\selectfont\symbol{60}\fontencoding{\encodingdefault}}
\newcommand{\Iota}{\mathrm{I}}
\newcommand{\assign}{:=}
\newcommand{\backassign}{=:}
\newcommand{\equallim}{\mathop{=}\limits}
\newcommand{\mathD}{\mathrm{D}}
\newcommand{\mathd}{\mathrm{d}}
\newcommand{\nobracket}{}
\newcommand{\tmaffiliation}[1]{\\ #1}
\newcommand{\tmcolor}[2]{{\color{#1}{#2}}}
\newcommand{\tmdummy}{$\mbox{}$}
\newcommand{\tmem}[1]{{\em #1\/}}
\newcommand{\tmemail}[1]{\\ \textit{Email:} \texttt{#1}}
\newcommand{\tmop}[1]{\ensuremath{\operatorname{#1}}}
\newcommand{\tmscript}[1]{\text{\scriptsize{$#1$}}}
\newcommand{\tmstrong}[1]{\textbf{#1}}
\newenvironment{proof}{\noindent\textbf{Proof\ }}{\hspace*{\fill}$\Box$\medskip}
\newenvironment{proof*}[1]{\noindent\textbf{#1\ }}{\hspace*{\fill}$\Box$\medskip}
\newcounter{nnacknowledgments}
\newtheorem{acknowledgments*}[nnacknowledgments]{Acknowledgments}}
\newtheorem{corollary}{Corollary}
\newtheorem{definition}{Definition}
\newtheorem{lemma}{Lemma}
\newtheorem{notation}{Notation}
\newtheorem{proposition}{Proposition}
{\theorembodyfont{\rmfamily}\newtheorem{remark}{Remark}}
\newtheorem{theorem}{Theorem}
\newcommand{\VV}{\mathscr{C}}
\newcommand{\CF}{\mathscr{F}}
\newcommand{\CS}{\mathscr{S}}
\begin{document}

\title{A stochastic control approach to Sine-Gordon EQFT}

\author{
  Nikolay Barashkov
  \tmaffiliation{Department of Mathematics and Statistics\\
  University of Helsinki}
  \tmemail{nikolay.barashkov@helsinki.fi}
}

\maketitle

  \begin{center}
    Dedicated to the memory of Mikhail Barashkov
  \end{center}

\begin{abstract}
  We study the Sine-Gordon model for $\beta^{2}< 4 \pi$ in infinite volume. We give a variatonal characterization of it's laplace transform, and deduce from this large deviations. Along the way we obtain estimates which are strong enough to obtain a proof of the Osterwalder-Schrader axioms including exponential decay of correlations as a byproduct. Our method is based on the Boue-Dupuis formula with an emphasis on the stochastic control structure of the problem.
  \end{abstract}
\section{Introduction}

In this article we investigate the Sine Gordon measure on the plane for
$\beta^2 < 4 \pi$, that is
\begin{equation}
  \nu_{\tmop{SG}} = \frac{1}{Z} \exp \left( - \lambda \int_{\mathbb{R}^2} \cos
  (\beta \varphi) \mathd x \right) \mathd \mu \label{eq:formal}  \qquad Z =
  \int \exp \left( - \lambda \int_{\mathbb{R}^2} \cos (\beta \varphi) \mathd x
  \right) \mathd \mu
\end{equation}
where $\mu$ is the Gaussian Free Field on $\mathbb{R}^2$, that is the Gaussian
measure {\cite{janson_gaussian_1997}} on $\CS' (\mathbb{R}^2)$ (tempered
distributions) with covariance
\[ \int_{\CS' (\mathbb{R}^2)} \langle f, \varphi \rangle_{L^2 (\mathbb{R}^2)}
   \langle g, \varphi \rangle_{L^2 (\mathbb{R}^2)} \mathd \mu = \langle f,
   (m^2 - \Delta)^{- 1} g \rangle_{L^2 (\mathbb{R}^2)} ._{} \]
Measures of this form are of interest in mathematical physics since they allow
the construction of relativistic Quantum Field Theories (QFTs)
{\cite{Kazhdan-1999,Strocchi-2013}} via the {\tmem{Osterwalder-Schrader}}
Reconstruction Theorem, provided one is able to prove that they satisfy the
Osterwalder-Schrader Axioms {\cite{Osterwalder-1973}}.

Expression {\eqref{eq:formal}} has no rigorous meaning since $\mu$ is known
to be supported on genuine distributions so $\cos (\beta \varphi)$ is ill
defined. Nevertheless if $\varphi$ is sampled from $\mu$ one can give
rigourous meaning to the {\tmem{Wick ordered}} cosine $\llbracket \cos (\beta
\varphi) \rrbracket$ as a random distribution {\cite{Junnila-2020}}. It is
important to note that the process of Wick ordering ``multiplies the cosine by
an infinite constant'' and thus the resulting potential becomes nonconvex even
if $\lambda \ll m$. In finite volume this is suffient to define a density with
respect to free field, for the sine gordon measure provided $\beta^2 < 4 \pi$.
For $4 \pi \leqslant \beta^2 < 8 \pi$ the construction becomes more
complicated and the partion function $Z$ aquires further divergencies. A
further complication arises since samples of $\mu$ are not expected to have
any decay at infinity so the expression $\int_{\mathbb{R}^2} \llbracket \cos
(\beta \varphi) \rrbracket \mathd x$ loses it's meaning even though
$\llbracket \cos (\beta \varphi) \rrbracket$ is a well defined random
distribution.

Despite the difficulties the Sine-Gordon measures has been studied extensivly
in the mathematical literature. In {\cite{Frohlich1976}} the measure was
contructed in the full space for $\beta < 4 \pi$ and $\lambda \ll m$ and the
authors were able verify the {\tmem{Osterwalder-Schrader axioms}}, exponential
decay of the correlation funcitons as well as nontriviality of the scattering
matrix of the corresponding Quantum Field Theory. In the full range $0 <
\beta^2 < 8 \pi$ the measure was constructed in
{\cite{dimock1993,lacoin_sine-gordon_2020,Dimock2000}} . A markov property for
the Sine-Gordon model was shown in {\cite{albeverio-hk-sine-gordon-1979}}. The
markov property allows one to employ an alternative route to constuct the
corresponding QFT via Nelson's reconstruction
{\cite{Nelson-1966,Nelson-1973}}. In {\cite{brydges_mayer_1987}} Brydges and
Kennedy gave an elegant construction of the Sine-Gordon measure using the
Polchinski equation {\cite{polchinski_renormalization_1984}}. Their method was
later explored in {\cite{bauerschmidt_2019}} to prove a Logarathmic Sobolev
Inequality for $\beta^2 < 6 \pi$ and in
{\cite{bauerschmidt_2020,hofstetter-2021}} to construct a coupling between the
Sine-Gordon measure and the Free Field, and subsequently study the maximum of
the Sine-Gordon field for $\beta^2 < 6 \pi$. In {\cite{oh-Sine-Gordon-2020}}
the authors showed that the Sine gordon measure is invariant under a the flow
a wave quation with Sinus nonlinearity (in finite volume). For $m = 0$ the
Sine-Gordon model also exhibits the Coleman Correspondecne, a relation with
the fermionic Thirring model
{\cite{Frohlich1976,bauerschmidt_fermion_2020,Benfatto-2008}}. \

To the Sine-Gordon measure one can associate Langevin dynamics, that is the
stochastic PDE \
\[ (\partial_t + (m^2 - \Delta)) u - \lambda \beta \llbracket \sin (\beta
   \varphi) \rrbracket = \xi (t, x) \qquad \xi \text{Space time white noise} .
\]
These have been studied in {\cite{Albeverio2001,Hairer-Sine-Gordon-2016}}.
These Langevin dynamics have been employed to study the measure in the context
of the $\Phi_3^4$ model using techiques from stochastic PDE's, see
{\cite{albeverio_invariant_2017,gubinelli_pde_2018}}. This approach is known
as Stochastic Quantizaiton (SQ). An elliptic version of the SQ equations have
also recently been developed {\cite{Albeverio-2019,Albeverio-2020}}.

In {\cite{barashkov_gubinelli_variational}} a variational approach to study
EQFTs was proposed, and the $\Phi_3^4$ model in finite volume was studied.
Here we aim to pursue this point of view in infinite volume. The variational
approach is an intepretation of the Polchinski equation in terms of stochastic
control, and thus most closely resembles {\cite{Brydges_1987}}. More precisely
it studies the stochastic control problem whose associated HJB equation is the
Polchinski equation, see also Appendix \ref{app:stoch-cont}. On the other
hand, as was seen in {\cite{barashkov_gubinelli_variational}}, the variational
method also allows to deploy techniques developed in the study of stochastic
quantization. Furthermore it allows to derive a variational representation of
the measure, without making refrence to a limiting process, even if the
measure is not absolutely continuous with respect to the GFF. Let us mention
that the variational approach has also been used to study phase transitons for
the $\Phi_3^4$ model {\cite{Chandra-2020}}. \

In {\cite{barashkov-2021}} M.Gubinelli and the author studied the $\Phi_2^4$
and Hoergh-Krohn (with exponential interaction) models in infinite volume
using the variational method. Through studying the Euler-Lagrange equation of
the corresponding variational problem it is possible to derive a variational
formula for the Hoergh-Krohn model in infinite volume. This required convexity
of the renormalized interaction and thus we were not able this result for the
$\Phi_2^4$ model although we obtained partial results in this direction. In
the present paper we will study this problem for the Sine-Gordon model in the
case $\beta^2 < 4 \pi$. Although in this case the renormalized interaction is
not convex we will be able to circumvent this using the boundedness of the
sinus.

The starting point of our analysis will be the Boue-Dupuis representation
{\cite{boue_variational_1998}} of the Laplace transform of the approximate
Sine-Gordon measure. See also the papers of {\"U}st{\"u}nel
{\cite{ustunel_variational_2014}} and Zhang~{\cite{zhang_variational_2009}}
for extensions to the infinite dimensional setting. where extensions and
further results on the variational formula are obtained The Boue-Dupuis
formula has been used to derive Large Deviation Principles (LDPs) in different
contexts {\cite{Budhiraja-2019}}. A byproduct of our approach will be to
derive a large deviations principle for the Sine-Gordon model in the
semiclassical limit. A similar result has been obtained for the Lioville
measure in {\cite{lacoin_semiclassical_2020,Lacoin_2017}} and in
{\cite{barashkov-2021}} for $\Phi_2^4$. \

\subsection{Results }

We will now give an overview of our results and a rough outline of the proofs.
We want to study the laplace transform, for some sufficiently nice functional
$f$ (see below for details)
\[ - \log \int \exp \left( - f (\varphi) - \lambda \int_{\mathbb{R}^2} \rho
   \llbracket \cos (\beta \varphi) \rrbracket \mathd x \right) \mathd \mu =
   W^{\rho} (f) . \]
Here we have introduced an ''infrared cutoff'' to deal with the diverence due
to infinite volume. It is chosen according to the following definition:

\begin{definition}
  We denote by
  \[ C = \{ \rho \in C_c^{\infty} (\mathbb{R}^2) : 0 \leqslant \rho \leqslant
     1, | \nabla \rho | \leqslant 1 \} \]
  the space of spacial cutoffs. We will say that family of spacial cutoffs
  $\rho^N \in C$ converges to $1$ or $\rho^N \rightarrow 1$ if for any $K > 0$
  there exists $N_0 \in \mathbb{N}$ such that $\rho^N (x) = 1$ for any $x \in
  B (0, K)$ and $N \geqslant N_0$. Usually we will drop the index $N$ and
  simply write $\rho \rightarrow 1$ in this case. 
\end{definition}

We will first study the measures
\[ \mathd \nu^{\rho}_{\tmop{SG}} = \frac{1}{Z^{\rho}} \exp \left( - \lambda
   \int_{\mathbb{R}^2} \rho \llbracket \cos (\beta \varphi) \rrbracket \mathd
   x \right) \mathd \mu \qquad Z^{\rho} = \exp (W^{\rho} (0)) . \]
The BD formula gives
\[ \begin{array}{lll}
     &  & W^{\rho} (f)\\
     & = & \inf F^{f, \rho} (u)\\
     & = & \inf_{u \in \mathbb{H}_a} \mathbb{E} \left[ f (W_{\infty} +
     I_{\infty} (u)) + \lambda \int \rho \llbracket \cos (\beta (W_{\infty} +
     I_{\infty} (u)) \rrbracket \nobracket + \frac{1}{2} \int^{\infty}_0 \|
     u_s \|^2_{L^2} \mathd s \right]
   \end{array} \label{eq:BD-intro} \]
Here
\begin{itemize}
  \item $W_t$ is a Gaussian process, such that $\tmop{Law} (W_{\infty}) = \mu$
  and $W_t$ is smooth almost surely for $t < \infty$, see Section
  \ref{sec:decomposition} for details.
  
  \item $I : L^2 (\mathbb{R}_+ \times \mathbb{R}^2) \rightarrow L^{\infty}
  (\mathbb{R}_+, H^1 (\mathbb{R}^2))$ is a linear map given by
  \[ I_t (u) = \int^t_0 J_s u_s \mathd s \qquad J_t = \left( \frac{1}{t^2}
     e^{- (m^2 - \Delta) / t} \right)^{1 / 2} \]
  also see Section \ref{sec:decomposition} for details.
  
  \item $\mathbb{H}_a$ is the space of processes adapted to $W_t$ such that
  \[ \mathbb{E} \left[ \int^{\infty}_0 \| u_s \|^2_{L^2} \mathd s \right] <
     \infty \]
\end{itemize}
The minimization problem on r.h.s of {\eqref{eq:BD-intro}} is a stochastic
control problem. There is two ways to study the minimier: One can ignore the
control structure and study the Euler Lagrange equations for the minizer as
was done in {\cite{barashkov-2021}}, which bears strong resemblence with the
study of Stochastic Quantization. Alternativly one can analyse the value
function of the control problem and use the verification theorem from
stochastic control theory (Proposition \ref{prop:verification} below) to infer
properties of the minimizer. We will use both points of view here. First we
will use the control point of view to derive an $L^{\infty}$ bound on the
optimizer:

\begin{theorem}
  \label{thm:L-infty}The minimizer $\bar{u}^{\rho}$ in $\mathbb{H}_a$ of the
  functional
  \[ F^{\rho} (u) =\mathbb{E} \left[ \lambda \int \rho \llbracket \cos (\beta
     (W_{\infty} + I_{\infty} (u))) \rrbracket \mathd x + \frac{1}{2}
     \int^{\infty}_0 \| u_t \|^2_{L^2} \mathd t \right] \]
  satisfies
  \[ \| \bar{u}^{\rho}_t \|_{L^{\infty}} \leqslant C \langle t
     \rangle^{\beta^2 / 8 \pi - 1} . \]
\end{theorem}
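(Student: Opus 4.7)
My plan is to exploit the stochastic control structure of the minimization via the verification theorem (Proposition \ref{prop:verification}). Attached to $F^\rho$ is the value function
\[ V_t(\varphi) = -\log \mathbb{E}\left[\exp\left(-\lambda\int \rho \llbracket\cos(\beta(\varphi + \eta_t))\rrbracket \mathd x\right)\right], \]
where $\eta_t$ is an independent copy of $W_\infty - W_t$. The verification theorem identifies the optimizer as a feedback law $\bar{u}^\rho_t = -J_t \nabla V_t(\bar\Phi_t)$ with $\bar\Phi_t = W_t + I_t(\bar{u}^\rho)$, so the theorem reduces to a deterministic bound $\sup_\varphi \|J_t \nabla V_t(\varphi)\|_{L^\infty} \lesssim \langle t\rangle^{\beta^2/(8\pi) - 1}$.

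Differentiating in $\varphi$ gives, up to a sign,
\[ \nabla V_t(\varphi)(y) = \lambda\beta\, \rho(y)\, \mathbb{E}^{\mathrm{tilt}_{\varphi,t}}\!\left[\llbracket\sin(\beta(\varphi+\eta_t))\rrbracket(y)\right], \]
where $\mathrm{tilt}_{\varphi,t}$ reweights the law of $\eta_t$ by $e^{-\lambda \int \rho\llbracket\cos(\beta(\varphi+\eta_t))\rrbracket \mathd x}/Z_t(\varphi)$. In the un-tilted case, the Wick identity $\mathbb{E}[\llbracket\sin(\beta(\varphi+\eta_t))\rrbracket] = e^{\beta^2\sigma_t^2/2}\sin(\beta\varphi)$ gives an $L^\infty$ bound of order $e^{\beta^2\sigma_t^2/2} \asymp \langle t\rangle^{\beta^2/(8\pi)}$, using the asymptotic $\sigma_t^2 \sim \tfrac{1}{4\pi}\log t$. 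Combined with the $L^1$ kernel bound $\|J_t\|_{L^1} = \tfrac{1}{t}e^{-m^2/(2t)} \lesssim \langle t\rangle^{-1}$, read off directly from the Fourier symbol of $J_t$, this produces the advertised exponent.

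The main obstacle is promoting the un-tilted calculation to the tilted one, since the Gibbs weight is neither bounded above nor below. I would proceed by UV regularization at scale $N$ (so that $\llbracket\sin(\beta\cdot)\rrbracket_N$ becomes a bona fide function), derive the $L^\infty$ bound on the approximate optimizer $\bar u^{\rho,N}$ uniformly in $N$, and pass to the $N\to\infty$ limit via convergence of the regularized value functions (which holds for $\beta^2 < 4\pi$). At the regularized level, the trigonometric identity $\sin(\beta(\varphi+\eta)) = \sin(\beta\varphi)\cos(\beta\eta) + \cos(\beta\varphi)\sin(\beta\eta)$, together with the rescaling that absorbs the divergent factor $e^{\beta^2\sigma_N^2/2}$ into partial Wick orderings with respect to the covariance of $\eta_t$, factors the $\varphi$-dependence of both the numerator and the Gibbs density into coefficients bounded by $1$. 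The resulting $\eta$-only tilted expectations should then be uniformly bounded using $L^p$-moment estimates on $e^{-V_\infty^N}$ valid precisely for $\beta^2 < 4\pi$, combined with a Cauchy--Schwarz step; establishing this moment bound and its preservation under the $\varphi$-dependent tilt is the technical heart of the argument.
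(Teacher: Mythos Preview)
Your overall architecture is right: the verification theorem reduces the problem to a uniform bound $\sup_\varphi\|\nabla V_t(\varphi)\|_{L^\infty}\lesssim\alpha(t)$, and the un-tilted computation via the martingale property of the Wick exponential gives exactly this. The difficulty is entirely in the tilt, and here your proposed mechanism breaks down.

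The obstruction is that after your partial Wick ordering, the object whose tilted expectation you must control is $(\alpha(N)/\alpha(t))\cos(\beta\eta_t(y))$ at a \emph{single spatial point} $y$. This random variable has $|\cdot|\le\alpha(N)/\alpha(t)$ pointwise, and a short Gaussian calculation shows its $L^p(\mathbb{P})$ norm is of the same order for every $p>1$ (indeed $\mathbb{E}[\cos^2(\beta\eta_t(y))]\to 1/2$, so the variance already diverges like $(\alpha(N)/\alpha(t))^2$). The collapse from $\alpha(N)$ to $\alpha(t)$ in the un-tilted expectation is a \emph{sign} cancellation, not a size cancellation, and any H\"older or Cauchy--Schwarz step against $\|e^{-V}\|_{L^{p'}}$ destroys it. So the bound you would obtain is $C\alpha(N)$, not $C\alpha(t)$, and does not survive $N\to\infty$. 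Uniform $L^p$ control of the Gibbs weight, which does hold for $\beta^2<4\pi$, is simply not enough here.

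The paper bypasses this completely by never writing the tilted expectation as a Gibbs average. Instead it uses an envelope identity (Lemma~\ref{lemma:envelope-grad}),
\[
\nabla V_{t,T}(\varphi)=\mathbb{E}\bigl[\nabla V_T\bigl(\varphi+W_{t,T}+I_{t,T}(u^\varphi)\bigr)\bigr],
\]
where $u^\varphi$ is the \emph{optimal} drift on $[t,T]$; this is the un-tilted expectation after the Girsanov shift implicit in your formula. One then Taylor-expands in $I_{t,T}(u^\varphi)$: the zeroth order term is exactly your un-tilted calculation, and the remainder is controlled by $\alpha(T)\|I_{t,T}(u^\varphi)\|_{L^\infty}$. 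The crucial input is that on a short window $t\ge T/2$ one already has $\|u^\varphi_s\|_{L^\infty}\lesssim\lambda\langle s\rangle^{-1/2-\delta}$ from the crude bound $|V_T|_{1,\infty}\le C\lambda T^{1/2-\delta}$ (Corollary~\ref{linfty-1}), so the remainder is $O(\lambda^2\langle t\rangle^{-\delta})$. This gives $\nabla V_{t,T}=\lambda\alpha_t\beta\rho\sin(\beta\varphi)+R_{t,T}$ with a small remainder, and iterating via dynamic programming over dyadic scales $T,T/2,T/4,\dots$ (Lemma~\ref{lemma:value-func}) sums the errors geometrically. The multiscale bootstrap, not a moment estimate, is what preserves the oscillatory cancellation under the tilt.
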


Once this is established we will want to study the dependence of the optimizer
on a local perturbation $f.$ We will show that, provided $f$ is sufficiently
nice, with $w (x) = \exp (\gamma | x |)$ and $u^{f, \rho}$ being the
miminimizer for $u^{f, \rho}$ is \
\begin{equation}
  \mathbb{E} \left[ \int^{\infty}_0 \int w | u_t^{f, \rho} - \bar{u}_t^{\rho}
  |^2 \mathd x \mathd t \right] < \infty \label{eq:decay-intro}
\end{equation}
Where we will establish {\eqref{eq:decay-intro}} studying the Euler-Lagrange
equations for $F^{f, \rho}$. For this convexity of $F^{\rho}$ is crucial.
Observe that the second term $\frac{1}{2} \int^{\infty}_0 \| u_t \|^2_{L^2}
\mathd t$ is strictly convex, but the first term seemingly breaks convexity
since the cosine is multiplied with an infinite constant. However applying
Ito's formula one can calculate
\begin{eqnarray*}
  &  & \lambda \int \rho \llbracket \cos (\beta (W_{\infty} + I_{\infty}
  (u))) \rrbracket \mathd x\\
  & = & \lambda \beta \int \rho \llbracket \sin (\beta (W_t + I_t (u)))
  \rrbracket J_t u \mathd x + \text{martingale}\\
  & = & \lambda \beta \int J_t (\rho \alpha (t) (\sin (\beta (W_t + I_t (u)))
  \nobracket) u_t \mathd x + \text{martingale}
\end{eqnarray*}
Below we will show that $\alpha (t) \lesssim \langle t \rangle^{\beta^2 / 8
\pi}$ and $\| J_t u \|_{L^p} \lesssim t^{- 1} \| u \|_{L^p}$ so one can hope
for the functioal to be convex provided that $\lambda$ is small enough and $u$
stays in a bounded region, which is guaranteed by Theorem \ref{thm:L-infty}.
Expaning this heuristic we will prove {\eqref{eq:decay-intro}}, which in turn
will allow us to remove the cutoff $\rho$ in the variational formula and prove
the following theorem:

\begin{theorem}
  \label{thm:characterization-1}Define $\mathbb{D}^f$ to be the space
  \[ \mathbb{D}^f = \left\{ v \in \mathbb{H}_a : \mathbb{E} \left[
     \int^{\infty}_0 \int \| w v \|^2_{L^2 (\mathbb{R}^2)} \mathd x \mathd t
     \right] \leqslant C \right\} . \]
  Then there exists a $\lambda_0$ such that for any $- \lambda_0 < \lambda <
  \lambda_0$\quad
  \[ \lim_{\rho \rightarrow 1} (W^{\rho} (f) - W^{\rho} (0)) = \inf_{v \in
     \mathbb{D}^{f}} \bar{F}^f (v) \]
  where
  \begin{align*} \bar{F}^f (v) = &\mathbb{E} \left[ f (W_{\infty} + I_{\infty} (\bar{u}) +
      I_{\infty} (v)) \right.
      \\ &+ \lambda \int \left( \llbracket \cos (\beta (W_{\infty}
     + I_{\infty} (\bar{u}) + I_{\infty} (v))) \rrbracket - \llbracket \cos
     (\beta (W_{\infty} + I_{\infty} (\bar{u}))) \rrbracket \right) \mathd x \\ & \left. +
    \int^{\infty}_0 \int \bar{u}_t v_t \mathd x \mathd t + \frac{1}{2}
    \int^{\infty}_0 \| v_t \|^2_{L^2} \mathd t \right]
    \end{align*}
  and \={u} is the limit of $\bar{u}^{\rho}$ from Theorem \ref{thm:L-infty}
  which will be shown to exist in Proposition
  \ref{prop:drift-infinite-volume}.
\end{theorem}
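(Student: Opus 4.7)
The plan is to reduce the theorem to a change-of-variable identity followed by a careful passage to the infinite-volume limit. Concretely, for each fixed cutoff $\rho$ the BD formula gives both $W^{\rho}(f) = \inf_{u \in \mathbb{H}_a} F^{f,\rho}(u)$ and $W^{\rho}(0) = F^{\rho}(\bar{u}^{\rho})$, and my goal is to identify the difference with an infimum that depends only on the perturbation $v = u - \bar{u}^{\rho}$. Expanding the quadratic cost gives $\tfrac12\|u\|_{L^2}^2 = \tfrac12\|\bar{u}^{\rho}\|_{L^2}^2 + \langle \bar{u}^{\rho}, v\rangle_{L^2} + \tfrac12\|v\|_{L^2}^2$, so after subtracting $F^{\rho}(\bar{u}^{\rho})$ the constant part cancels and the mixed term produces exactly the cross term $\int_0^{\infty}\!\!\int \bar{u}^{\rho}_t v_t\,dx\,dt$ appearing in $\bar{F}^f$. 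For the potential term I write $\lambda\int\rho\,\llbracket\cos(\beta(W_\infty+I_\infty(u)))\rrbracket dx$ as $\lambda\int\rho\,\llbracket\cos(\beta(W_\infty+I_\infty(\bar{u}^\rho)))\rrbracket\,dx$ plus the cosine difference, which yields the corresponding cutoff version $\bar{F}^{f,\rho}$ satisfying $W^{\rho}(f)-W^{\rho}(0)=\inf_{v}\bar{F}^{f,\rho}(v)$.

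Having this finite-volume identity, the next step is to control the minimizing $v^{f,\rho}$. The decay estimate \eqref{eq:decay-intro}, proved via the Euler--Lagrange analysis built on strict convexity of $F^{f,\rho}$ in the region cut out by Theorem \ref{thm:L-infty}, gives a bound $\mathbb{E}\int_0^\infty \|w\, v^{f,\rho}\|_{L^2}^2\,dt\le C$ uniform in $\rho$. This places the sequence $v^{f,\rho}$ inside $\mathbb{D}^f$ (modulo identifying the correct constant in the definition), and by weak compactness in the weighted Hilbert space I extract a limit point $v^\ast$. Combined with the convergence $\bar{u}^{\rho}\to\bar{u}$ from Proposition \ref{prop:drift-infinite-volume}, this provides the candidate minimizer for the limit problem.

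The remaining task is to pass to the limit $\rho\to 1$ inside $\bar{F}^{f,\rho}(v^{f,\rho})$ and to match with $\inf_{v\in\mathbb{D}^f}\bar{F}^f(v)$. The quadratic term $\tfrac12\|v\|_{L^2}^2$ passes via weak lower semicontinuity; the cross term $\int\bar{u}_t v_t$ converges by combining the uniform weighted bound on $v^{f,\rho}$ with the $L^\infty$ bound of Theorem \ref{thm:L-infty} and the convergence of $\bar{u}^{\rho}$; and the term involving $f$ uses the continuity/regularity assumption on $f$ together with the fact that $I_\infty(v^{f,\rho})$ is localized. The delicate piece is the cosine difference, which must be treated \emph{without} the cutoff: here I use that $\llbracket\cos(\beta(\varphi+\psi))\rrbracket-\llbracket\cos(\beta\varphi)\rrbracket$ expands via addition formulas into $\llbracket\cos(\beta\varphi)\rrbracket(\cos(\beta\psi)-1)-\llbracket\sin(\beta\varphi)\rrbracket\sin(\beta\psi)$, where $\psi=I_\infty(v)$ is localized by the weight $w$, so the pairing with the Wick cosine/sine of $W_\infty+I_\infty(\bar{u})$ is controlled by $L^p$ estimates on the Wick exponentials against $\psi$ (this is where smallness of $\lambda$ enters to keep the Euler--Lagrange analysis applicable).

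The main obstacle is precisely this last convergence step: removing $\rho$ in the cosine difference requires showing that each term in the expansion, paired against the localized $\psi=I_\infty(v^{f,\rho})$, is uniformly integrable in $\rho$ and passes to the limit. The $L^\infty$ control from Theorem \ref{thm:L-infty} and the exponential weight from \eqref{eq:decay-intro} are both essential; one has to balance the polynomial growth $\alpha(t)\lesssim \langle t\rangle^{\beta^2/8\pi}$ of the Wick renormalization against the spatial decay of $\psi$. Once this is done, equality of the limit with $\inf_{v\in\mathbb{D}^f}\bar{F}^f(v)$ follows from weak lower semicontinuity in one direction and, in the other, by testing $\bar{F}^{f,\rho}$ with an arbitrary $v\in\mathbb{D}^f$ (not the minimizer) and using the same convergence arguments to upper-bound the liminf of $W^{\rho}(f)-W^{\rho}(0)$ by $\bar{F}^f(v)$.
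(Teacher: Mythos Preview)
Your reduction to the finite-volume identity $W^{\rho}(f)-W^{\rho}(0)=\inf_{v\in\mathbb{H}_a}\bar{F}^{f,\rho}(v)$ via the change of variables $u=\bar{u}^{\rho}+v$ matches the paper exactly, as does the restriction of the infimum to $\mathbb{D}^f$ via the weighted decay estimate (Theorem~\ref{thm:weighted}). The divergence is in the final passage to the limit.

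You run a $\Gamma$-convergence argument: extract a weak limit $v^*$ of the minimizers $v^{f,\rho}$, establish $\liminf$ via weak lower semicontinuity, and $\limsup$ by testing with a fixed $v$. The paper instead proves directly that $\bar{F}^{f,\rho}(v)\to\bar{F}^f(v)$ \emph{uniformly} over $v\in\mathbb{D}^f$, which immediately gives $\inf_{\mathbb{D}^f}\bar{F}^{f,\rho}\to\inf_{\mathbb{D}^f}\bar{F}^f$ without ever tracking the minimizers. The uniform convergence follows because every term in $\bar{F}^f(v)-\bar{F}^{f,\rho}(v)$ can be bounded by a quantity depending only on $\|I_\infty(\bar{u})-I_\infty(\bar{u}^{\rho})\|_{H^{1,-\gamma}}$ (and a $(1-\rho)$-tail) times a power of $\|I_\infty(v)\|_{H^{1,2\gamma}}$, the latter being uniformly controlled on $\mathbb{D}^f$; the cosine difference is handled by a weighted Sobolev estimate (Lemma~\ref{lemma:cosine-sobolev}) that interpolates between $W^{1,1,\gamma}$ and $L^\infty$ and exploits $\|I_\infty(\bar{u})\|_{W^{1,\infty}}\le C$.

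Your route is viable but carries an extra burden you do not fully discharge: the cosine difference is neither convex nor weakly continuous in $v$, so your $\liminf$ inequality requires upgrading the weak convergence $v^{f,\rho}\rightharpoonup v^*$ to strong convergence of $I_\infty(v^{f,\rho})$ in a topology where the cosine difference is continuous. This is obtainable from the uniform $L^2(\mathbb{R}_+,L^{2,\gamma})$ bound together with the smoothing of $I_\infty$, but you should state it explicitly. The paper's uniform-convergence approach sidesteps this entirely, at the cost of the somewhat technical interpolation estimate in Lemma~\ref{lemma:cosine-sobolev}.
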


It turs out that the minimizer of eq {\eqref{eq:BD-intro}} for $f = 0$
provides a coupling of the Sine-Gordon measure with the Gaussian Free Field,
so from Theorem \ref{thm:L-infty} we will be able to deduce the following:

\begin{theorem}
  \label{thm:coupling}{\tmdummy}
  
  \begin{enumerate}
    \item There exists random variables $I^{\rho} \in L^{\infty} (\mathbb{P}
    \times \mathbb{R}^2)$ and
    \[ \sup_{\rho \in C} \| I^{\rho} \|_{L^{\infty} (\mathbb{P} \times
       \mathbb{R}^2)} < \infty \]
    such that the measures $\nu^{\rho}_{\tmop{SG}}$ satisfy
    \[ \nu^{\rho}_{\tmop{SG}} = \tmop{Law} (W_{\infty} + I^{\rho}) . \]
    \item If $\lambda$ is small enough we have that there exists an $I \in
    L^{\infty} (\mathbb{P} \times \mathbb{R}^2)$ such that for any $\gamma >
    0$
    \[ \| I^{\rho} - I \|_{L^{\infty} (\mathbb{P}, L^{2, - \gamma})}
       \rightarrow 0, \]
    and the Law of $(W, I)$ is euclidean invariant. \ \ 
  \end{enumerate}
\end{theorem}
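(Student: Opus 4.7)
The plan is to read off the coupling directly from the optimal drift $\bar{u}^\rho$ and to transport it through the $\rho\to 1$ limit using the tools already in place.

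For part (1), set $I^\rho := I_\infty(\bar{u}^\rho)$, with $\bar{u}^\rho$ the minimizer from Theorem \ref{thm:L-infty}. The verification theorem (Proposition \ref{prop:verification}) applied to the stochastic control problem behind \eqref{eq:BD-intro} identifies the joint law: Girsanov's transform with drift $\bar{u}^\rho$ together with the BD variational identity gives, for every bounded continuous $g$,
\[
    \int g\,d\nu^\rho_{\tmop{SG}} \;=\; \mathbb{E}\!\left[ g(W_\infty + I^\rho) \right],
\]
which is exactly $\nu^\rho_{\tmop{SG}} = \mathrm{Law}(W_\infty + I^\rho)$. The uniform $L^\infty$ bound then follows from Theorem \ref{thm:L-infty} together with the heat-kernel estimate $\|J_s\|_{L^\infty \to L^\infty}\lesssim s^{-1} e^{-m^2/(2s)}$, since
\[
    \|I^\rho\|_{L^\infty(\mathbb{P}\times\mathbb{R}^2)} \;\le\; \int_0^\infty \|J_s \bar{u}^\rho_s\|_{L^\infty}\,ds \;\le\; C\int_0^\infty s^{-1} e^{-m^2/(2s)}\langle s\rangle^{\beta^2/8\pi -1}\,ds,
\]
which is finite uniformly in $\rho$: the exponential factor tames the $s\to 0$ singularity, and at infinity the integrand behaves like $s^{\beta^2/8\pi - 2}$, integrable because $\beta^2/8\pi - 2 < -3/2$ when $\beta^2 < 4\pi$.

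For part (2), let $\bar{u}$ be the infinite-volume limit furnished by Proposition \ref{prop:drift-infinite-volume} and set $I := I_\infty(\bar{u})$. Applying the decay estimate \eqref{eq:decay-intro} with $f = 0$, used as a comparison between the two drifts $\bar{u}^\rho$ and $\bar{u}^{\rho'}$, shows that $\bar{u}^\rho$ is Cauchy in the weighted space $L^2(\mathbb{P}\times\mathbb{R}_+\times\mathbb{R}^2, w)$. Mapping this through $I_\infty$, which is bounded into $L^{2,-\gamma}$ once $\gamma$ exceeds the weight parameter, transfers the Cauchy property to $I^\rho$ and yields the convergence $\|I^\rho - I\|_{L^\infty(\mathbb{P}, L^{2,-\gamma})} \to 0$. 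The uniform $L^\infty$ bound of part (1) passes to $I$ by lower semicontinuity.

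Euclidean invariance of $(W, I)$ reduces to showing that the map $W\mapsto \bar{u}(W)$ commutes with every euclidean motion $T$, since $W$ is manifestly euclidean invariant. The functional $\bar F^0$ appearing in Theorem \ref{thm:characterization-1} is strictly convex for $\lambda$ small (this is the very mechanism used there to remove the cutoff), so the infinite-volume minimizer $\bar{u}$ is unique and characterized by a euclidean-covariant Euler--Lagrange system. Comparing a cutoff $\rho$ with its translate/rotate $\rho\circ T^{-1}$ and invoking Step 2 to identify the limits, one sees that $\bar{u}(T^*W) = T^*\bar{u}(W)$, whence $I(T^*W)=T^*I(W)$ and the joint law is invariant.

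\textbf{Main obstacle.} The genuinely delicate point is the euclidean invariance. It requires not only uniqueness of the infinite-volume minimizer but also that the limit $\bar{u}$ be independent of the approximating sequence of cutoffs; both rest on the strict convexity of the infinite-volume functional at small $\lambda$, which is precisely the nontrivial ingredient behind Theorem \ref{thm:characterization-1}. The $L^\infty$ control on $\bar{u}^\rho$ from Theorem \ref{thm:L-infty} is what keeps the sinus nonlinearity in the convex regime and makes this argument go through.
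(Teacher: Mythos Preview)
Your overall route is the paper's: set $I^\rho=I_\infty(\bar u^\rho)$, pull the $L^\infty$ bound from Theorem~\ref{thm:L-infty}, pass to the limit via Proposition~\ref{prop:drift-infinite-volume}, and deduce Euclidean invariance from uniqueness of the limit. Two points deserve comment.

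\textbf{The coupling identity.} Your one-line appeal to ``Girsanov's transform with drift $\bar u^\rho$ together with the BD variational identity'' does not, as written, prove $\nu^\rho_{\mathrm{SG}}=\mathrm{Law}(W_\infty+I^\rho)$. The verification theorem only identifies the form of the optimizer; it does not by itself say that $W_\infty+I_\infty(\bar u^\rho)$ has the tilted law. A Girsanov argument can be made to work (show that $Y_t=W_t+I_t(\bar u^\rho)$ and, under $d\mathbb{Q}=Z^{-1}e^{-V}d\mathbb{P}$, the process $W_t$ solve the \emph{same} SDE $dY=-J_t^2\nabla V_{t,T}(Y)\,dt+J_t\,dX$, then invoke weak uniqueness), but you have not written this. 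The paper instead uses Lemma~\ref{lemma:coupling}, an envelope-theorem computation already sitting in the preliminaries: differentiating $\upsilon\mapsto -\log\int e^{-\upsilon f}\,d\nu^\rho_{\mathrm{SG}}$ at $\upsilon=0$ via the variational formula gives $\mathbb{E}[f(W_\infty+I_\infty(\bar u^\rho))]$ directly. That is the clean tool here; you should cite it.

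\textbf{Euclidean invariance.} You route this through strict convexity of $\bar F^0$ from Theorem~\ref{thm:characterization-1}, which is circular (that theorem already consumes Proposition~\ref{prop:drift-infinite-volume}) and more than you need. The paper's argument is simpler: Proposition~\ref{prop:drift-infinite-volume} shows $\bar u^\rho$ is Cauchy, hence the limit $\bar u$ is independent of the particular sequence $\rho\to 1$. Comparing $\rho$ with $\rho\circ T^{-1}$ for a Euclidean motion $T$ then gives $T^*\bar u(W)=\bar u(T^*W)$ immediately. Your identification of this as the ``main obstacle'' overstates it; once Proposition~\ref{prop:drift-infinite-volume} is in hand, invariance is a two-line consequence.
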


\begin{proof}
  Theorem \ref{thm:coupling} follows from Lemma \ref{lemma:coupling}, Theorem
  \ref{thm:L-infty} and Proposition \ref{prop:drift-infinite-volume}, note
  that Euclidean invariance follows easily from the uniquenss of $I$. 
\end{proof}

An analogue of Theorem \ref{thm:coupling} has been established in finite volume in {\cite{bauerschmidt_2020}} even in the larger range $\beta^{2}<6 \pi$. We expect their techniques to extend to the infinite volume case at least for weak coupling (when $\lambda$ is small enough).

Both Theorem \ref{thm:characterization} and Theorem \ref{thm:coupling} imply
that if $\lambda$ is small enough $\nu^{\rho}_{\tmop{SG}}$ converges to a
unique measure $\nu_{\tmop{SG}}$ as $\rho \rightarrow 1$. A byproduct of
method is a proof for the Osterwalder-Schrader axioms for $\nu_{\tmop{SG}}$,
see {\cite{gubinelli_pde_2018}} for a nice exposition of the OS axioms. Note
also that from Theorem \ref{thm:coupling} we can deduce tighness of
$\nu^{\rho}_{\tmop{SG}}$ even if $\lambda$ is large.

\begin{theorem}
  \label{thm:OS}$\nu_{\tmop{SG}}$ satisfies the Osterwalder-Schrader axioms.
  Furthermore the correlations decay exponentially and $\nu_{\tmop{SG}}$ is
  non-Gaussian. 
\end{theorem}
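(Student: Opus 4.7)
The plan is to verify each OS axiom separately, using the coupling of Theorem \ref{thm:coupling} and the variational formula of Theorem \ref{thm:characterization-1} as the main tools, and then to read off exponential decay and non-Gaussianity from the same formula.

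Regularity (temperedness) and Euclidean invariance are immediate from Theorem \ref{thm:coupling}: the decomposition $\nu_{\tmop{SG}} = \tmop{Law}(W_\infty + I)$ with $I$ uniformly bounded in $L^{\infty}(\mathbb{P} \times \mathbb{R}^2)$ shows that samples inherit the temperedness and local regularity of the GFF (the $I$-contribution being harmless in any polynomially weighted space), while Euclidean invariance is part (2) of the same theorem. For reflection positivity I would work at finite volume with cutoffs $\rho \in C$ that are symmetric under the reflection $\theta:(x_1, x_2) \mapsto (-x_1, x_2)$ and approximate $1$. The density $\exp(-\lambda \int \rho \llbracket \cos(\beta \varphi) \rrbracket \mathd x)$ is then $\theta$-invariant, bounded, and factors across the hyperplane $\{x_1 = 0\}$ as a product $F^+ \cdot (F^+ \circ \theta)$ of local functionals; combined with reflection positivity of the GFF this yields RP for $\nu^{\rho}_{\tmop{SG}}$, which passes to the limit since the weak convergence $\nu^{\rho}_{\tmop{SG}} \to \nu_{\tmop{SG}}$ supplied by Theorem \ref{thm:coupling} preserves RP.

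Exponential decay I would extract from Theorem \ref{thm:characterization-1} by testing against $f_\epsilon = \epsilon_1 \langle \eta_1, \varphi \rangle + \epsilon_2 \langle \eta_2, \varphi \rangle$ with $\eta_1, \eta_2$ smooth and localized near points at distance $d$. The connected two-point correlation is $\partial_{\epsilon_1}\partial_{\epsilon_2}[W(f_\epsilon) - W(0)]\big|_{\epsilon=0}$, and using the variational representation it can be expressed in terms of the perturbation $u^{f_\epsilon, \rho} - \bar{u}^{\rho}$ of the minimizer. Working in the weighted space $\mathbb{D}^{f_\epsilon}$ with weight $w(x) = \exp(\gamma|x|)$ centered at the support of $\eta_1$, the bilinear cross contributions pick up a factor $\exp(-\gamma d)$ when integrated against $\eta_2$, yielding exponential decay of the connected $2$-point function at rate $\gamma$. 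Iterating for higher $n$-point functions gives the cluster property, which together with Euclidean invariance supplies translation ergodicity, the last OS axiom. Non-Gaussianity reduces to exhibiting a single non-vanishing cumulant of order $\geq 3$: the variational formula of Theorem \ref{thm:characterization-1} shows that $f \mapsto \lim_{\rho \to 1}(W^{\rho}(f) - W^{\rho}(0))$ depends on $f$ in a genuinely non-quadratic way, and expanding to leading order in $\lambda$ produces a four-point cumulant of $\varphi$ proportional to $\lambda^2$ times a non-degenerate convolution of $\cos$-kernels, which cannot vanish identically.

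The hardest step is making the exponential-decay estimate quantitative: one must control the interaction of the weight $w = \exp(\gamma|x|)$ with the smoothing kernel $J_t$ entering $I_t(u)$ uniformly in $t$, and check that the convexity argument driving \eqref{eq:decay-intro} remains valid for some $\gamma > 0$. Reflection positivity is then largely bookkeeping once symmetric cutoffs are chosen, and non-Gaussianity reduces to a single perturbative cumulant computation.
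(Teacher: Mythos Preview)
Your overall architecture matches the paper's closely for Euclidean invariance, analyticity, and exponential clustering: the paper also derives invariance and Gaussian tails from Theorem~\ref{thm:coupling}, and proves exponential decay by differentiating the log-Laplace transform and invoking the weighted estimate on $u^{f,\rho}-\bar u^{\rho}$ (this is Lemma~\ref{lemma:pre-massgap} in Section~\ref{sec:MG}, then a truncation argument to pass from bounded $f$ to polynomial test functions). So on those points you are essentially correct and aligned with the paper.

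There is, however, a genuine gap in your reflection-positivity argument. You assert that the density $\exp(-\lambda\int\rho\llbracket\cos(\beta\varphi)\rrbracket\,\mathd x)$ ``factors across the hyperplane $\{x_1=0\}$ as a product $F^+\cdot(F^+\circ\theta)$ of local functionals''. But $\llbracket\cos(\beta\varphi)\rrbracket$ is \emph{not} a pointwise function of $\varphi$; it is defined as the limit of $\alpha(T)\cos(\beta W_T)$, and the regularization $W_T$ smooths the field isotropically (in particular in the $x_1$-direction), so the finite-$T$ density does not factor, and the underlying measure $\tmop{Law}(W_T)$ is not reflection positive either. At $T=\infty$ the factorization amounts to the claim that $\int_{\{x_1>0\}}\rho\llbracket\cos(\beta\varphi)\rrbracket$ is measurable with respect to $\sigma(\varphi|_{\{x_1>0\}})$, which is plausible but not immediate and is exactly what the paper works to establish. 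The paper's fix (Section~\ref{sec:ref-pos}) is to introduce a \emph{different} regularization $\theta^\varepsilon=\delta_0\otimes\eta_\varepsilon$ that smooths only in the $x_2$-direction, so that the smoothed Gaussian measure $\mu_F^\varepsilon$ remains reflection positive (because $\theta^\varepsilon\ast$ commutes with $\Pi_+$) and the density is an honest local function of the smoothed field; one then checks separately that $\alpha^\varepsilon\cos(\beta W_\infty^\varepsilon)\to\llbracket\cos(\beta W_\infty)\rrbracket$ with uniform exponential integrability.

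Your non-Gaussianity sketch is also different in kind from the paper's and, as written, incomplete: a perturbative four-point computation can be made to work, but you would need to actually produce a nonvanishing coefficient and argue it survives the infinite-volume limit. The paper instead (Section~\ref{sec:NG}) argues by contradiction: if $\nu_{\tmop{SG}}$ were Gaussian, then $\psi\mapsto\lim V^\rho_{0,T}((m^2-\Delta)^{-1}\psi)$ would have to be quadratic, hence $\nabla V^\rho_{0,T}$ linear; but Lemma~\ref{lemma:value-func} gives $\nabla V^\rho_{0,T}(\psi)=\lambda\alpha(0)\sin(\beta\psi)+O(\lambda^2)$ in $L^\infty$, and one checks directly that the sine violates the parallelogram identity for small $\lambda$. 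This avoids any explicit cumulant calculation.
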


\begin{proof}
  Euclidean invariance and follows directly from Theorem \ref{thm:coupling},
  observe that analyticity also follows since Theorem \ref{thm:coupling}
  implies that $\nu_{\tmop{SG}}$ has gaussian tails. The remaining axioms are
  shown in Section \ref{sec:OS} .
\end{proof}

Finally will discuss large deviations for $\nu_{\tmop{SG}}$ in a
semi-classical limit similarly to \
{\cite{lacoin_semiclassical_2020,Lacoin_2017,barashkov-2021}}. For this we
have to introduce Planck's constant into the measure. We want to look at the
measure formally given by
\[ \mathd \nu_{\tmop{SG}, \hbar} = \frac{1}{Z_{\hbar}} e^{- \frac{1}{\hbar}
   \int \lambda \cos (\beta \varphi (x)) + \frac{1}{2} m^2 \varphi (x)^2 +
   \frac{1}{2} | \nabla \varphi (x) |^2 \mathd x} \mathd \varphi . \]
This can be rewritten as
\[ \nu_{\tmop{SG}, \hbar} (\mathd \varphi) = \frac{1}{Z_{\hbar}} e^{-
   \frac{\lambda}{\hbar} \int \cos (\hbar^{1 / 2} \beta \varphi)} \mu (\mathd
   \varphi) . \]
where $Z_{\hbar}$ is normalization constant and we are interested in the limit
$\hbar \rightarrow 0$. These measure can be made sense of in the same way as
$\nu_{\tmop{SG}}$. We will prove

\begin{theorem}
  \label{thm:LD}$\nu_{\tmop{SG}, \hbar}$ satisfies a large deviations
  principle with rate function
  \[ I (\varphi) = \lambda \int (\cos (\beta \varphi (x)) - 1) \mathd x +
     \frac{1}{2} m^2 \int \varphi^2 (x) \mathd x + \frac{1}{2} \int | \nabla
     \varphi (x) |^2 \mathd x. \]
  or equivalently
  \[ \lim_{\hbar \rightarrow 0} - \hbar \log \int e^{- \frac{1}{\hbar} f
     (\varphi)} \mathd \nu_{\tmop{SG}, \hbar} = \inf_{\varphi \in H^{- 1}
     (\langle x \rangle^{- n})} \{ f (\varphi) + I (\varphi) \} . \]
\end{theorem}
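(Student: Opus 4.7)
The plan is to establish the equivalent Laplace identity directly via the Boué--Dupuis formula and a semiclassical rescaling, and then promote it to the full LDP by Bryc's theorem; the required exponential tightness of $\nu_{\mathrm{SG},\hbar}$ in $H^{-1}(\langle x\rangle^{-n})$ follows from the uniform $L^\infty$-coupling of Theorem \ref{thm:coupling} together with the Gaussian tails of the rescaled free field. Writing the cut-off Laplace transform as a difference of variational problems,
\[
-\hbar\log\int e^{-f/\hbar}\,\mathrm{d}\nu^{\rho}_{\mathrm{SG},\hbar} = \inf_{u\in\mathbb{H}_a} F^{f,\rho}_\hbar(u) - \inf_{u\in\mathbb{H}_a} F^{0,\rho}_\hbar(u),
\]
with $F^{f,\rho}_\hbar(u) = \mathbb{E}\bigl[f(W_\infty+I_\infty u) + \lambda\int\rho\llbracket\cos(\hbar^{1/2}\beta(W_\infty+I_\infty u))\rrbracket\,\mathrm{d}x + \tfrac{\hbar}{2}\int_0^\infty\|u_s\|^2_{L^2}\,\mathrm{d}s\bigr]$, I would then rescale $v_s := \hbar^{1/2}u_s$ and $\tilde W_t := \hbar^{1/2}W_t$. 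Under this change of variables $\tilde W_\infty$ is Gaussian with covariance $\hbar(m^2-\Delta)^{-1}$, the combination $\hbar^{1/2}\beta(W_\infty + I_\infty u)$ becomes $\beta(\tilde W_\infty + I_\infty v)$, the Wick renormalization constant collapses to the one natural for $\tilde W$ (which tends to $1$ as $\hbar\to 0$), and the Cameron--Martin cost becomes the $\hbar$-independent $\tfrac12\int_0^\infty\|v_s\|^2_{L^2}\mathrm{d}s$.

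The core step is then a $\Gamma$-convergence/continuous-convergence statement at fixed $\rho$: the rescaled functional $\tilde F^{f,\rho}_\hbar$ converges, with convergence of its minima, to the deterministic
\[
\tilde F^{f,\rho}_0(v) = f(I_\infty v) + \lambda\int\rho\cos(\beta I_\infty v)\,\mathrm{d}x + \tfrac12\int_0^\infty\|v_s\|^2_{L^2}\,\mathrm{d}s.
\]
For the upper bound I would use deterministic recovery sequences $v^\hbar\equiv v^\ast$ that are near-minimizers of $\tilde F^{f,\rho}_0$, exploiting that $\tilde W_\infty\to 0$ in weighted Sobolev norms and that the stochastic estimates of Section \ref{sec:decomposition}, applied to the small-noise field, show the Wick-ordered cosine converges to the ordinary cosine. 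For the lower bound, near-optimal drifts $v^\hbar$ are uniformly $L^2$-bounded by comparison with $v=0$, hence weakly precompact; the quadratic term is weakly lower semicontinuous and the cosine term converges in probability along the extracted subsequence. The Cameron--Martin isometry $\int_0^\infty\|v\|^2_{L^2}\mathrm{d}s=\|I_\infty v\|^2_{H^1_m}$ identifies $\inf_v\tilde F^{f,\rho}_0(v) = \inf_\varphi\{f(\varphi)+I^\rho(\varphi)\}$ with $I^\rho(\varphi) = \lambda\int\rho\cos(\beta\varphi)\,\mathrm{d}x+\tfrac12\|\varphi\|^2_{H^1_m}$. Taking the difference cancels the volume-divergent constant $\lambda\int\rho\,\mathrm{d}x$, and since $I^\rho(\varphi)-I^\rho(0)\to I(\varphi)$ pointwise on $H^1$ with $\inf I^\rho - I^\rho(0)\to\inf I = I(0)=0$ (using the elementary bound $\cos\theta-1\geqslant-\theta^2/2$ together with the small-coupling assumption $\lambda\beta^2<m^2$), the Laplace identity of Theorem \ref{thm:LD} follows.

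The principal obstacle is the interchange of the limits $\hbar\to 0$ and $\rho\to 1$. At fixed $\hbar>0$, removing the cutoff is the content of Theorem \ref{thm:characterization-1}, which relies on the $L^\infty$-bound of Theorem \ref{thm:L-infty} and the weighted-decay estimate \eqref{eq:decay-intro}. After the rescaling above both remain valid uniformly for $\hbar\in(0,1]$: the boundedness of the nonlinearity $\sin(\beta\cdot)$ is $\hbar$-independent, the Wick constants for the small-noise cosine are dominated by their $\hbar=1$ counterparts, and the convexity argument on which \eqref{eq:decay-intro} rests is driven by the same smallness condition on $\lambda$. This uniformity allows passage to the diagonal limit $(\hbar,\rho)\to(0,1)$. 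Combining the resulting Laplace identity with Bryc's theorem and the exponential tightness in $H^{-1}(\langle x\rangle^{-n})$ yields the full LDP with rate function $I$.
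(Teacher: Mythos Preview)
Your overall architecture is close to the paper's, but there are three concrete gaps.

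First, the Bou\'e--Dupuis formula you write down is off: the field under $\nu_{\mathrm{SG},\hbar}$ is $\hbar^{1/2}W_\infty$, so after the standard formula and multiplication by $\hbar$ the argument of $f$ is $\hbar^{1/2}(W_\infty+I_\infty u)$, not $W_\infty+I_\infty u$. With your rescaling $v=\hbar^{1/2}u$ the correct argument becomes $\hbar^{1/2}W_\infty+I_\infty v$, which indeed tends to $I_\infty v$; as written, your $f(W_\infty+\hbar^{-1/2}I_\infty v)$ does not. Relatedly, the ``Cameron--Martin isometry'' you invoke is only an inequality, $\int_0^\infty\|v\|_{L^2}^2\geqslant\|I_\infty v\|_{H^1_m}^2$ (Lemma~\ref{bound-I-l2}); equality holds only for drifts of the form $v_s=J_s(m^2-\Delta)\psi$. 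The identification of $\inf_v$ with $\inf_\psi$ therefore needs a separate two-sided argument, which the paper carries out explicitly.

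Second, and more seriously, your lower bound ``weak compactness of near-optimal $v^\hbar$, then the cosine converges in probability'' is not justified. The drifts are random processes, and weak convergence in $L^2(\Omega\times\mathbb{R}_+\times\mathbb{R}^2)$ does not yield convergence of the nonlinear term $\mathbb{E}\int\rho\cos(\beta I_\infty v^\hbar)$; the cosine is neither convex nor weakly continuous in this topology. The paper sidesteps this entirely: it shifts the variational problem by the optimal drift $\bar{u}^{\hbar}$, shows $\bar{u}^{\hbar}\to 0$ in $L^2(\mathbb{P},L^2(\mathbb{R}_+,L^{2,-\gamma}))$, restricts to the weighted ball $\mathbb{D}^f$ via the exponential-decay estimate (the $\hbar$-uniform analogue of Theorem~\ref{thm:weighted}), and then proves \emph{uniform} convergence $G^f_\hbar\to G^f_0$ on $\mathbb{D}^f$. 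Uniform convergence transports minima without any compactness step.

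Third, Bryc's theorem requires the Laplace identity for \emph{all} bounded continuous $f$, but your variational argument (and the removal of the cutoff via the weighted estimates) only works for $f$ with controlled $|f|_{1,2,\gamma}$. The paper closes this via a rate-function-determining class (Lemmas~\ref{lemma:rate-function-determing} and~\ref{lemma:class-rate-function-specific}): it suffices to verify the Laplace limit on a class $D\subset C_b$ that isolates points, and one can take $D$ inside $\{|f|_{1,2,m}<\infty\}$. You should invoke this reduction explicitly rather than appealing to Bryc directly.
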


Let us remark that for singular SPDE's large deviations have been studied in
{\cite{hairer-2015-largedev}} and the more precise study of Laplace
asymptotics has been carried out in {\cite{friz-2021,klose-2022}}. It would be
interesting to investigate Laplace asymptotics in our context.

{\tmstrong{Structure of the article.}} In Section \ref{sec:prelim} we collect
some useful estimates, introduce some useful processes and collect some facts
on stochastic control. Details and proofs will be given in the corresponding
appendices. In Section \ref{sec:proof} we will prove Theorem
\ref{thm:L-infty}. In Section \ref{sec:char} we will prove Theorem
\ref{thm:characterization-1}. In Section \ref{sec:LD} we will show Theorem
\ref{thm:LD}. Finally in Section \ref{sec:OS} we will establish Theorem
\ref{thm:OS}. The appendices contain supplementary material on stochastic
control and technical estimates.

\begin{acknowledgments*}
  I thank M.Gubinelli for his support during my Ph.D and his helpful comments
  on this article which included a significant simplifiaction of the argument.
  I thank S.Albeverio F. De Vecchi, T.Gunaratnam and I. Zachhuber for their
  comments on a preliminary version of this article. I thank Antti Kupianen
  for some intersting discussions on the Sine-Gordon model. This work was
  supported by the German Research Foundation (DFG) through CRC 1060 - project
  number 211504053 and by the Eurpoean Reseach Council through ERC Advanced
  Grant 741487 ``Quantum Fields and Probability''.
  This paper was written with Texmacs (www.texmacs.org).
\end{acknowledgments*}

\section{Preliminaries}\label{sec:prelim}

In this section we collect some useful notions, estimates and facts which will
be used in the rest of the paper.

\subsection{Weighted spaces }

First recall the definition of Littlewood--Paley blocks. Let $\chi, \varrho$
be smooth radial functions $\mathbb{R}^2 \rightarrow \mathbb{R}$ such that
\begin{itemize}
  \item $\tmop{supp} \chi \subseteq B (0, R)$, $\tmop{supp} \varrho \subseteq
  B (0, 2 R) \setminus B (0, R)$;
  
  \item $0 \leqslant \chi, \varrho \leqslant 1$, $\chi (\xi) + \sum_{j
  \geqslant 0} \varrho (2^{- j} \xi) = 1$ for any $\xi \in \mathbb{R}^d$;
  
  \item $\tmop{supp} \varrho (2^{- j} \cdot) \cap \tmop{supp} \varrho (2^{- i}
  \cdot) = \varnothing$ if $| i - j | > 1$.
\end{itemize}
Introduce the notations $\varrho_{- 1} = \chi$, $\varrho_j = \varrho (2^{- j}
\cdot)$ for $j \geqslant 0$. For any $f \in \CS' (\mathbb{R}^2)$ we define the
operators $\Delta_j f \assign \varrho_j (\mathD) f$, $j \geqslant - 1$.

\begin{definition}
  Let $s \in \mathbb{R}, p, q \in [1, \infty]$. For a Schwarz distribution $f
  \in \CS' (\mathbb{R}^2)$ define the norm
  \[ \| f \|_{B_{p, q}^s} = \| (2^{j s} \| \Delta_j f \|_{L^p})_{j \geqslant
     - 1} \|_{\ell^q} \]
  where $\| \|_{L^p}$ denotes the normalized $L^p (\Lambda)$ norm. The space
  $B^s_{p, q}$ is the set of functions $f \in \CS' (\Lambda)$ such that $\| f
  \|_{B_{p, q}^s} < \infty$ moreover $H^s = B^s_{2, 2}$ are the usual Sobolev
  spaces, and we denote by $\VV^s$ the closure of smooth functions in the
  $B_{\infty, \infty}^s$ norm.
\end{definition}

\begin{definition}
  A polynomial weight $\rho$ is a function $\rho : \mathbb{R}^2 \rightarrow
  \mathbb{R}_+$ of the form $\rho (x) = c \langle x \rangle^{- \sigma}$ for
  $\sigma, c \geqslant 0$. For a polynomial weight $\rho$ let
  \[ \| f \|_{L^p (\rho)} = \left( \int | f (x) |^p \rho (x) \mathd x
     \right)^{1 / p} \]
  and by $L^p (\rho)$ the space of functions for which this norm is finite. 
\end{definition}

\begin{definition}
  For a polynomial weight $\rho$ let
  \[ \| f \|_{L^p (\rho)} = \left( \int | f (x) |^p \rho (x) \mathd x
     \right)^{1 / p} \]
  and by $L^p (\rho)$ the space of functions for which this norm is
  finiteSimilarly we define the weighted Besov spaces $B_{p, q}^s (\rho)$ as
  the set of elements of $\CS' (\mathbb{R}^d)$ for which the norm
  \[ \| f \|_{B_{p, q}^s (\rho)} = \| (2^{j s} \| \Delta_j f \|_{L^p
     (\rho)})_{j \geqslant - 1} \|_{\ell^q} . \]
\end{definition}

We also introduce some spaces with exponential weights:

\begin{definition}
  For a set $z \in \mathbb{R}^2$, $r \in \mathbb{R}$ we define the weighted
  $L^p$ spaces
  \[ \| f \|_{L_z^{p, r}} = \left( \int \exp (r p | x |) f^p (x) \mathd x
     \right)^{1 / p} \]
  And
  \[ \| f \|_{W_z^{1, p, r}} = \| f \|_{L_z^{p, r}} + \left( \int (\exp (r p
     | x - z |)) (\nabla f (x))^p \mathd x \right)^{1 / p} \]
  We will also set $H_z^{1, r} \equallim W_z^{1, 2, r}$. Furthermore we will
  set
  \[ \| f \|_{L^{p, r}} = \| f \|_{L_0^{p, r}}, \qquad \| f \|_{W^{1, p, r}}
     = \| f \|_{W_0^{1, p, r}} . \]
\end{definition}

\begin{notation}
  \label{conv:grad}Throughout the chapter we will frequently compute Gradients
  and Hessian of functionals on $f : L^2 (\mathbb{R}^2) \rightarrow
  \mathbb{R}$. We will always interpret $\nabla f (\varphi)$, to be an element
  $L^2 (\mathbb{R}^2)$ by the Riesz representation theorem. Similarly we will
  always interpret $\tmop{Hess} f (\varphi)$ to be an operator on $L^2
  (\mathbb{R})$.
\end{notation}

\begin{definition}
  \label{def:norm-functional}For a Frechet differentiable functional $G : L^2
  (\langle x \rangle^{- n}) \rightarrow \mathbb{R}$ and $x \in \mathbb{R}^2$
  we define the quantities
  \[ | G |_{1, \infty} = \sup_{\varphi \in L^2 (\langle x \rangle^{- n})} \|
     \nabla G (\varphi) \|_{L^{\infty}} \]
  \[ | G |^x_{1, 2, r} = \sup_{\varphi \in L^2 (\langle x \rangle^{- n})} \|
     \nabla G (\varphi) \|_{L_x^{2, r}} . \]
\end{definition}

\begin{proposition}
  We have the following estimates
  \begin{enumerate}
    \item $\| f \|_{L^2 (\langle x \rangle^{- n})} \leqslant C \langle d (0,
    y) \rangle^{- n / 2} \| f \|_{L_y^{2, \gamma}}$
    
    \item Let $s \in \{ 0, 1 \}$ $r > 0$ and $f \in W_p^{s, r}$ is supported
    on $B (0, N)^c$, $N \geqslant 1$. Then
    \[ \| f \|_{W_p^{s, r - \kappa}} \leqslant \exp (- \kappa N) \| f
       \|_{W_p^{s, r}} . \]
  \end{enumerate}
\end{proposition}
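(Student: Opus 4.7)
The plan is to treat the two statements separately since they are of a rather different flavor. Both are pointwise weight comparisons that become integral inequalities after pulling a constant out of the integral.

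For part (1), I would rewrite
\[
\|f\|_{L^2(\langle x\rangle^{-n})}^2 = \int \langle x\rangle^{-n} e^{-2\gamma|x-y|} \cdot e^{2\gamma|x-y|} f(x)^2\, \mathrm{d}x,
\]
and then pull a supremum out. The estimate therefore reduces to the pointwise bound
\[
\sup_{x\in\mathbb{R}^2} \langle x\rangle^{-n} e^{-2\gamma|x-y|} \leqslant C \langle y\rangle^{-n},
\]
equivalently $\langle y\rangle^n \leqslant C \langle x\rangle^n e^{2\gamma|x-y|}$. This follows from Peetre's inequality $\langle y\rangle \leqslant \sqrt{2}\, \langle x\rangle \langle x-y\rangle$, raised to the power $n$, combined with the elementary fact that any polynomial $\langle z\rangle^n$ is dominated by $e^{2\gamma|z|}$ up to a constant depending only on $n$ and $\gamma$. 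Since $d(0,y) = |y|$, this yields the claim.

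For part (2), the argument is even more direct. For any $x$ in the support of $f$ we have $|x| \geqslant N$, hence
\[
e^{(r-\kappa)p|x|} = e^{rp|x|}\, e^{-\kappa p |x|} \leqslant e^{-\kappa p N}\, e^{rp|x|}.
\]
Integrating against $|f(x)|^p$ and taking the $p$-th root yields the $s=0$ case. For $s=1$, I would observe that $\nabla f$ is again supported in $B(0,N)^c$ (the distributional gradient vanishes on any open set where $f$ does), so the same pointwise comparison applied to $|\nabla f|^p$ gives the bound on the gradient part of the $W_p^{1,r-\kappa}$ norm. Adding the two contributions and using the definition of $W_p^{s,r}$ concludes.

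Neither step presents a serious obstacle; these are weight-comparison lemmas of the kind one uses throughout analysis on weighted spaces. The only mild subtlety is checking the constants in Peetre's inequality so that the resulting $C$ absorbs the exponential domination of the polynomial, but no fine tuning is needed since the statement carries an unspecified constant $C$. If one were to keep track of sharp constants, the relation between $\gamma$ and $n$ would appear, but the proposition as stated asks only for qualitative decay.
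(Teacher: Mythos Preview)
Your proposal is correct and follows essentially the same route as the paper. In both parts the paper also reduces to a pointwise weight comparison: for (1) it multiplies and divides by $e^{2\gamma|x-y|}$ and uses the same Peetre-type bound $\langle x\rangle^{-n}e^{-2\gamma|x-y|}\leqslant C\langle y\rangle^{-n}$; for (2) it uses exactly your observation $e^{(r-\kappa)p|x|}\leqslant e^{-\kappa p N}e^{rp|x|}$ on the support and then applies the same inequality to $\nabla f$.
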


For a proof see Appendix \ref{app:weighted}.

\subsection{Heat Kernel decomposition and martingale Cutoff
}\label{sec:decomposition}

We consider the decomposition $\left(with L = (m^2 - \Delta) \right)$
\[ L^{- 1} = \int^{\infty}_0 J^2_t \mathd t \]
where
\[ J_t = \left( \frac{1}{t^2} e^{- L / t} \right)^{1 / 2} . \]
We denote by
\begin{equation}
  \mathcal{C}_t = \int^t_0 J^2_s \mathd s = L^{- 1} e^{- L / t},
  \label{eq:heatkernel}
\end{equation}
and by $K_t (x, y)$ the kernel of $\mathcal{C}_t$. From the definitions one
can see that
\[ K_t (x, y) = \int^t_0 e^{- m^2 / s} \left( \frac{1}{s^2} \frac{s}{4 \pi}
   e^{- 4 s | x - y |^2} \right) \mathd s = \int^t_0 e^{- m^2 / s^2} \left(
   \frac{1}{4 \pi s} e^{- 4 s | x - y |^2} \right) \mathd s \]
so
\[ K_t (x, x) = \int^t_0 e^{- m^2 / s^2} \left( \frac{1}{4 \pi s} \right)
   \mathd s = \mathbbm{1}_{t \geqslant 1} \frac{1}{4 \pi} \log t + C (t) \]
where $\sup_{t \in \mathbb{R}_+} C (t) < \infty$. Let $0 \leqslant s < t$ and
$u \in L^2 ([s, t], L^2 (\mathbb{R}^2)) .$ For later use we introduce the
notation
\[ I_{s, t} (u) = \int^t_s J_l u_l \mathd l. \]
and we set $I_T (u) = I_{0, T} (u)$.

\begin{proposition}
  Let $- m < \gamma < m$. We have the following estimates
  \begin{enumerate}
    \item $\| J_t f \|_{L^{\infty}} \leqslant t^{- 1} \| f \|_{L^{\infty}}$
    
    \item $\| I_{s, t} (u) \|_{L^{2, \gamma} (\mathbb{R}^2)} \leqslant C
    \langle s \rangle^{- 1 / 2} \| u \|_{L^2 (\mathbb{R}_+, L^{2, \gamma})}$
    
    \item $\| I_{s, t} (u) \|_{W^{1, \infty}} \leqslant C \| \langle l
    \rangle^{1 / 2 + \delta} u_l \|_{L_l^{\infty} ([s, t] \times
    \mathbb{R}^2)}$
  \end{enumerate}
\end{proposition}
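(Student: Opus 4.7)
The starting point is to identify $J_t$ explicitly. Since $(t^{-1} e^{-L/(2t)})^2 = t^{-2} e^{-L/t}$ and $t^{-1} e^{-L/(2t)}$ is a positive operator by functional calculus, uniqueness of the positive square root gives $J_t = t^{-1} e^{-L/(2t)}$. In two dimensions the explicit heat kernel yields the Gaussian convolution kernel
\[ K^J_t(x,y) = \frac{e^{-m^2/(2t)}}{2\pi}\, e^{-t|x-y|^2/2}, \]
which is the workhorse for all three estimates. For part (1), integrating $K^J_t$ in $y$ gives $t^{-1} e^{-m^2/(2t)} \leq t^{-1}$, so a standard convolution estimate yields $\|J_t f\|_{L^\infty} \leq t^{-1} \|f\|_{L^\infty}$.

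For part (2), I would first apply Minkowski's inequality and then Cauchy--Schwarz in the $l$-variable to reduce to a bound of the form $\int_s^\infty \|J_l\|^2_{L^{2,\gamma} \to L^{2,\gamma}}\, dl \leq C\langle s\rangle^{-1}$. The weighted operator norm is controlled via Schur's test applied to the conjugated kernel $e^{\gamma(|x|-|y|)} K^J_l(x,y)$. The Lipschitz bound $|x|-|y| \leq |x-y|$ (and its sign-flipped counterpart for $\gamma<0$) reduces matters to a Gaussian integral; completing the square produces a factor $e^{\gamma^2/(2l)}$ which is absorbed by the mass factor $e^{-m^2/(2l)}$ precisely because $|\gamma|<m$. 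One obtains $\|J_l\|_{L^{2,\gamma}\to L^{2,\gamma}} \lesssim l^{-1}$ for $l\geq 1$ and $\|J_l\|_{L^{2,\gamma}\to L^{2,\gamma}} \lesssim e^{-c/l}$ for $l\leq 1$; the time integral is then $\lesssim s^{-1}$ for $s\geq 1$ and bounded uniformly for $s\leq 1$, yielding $\langle s\rangle^{-1/2}$ after taking the square root.

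For part (3), Minkowski reduces the task to bounding $\int_s^t \|J_l u_l\|_{L^\infty}\, dl$ and $\int_s^t \|\nabla J_l u_l\|_{L^\infty}\, dl$ separately. The first uses the sharper form $\|J_l u_l\|_{L^\infty} \leq l^{-1} e^{-m^2/(2l)} \|u_l\|_{L^\infty}$ established in the proof of (1). For the second, differentiating $K^J_l$ in $x$ produces an additional factor $-l(x-y)$, and the Gaussian absolute first moment is of order $l^{-1/2}$, giving $\|\nabla J_l\|_{L^\infty\to L^\infty} \lesssim l^{-1/2} e^{-m^2/(2l)}$. Pulling the weight $\langle l\rangle^{1/2+\delta}$ out of $u_l$ leaves the scalar integral $\int_0^\infty (l^{-1}+l^{-1/2})e^{-m^2/(2l)} \langle l\rangle^{-1/2-\delta}\, dl$, which converges because the exponential factor handles the singularity at $l\to 0$ while the $\langle l\rangle^{-1-\delta}$ tail controls $l\to \infty$. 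The only genuine difficulty lies in (2), where the mass gap $m^2 > \gamma^2$ must be exploited to balance the exponential weight with the Gaussian decay of the kernel; this is precisely the role of the hypothesis $|\gamma|<m$, and the other two estimates are essentially Gaussian bookkeeping.
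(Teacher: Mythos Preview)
Your proof is correct and follows essentially the same route as the paper: explicit Gaussian kernel for $J_t$, Young's inequality for (1), Cauchy--Schwarz in the time variable together with the absorption $e^{|\gamma|\,|x-y|}e^{-m^2/(2l)-l|x-y|^2/2}\lesssim e^{-c/l}$ (which is exactly where $|\gamma|<m$ enters) for (2), and the Gaussian first-moment bound $\|\nabla J_l\|_{L^\infty\to L^\infty}\lesssim l^{-1/2}e^{-m^2/(2l)}$ for (3). The only cosmetic difference is that in (2) you package the estimate via the operator norm $\|J_l\|_{L^{2,\gamma}\to L^{2,\gamma}}$ and Schur's test, whereas the paper performs Cauchy--Schwarz in $l$ pointwise at the kernel level and then applies Young's inequality in space; both arrive at the same $\langle s\rangle^{-1}$ time integral.
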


For a proof see Appendix \ref{app:weighted}.

We now define regularized GFF as
\[ W_{s, t} = \int^t_s Q_l \mathd X_l \]
where $X_s$ is a cylindrical Brownian motion on $L^2$. We set $W_t = W_{0,
t}$. We can calculate:
\[ \mathbb{E} [W_t (x) W_t (y)] = K_t (x, y) \]
and it is clear that $W_t$ is a martingale.

\

\subsection{Stochastic Control }

We are interested in studying the quantities
\[ V_{t, T} (\varphi) = - \log \mathbb{E} [\exp (- V_T (\varphi + W_{t, T}))]
\]
where $Z_{t, T} = \exp (- V_{t, T})$, for $\varphi \in L^2 (\mathbb{R}^2) .$

For the rest of this article we will denote by $C^n (L^2 (\rho))$ functions
$L^2 (\rho) \rightarrow \mathbb{R}$ which are $n$ times continuously
Fr{\'e}chet differntiable with bounded derivatives. The following proposition
is known as the Boue-Dupuis {\cite{boue_variational_1998}} or Borell formula
{\cite{borell_diffusion_2000}} and is a stochastic control representation of
the Polchinski equation. \

\begin{proposition}
  \label{prop:BD-formula}Asssume that $V_T \in C^2 (L^2 (\langle x \rangle^{-
  n})) \rightarrow \mathbb{R}^2$. Then
  \begin{equation}
    V_{t, T} (\varphi) = - \log \mathbb{E} [e^{- V_T (\varphi + W_{t, T})}] =
    \inf_{u \in \mathbb{H}_a} \mathbb{E} \left[ V_T (W_{t, T} + I_{t, T} (u))
    + \frac{1}{2} \int^T_t \| u_s \|^2_{L^2} \mathd s \right] \label{eq:BD}
  \end{equation}
  
\end{proposition}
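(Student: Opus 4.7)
The plan is to prove both inequalities in the standard way: an upper bound via a Girsanov change of measure combined with Jensen's inequality, and a matching lower bound via a verification argument for the Polchinski HJB equation solved by $V_{t,T}$. For the upper bound I would fix any $u \in \mathbb{H}_a$ and introduce the Girsanov exponential $D^u_T = \exp\bigl(\int_t^T \langle u_s, \mathd X_s\rangle - \tfrac{1}{2}\int_t^T \|u_s\|^2_{L^2}\mathd s\bigr)$. Under $\mathd\mathbb{Q} = D^u_T\,\mathd\mathbb{P}$ the shifted process $\tilde X_s = X_s - \int_t^s u_r \mathd r$ is a cylindrical Brownian motion, so $W_{t,T} = \tilde W_{t,T} + I_{t,T}(u)$ with $\tilde W_{t,T} = \int_t^T J_s\,\mathd\tilde X_s$. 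Inserting this into $\mathbb{E}^{\mathbb{P}}[e^{-V_T(\varphi + W_{t,T})}] = \mathbb{E}^{\mathbb{Q}}[(D^u_T)^{-1} e^{-V_T(\varphi + W_{t,T})}]$ and rewriting in terms of $\tilde X$ produces the identity
\[
\mathbb{E}\bigl[e^{-V_T(\varphi + W_{t,T})}\bigr] = \mathbb{E}\Bigl[\exp\Bigl(-V_T(\varphi + W_{t,T} + I_{t,T}(u)) - \int_t^T \langle u_s, \mathd X_s\rangle - \tfrac{1}{2}\int_t^T \|u_s\|^2_{L^2}\mathd s\Bigr)\Bigr].
\]
Applying $-\log$, Jensen's inequality, and the true-martingale property of $\int_t^T \langle u_s, \mathd X_s\rangle$ (guaranteed by the $\mathbb{H}_a$ integrability) yields
\[
-\log \mathbb{E}\bigl[e^{-V_T(\varphi + W_{t,T})}\bigr] \leq \mathbb{E}\Bigl[V_T(\varphi + W_{t,T} + I_{t,T}(u)) + \tfrac{1}{2}\int_t^T \|u_s\|^2_{L^2}\mathd s\Bigr],
\]
and infimizing over $u$ gives the $\leq$ direction. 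Unbounded controls are handled by truncation combined with the uniform bounds supplied by $V_T \in C^2(L^2(\langle x\rangle^{-n}))$.

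For the lower bound I would realize the infimum by the verification theorem (Proposition~\ref{prop:verification}). The $C^2$ hypothesis on $V_T$ together with the strong smoothing of $J_s$ ensures enough regularity that $V_{s,T}$ satisfies the Polchinski equation
\[
\partial_s V_{s,T} + \tfrac{1}{2}\operatorname{tr}\bigl(J_s^2 \nabla^2 V_{s,T}\bigr) - \tfrac{1}{2}\|J_s \nabla V_{s,T}\|^2_{L^2} = 0, \qquad V_{T,T} = V_T,
\]
which is precisely the HJB equation of the stochastic control problem. Selecting the feedback $u^*_s = -J_s \nabla V_{s,T}(Y^*_s)$ along the corresponding controlled trajectory $Y^*_s = \varphi + W_{t,s} + I_{t,s}(u^*)$ and applying It\^o's formula to $V_{s,T}(Y^*_s)$, the Polchinski equation cancels the drift and leaves
\[
V_T(Y^*_T) - V_{t,T}(\varphi) = -\tfrac{1}{2}\int_t^T \|u^*_s\|^2_{L^2}\mathd s + \int_t^T \langle \nabla V_{s,T}(Y^*_s), J_s\,\mathd X_s\rangle.
\]
Taking expectations kills the stochastic integral and gives $V_{t,T}(\varphi) = \mathbb{E}[V_T(Y^*_T) + \tfrac{1}{2}\int_t^T \|u^*_s\|^2_{L^2}\mathd s]$, which matches the upper bound and shows that the infimum is attained by $u^*$.

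The main obstacle will be justifying the verification step in infinite dimensions: one must check that $\nabla V_{s,T}$ and $\nabla^2 V_{s,T}$ exist in the sense of Notation~\ref{conv:grad}, that the trace $\operatorname{tr}(J_s^2 \nabla^2 V_{s,T})$ is a well-defined scalar (using that $J_s^2 = s^{-2}e^{-(m^2-\Delta)/s}$ is trace class on the weighted $L^2$ spaces of interest), and that the feedback $u^*$ is itself admissible, i.e.\ lies in $\mathbb{H}_a$. In practice I would first approximate $V_T$ by cylindrical/finite-rank smoothings for which the identity reduces to the classical finite-dimensional Bou\'e--Dupuis formula, and then pass to the limit using the uniform bounds from $V_T \in C^2(L^2(\langle x\rangle^{-n}))$; this is essentially the content of the stochastic control results collected in Appendix~\ref{app:stoch-cont}.
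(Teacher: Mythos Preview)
Your proposal is correct. The paper's route, laid out in Appendix~\ref{app:stoch-cont}, is very close to yours: it first shows directly (Proposition~\ref{HJB-partitionfunction}) that $v_{t,T}(\varphi)=-\log\mathbb{E}[e^{-V_T(\varphi+W_{t,T})}]$ solves the Polchinski/HJB equation and inherits $C^2$ regularity from $V_T$, then invokes a general infinite-dimensional verification theorem (Proposition~\ref{prop:verification-2}, quoted from Fabbri--Gozzi--\'Swi\k{e}ch) together with solvability of the feedback SDE~\eqref{eq:minimizer} to conclude (Corollary~\ref{cor:BD-formula}). The one structural difference is that the paper does not separate the two inequalities: the cited verification theorem delivers both at once via It\^o's formula (the HJB \emph{inequality} along a generic admissible $u$ gives $v_{t,T}\le$ cost, and equality along the feedback $u^*$ gives the reverse). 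Your Girsanov/Jensen argument for the ``$\le$'' direction is the original Bou\'e--Dupuis/Borell proof and is a perfectly valid alternative; it has the mild advantage of not using any regularity of $V_{s,T}$ for $s<T$, at the cost of the truncation/Novikov step you mention. The finite-dimensional approximation you propose at the end is not needed here: the paper works directly in the weighted $L^2$ space, using that $V_T\in C^2$ forces $\nabla v_{t,T}$ to be globally Lipschitz uniformly in $t$, so the feedback SDE has a unique strong solution by a standard fixed-point argument.
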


\begin{proposition}[Verification]
  \label{prop:verification}Assume that $V_{t, T}$ is defined as in
  {\eqref{eq:BD}}. If $V_T$ is in $C^2 (L^2 (\langle x \rangle^{- n}))$ then
  so is $V_{t, T}$ and the equation
  \begin{equation}
    \mathd Y_{s, t} = - J^{2}_t \nabla V_{t, T} (Y_{s, t}) \mathd t + J_t \mathd
    X_t, \label{eq:minimizer}
  \end{equation}
  possses a unique solution in $L^{2}(\mathbb{P} C([0,T],L^{2}(\mathbb{R}^{2})))$. The infimum on the r.h.s is attained by
  \[ u_s = - J_s \nabla V_{s, T} (Y_{t, s}) = - J_s \nabla V_{s, T} (W_{t, s}
     + I_{t, s} (u)) . \]
\end{proposition}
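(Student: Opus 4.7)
\begin{proof*}{Proof plan.}
The plan is to follow the classical verification scheme: first establish enough regularity of $V_{t,T}$ to make sense of the feedback control, identify the HJB (Polchinski) equation it solves, solve the closed-loop SDE, and then use It{\^o}'s formula to compare a candidate minimizer with an arbitrary admissible control.

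First I would prove that $V_{t,T} \in C^{2}(L^{2}(\langle x \rangle^{-n}))$ by differentiating the representation $V_{t,T}(\varphi) = -\log \mathbb{E}[e^{-V_{T}(\varphi + W_{t,T})}]$ under the expectation. Since $V_{T} \in C^{2}$ with bounded derivatives, dominated convergence and standard Gaussian integration by parts give
\[
\nabla V_{t,T}(\varphi) = \frac{\mathbb{E}[\nabla V_{T}(\varphi+W_{t,T})e^{-V_{T}(\varphi+W_{t,T})}]}{\mathbb{E}[e^{-V_{T}(\varphi+W_{t,T})}]},
\]
and an analogous formula for $\mathrm{Hess}\,V_{t,T}$, both with bounds inherited from $V_{T}$. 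Smoothing by the heat semigroup $e^{-L/t}$ also lets us read off time differentiability in $t$.

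Second, differentiating $V_{t,T}(\varphi) = -\log \mathbb{E}[e^{-V_{s,T}(\varphi+W_{t,s})}]$ in $t$ (semigroup property) yields the Polchinski / HJB equation
\[
\partial_{t} V_{t,T}(\varphi) = -\tfrac{1}{2}\mathrm{Tr}\bigl(J_{t}^{2}\,\mathrm{Hess}\,V_{t,T}(\varphi)\bigr) + \tfrac{1}{2}\|J_{t}\nabla V_{t,T}(\varphi)\|_{L^{2}}^{2}, \qquad V_{T,T}=V_{T}.
\]
With $\nabla V_{t,T}$ Lipschitz (from the $C^{2}$ bound), the closed-loop SDE \eqref{eq:minimizer} admits a unique strong solution in $L^{2}(\mathbb{P}, C([0,T], L^{2}))$ by standard contraction / Picard iteration on the mild formulation, using also that $J_{t}$ is bounded on $L^{2}$ away from $t=0$ and that the stochastic convolution $\int J_{t}\,dX_{t}$ lives in $L^{2}(\mathbb{P}, C([0,T], L^{2}))$.

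Third, for the candidate optimizer $u^{\ast}_{s} = -J_{s}\nabla V_{s,T}(Y_{t,s})$, apply It{\^o}'s formula to $V_{s,T}(W_{t,s}+I_{t,s}(u))$ for an arbitrary $u \in \mathbb{H}_{a}$. Using the HJB equation, the drift term becomes
\[
\tfrac{1}{2}\|J_{s}\nabla V_{s,T}\|_{L^{2}}^{2} + \langle \nabla V_{s,T}, J_{s} u_{s}\rangle_{L^{2}} = \tfrac{1}{2}\|u_{s} + J_{s}\nabla V_{s,T}\|_{L^{2}}^{2} - \tfrac{1}{2}\|u_{s}\|_{L^{2}}^{2},
\]
so, after taking expectations and using that the martingale part vanishes, one obtains
\[
\mathbb{E}\bigl[V_{T}(W_{t,T}+I_{t,T}(u))\bigr] + \tfrac{1}{2}\mathbb{E}\!\int_{t}^{T}\!\|u_{s}\|_{L^{2}}^{2}\,ds = V_{t,T}(0) + \tfrac{1}{2}\mathbb{E}\!\int_{t}^{T}\!\|u_{s}+J_{s}\nabla V_{s,T}\|_{L^{2}}^{2}\,ds.
\]
This identity simultaneously proves that the infimum equals $V_{t,T}(0)$, that it is attained iff $u_{s} = -J_{s}\nabla V_{s,T}(W_{t,s}+I_{t,s}(u))$ a.e., and hence that the minimizer coincides with the feedback $u^{\ast}$ driven by the closed-loop process $Y$; uniqueness follows from strict convexity of the last term.

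The main obstacle is the justification of It{\^o}'s formula in this infinite-dimensional setting and the control of the Hessian / stochastic integral terms, since $V_{s,T}$ is a functional on an infinite-dimensional space and $J_{s}$ has a mild singularity as $s\downarrow 0$. This is handled by working on $[t,T]$ with $t>0$, using the smoothing of $J_{s}$ away from zero, the uniform bounds on $\mathrm{Hess}\,V_{s,T}$ from step one, and approximating $u$ by bounded adapted controls before passing to the limit.
\end{proof*}
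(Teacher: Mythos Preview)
Your proposal is correct and follows the classical verification scheme. The paper takes the same structural route---first establishing $V_{t,T}\in C([0,T],C^{2})\cap C^{1}([0,T],C)$ and the Polchinski/HJB equation via the explicit formula $Z_{t,T}(\varphi)=\mathbb{E}[e^{-V_T(\varphi+W_{t,T})}]$ and Gaussian integration by parts (Proposition~\ref{HJB-partitionfunction}), then solving the closed-loop SDE by a fixed-point argument using the Lipschitz bound on $\nabla V_{t,T}$. The one difference is in the last step: where you carry out the verification argument by hand (It{\^o}'s formula applied to $s\mapsto V_{s,T}(W_{t,s}+I_{t,s}(u))$ and completing the square), the paper instead invokes a general infinite-dimensional verification theorem from the literature (\cite{Fabbri_2017}, Theorem~2.36) and then specializes it to $\beta(t,\varphi,a)=J_t a$, $\sigma=J_t$, $l=\tfrac12\|a\|_{L^2}^2$. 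Your direct computation is exactly what that cited theorem encodes, so the two proofs are equivalent; yours is more self-contained, while the paper's is shorter and sidesteps the infinite-dimensional It{\^o} justification you flag as the main obstacle by delegating it to the reference.
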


\subsection{Wick ordered cosine }\label{sec:cosine}

Since the Gaussian Free Field in 2 dimensions is supported on distibutions
$\cos (\beta W_{\infty})$ is ill defined. However we can correct this by Wick
ordering, that is considering instead $\alpha (T) \cos (\beta W_T)$, with a
$\alpha (T) \rightarrow \infty$ and hoping to obtain a nontrivial limit. That
this is indeed possible is the content of the following propositon from
{\cite{Junnila-2020}}:.

\begin{proposition}
  Assume that $\beta^2 < 4 \pi$. Then there exists a (differentiable) function
  $\alpha (T)$ such that
  \begin{itemize}
    \item $\alpha (T) \leqslant C \langle T \rangle^{\beta^2 / 8 \pi}$
    
    \item $\llbracket \cos (\beta W_T) \rrbracket : = \alpha_T \cos (\beta
    W_T)$ is a martingale in $T$.
    
    \item For an $p \in [1, \infty)$, as $T \rightarrow \infty$ $\llbracket
    \cos (\beta W_T) \rrbracket$ converges in \ in $L^p (\mathbb{P}, B_{p,
    p}^{- \beta^2 / 4 \pi - 2 \delta} (\langle x \rangle^{- n}))$ to a limit
    $\llbracket \cos (\beta W_{\infty}) \rrbracket$.
  \end{itemize}
\end{proposition}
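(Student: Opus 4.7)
The plan is to reduce the proposition to It\^o's formula together with a direct second-moment calculation. Set $\alpha(T) := \exp\bigl(\tfrac{\beta^2}{2} K_T(0,0)\bigr)$. Translation invariance of $W$ forces $K_T(x,x) = K_T(0,0)$ for every $x$, so $\alpha$ is deterministic; the identity $K_T(0,0) = \mathbbm{1}_{T \geq 1} \tfrac{1}{4\pi}\log T + C(T)$ with bounded $C$ from Section \ref{sec:decomposition} yields $\alpha(T) \leq C\langle T\rangle^{\beta^2/8\pi}$.

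For the martingale property, I apply It\^o's formula to $T \mapsto \alpha(T)\cos(\beta W_T(x))$. Since $W_T(x)$ is a continuous martingale with $d\langle W(x)\rangle_T = dK_T(x,x)$, the drift equals $\bigl(\alpha'(T) - \tfrac{\beta^2}{2}\alpha(T)\tfrac{dK_T(x,x)}{dT}\bigr)\cos(\beta W_T(x))$, which vanishes by the very choice of $\alpha$. Pairing against a test function $f \in \mathcal{S}$ and invoking stochastic Fubini, legitimate because $W_T$ is smooth for finite $T$, upgrades this to a genuine martingale in the distributional sense.

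For the convergence in $L^p(\mathbb{P}, B^{-\beta^2/4\pi - 2\delta}_{p,p}(\langle x\rangle^{-n}))$ I would use Gaussian hypercontractivity, or more precisely Kahane-type inequalities appropriate for the full chaos expansion of $\llbracket\cos(\beta W_T)\rrbracket$, to reduce matters to a second-moment estimate. The central computation uses the Gaussian formula $\mathbb{E}[e^{i\beta(aW_T(x)+bW_S(y))}] = \exp\bigl(-\tfrac{\beta^2}{2}(a^2 K_T(x,x) + b^2 K_S(y,y) + 2ab K_{T\wedge S}(x,y))\bigr)$ combined with the definition of $\alpha$, giving
\[
\mathbb{E}\bigl[\llbracket\cos(\beta W_T(x))\rrbracket\, \llbracket\cos(\beta W_S(y))\rrbracket\bigr] = \cosh\bigl(\beta^2 K_{T\wedge S}(x,y)\bigr),
\]
whose short-distance behaviour $|x-y|^{-\beta^2/(4\pi)}$ is locally integrable in $2$D precisely when $\beta^2 < 8\pi$. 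Testing against Littlewood--Paley projectors then produces a bound on $\mathbb{E}\|\Delta_j \llbracket\cos(\beta W_T)\rrbracket\|^p_{L^p(\langle x\rangle^{-n})}$ uniform in $T$ that matches the stated regularity; applying the same computation to the increment, whose second moment is a combination of $\cosh$ terms collapsing to zero thanks to the monotone convergence $K_T \nearrow L^{-1}$, then gives convergence. The main obstacle is the bookkeeping that extracts the precise logarithmic loss $\delta$ from the $\log T$ behaviour of $K_T$ on the diagonal and absorbs the translation-invariant divergence at infinity into the weight $\langle x\rangle^{-n}$. These estimates are carried out in detail in \cite{Junnila-2020}, on which I rely.
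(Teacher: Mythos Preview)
Your treatment of the first two items is identical to the paper's: the choice $\alpha(T) = e^{\frac{\beta^2}{2}K_T(0,0)}$ together with It\^o's formula is exactly how the paper proceeds.

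For the regularity and convergence the paper takes a different route. Rather than reducing to the two-point function $\cosh(\beta^2 K_{T\wedge S}(x,y))$ and invoking Kahane-type moment comparison, the paper exploits the martingale structure directly: for each Littlewood--Paley block it computes a \emph{pathwise, deterministic} bound on the quadratic variation of $M_t^{i,x} = \Delta_i \llbracket\cos(\beta W_t)\rrbracket(x)$, namely $|\langle M^{i,x}\rangle_t| \leqslant C\, 2^{i(\beta^2/2\pi + \delta)}$, by writing the bracket as a double integral against $J_s^2(z_1,z_2)$ and estimating $\int_0^t \alpha(s)^2 J_s^2 \,\mathd s$ in closed form. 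Burkholder's inequality then promotes this single almost-sure estimate to $L^p$ bounds for \emph{every} $p$ simultaneously, and uniform $L^p$-boundedness of the martingale gives convergence. This bypasses hypercontractivity altogether, which is a genuine advantage here since $\cos(\beta W_T)$ has infinite chaos expansion and Nelson's estimate does not apply naively; your route via imaginary-chaos moment bounds (as in \cite{Junnila-2020}) is correct but requires either computing higher moments explicitly or importing their moment-comparison machinery, whereas the paper's Burkholder argument is self-contained and uses only the bracket bound.
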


For a proof see Appendix \ref{app:cosine}. We can introduce the approximate
measures given by
\[ \int f (\varphi) \nu^{T, \rho}_{\tmop{SG}} (\mathd \varphi) =
   \frac{1}{Z^T} \mathbb{E} \left[ f (W_T) \exp \left( - \lambda \int \rho
   \llbracket \cos (\beta W_T) \rrbracket \mathd x \right) \right] \]
where $Z_T$ is a normalization constant. We shall see that $\nu^{T,
\rho}_{\tmop{SG}} \rightarrow \nu^{\rho}_{\tmop{SG}}$ weakly on $H^{- 1}
(\langle x \rangle^{- n})$.

\subsection{An Envelope theorem}

\begin{lemma}
  Let $T \in [0, \infty]$. Let $f, g : H^{- 1} (\langle x \rangle^{- n})
  \rightarrow \mathbb{R}$ be continuous bounded fuctions.
  \label{lemma:coupling}Let $u^g$ be a minimizer for
  \[ F^{T, \rho} (u) =\mathbb{E} \left[ g (W_T + I_T (u)) + \int \rho
     \llbracket \cos (\beta (W_T + I_T (u))) \rrbracket \mathd x + \frac{1}{2}
     \int^{\infty}_0 \| u_t \|^2_{L^2} \mathd t \right] . \]
  Then
  \[ \int f (\varphi) \nu^{T, \rho}_{\tmop{SG}} (\mathd \phi) =\mathbb{E} [f
     (W_T + I_T (u^g))], \]
  where
  \[ \mathd \nu^{T, \rho, g}_{\tmop{SG}} = \frac{1}{Z^g} \exp (- g (\varphi))
     \mathd \nu^{T, \rho}_{\tmop{SG}} \qquad Z^g = \int \exp (- g (\varphi))
     \mathd \nu^{T, \rho}_{\tmop{SG}} (\varphi) . \]
\end{lemma}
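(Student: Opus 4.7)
My plan is to derive the identity by an envelope-theorem argument applied to the Boué--Dupuis formula, perturbing the potential $g$ by $\epsilon f$ and differentiating at $\epsilon=0$.

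First, I would introduce the one-parameter family $g_\epsilon = g + \epsilon f$ and, using Proposition \ref{prop:BD-formula} applied to the total potential $g_\epsilon(\varphi) + \lambda \int \rho \llbracket \cos(\beta\varphi)\rrbracket \mathd x$, write
\[
V(\epsilon) := -\log \mathbb{E}\left[\exp\left(-g_\epsilon(W_T) - \lambda \int \rho \llbracket \cos(\beta W_T)\rrbracket \mathd x\right)\right] = \inf_{u \in \mathbb{H}_a} F^{T,\rho,\epsilon}(u),
\]
where $F^{T,\rho,\epsilon}(u) = F^{T,\rho}(u) + \epsilon \,\mathbb{E}[f(W_T + I_T(u))]$. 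Computing the left-hand derivative explicitly yields
\[
V'(0) = \frac{\mathbb{E}[f(W_T) \exp(-g(W_T) - \lambda\int \rho \llbracket \cos(\beta W_T)\rrbracket \mathd x)]}{\mathbb{E}[\exp(-g(W_T) - \lambda\int \rho \llbracket \cos(\beta W_T)\rrbracket \mathd x)]} = \int f(\varphi)\, \nu^{T,\rho,g}_{\tmop{SG}}(\mathd\varphi),
\]
which is exactly the left-hand side of the desired identity (modulo the evident typo in the statement).

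Next, I would extract the right-hand side by a two-sided comparison that avoids invoking any smooth dependence of the minimizer on $\epsilon$. Plugging $u^g$ into the perturbed functional gives the upper bound
\[
V(\epsilon) \le F^{T,\rho,\epsilon}(u^g) = V(0) + \epsilon\, \mathbb{E}[f(W_T + I_T(u^g))],
\]
while plugging an $\epsilon$-minimizer $u^{g_\epsilon}$ into $F^{T,\rho,0}$ gives the reverse
\[
V(0) \le F^{T,\rho,0}(u^{g_\epsilon}) = V(\epsilon) - \epsilon\, \mathbb{E}[f(W_T + I_T(u^{g_\epsilon}))].
\]
Dividing by $\epsilon>0$ and $\epsilon<0$ and passing to the limit would then pinch $V'(0)$ between $\mathbb{E}[f(W_T + I_T(u^g))]$ and $\liminf_{\epsilon\to 0}\mathbb{E}[f(W_T + I_T(u^{g_\epsilon}))]$. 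Equating the two expressions for $V'(0)$ delivers the claimed identity.

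The main obstacle will be the last step: showing that $\mathbb{E}[f(W_T + I_T(u^{g_\epsilon}))] \to \mathbb{E}[f(W_T + I_T(u^g))]$ as $\epsilon \to 0$. Because $f$ is bounded and continuous on $H^{-1}(\langle x\rangle^{-n})$, this reduces to showing that $I_T(u^{g_\epsilon}) \to I_T(u^g)$ in probability in a topology compatible with $f$. One clean way is to invoke strict convexity of $u \mapsto \tfrac{1}{2}\mathbb{E}\int_0^\infty\|u_s\|_{L^2}^2\mathd s$ together with the boundedness of the other two terms (the sine nonlinearity is Lipschitz, and $f$ is bounded): this yields uniform bounds on $\|u^{g_\epsilon}\|$ in the appropriate Hilbert structure on $\mathbb{H}_a$, weak convergence along subsequences to a minimizer of $F^{T,\rho,0}$, and then identification of the limit by uniqueness of the minimizer. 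Alternatively, one avoids studying $u^{g_\epsilon}$ at all by restricting to smooth perturbations for which the value function is automatically differentiable, and by noting that the comparison chain above already forces $V$ to be differentiable at $\epsilon=0$ whenever the limit on the right exists, which can be upgraded through a density argument in $f$.
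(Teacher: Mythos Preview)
Your overall approach --- an envelope-theorem argument applied to the Bou\'e--Dupuis representation --- is exactly the paper's. But you have made it harder than necessary, and in doing so introduced a gap.

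The ``main obstacle'' you identify is not an obstacle. The function $V(\epsilon)=-\log\int e^{-\epsilon f}\,\mathrm{d}\nu^{T,\rho,g}_{\mathrm{SG}}$ is automatically differentiable at $\epsilon=0$: since $f$ is bounded this is just the cumulant generating function of a bounded random variable, hence smooth. Once you use this, your \emph{first} inequality alone already finishes the proof. Indeed $V(\epsilon)\le V(0)+\epsilon\,\mathbb{E}[f(W_T+I_T(u^g))]$ holds for \emph{every} real $\epsilon$; dividing by $\epsilon>0$ gives $V'_+(0)\le \mathbb{E}[f(W_T+I_T(u^g))]$, dividing by $\epsilon<0$ gives $V'_-(0)\ge \mathbb{E}[f(W_T+I_T(u^g))]$, and differentiability forces equality. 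This is precisely what the paper does (it phrases the second bound as $V'(0)=\lim_{\upsilon\to 0}(V(0)-V(-\upsilon))/\upsilon$ and plugs $u^g$ in once more), and it never touches the perturbed minimizer $u^{g_\epsilon}$.

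By contrast, your primary route through $u^{g_\epsilon}\to u^g$ is genuinely problematic here. The functional $F^{T,\rho}$ is not convex --- the Wick-ordered cosine destroys convexity --- so the step ``identification of the limit by uniqueness of the minimizer'' does not follow from strict convexity of the quadratic part alone. In the paper, uniqueness of the minimizer comes from the stochastic-control verification theorem (Proposition~\ref{prop:verification}), not from variational convexity, and invoking that would require more regularity on $g$ than the lemma assumes. Your final alternative is close to the right idea, but the density/smoothing argument is superfluous: differentiability of the cumulant generating function holds for every bounded continuous $f$ directly.
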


\begin{proof}
  This is a version of the envelope thoerem
  {\cite{Milgrom-2002,albeverio2020mean}}. We have
  \[ \int f (\varphi) \mathd \nu^{T, \rho, g}_{\tmop{SG}} \left. =
     \frac{\mathd}{\mathd \upsilon} \right|_{\upsilon = 0} - \log \int \exp (-
     \upsilon f (\varphi)) \mathd \nu^{T, \rho, g}_{\tmop{SG}} . \]
  and the r.h.s is well known to be differntiable in $\upsilon$ since it's the
  cumulant generating function. Now
  \begin{eqnarray*}
    &  & - \log \int \exp (f (\varphi)) \mathd \nu^{T, \rho, g}_{\tmop{SG}}\\
    & = & \inf_{u \in \mathbb{H}_a} \mathbb{E} \left[ f (W_T + I_T (u)) + g
    (W_T + I_T (u)) + \int \rho \left\llbracket \cos (\beta (W_T + I_T (u)))
    \right\rrbracket \mathd x + \frac{1}{2} \int^T_0 \| u_t \|^2_{L^2} \mathd
    t \right]\\
    & \backassign & \inf_{u \in \mathbb{H}_a} F^f (u)
  \end{eqnarray*}
  For $T < \infty$ this equality follows from Corollary \ref{cor:BD-formula}
  in Appendix \ref{app:stoch-cont}. For $T = \infty$ we have from Theorem 1.3
  in {\cite{Junnila-2020}} that
  \[ \mathbb{E} \left[ \exp \left( p \left| \int \llbracket \cos (\beta
     W_{\infty}) \rrbracket \mathd x \right| \right) \right] < \infty \]
  so $\int \llbracket \cos (\beta W_{\infty}) \rrbracket \mathd x$ is a tame
  functional in the language of {\"U}st{\"u}nel
  {\cite{ustunel_variational_2014}}, which implies the variational formula
  (see {\cite{ustunel_variational_2014}}). Now note that
  \begin{eqnarray*}
    \lim_{\upsilon \rightarrow 0} \frac{\inf_{u \in \mathbb{H}_a} F^{\upsilon
    f} (u) - \inf_{u \in \mathbb{H}_a} F^0 (u)}{\upsilon} & \geqslant &
    \lim_{\upsilon \rightarrow 0} \frac{F^{\upsilon f} (u^g) - F^0
    (u^g)}{\upsilon}\\
    & = & f (W_{\infty} + I_{\infty} (u^{T, \rho, g})) .
  \end{eqnarray*}
  On the other hand by differentiability
  \begin{eqnarray*}
    \lim_{\upsilon \rightarrow 0} \frac{\inf_{u \in \mathbb{H}_a} F^{\upsilon
    f} (u) - \inf_{u \in \mathbb{H}_a} F^0 (u)}{\upsilon} & = & \lim_{\upsilon
    \rightarrow 0} \frac{\inf_{u \in \mathbb{H}_a} F^0 (u) - \inf_{u \in
    \mathbb{H}_a} F^{- \upsilon f} (u)}{\upsilon}\\
    & \leqslant & \lim_{\upsilon \rightarrow 0} \frac{F^0 (u^g) - F^{-
    \upsilon f} (u^g)}{\upsilon}\\
    & = & f (W_T + I_T (u^g))
  \end{eqnarray*}
  which implies the statement.
\end{proof}

Note that Lemma \ref{lemma:coupling} and Proposition \ref{prop:conv-min} below
imply that $\nu^{T, \rho}_{\tmop{SG}} \rightarrow \nu^{\rho}_{\tmop{SG}}$
weakly (e.g on $H^{- 1} (\langle x \rangle^{- n})$).

\section{Proof of Theorem \ref{thm:L-infty}}\label{sec:proof}

\begin{lemma}
  \label{lemma:envelope-grad}Asssume that $| V_T |_{1, \infty} + | V |_{2, 2}
  < \infty$ then
  \begin{equation}
    V_{t, T} (\varphi) = \inf_{u \in \mathbb{H}_a} \mathbb{E} \left[ V_T
    (W_{t, T} + I_{t, T} (u) + \varphi) + \frac{1}{2} \int^T_t \| u_s
    \|^2_{L^2} \mathd t \right] \label{eq:abstract-bd}
  \end{equation}
  satisfies
  \[ \nabla V_T (\varphi) =\mathbb{E} [\nabla V_T (W_{t, T} + I_{t, T}
     (u^{\varphi}) + \varphi)] \]
  where $u^{\varphi}$ is the optimizer on the r.h.s of
  {\eqref{eq:abstract-bd}}, in particular
  \[ | V_{t, T} |_{1, \infty} \leqslant | V_T |_{1, \infty} . \]
\end{lemma}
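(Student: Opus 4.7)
The plan is a standard envelope-theorem argument adapted to the stochastic control formulation of Proposition \ref{prop:BD-formula}. The hypothesis $|V_T|_{1,\infty}+|V_T|_{2,2}<\infty$ together with Proposition \ref{prop:verification} guarantees that $V_{t,T}$ is $C^2$ on $L^2(\langle x\rangle^{-n})$ and that a unique minimizer $u^{\varphi}\in\mathbb{H}_a$ exists for each shift $\varphi$, so $\nabla V_{t,T}(\varphi)$ is well defined as a Riesz representative in $L^2$. The gradient identity then follows from a pair of one-sided inequalities, and the $L^\infty$ bound drops out by Jensen.

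For the identity, fix a direction $h\in L^2(\langle x\rangle^{-n})$ and $\epsilon>0$. Using $u^{\varphi}$ as a (suboptimal) control at the shifted parameter $\varphi+\epsilon h$ and invoking optimality at $\varphi$, the quadratic cost $\tfrac{1}{2}\int\|u^{\varphi}_s\|^2\,\mathd s$ cancels on both sides and one obtains
\begin{equation*}
V_{t,T}(\varphi+\epsilon h)-V_{t,T}(\varphi)\leq \mathbb{E}\bigl[V_T(W_{t,T}+I_{t,T}(u^{\varphi})+\varphi+\epsilon h)-V_T(W_{t,T}+I_{t,T}(u^{\varphi})+\varphi)\bigr].
\end{equation*}
Dividing by $\epsilon$ and letting $\epsilon\to 0^+$, the right-hand side converges to $\mathbb{E}[\langle\nabla V_T(W_{t,T}+I_{t,T}(u^{\varphi})+\varphi),h\rangle]$ by dominated convergence, justified by the uniform bound $|V_T|_{1,\infty}<\infty$ which controls the difference quotient. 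This yields $\langle\nabla V_{t,T}(\varphi),h\rangle\leq\mathbb{E}[\langle\nabla V_T(\cdot),h\rangle]$. Applying the same inequality with $h$ replaced by $-h$ gives the reverse inequality, so equality holds for every $h$, producing the stated gradient formula.

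The $L^\infty$ bound then follows from the gradient identity by a Jensen-type argument: for any test function $\psi\in L^1(\mathbb{R}^2)$,
\begin{equation*}
|\langle\nabla V_{t,T}(\varphi),\psi\rangle|=|\mathbb{E}[\langle\nabla V_T(W_{t,T}+I_{t,T}(u^{\varphi})+\varphi),\psi\rangle]|\leq \mathbb{E}\bigl[\|\nabla V_T(\cdot)\|_{L^\infty}\bigr]\,\|\psi\|_{L^1}\leq |V_T|_{1,\infty}\|\psi\|_{L^1},
\end{equation*}
so by duality $\|\nabla V_{t,T}(\varphi)\|_{L^\infty}\leq|V_T|_{1,\infty}$ uniformly in $\varphi$, giving $|V_{t,T}|_{1,\infty}\leq|V_T|_{1,\infty}$.

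The main technical hurdle is justifying the dominated-convergence step rigorously inside the expectation when extracting the directional derivative. It requires the pointwise $L^\infty$-bound on $\nabla V_T$ (exactly $|V_T|_{1,\infty}<\infty$) together with enough regularity of $V_{t,T}$ to identify the pointwise derivative with the Fréchet derivative; both are supplied by Proposition \ref{prop:verification}, with the role of $|V_T|_{2,2}$ being to guarantee the required $C^2$ smoothness. Once these are in place the envelope computation is routine.
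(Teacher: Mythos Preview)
Your proof is correct and follows essentially the same envelope-theorem approach as the paper: bound the forward difference $V_{t,T}(\varphi+\varepsilon h)-V_{t,T}(\varphi)$ from above by inserting the optimizer $u^{\varphi}$ as a suboptimal control, pass to the limit, and then obtain the reverse inequality by replacing $h$ with $-h$ (the paper phrases this second step as bounding $V_{t,T}(\varphi)-V_{t,T}(\varphi-\varepsilon\psi)$ from below, which is the same computation). Your duality argument for the $L^\infty$ bound is slightly more explicit than the paper's, but the content is identical.
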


\begin{proof}
  We have
  \[ \langle \nabla V_{t, T} (\varphi), \psi \rangle = \lim_{\varepsilon
     \rightarrow 0} \frac{1}{\varepsilon} (V_{t, T} (\varphi + \varepsilon
     \psi) - V_{t, T} (\varphi)) \]
  which implies
  \begin{eqnarray*}
    & \langle \nabla V_{t, T} (\varphi), \psi \rangle_{L^2 (\mathbb{R}^2)}\\
    \leqslant & \lim_{\varepsilon \rightarrow 0} \frac{1}{\varepsilon} \left(
    \mathbb{E} \left[ V_T (W_{t, T} + I_{t, T} (u^{\varphi}) + \varphi +
    \varepsilon \psi) + \frac{1}{2} \int^T_t \| u^{\varphi}_s \|^2_{L^2}
    \mathd t \right] \right.\\
    & - \left. \mathbb{E} \left[ V_T (W_{t, T} + I_{t, T} (u^{\varphi}) +
    \varphi) + \frac{1}{2} \int^T_t \| u^{\varphi}_s \|^2_{L^2} \mathd t
    \right] \right)\\
    = & \mathbb{E} \int \nabla V_T (W_{t, T} + I_{t, T} (u^{\varphi}) +
    \varphi) \psi \mathd x
  \end{eqnarray*}
  on the other hand we have by differentiability,
  \[ \langle \nabla V_{t, T} (\varphi), \psi \rangle = \lim_{\varepsilon
     \rightarrow 0} \frac{1}{\varepsilon} (V_{t, T} (\varphi) - V_{t, T}
     (\varphi - \varepsilon \psi)) \]
  \[ \begin{array}{lll}
       \langle \nabla V_{t, T} (\varphi), \psi \rangle & \geqslant &
       \lim_{\varepsilon \rightarrow 0} \frac{1}{\varepsilon} \left(
       \mathbb{E} \left[ V_T (W_{t, T} + I_{t, T} (u^{\varphi}) + \varphi) +
       \frac{1}{2} \int^T_t \| u^{\varphi}_s \|^2_{L^2} \mathd t \right]
       \right.\\
       &  & - \left. \mathbb{E} \left[ V_T (W_{t, T} + I_{t, T} (u^{\varphi})
       + \varphi - \varepsilon \psi) + \frac{1}{2} \int^T_t \| u^{\varphi}_s
       \|^2_{L^2} \mathd t \right] \right)\\
       & = & \mathbb{E} \int \nabla V_T (W_{t, T} + I_{t, T} (u^{\varphi}) +
       \varphi) \psi \mathd x
     \end{array} \]
  and we get the statement. 
\end{proof}

\begin{corollary}
  \label{linfty-1}Assume that $t \geqslant T / 2$ and $| V_T |_{1, \infty}
  \leqslant C \lambda T^{1 / 2 - \delta}$. Then with $u^{\varphi}$ minimizing
  \[ F (u) =\mathbb{E} \left[ V_T (W_{t, T} + I_{t, T} (u) + \varphi) +
     \frac{1}{2} \int^T_t \| u_t \|^2_{L^2} \mathd t \right] . \]
  Then $u^{\varphi}$ satisfies
  \[ \| u_s^{\varphi} \|_{L^{\infty}} \leqslant C \lambda \langle s \rangle^{-
     1 / 2} . \]
\end{corollary}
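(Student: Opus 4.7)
The plan is to combine the verification theorem with the envelope estimate from Lemma~\ref{lemma:envelope-grad} and the smoothing bound $\|J_s f\|_{L^\infty} \leq s^{-1}\|f\|_{L^\infty}$ from Section~\ref{sec:decomposition}. Applying Proposition~\ref{prop:verification} to the control problem on $[t,T]$ with shifted terminal cost $V_T(\,\cdot\, + \varphi)$, the minimizer admits the feedback representation
\[
u^\varphi_s = -J_s \nabla V_{s,T}(W_{t,s} + I_{t,s}(u^\varphi) + \varphi).
\]
Passing to $L^\infty$ norms and invoking the smoothing estimate on $J_s$ then yields, pointwise in $s \in [t,T]$,
\[
\|u^\varphi_s\|_{L^\infty} \leq s^{-1} \,|V_{s,T}|_{1,\infty}.
\]

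The remaining task is to control $|V_{s,T}|_{1,\infty}$ uniformly in $s \in [t,T]$. This is exactly what the preceding envelope lemma delivers: the last line of Lemma~\ref{lemma:envelope-grad} states $|V_{s,T}|_{1,\infty} \leq |V_T|_{1,\infty}$, so the hypothesis propagates back through the dynamic programming principle to give
\[
|V_{s,T}|_{1,\infty} \leq C\lambda\, T^{1/2 - \delta}.
\]
Inserting this into the previous display and using $T \leq 2s$ (which follows from $t \geq T/2$ and $s \geq t$), we conclude
\[
\|u^\varphi_s\|_{L^\infty} \leq C\lambda\, s^{-1} T^{1/2 - \delta} \leq C\lambda\, s^{-1/2 - \delta} \leq C\lambda\, \langle s \rangle^{-1/2},
\]
the final inequality holding in the regime $s \geq T/2$ relevant for the later applications of this corollary.

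There is no real obstacle at this stage; the substantive work has been absorbed into the envelope identity of Lemma~\ref{lemma:envelope-grad}, and the corollary is essentially a bookkeeping step. The conceptually important point to highlight is that, without the bound $|V_{s,T}|_{1,\infty} \leq |V_T|_{1,\infty}$, the gradient of $V_{s,T}$ could a priori grow with $T-s$ through the accumulated Gaussian noise, and the smoothing factor $s^{-1}$ alone would not suffice. The combination of the two — envelope propagation plus $L^\infty$ contraction of $J_s$ — is what turns a terminal hypothesis scaling as $T^{1/2 - \delta}$ into the desired $\langle s \rangle^{-1/2}$ decay of the optimizer, a building block that will be iterated across dyadic time windows to establish Theorem~\ref{thm:L-infty}.
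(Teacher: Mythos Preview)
Your proof is correct and follows essentially the same route as the paper's: verification theorem to get the feedback form $u_s^\varphi = -J_s \nabla V_{s,T}(\cdot)$, the $L^\infty$ bound $\|J_s f\|_{L^\infty} \leq s^{-1}\|f\|_{L^\infty}$, propagation of the gradient bound via Lemma~\ref{lemma:envelope-grad}, and finally $s \geq T/2$ to convert $s^{-1}T^{1/2-\delta}$ into the claimed decay. You are in fact slightly more explicit than the paper, which does not name Lemma~\ref{lemma:envelope-grad} in the middle step but uses it implicitly.
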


\begin{proof}
  By Proposition \ref{prop:verification} $u_s^{\varphi}$ satisfies
  \begin{eqnarray*}
    \| u_s^{\varphi} \|_{L^{\infty}} & = & \| J_s \nabla \lambda V_{s, T} (W_s
    + I_s (u^{\varphi})) \|_{L^{\infty}}\\
    & \leqslant & s^{- 1} \lambda T^{1 / 2 - \delta}\\
    & \leqslant & \lambda \langle s \rangle^{- 1 / 2 - \delta},
  \end{eqnarray*}
  since $s \geqslant T / 2$.
\end{proof}

We now show that $u^{\varphi}$ also satisfies an Euler-Lagrange equaiton

\begin{lemma}
  Denote by $u^{\varphi}$ the minizer of the r.h.s of
  {\eqref{eq:abstract-bd}}. It exists by Proposition \ref{prop:verification}
  since $V_T \in C^2 (L^2 (\mathbb{R}^2), \mathbb{R})$. We show that
  $u^{\varphi}$ satisfies, for any $h \in \mathbb{H}_a$
  \begin{equation}
    \mathbb{E} \left[ \int \nabla V_T (W_{t, T} + I_{t, T} (u^{\varphi}) +
    \varphi) I_{t, T} (h) \mathd x + \int^T_t \int u^{\varphi} h \mathd x
    \mathd t \right] . \label{eq:EL-1}
  \end{equation}
\end{lemma}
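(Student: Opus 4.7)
The plan is to prove the Euler--Lagrange identity by carrying out a first-order variation of $F$ around its minimizer $u^{\varphi}$. Since $\mathbb{H}_a$ is a linear space and $I_{t,T}$ is a linear map, for any direction $h \in \mathbb{H}_a$ the perturbed control $u^{\varphi} + \varepsilon h$ again lies in $\mathbb{H}_a$ for every $\varepsilon \in \mathbb{R}$. Minimality of $u^{\varphi}$ thus implies that the real-valued map $\varepsilon \mapsto F(u^{\varphi} + \varepsilon h)$ attains a global minimum at $\varepsilon = 0$, so its derivative (once differentiability is justified) must vanish there, which will yield exactly \eqref{eq:EL-1}.

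The next step is to compute the derivative of $F(u^\varphi + \varepsilon h)$ at $\varepsilon=0$ term by term. Using linearity of $I_{t,T}$,
\[
F(u^{\varphi} + \varepsilon h) = \mathbb{E}\left[V_T(W_{t,T} + \varphi + I_{t,T}(u^{\varphi}) + \varepsilon I_{t,T}(h)) + \tfrac{1}{2}\int_t^T \|u^{\varphi}_s + \varepsilon h_s\|_{L^2}^2\, \mathd s\right].
\]
The quadratic term differentiates in $\varepsilon$ in a standard way to give $\mathbb{E}\int_t^T \langle u^{\varphi}_s, h_s\rangle_{L^2}\, \mathd s$. For the $V_T$ term, by Fr\'echet differentiability of $V_T$ and the chain rule, the pointwise $\varepsilon$-derivative of the integrand is $\langle \nabla V_T(W_{t,T} + \varphi + I_{t,T}(u^{\varphi})), I_{t,T}(h)\rangle_{L^2(\mathbb{R}^2)}$, which is precisely the first term of \eqref{eq:EL-1}.

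The main technical obstacle is interchanging $\tfrac{\mathd}{\mathd\varepsilon}$ with $\mathbb{E}$. I would apply the mean value theorem to rewrite the $V_T$-difference quotient as $\langle \nabla V_T(W_{t,T} + \varphi + I_{t,T}(u^{\varphi}) + \tilde{\varepsilon} I_{t,T}(h)), I_{t,T}(h)\rangle_{L^2}$ for some $\tilde{\varepsilon} \in [0,\varepsilon]$. The $C^2(L^2)$ hypothesis on $V_T$ (which in the paper's convention entails a uniform bound on $\nabla V_T$), combined with the bound $\|I_{t,T}(h)\|_{L^\infty([t,T], H^1)} \lesssim \|h\|_{L^2}$ from Section~\ref{sec:decomposition}, produces a Cauchy--Schwarz-type pointwise estimate $|\langle \nabla V_T(\cdots), I_{t,T}(h)\rangle_{L^2}| \lesssim \|h\|_{L^2(\mathbb{R}_+ \times \mathbb{R}^2)}$. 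Since $h \in \mathbb{H}_a$ has $\mathbb{E}\int_0^\infty \|h_s\|_{L^2}^2 \, \mathd s < \infty$, this yields an integrable dominating function and dominated convergence applies. Passing the derivative through $\mathbb{E}$ and using the minimality first-order condition then gives \eqref{eq:EL-1}.
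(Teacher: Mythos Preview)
Your proposal is correct and follows essentially the same approach as the paper: perturb the minimizer in a direction $h\in\mathbb{H}_a$, compute the first-order variation of the two terms, and use optimality to conclude. The paper argues with $\pm\varepsilon h$ to obtain two one-sided inequalities rather than asserting differentiability directly, but the underlying computation and the (implicit) dominated-convergence justification are the same as yours; if anything, you spell out the domination step more carefully than the paper does.
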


\begin{proof}
  Indeed since $u^{\varphi}$ is a minimizer we have for any $h \in
  \mathbb{H}_a$
  \begin{eqnarray*}
    &  & \mathbb{E} \left[ V_T (W_{t, T} + I_{t, T} (u^{\varphi} \pm
    \varepsilon h) + \varphi) + \frac{1}{2} \int^T_t \| u^{\varphi}_s \pm
    \varepsilon h_s \|^2_{L^2} \mathd s \right]\\
    &  & -\mathbb{E} \left[ V_T (W_{t, T} + I_{t, T} (u^{\varphi}) + \varphi)
    + \frac{1}{2} \int^T_t \| u^{\varphi}_s \|^2_{L^2} \mathd s \right]
    \geqslant 0
  \end{eqnarray*}

  and taking $\varepsilon \rightarrow 0$ we get the claim since
  \begin{eqnarray*}
    &  & \lim_{\varepsilon \rightarrow 0} \frac{1}{\varepsilon} (V_T (W_{t,
    T} + I_{t, T} (u^{\varphi} \pm \varepsilon h) + \varphi) - V_T (\varphi +
    I_{t, T} (u) + \varphi))\\
    & = & \pm \int \nabla V_T (W_{t, T} + I_{t, T} (u^{\varphi}) + \varphi)
    I_{t, T} (h) \mathd x
  \end{eqnarray*}
  and
  \[ \lim_{\varepsilon \rightarrow 0} \frac{1}{\varepsilon} \left( \int^T_t
     \| u^{\varphi}_s \pm \varepsilon h_s \|^2_{L^2} \mathd t - \int^T_t \|
     u^{\varphi}_s \|^2_{L^2} \mathd t \right) = \pm 2 \int^T_t \int
     u_s^{\varphi} h_s \mathd x \mathd s \]
  so
  \[ \pm \mathbb{E} \left[ \int \nabla V_T (W_{t, T} + I_{t, T} (u^{\varphi})
     + \varphi) I_{t, T} (u^{\varphi}) \mathd x + \int^T_t \int u_s^{\varphi}
     h_s \mathd x \mathd s \right] \geqslant 0 \]
  which implies the claim. 
\end{proof}

\begin{lemma}
  \label{lemma:value-func}Assume that
  \[ \nabla V_T (\varphi) = \lambda \alpha_T \beta \rho \sin (\beta \varphi)
     + R_T (\varphi) \]
  with $\sup_{\varphi \in L^2 (\langle x \rangle^n)} \| R_T (\varphi)
  \|_{L^{\infty}} \leqslant C$ and $\beta^2 < 4 \pi$. Then for $t \geqslant T
  / 2$
  \[ \nabla V_{t, T} (\varphi) = \lambda \alpha_t \beta \rho \sin (\beta
     \varphi) + R_{t, T} (\varphi) \]
  where
  \begin{enumerate}
    \item The following inquality holds
    \[ \sup_{\varphi \in L^2 (\langle x \rangle^n)} \| R_{t, T} (\varphi)
       \|_{L^{\infty}} \leqslant C \lambda^2 \langle t \rangle^{- \delta} +
       \sup_{\varphi \in L^2 (\langle x \rangle^n)} \| R_T (\varphi)
       \|_{L^{\infty}} . \]
    \item There exists a constant dependent on $\rho$ such that
    \[ \sup_{\varphi \in L^2 (\langle x \rangle^n)} \| R_{t, T} (\varphi)
       \|_{L^2} \leqslant C_{\rho} \lambda^2 \langle t \rangle^{- \delta} +
       \sup_{\varphi \in L^2 (\langle x \rangle^n)} \| R_T (\varphi) \|_{L^2}
       . \]
    \item There exists a constant $C$ (independent of $T, \lambda, \beta$)
    such that for $t \geqslant T / 2 \wedge C$
    \[ \sup_{\varphi, \psi \in L^2 (\langle x \rangle^n)} \frac{\| R_{t, T}
       (\varphi) - R_{t, T} (\psi) \|_{L^2}}{\| \varphi - \psi \|_{L^2}}
       \leqslant C \left( \sup_{\varphi, \psi \in L^2 (\langle x \rangle^n)}
       \frac{\| R_T (\varphi) - R_T (\psi) \|_{L^2}}{\| \varphi - \psi
       \|_{L^2}} \right) . \]
  \end{enumerate}
  
\end{lemma}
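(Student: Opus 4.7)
The plan is to combine the envelope representation in Lemma~\ref{lemma:envelope-grad} with an It\^o identity tailored to the Wick normalisation $\alpha_s$, and read off the three estimates from pointwise control of the optimiser $u^{\varphi}$.

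Since $\|\nabla V_T\|_{L^\infty}\leq \lambda\beta\alpha_T+\sup_\psi\|R_T(\psi)\|_{L^\infty}\leq C\lambda T^{\beta^2/(8\pi)}$ and $\beta^2<4\pi$, the hypothesis of Corollary~\ref{linfty-1} is met with some $\delta>0$, giving $\|u^{\varphi}_s\|_{L^\infty}\leq C\lambda\langle s\rangle^{-1/2-\delta}$ on $[t,T]$. Writing $Z_s^{\varphi}:=W_{t,s}+I_{t,s}(u^{\varphi})+\varphi$, Lemma~\ref{lemma:envelope-grad} yields $\nabla V_{t,T}(\varphi)=\lambda\alpha_T\beta\rho\,\mathbb{E}[\sin(\beta Z_T^{\varphi})]+\mathbb{E}[R_T(Z_T^{\varphi})]$. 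Applying It\^o's formula to $s\mapsto \alpha_s\sin(\beta Z_s^{\varphi})$, the drift from $\dot\alpha_s$ exactly cancels the It\^o correction from the quadratic variation of $W_{t,s}$ (this is precisely how $\alpha_s$ was chosen), leaving
\[
d\bigl(\alpha_s\sin(\beta Z_s^{\varphi})\bigr)=\alpha_s\beta\cos(\beta Z_s^{\varphi})\,dW_{t,s}+\alpha_s\beta\cos(\beta Z_s^{\varphi})\,J_s u_s^{\varphi}\,ds.
\]
Integrating on $[t,T]$, taking expectation (which kills the martingale), using $Z_t^{\varphi}=\varphi$ and substituting back produces the key identity
\[
R_{t,T}(\varphi)=\lambda\beta^2\rho\,\mathbb{E}\int_t^T\alpha_s\cos(\beta Z_s^{\varphi})\,J_s u_s^{\varphi}\,ds+\mathbb{E}[R_T(Z_T^{\varphi})].
\]

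Estimates (1) and (2) follow by direct bookkeeping: with $|\cos|\leq 1$, $\|J_su^{\varphi}_s\|_{L^\infty}\leq s^{-1}\|u_s^{\varphi}\|_{L^\infty}\leq C\lambda s^{-3/2-\delta}$ and $\alpha_s\leq Cs^{\beta^2/(8\pi)}$, the first term is bounded in $L^\infty$ by $C\lambda^2\int_t^\infty s^{\beta^2/(8\pi)-3/2-\delta}\,ds\leq C\lambda^2\langle t\rangle^{-\delta'}$ for some $\delta'>0$, since $\beta^2/(8\pi)-1/2<0$. Replacing $\|\rho g\|_{L^\infty}$ with $\|\rho\|_{L^2}\|g\|_{L^\infty}$ yields the $L^2$ version at the cost of a $\rho$-dependent constant, and Jensen's inequality handles the residual term via $\|\mathbb{E}[R_T(Z_T^{\varphi})]\|_{L^p}\leq \sup_\psi\|R_T(\psi)\|_{L^p}$ for $p\in\{2,\infty\}$.

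The main obstacle is (3), where the Lipschitz constant must be bounded by a universal multiple of $\mathrm{Lip}(R_T)$. Differentiating the decomposition in $\varphi$, the explicit first term is quadratic in $\lambda$ and decays in $t$, so its Lipschitz contribution is absorbable; the serious part is $\mathbb{E}[R_T(Z_T^{\varphi_1})-R_T(Z_T^{\varphi_2})]$, which requires an $L^2$-Lipschitz estimate for $\varphi\mapsto Z_T^{\varphi}=\varphi+I_{t,T}(u^{\varphi})$. By Proposition~\ref{prop:verification}, $u^{\varphi}_s=-J_s\nabla V_{s,T}(Y_s^{\varphi})$, so the tangent flow $U_s:=\partial_\varphi Y_s^{\varphi}$ solves the linear operator ODE $\dot U_s=-J_s^2\,\mathrm{Hess}(V_{s,T})(Y_s^{\varphi})U_s$ with $U_t=\mathrm{Id}$. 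Self-consistently, the Hessian is of the form $\lambda\alpha_s\beta^2\rho\cos(\beta\cdot)+\nabla R_{s,T}$ and is bounded below by $-(\lambda\alpha_s\beta^2+\mathrm{Lip}(R_{s,T}))$; combined with $\|J_l\|_{L^2\to L^2}^2\leq Cl^{-2}$ and $\int_t^\infty l^{\beta^2/(8\pi)-2}\,dl\leq C\langle t\rangle^{\beta^2/(8\pi)-1}$, a Gronwall-type estimate gives $\|U_s\|_{L^2\to L^2}\leq C$ as soon as $t$ exceeds a fixed universal constant, which is precisely the threshold $t\geq T/2\wedge C$. Feeding this bound into $\|I_{t,T}(\partial_\varphi u^{\varphi} h)\|_{L^2}\leq C\|h\|_{L^2}$ closes the argument.
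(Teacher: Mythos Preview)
Your treatment of Parts~1 and~2 is correct and essentially parallel to the paper, which instead of the It\^o identity Taylor-expands $\sin(\beta(W_{t,T}+I_{t,T}(u^\varphi)+\varphi))$ around $I_{t,T}(u^\varphi)=0$ and then uses the martingale identity $\mathbb{E}[\alpha_T\sin(\beta W_{t,T}+\beta\varphi)]=\alpha_t\sin(\beta\varphi)$ to isolate the leading term; both routes produce the same remainder and the same $L^\infty$/$L^2$ bounds via Corollary~\ref{linfty-1}. (The paper then iterates dyadically via dynamic programming to reach all $t$, which you do not need for the statement as written.)

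Your Part~3 argument has a genuine circularity. The tangent-flow estimate $\|U_s\|_{L^2\to L^2}\le C$ requires control of $\mathrm{Hess}\,V_{s,T}$ for \emph{every} $s\in[t,T]$, and by your own decomposition that Hessian equals $\lambda\alpha_s\beta^2\rho\cos(\beta\cdot)+\nabla R_{s,T}$, whose second summand has operator norm $\mathrm{Lip}(R_{s,T})$---exactly the quantity Part~3 is meant to bound. Your Gronwall exponent therefore contains $\int_t^T\|J_l^2\|\,\mathrm{Lip}(R_{l,T})\,dl$, and closing this self-consistently would force the threshold to depend on $\mathrm{Lip}(R_T)$, not just on a universal constant. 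The paper avoids this by never differentiating $V_{s,T}$ twice. It compares the two variational problems directly: writing $F^{\rho,\psi}(u)=F^{\rho,\varphi}(u)+K(u,\varphi,\psi)$ with $|K(u,\varphi,\psi)-K(v,\varphi,\psi)|\le Ct^{-\delta}\|u-v\|_{L^2}\|\varphi-\psi\|_{L^2}$, and combining the Euler--Lagrange equation for $u^{\varphi}$ with the \emph{terminal} semiconvexity of $V_T$ (which only uses $\mathrm{Lip}(R_T)$, not $\mathrm{Lip}(R_{s,T})$) to get the coercivity $F^{\rho,\varphi}(u)-F^{\rho,\varphi}(u^{\varphi})\ge\tfrac14\|u-u^{\varphi}\|_{L^2}^2$ once $t\ge T/2\wedge C$. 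This yields $\|u^{\psi}-u^{\varphi}\|_{L^2}\le Ct^{-\delta}\|\varphi-\psi\|_{L^2}$ with no reference to intermediate value functions, after which the Lipschitz bound on $R_{t,T}$ is read off from your same integral representation and iterated dyadically (the product $\prod_k(1+\langle T/2^k\rangle^{-\delta})$ is bounded because the corresponding sum converges). If you want to rescue the tangent-flow approach you must run an honest bootstrap from $s=T$ downward and show the Gronwall exponent stays uniformly bounded; as written, that step is missing.
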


\begin{proof}
  {\tmstrong{Proof of 1. Step 1}} We first establish the estimate for $t
  \geqslant T / 2$. By Lemma \ref{lemma:envelope-grad}
  \begin{eqnarray*}
    &  & \nabla V_{t, T} (\varphi)\\
    & = & \mathbb{E} [\lambda \alpha_T \beta \rho \sin (\beta (W_{t, T} +
    I_{t, T} (u^{\varphi}) + \varphi))] +\mathbb{E} [R_T (W_{t, T} + I_{t, T}
    (u^{\varphi}) + \varphi)]\\
    & = & \mathbb{E} [\lambda \alpha_T \beta \rho \sin (\beta (W_{t, T} +
    \varphi))] +\mathbb{E} \left[ \lambda \int^1_0 \alpha_T \beta \sin (\beta
    (W_{t, T} + s I_{t, T} (u^{\varphi}) + \varphi)) I_{t, T} (u^{\varphi})
    \mathd s \right] \\
    &  &+\mathbb{E} [R_T (W_{t, T} + I_{t, T} (u^{\varphi}) + \varphi)]
  \end{eqnarray*}
  Now note that by the martingale property of the renormalized sine
  \[ \mathbb{E} [\alpha_T \sin (\beta W_{t, T} + \beta \varphi)] = \alpha_t
     \beta \sin (\beta \varphi) \]
  so it remains to estimate the second term. Since
  \begin{eqnarray*}
    &  & \left\| \mathbb{E} \left[ \lambda \int^1_0 \alpha_T \beta \rho \sin
    (\beta (W_{t, T} + s I_{t, T} (u^{\varphi}) + \varphi)) I_{t, T}
    (u^{\varphi}) \mathd s \right] \right\|_{L^{\infty}}\\
    & \leqslant & \mathbb{E} [\sup_{s \in [0, 1]} \lambda \alpha_T \beta \|
    \rho \sin (\beta (W_{t, T} + s I_{t, T} (u^{\varphi}) + \varphi)) I_{t, T}
    (u^{\varphi}) \|_{L^{\infty}}]\\
    & \leqslant & \lambda \mathbb{E} [\alpha_T \beta \| I_{t, T}
    (u^{\varphi}) \|_{L^{\infty}}]\\
    & \leqslant & C \lambda^2  \langle t \rangle^{- 1 / 2} \alpha_T\\
    & \leqslant & C \lambda^2  \langle t \rangle^{- \delta},
  \end{eqnarray*}
  where in the second to last line we used Corollary \ref{linfty-1}. This
  gives the first inequality for $t \geqslant T / 2$.
  
  {\tmstrong{Proof of 1. Step 2}} By Proposition \ref{prop:dynamicprogramming}
  below we have that
  \[ V_{t, T} (\varphi) = \inf_{u \in \mathbb{H}_a} \mathbb{E} \left[ V_{T /
     2, T} (W_{t, T / 2} + I_{t, T / 2} (u) + \varphi) + \frac{1}{2} \int^{T /
     2}_t \| u_t \|^2_{L^2} \mathd t \right] \]
  so applying Step 1 we get for $t \geqslant T / 4$
  \[ \nabla V_{t, T} (\varphi) = \lambda \beta \alpha_t \rho \sin (\beta
     \varphi) + R_{t, T} (\varphi) \]
  with
  \[ \sup_{\varphi \in L^2 (\langle x \rangle^n)} \| R_{t, T} (\varphi)
     \|_{L^{\infty}} \leqslant C \lambda^2 (\langle T / 2 \rangle^{- \delta} +
     \langle T / 4 \rangle^{- \delta}) . \]
  Prooceeding like this inductivly we obtain for $T / 2^{i - 1} \geqslant t
  \geqslant T / 2^i$
  \begin{eqnarray*}
    \sup_{\varphi \in L^2 (\langle x \rangle^n)} \| R_{t, T} (\varphi)
    \|_{L^{\infty}} & \leqslant & C \lambda^2 \sum_{k = 1}^i \langle T / 2^k
    \rangle^{- \delta}\\
    & \leqslant & 2 C \lambda^2 \sum_{k = 1}^i \langle 2^i t / 2^k \rangle^{-
    \delta}\\
    & \leqslant & C \lambda^2 \langle t \rangle^{- \delta} \sum_{k = 1}^i
    2^{(k - i) \delta}\\
    \tilde{k} = (i - k) & = & C \lambda^2 \langle t \rangle^{- \delta}
    \sum_{\tilde{k} = 1}^i 2^{- \tilde{k} \delta}\\
    & \leqslant & C \lambda^2 \langle t \rangle^{- \delta}
  \end{eqnarray*}
  which is the desired statement.
  
  {\tmstrong{Proof of 2.}} The proof of the second assertion is analogous
  since in that case $\rho \sin (\cdot)$ is in $L^2 (\mathbb{R}^2)$.
  
  {\tmstrong{Proof of 3. Step 1}} We first proof the statment for $t
  \geqslant T / 2 \wedge C$. For this we have to first estimate the difference
  \[ I_{t, T} (u^{\varphi, \rho}) - I_{t, T} (u^{\psi, \rho}) . \]
  Observe that
  \begin{eqnarray*}
    &  & F^{\rho, \psi}  (u) - F^{\rho, \varphi}  (u)\\
    & = & \mathbb{E} \left[ V_T (W_{t, T} + I_{t, T}  (u) + \psi) +
    \frac{1}{2} \int^{\infty}_0 \| u_t \|^2_{L^2} \mathd t \right]\\
    & = & \mathbb{E} \left[ V_T (W_{t, T} + I_{t, T}  (u) + \varphi) \right. \\
    &  & \left. + \int^1_0 \int \nabla V_T \left( W_{t, T} + I_{t, T}  (u) + (1 - \theta)
          \varphi + \theta \psi \right) (\varphi - \psi) \mathd x \mathd \theta
    + \frac{1}{2} \int^{\infty}_0 \| u_t \|^2_{L^2} \mathd t \right]\\
    & = & F^{\rho, \varphi}  (u) +\mathbb{E} \left[ \int^1_0 \int \nabla V_T
    (W_{t, T} + I_{t, T}  (u) + (1 - \theta) \varphi + \theta \psi) (\varphi -
    \psi) \mathd x \right]\\
    & \backassign & F^{\rho, \varphi}  (u) + K (u, \varphi, \psi) .
  \end{eqnarray*}
  and the last line is to be taken as a definition for $K$. By our assumption
  we can estimate
  \begin{eqnarray*}
    &  & | K (u, \varphi, \psi) - K (v, \varphi, \psi) |\\
    & \leqslant & \mathbb{E} \left[ \int^1_0 \int (\nabla V_T (W_{t, T} +
    I_{t, T}  (u) + (1 - \theta) \varphi + \theta \psi) \right. \\ &  &\left. - \nabla V_T (W_{t, T}
    + I_{t, T}  (v) + (1 - \theta) \varphi + \theta \psi)) (\varphi - \psi)
    \mathd x \right]\\
    & \leqslant & C \langle T \rangle^{\beta^2 / 4 \pi} \mathbb{E} [\| I_{t,
    T}  (u - v) \|_{L^2 (\mathbb{R}^2)}] \| \varphi - \psi \|_{L^2
    (\mathbb{R}^2)}\\
    & \leqslant & C t^{- 1 / 2} \langle T \rangle^{\beta^2 / 4 \pi}
    \mathbb{E} [\| u - v \|_{L^2 (\mathbb{R}_+ \times \mathbb{R}^2)}] \|
    \varphi - \psi \|_{L^2 (\mathbb{R}^2)}\\
    & \leqslant & C t^{- \delta} \mathbb{E} [\| u - v \|_{L^2 (\mathbb{R}_+
    \times \mathbb{R}^2)}] \| \varphi - \psi \|_{L^2 (\mathbb{R}^2)}
  \end{eqnarray*}
  where in the second to last line we used Lemma
  \ref{lemma:boundIL2} and in the last line the assumption $T
  \leqslant 2 t$.
  
  Furthermore $u^{\psi, \rho}$ is a minimizer implies
  \begin{eqnarray*}
    F^{\rho, \psi}  (u^{\psi, \rho}) - F^{\rho, \psi}  (u^{\varphi, \rho}) &
    \leqslant & 0.
  \end{eqnarray*}
  On the other hand using the semiconvexity of $V_T$ (which follows from the
  assumption) and the Euler Lagrange equation for $u^{\varphi, \rho}$
  (Equation {\eqref{eq:EL-1}}) we get
  \begin{eqnarray*}
    &  & F^{\rho, \varphi}  (u) - F^{\rho, \varphi}  (u^{\varphi, \rho})\\
    & = & \mathbb{E} \left[ V_T (W_{t, T} + I_{t, T}  (u) + \varphi) - V_T
    \left( W_{t, T} + I_{t, T}  (u^{\varphi, \rho}) + \varphi \right) +
    \frac{1}{2} \int^{\infty}_0 \| u_t \|^2_{L^2} \mathd t - \frac{1}{2}
    \int^{\infty}_0 \| u^{\varphi, \rho}_t \|^2_{L^2} \mathd t \right]\\
    & \geqslant & \mathbb{E} \left[ \int^1_0 \int \nabla V_T (W_{t, T} +
    I_{t, T}  (u^{\varphi, \rho}) + \varphi) I_{t, T}  (u - u^{\varphi, \rho})
    \mathd x + \int^{\infty}_0 \int u^{\varphi, \rho}_t (u_t - u_t^{\varphi,
    \rho}) \mathd x \mathd t \right. \\
    &   & \left. - T^{1 / 2} \| I_{t, T} (u - u^{\varphi, \rho}) \|^2_{L^2} + \frac{1}{2}
    \int^{\infty}_0 \| u_t - u_t^{\varphi, \rho} \|^2_{L^2} \mathd t \right] \\
    & = & - C T^{1 / 2} \mathbb{E} [ \| I_{t, T} (u - u^{\varphi, \rho})
    \|^2_{L^2}] + \frac{1}{2} \mathbb{E} \left[ \int^{\infty}_0 \| u_t -
    u_t^{\varphi, \rho} \|^2_{L^2} \mathd t \right] \\
    & \geqslant & \frac{1}{4} \int^{\infty}_0 \| u_t - u_t^{\varphi, \rho}
    \|^2_{L^2} \mathd t.
  \end{eqnarray*}
  In the last line we used Lemma \ref{lemma:boundIL2} and the assupmution that
  $t \geqslant T / 2 \wedge C$.
  
  Combining everything we get
  \begin{eqnarray*}
    0 & \geqslant & F^{\psi, \rho}  (u^{\psi, \rho}) - F^{\psi, \rho} 
    (u^{\varphi, \rho})\\
    & = & F^{\varphi, \rho}  (u^{\psi, \rho}) - F^{\varphi, \rho} 
    (u^{\varphi, \rho})\\
    &  & + K (u^{\psi, \rho}, \varphi, \psi) - K (u^{\varphi, \rho}, \varphi,
    \psi)\\
    & \geqslant & \frac{1}{4} \mathbb{E} \left[ \int^{\infty}_0 \| u^{\psi,
    \rho}_t - u_t^{\varphi, \rho} \|^2_{L^2} \mathd t \right] - | K (u^{\psi,
    \rho}, \varphi, \psi) - K (u^{\varphi, \rho}, \varphi, \psi) |\\
    & \geqslant & \frac{1}{4} \mathbb{E} [\| u^{\psi, \rho} - u^{\varphi,
    \rho} \|^2_{L^2 (\mathbb{R}_+ \times \mathbb{R}^2)}] - C t^{- \delta}
    \mathbb{E} [\| u^{\psi, \rho} - u^{\varphi, \rho} \|^2_{L^2 (\mathbb{R}_+
    \times \mathbb{R}^2)}]^{1 / 2} \| \varphi - \psi \|_{L^2 (\mathbb{R}^2)}
  \end{eqnarray*}
  which implies
  \[ \| u^{\psi, \rho} - u^{\varphi, \rho} \|_{L^2 (\mathbb{R} \times
     \mathbb{R}^2)} \leqslant C t^{- \delta} \| (\varphi - \psi) \|_{L^2
     (\mathbb{R}^2)} . \]
  Note that this implies $\| I_{t, T} (u^{\psi, \rho} - u^{\varphi, \rho})
  \|_{L^2 (\mathbb{R}^2)} \leqslant C t^{- 1 / 2 - \delta} \| (\varphi - \psi)
  \|_{L^2 (\mathbb{R}^2)}$.
  
  {\tmstrong{Proof of 3. Step 2}} Recall that we have to estimate
  \begin{eqnarray*}
    &  & \left\| \lambda \mathbb{E} \left[ \int^1_0 \alpha_T \beta \sin
    (\beta (W_{t, T} + s I_{t, T} (u^{\varphi, \rho}) + \varphi)) I_{t, T}
    (u^{\varphi, \rho}) \mathd s \right] \right. \\
    &  &- \lambda \left[ \int^1_0 \alpha_T \beta \sin (\beta (W_{t, T} + s I_{t,T} (u^{\psi, \rho}) + \psi)) I_{t, T} (u^{\psi, \rho}) \mathd s \right]
    \left. \right\|_{L^2}\\
    & \leqslant & \lambda \left\| \int^1_0 \alpha_T \beta \sin (\beta (W_{t,
    T} + s I_{t, T} (u^{\varphi, \rho}) + \varphi)) I_{t, T} (u^{\varphi,
    \rho} - u^{\psi, \rho}) \mathd s \right\|_{L^2}\\
    &  & + \lambda \left\| \int^1_0 \alpha_T \beta (\sin (\beta (W_{t, T} + s
    I_{t, T} (u^{\varphi, \rho}) + \varphi)) - \sin (\beta (W_{t, T} + s I_{t,
    T} (u^{\psi, \rho}) + \psi))) I_{t, T} (u^{\psi, \rho}) \mathd s
    \right\|_{L^2}\\
    & \leqslant & C \lambda (T^{1 / 2} \| I_{t, T} (u^{\varphi, \rho} -
    u^{\psi, \rho}) \|_{L^2} + T^{1 / 2} \| I_{t, T} (u^{\psi, \rho})
    \|_{L^{\infty}} (\| I_{t, T} (u^{\varphi, \rho} - u^{\psi, \rho}) \|_{L^2}
    + \| \varphi - \psi \|_{L^2}))\\
    & \leqslant & C t^{- \delta} \| \varphi - \psi \|_{L^2}
  \end{eqnarray*}
  Where in the last line we have used Corrollary \ref{linfty-1}.
  
  Finally we can estimate
  \begin{eqnarray*}
    &  & \| \mathbb{E} [R_T (W_{t, T} + I_{t, T} (u^{\varphi}) + \varphi) -
    R_T (W_{t, T} + I_{t, T} (u^{\psi}) + \psi)] \|_{L^2}\\
    & \leqslant & \| R_T \|_{L^2 \rightarrow L^2} (\| I_{t, T} (u^{\varphi})
    - I_{t, T} (u^{\psi}) \|_{L^2} + \| \varphi - \psi \|_{L^2})\\
    & \leqslant & \| R_T \|_{L^2 \rightarrow L^2} ((1 + t^{- 1 / 2 - \delta})
    \| \varphi - \psi \|_{L^2})
  \end{eqnarray*}
  where we have used the notation
  \[ \| R_T \|_{L^2 \rightarrow L^2} = \sup_{\varphi, \psi \in L^2 (\langle x
     \rangle^n)} \frac{\| R_T (\varphi) - R_T (\psi) \|_{L^2}}{\| \varphi -
     \psi \|_{L^2}} . \]
  \tmcolor{black}{{\tmstrong{Proof of 3. Step 3}} Now prooceding as in the
  proof of Assertion 1, Step 2, we obtain for $t \geqslant T / 2^i \wedge C$
  \[ \| R_{t, T} \|_{L^2 \rightarrow L^2} \leqslant \prod_{k = 1}^i (1 +
     \langle T / 2^k \rangle^{- \delta}) \]}
  
  Now obeserve that
  \[ \log \prod_{k = 1}^i (1 + \langle T / 2^k \rangle^{- \delta}) \leqslant
     \sum_{k = 1}^i \langle T / 2^k \rangle^{- \delta} \leqslant C \sum_{k =
     1}^i 2^{\delta (i - k)} \leqslant C \]
  This gives the statement for $t \geqslant C$. Now finally we can conclude by
  Proposition \ref{HJB-partitionfunction}. 
\end{proof}

\begin{proof*}{Proof of Theorem \ref{thm:L-infty}}
  Let $u^{T, \rho}$ be the minimzer for the functional
  \[ F^T (u) =\mathbb{E} \left[ \int \rho \llbracket \cos (\beta (W_T + I_T
     (u))) \rrbracket \mathd x + \frac{1}{2} \int^T_0 \| u_t \|^2_{L^2} \mathd
     t \right] . \]
  By Proposition \ref{prop:verification} we have that
  \[ \| u^{T, \rho} \|_{L^{\infty}} = \| Q_t \nabla V_{t, T} (W_T + I_t (u))
     \|_{L^{\infty}} \]
  where
  \[ V_{t, T} (\varphi) = \inf_{u \in \mathbb{H}_a} \mathbb{E} \left[ \int
     \rho \llbracket \nobracket \cos (\beta (W_{t, T} + I_{t, T} (u)
     \nobracket + \varphi)) \rrbracket \mathd x + \frac{1}{2} \int^T_t \| u_t
     \|^2_{L^2} \mathd t \right] . \]
  Now by Lemma \ref{lemma:value-func}
  \[ \| Q_t \nabla V_{t, T} (W_t + I_t (u)) \|_{L^{\infty}} \leqslant t^{- 1}
     \sup_{\varphi \in L^2 (\langle x \rangle^n)} \| \nabla V_{t, T} (\varphi)
     \|_{L^{\infty}} \leqslant C t^{- 1} (\alpha (t) + t^{- \delta}) \leqslant
     C t^{- 1 / 2 - \delta}, \]
  which implies the statment. 
\end{proof*}

\begin{proposition}
  \label{prop:conv-min}Let $u^{T, \rho}$ be the minimzer for the functional
  \[ F^T (u) =\mathbb{E} \left[ \int \rho \llbracket \cos (\beta (W_T + I_T
     (u))) \rrbracket \mathd x + \frac{1}{2} \int^{\infty}_0 \| u_t \|^2_{L^2}
     \mathd t \right], \]
  then as $T \rightarrow \infty$ $u^{T, \rho}$ converges to $u^{\infty, \rho}$
  in $\mathbb{H}_a$ to a minimizer of $F^{\infty} (u)$. 
\end{proposition}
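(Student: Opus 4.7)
The plan is to establish uniform bounds on $u^{T,\rho}$ in $L^\infty$ and in $\mathbb{H}_a$, extract a weak subsequential limit, identify it as a minimizer of $F^\infty$ by passing to the limit in both the quadratic and cosine parts of the functional, and finally upgrade to strong $\mathbb{H}_a$-convergence via the converging-norms trick. The proof of Theorem \ref{thm:L-infty} proceeds via Lemma \ref{lemma:value-func} and the verification theorem applied to the finite-horizon value function $V_{t,T}^\rho$, with all constants independent of $T$; hence $\|u^{T,\rho}_t\|_{L^\infty}\leq C\langle t\rangle^{\beta^2/8\pi-1}$ uniformly in $T$. Combined with the trivial comparison $F^T(u^{T,\rho})\leq F^T(0)$ and the uniform $L^p(\mathbb{P})$-boundedness of $\int\rho\llbracket\cos(\beta W_T)\rrbracket\,\mathd x$ guaranteed by Section \ref{sec:cosine}, this yields a uniform bound $\mathbb{E}\int_0^\infty\|u^{T,\rho}_t\|_{L^2}^2\,\mathd t\leq C$. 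Reflexivity of the Hilbert space $\mathbb{H}_a$ then produces a subsequence $u^{T_n,\rho}\rightharpoonup u^{\infty,\rho}$ weakly.

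To pass to the limit in $F^{T_n}(u^{T_n,\rho})$, I would use the trigonometric factorization
\[ \llbracket\cos(\beta(W_T+I_T(u)))\rrbracket = \llbracket\cos(\beta W_T)\rrbracket\cos(\beta I_T(u)) - \llbracket\sin(\beta W_T)\rrbracket\sin(\beta I_T(u)). \]
Since $\llbracket\cos(\beta W_T)\rrbracket$ and $\llbracket\sin(\beta W_T)\rrbracket$ converge in $L^p(\mathbb{P},B^{-\beta^2/4\pi-\delta}_{p,p}(\langle x\rangle^{-n}))$ and $\rho$ has compact support, the cosine term converges to its $T=\infty$ analogue provided one has sufficiently strong local convergence of $I_{T_n}(u^{T_n,\rho})$ on $\operatorname{supp}\rho$. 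Weak lower semicontinuity of the quadratic term yields $F^\infty(u^{\infty,\rho})\leq\liminf F^{T_n}(u^{T_n,\rho})$. For the matching upper bound, I test against smooth, temporally compactly supported competitors $v\in\mathbb{H}_a$: then $F^{T_n}(v)\to F^\infty(v)$ by the same mechanism, so $\limsup F^{T_n}(u^{T_n,\rho})\leq F^\infty(v)$ and by density $u^{\infty,\rho}$ is a minimizer with $F^{T_n}(u^{T_n,\rho})\to F^\infty(u^{\infty,\rho})$.

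Convergence of the full functional together with convergence of the cosine part along the sequence forces $\mathbb{E}\int\|u^{T_n,\rho}_t\|_{L^2}^2\,\mathd t\to\mathbb{E}\int\|u^{\infty,\rho}_t\|_{L^2}^2\,\mathd t$, and combined with weak convergence in a Hilbert space this upgrades to strong convergence in $\mathbb{H}_a$. Uniqueness of the $F^\infty$-minimizer (used to promote subsequential to full-sequence convergence) holds on the $L^\infty$-bounded region where the $u^{T,\rho}$ and all their weak limits live, by the convexity argument outlined in the heuristic preceding Theorem \ref{thm:characterization-1}.

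The main obstacle is precisely the strong local convergence of $I_{T_n}(u^{T_n,\rho})$ on $\operatorname{supp}\rho$, which is needed because the sine/cosine nonlinearities do not preserve weak limits. This I would handle by combining the uniform $L^\infty$ bound on $u^{T,\rho}_t$ with the smoothing properties of $J_t$ to obtain pointwise bounds and spatial equicontinuity of the drift $x\mapsto I_{T}(u^{T,\rho})(x)$ on compact sets uniformly in $T$; Arzel\`a--Ascoli then upgrades weak convergence of the controls to uniform convergence of the drifts on $\operatorname{supp}\rho$, which is exactly the regularity required to pair against the renormalized sine and cosine in their Besov topology.
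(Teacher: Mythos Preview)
Your approach via the direct method (weak compactness, lower semicontinuity, converging norms) is genuinely different from the paper's, which shows directly that $(u^{T,\rho})_T$ is Cauchy in $\mathbb{H}_a$ using the stochastic control structure: from the verification formula $u_t^{T,\rho}=-J_t\nabla V_{t,T}(W_t+I_t(u^{T,\rho}))$ and the quantitative $L^2$-remainder bound in the decomposition $\nabla V_{t,T}=\lambda\alpha_t\beta\rho\sin(\beta\cdot)+R_{t,T}$ supplied by Lemma~\ref{lemma:value-func}, a Gronwall argument yields the explicit rate $\|u_t^{T_1,\rho}-u_t^{T_2,\rho}\|_{L^2}\le C t^{-1}T_1^{-\delta}$. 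Only after this does the paper check continuity of $u\mapsto F^{T}(u)$ along strongly convergent sequences (via the same trigonometric factorization you describe) to identify the limit as a minimizer.

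There is, however, a genuine gap at your Arzel\`a--Ascoli step. The uniform $W^{1,\infty}$ bound on $I_{T_n}(u^{T_n,\rho})$ gives, for each fixed $\omega$, precompactness in $C(\operatorname{supp}\rho)$---but the convergent subsequence depends on $\omega$, and weak convergence in $\mathbb{H}_a\subset L^2(\Omega\times\mathbb{R}_+\times\mathbb{R}^2)$ carries no pathwise information with which to identify the limit. Concretely, weak convergence $I_\infty(u^{T_n,\rho})\rightharpoonup I_\infty(u^{\infty,\rho})$ in $L^2(\Omega,H^1(K))$ together with a uniform $L^\infty(\Omega,C^1(K))$ bound does \emph{not} force strong convergence in $L^2(\Omega,C(K))$: there is no compactness in the $\Omega$-variable (on $\Omega=[0,1]$ take $f_n(\omega,x)=\sin(2\pi n\omega)\,g(x)$ for a fixed $g\in C^1(K)$; this sequence is uniformly bounded in $L^\infty(\Omega,C^1(K))$, weakly null in $L^2(\Omega,H^1(K))$, yet has constant nonzero $L^2(\Omega,C(K))$-norm). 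Without strong convergence of $I_{T_n}(u^{T_n,\rho})$ toward $I_\infty(u^{\infty,\rho})$ in a topology pairing against $\llbracket\cos(\beta W_\infty)\rrbracket$, you cannot pass the nonlinear cosine term to the limit, and both your liminf inequality and the converging-norms upgrade break down. The paper sidesteps this obstruction entirely by never invoking weak compactness: the control-theoretic Cauchy estimate delivers strong $\mathbb{H}_a$-convergence from the outset.
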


\begin{proof}
  We first show that $u^{T, \rho}$ is a Couchy-sequence. By Proposition
  \ref{prop:verification} $u^{T, \rho}$ satisfies the equation
  \[ u_t^{T, \rho} = J_t V_{t, T} (W_t + I_t (u^{T, \rho})_{}) \]
  so
  \[ (u_t^{T_1, \rho} - u_t^{T_2, \rho}) = J_t (V_{t, T_1} (W_t + I_t
     (u^{T_1, \rho})_{}) - V_{t, T_2} (W_t + I_t (u^{T_2, \rho})_{})) . \]
  Now note that by Lemma \ref{lemma:value-func}
  \[ \nabla V_{T_1, T_2} (\varphi) = \lambda \alpha_{T_1} \beta \sin (\beta
     \varphi) + R_{T_1, T_2} (\varphi) \]
  with $\| R_{T_1, T_2} \|_{L^2} \leqslant C_{\rho} T_1^{- \delta}$. This
  implies by Proposition \ref{prop:dynamicprogramming} that with $T_1
  \leqslant T_2$
  \[ \sup_{\varphi \in L^2 (\langle x \rangle^n)} \| \nabla V_{t, T_1}
     (\varphi) - \nabla V_{t, T_2} (\varphi) \|_{L^2} \leqslant C \langle t
     \rangle^{- \delta} \langle T_1 \rangle^{- \delta}, \]
  and
  \begin{eqnarray*}
    &  & \| Q_t (V_{t, T_1} (W_t + I_t (u^{T_1, \rho})_{}) - V_{t, T_2} (W_t
    + I_t (u^{T_2, \rho})_{})) \|_{L^2}\\
    & \leqslant & C t^{- 1} \| \nobracket (V_{t, T_1} (W_t + I_t (u^{T_1,
    \rho})_{}) - V_{t, T_2} (W_t + I_t (u^{T_2, \rho})_{}))) \|_{L^2}\\
    & \leqslant & C t^{- 1} \| \nobracket (V_{t, T_1} (W_t + I_t (u^{T_1,
    \rho})_{}) - V_{t, T_1} (W_t + I_t (u^{T_2, \rho})_{}))) \|_{L^2}\\
    &  & + C t^{- 1} \sup_{\varphi \in L^2 (\langle x \rangle^n)} \| V_{t,
    T_1} (\varphi) - V_{t, T_2} (\varphi) \|_{L^2}\\
    & \leqslant & C t^{- 1} t^{\beta^2 / 8 \pi} \| I_t (u^{T_1, \rho} -
    u^{T_2, \rho}) \|_{L^2} + C t^{- 1} \sup_{\varphi \in L^2 (\langle x
    \rangle^n)} \| V_{t, T_1} (\varphi) - V_{t, T_2} (\varphi) \|_{L^2}\\
    & \leqslant & C \int^t_0 s^{- 1 - \delta} \| u_s^{T_1, \rho} - u_s^{T_2,
    \rho} \|_{L^2 (\mathbb{R}^2)} + C t^{- 1} (T_1)^{- \delta} .
  \end{eqnarray*}
  Gronwalls lemma now gives
  \[ \| u_t^{T_1, \rho} - u_t^{T_2, \rho} \|_{L^2} \leqslant C t^{- 1}
     (T_1)^{- \delta} \]
  which implies that $u_s^{T_1, \rho}$ is a Couchy sequence in $L^2
  (\mathbb{R}_+ \times \mathbb{R}^2)$ (since $t^{- 1}$ is in $L^2$). To prove
  that the limit is a minimizer of $F^{\infty, \rho}$ it is sufficient to
  check that
  \[ \qquad \lim_{T \rightarrow \infty} F^{T, \rho} (u^T) = F^{\infty, \rho}
     (u) \]
  for any sequence $u^T \rightarrow u$ in $\mathbb{H}_a$. Indeed this implies
  \[ F^{\infty, \rho} (u) \geqslant \inf_{u \in \mathbb{H}_a} F^{\infty,
     \rho} (u) = \inf_{u \in \mathbb{H}_a} \lim_{T \rightarrow \infty} F^{T,
     \rho} (u) \geqslant \lim_{T \rightarrow \infty} F^{T, \rho} (u^{T, \rho})
     = F^{\infty, \rho} (u^{\infty, \rho}), \]
  which gives the claim. To show this observe that
  \begin{eqnarray*}
    &  & | F^{T, \rho} (u^T) - F^{\infty, \rho} (u) |\\
    & \leqslant & \mathbb{E} \left| \int \rho (\llbracket \cos (\beta
    (W_{\infty} + I_{\infty} (u))) \rrbracket - \llbracket \cos (\beta (W_T +
    I_T (u))) \rrbracket) \mathd x \right| +\mathbb{E} [\| u - u^T \|^2_{L^2
    (\mathbb{R}_+ \times \mathbb{R}^2)}]
  \end{eqnarray*}
  and recall that for $T \in [0, \infty]$
  \[ \llbracket \cos (\beta (W_T + I_T (u))) \rrbracket = \llbracket \cos
     (\beta (W_T)) \rrbracket \sin (\beta I_T (u)) + \llbracket \sin (\beta
     (W_T)) \rrbracket \cos (\beta I_T (u)) . \]
  We have that if $u^T \rightarrow u$ in $\mathbb{H}_a$ then $\| I_T (u^T) -
  I_{\infty} (u) \|_{H^1} \leqslant \| I_T (u - u^T) \|_{H^1} + \| I_{T,
  \infty} (u) \|_{H^1} \rightarrow 0$ $\mathbb{P}- a.s$.
  
  Furthermore $\| I_T (u^T) - I_{\infty} (u) \|_{H^1} \leqslant \| u^T \|_{L^2
  (\mathbb{R}_+ \times \mathbb{R}^2)} + \| u \|_{L^2 (\mathbb{R}_+ \times
  \mathbb{R}^2)}$ and thereby uniformly in $L^2 (\mathbb{P})$. Finally
  recalling that $\llbracket \cos (\beta (W_T)) \rrbracket \rightarrow
  \llbracket \cos (\beta (W_{\infty})) \rrbracket$ in $L^p (\mathbb{P},
  H_{\tmop{loc}}^1 (\mathbb{R}^2))$ we can conlude that
  \[ \int \rho \llbracket \cos (\beta (W_T + I_T (u^T))) \rrbracket
     \rightarrow \int \rho \llbracket \cos (\beta (W_{\infty} + I_{\infty}
     (u))) \rrbracket \]
  $\mathbb{P}- a.s$ and is unifomly integrable, which implies the claim.
\end{proof}

\section{Variational characterization}\label{sec:char}

\begin{theorem}
  \label{thm:weighted}Let $\bar{u}^{\rho, f}$ be the minimzer of
  \begin{equation}
    F^{\rho, f} (u) =\mathbb{E} \left[ f (W_{\infty} + I_{\infty} (u)) + \int
    \rho (x) \llbracket \cos (\beta (W_{\infty} + I_{\infty} (u))) \rrbracket
    \mathd x + \frac{1}{2} \int \| u_t \|^2_{L^2} \mathd t \right]
    \label{functional-def}
  \end{equation}
  Then there exists $\gamma, \delta > 0$ such that for any $\lambda (\beta +
  \beta^2) < \delta$ and $w (x) = \exp (\gamma | x |)$, there exists $C > 0$
  such that
  \[ \mathbb{E} \left[ \int^{\infty}_0 \| w (\bar{u}^{\rho, f} -
     \bar{u}^{\rho}) \|_{L^2}^2 \mathd t \right] \leqslant C | f |_{1, \gamma}
  \]
\end{theorem}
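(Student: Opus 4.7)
The strategy is to exploit the (weighted) convexity of $F^{\rho,f}$ by testing the Euler--Lagrange equations of $\bar u^{\rho,f}$ and $\bar u^\rho$ against the direction $h_t = w^2 v_t$, where $v := \bar u^{\rho,f} - \bar u^\rho$. The $L^\infty$ bound from Theorem \ref{thm:L-infty} together with the smallness of $\lambda$ ensures that the Wick-cosine contribution to the Hessian is subdominant to the strictly convex term $\tfrac12\int\|u\|^2$.

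Concretely, writing the EL equation \eqref{eq:EL-1} for $\bar u^{\rho,f}$ (with the added $\nabla f$-contribution) and for $\bar u^\rho$, subtracting, and testing with $h_t = w^2 v_t$ yields, setting $\Phi := W_\infty + I_\infty(\bar u^\rho)$ and $\Phi_f := W_\infty + I_\infty(\bar u^{\rho,f})$,
\[ \mathbb{E}\Big[\int_0^\infty \|wv_t\|^2_{L^2} \mathd t\Big] = -\mathbb{E}\Big[\int \nabla f(\Phi_f) I_\infty(w^2 v) \mathd x\Big] + \lambda\beta \mathbb{E}\Big[\int \rho (\llbracket\sin(\beta\Phi_f)\rrbracket - \llbracket\sin(\beta\Phi)\rrbracket) I_\infty(w^2 v) \mathd x\Big]. \]
The diagonal contribution $\mathbb{E}[\int\int v_t h_t \mathd x \mathd t] = \mathbb{E}[\int \|wv_t\|^2_{L^2} \mathd t]$ produces the quantity to be bounded, while the right-hand side splits into an $f$-contribution and a sine-difference contribution.

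For the $\nabla f$ term I would use duality $L^{2,\gamma}\leftrightarrow L^{2,-\gamma}$ together with the identity $\|w^2 v\|_{L^{2,-\gamma}} = \|wv\|_{L^2}$ and the weighted $I_\infty$ estimate from Section \ref{sec:decomposition}, obtaining a bound of the form $C|f|_{1,\gamma}\big(\mathbb{E}[\int \|wv\|^2 \mathd t]\big)^{1/2}$. For the sine-difference I would apply the mean value formula $\llbracket\sin(\beta\Phi_f)\rrbracket - \llbracket\sin(\beta\Phi)\rrbracket = \beta\int_0^1 \llbracket\cos(\beta(W_\infty + I_\infty(\bar u^\rho + \theta v)))\rrbracket I_\infty(v)\,\mathd\theta$, and then use the Ito representation indicated in the introduction to recast the spatial integral of the Wick cosine as a time integral against $\alpha(t)\sin(\beta(W_t + I_t(\cdot)))$, which at each finite $t$ is an honest $L^\infty$ function (avoiding distributional issues at $t=\infty$). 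Combining $\alpha(t) \lesssim \langle t\rangle^{\beta^2/8\pi}$, $\|J_t\|_{L^p\to L^p} \lesssim t^{-1}$, the weighted smoothing estimates on $J_t$ and $I_t$, and the $L^\infty$ bound on $\bar u^\rho$ (and the analogous one on $\bar u^{\rho,f}$) from Theorem \ref{thm:L-infty}, this term is dominated by $C\lambda(\beta+\beta^2)\mathbb{E}[\int \|wv\|^2 \mathd t]$. Taking $\lambda(\beta+\beta^2) < \delta$ small enough absorbs it into the LHS, leaving $\mathbb{E}[\int \|wv\|^2] \lesssim |f|_{1,\gamma}\big(\mathbb{E}[\int \|wv\|^2]\big)^{1/2}$, whence the claim.

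The main obstacle is the interplay between the exponential weight $w(x) = e^{\gamma|x|}$ and the non-local operator $J_t$: since $J_t$ involves the heat semigroup $e^{-L/t}$, it does not commute with $w$, and the commutator $[J_t, w]$ produces contributions of size $|\nabla w|/w \leq \gamma$. The weighted smoothing estimates persist provided $\gamma$ is strictly below the mass gap $m$, which forces the smallness constraint on $\gamma$. A secondary technical issue is to make rigorous sense of the Ito representation at $t=\infty$ where $\llbracket\cos(\beta W_\infty)\rrbracket$ is only a distribution; I would address this by first working with the finite-$T$ approximations (where all quantities are smooth) and passing to the limit via Proposition \ref{prop:conv-min}.
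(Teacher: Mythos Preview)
Your proposal is correct and follows essentially the same route as the paper: subtract the Euler--Lagrange equations for $\bar u^{\rho,f}$ and $\bar u^{\rho}$, test against the weighted difference, bound the $\nabla f$ contribution by $|f|_{1,\gamma}$ via duality, and absorb the Wick-trigonometric terms using the $L^\infty$ bound on the optimal drift together with the smallness of $\lambda(\beta+\beta^2)$. The only organisational difference is that the paper applies It\^o's formula \emph{before} writing down the Euler--Lagrange equation (Lemma~\ref{lemma:el-f}), so that the EL identity already lives at finite $t$ with two terms ($\beta\,J_t\llbracket\sin\rrbracket\,h_t$ and $\beta^2\,J_t(\llbracket\cos\rrbracket I_t(h))\,u_t$); the subtraction and weighted testing (Lemma~\ref{lemma:decay}, with $h=w\,v$) then avoid the distributional issues at $t=\infty$ that you correctly flag as the secondary technical obstacle.
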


Before proceeding to the proof we establish and EL-equation for the minimers
of {\eqref{functional-def}}, similarly to {\cite{barashkov-2021}}. \

\begin{lemma}
  \label{lemma:el-f}There exists a minimer $\bar{u}^{\rho, f}$ of
  {\eqref{functional-def}} in $\mathbb{H}_a$ and it satisfies the equation
  \begin{eqnarray*}
    &  & \mathbb{E} \left[ \beta^2 \int^{\infty}_0 \int J_t (\rho \llbracket
    \cos (\beta (W_t + I_t (\bar{u}^{\rho, f}))) \rrbracket I_t (h)) \left(
    \bar{u}_t^{\rho, f} \right) \mathd x \mathd t \right. \\
    & & + \left. \beta \int^{\infty}_0 \int J_t (\rho \llbracket \sin (\beta (W_t + I_t
    (\bar{u}^{\rho, f}))) \rrbracket) h_t \mathd x \mathd t \right]\\
    & = & \mathbb{E} \left[ \int^{\infty}_0 \int \bar{u}_t^{\rho, f} h_t
    \mathd x \mathd t \right] -\mathbb{E} \left[ \int \nabla f (W_{\infty} +
    I_{\infty} (\bar{u}^{\rho, f})) I_{\infty} (h) \mathd x \right] .
  \end{eqnarray*}
  for any $h \in \mathbb{H}_a$. 
\end{lemma}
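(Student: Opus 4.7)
The plan proceeds in three stages: (i) establish existence of the minimizer by approximating at finite time $T$ and passing to the limit, (ii) use Ito's formula to rewrite the renormalized cosine as a drift integral plus a martingale, and (iii) compute the directional derivative of the resulting expression and set it to zero.

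For (i) I would apply Proposition \ref{prop:verification} to the finite-$T$ potential $V_T(\varphi) = f(\varphi) + \int \rho \llbracket \cos(\beta \varphi) \rrbracket \mathd x$, which is $C^2$ on $L^2(\langle x \rangle^{-n})$ by the assumptions on $f$ and boundedness of the Wick cosine/sine integrated against a smooth compactly supported $\rho$. This produces a unique minimizer $\bar u^{T,\rho,f}$ of the truncated functional. The Cauchy-in-$T$ argument in the proof of Proposition \ref{prop:conv-min} then carries over verbatim (the extra $\nabla f$-term contributes a uniformly bounded, Lipschitz drift), yielding $\bar u^{\rho,f}$ as the limit, which minimizes $F^{\rho,f}$ by the same lower-semicontinuity argument used at the end of that proposition.

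For (ii) and (iii) I would apply Ito's formula to $\llbracket \cos(\beta(W_t + I_t(u))) \rrbracket$, as sketched just before Theorem \ref{thm:characterization-1}: the Wick factor $\alpha_t$ is designed precisely to cancel the Ito quadratic-variation term, so the only surviving $\mathd t$-contribution is $-\beta \llbracket \sin(\beta(W_t + I_t(u))) \rrbracket J_t u_t$. Integrating against $\rho$, taking expectation, and using self-adjointness of $J_t$, I rewrite
\[
F^{\rho,f}(u) = \mathbb{E} f(W_\infty + I_\infty(u)) - \beta \, \mathbb{E} \int_0^\infty \!\!\int J_t\bigl(\rho \llbracket \sin(\beta(W_t+I_t(u)))\rrbracket\bigr) u_t \mathd x \mathd t + \frac{1}{2} \mathbb{E} \int_0^\infty \|u_t\|^2_{L^2} \mathd t + \mathrm{const}.
\]
Every term is now manifestly G\^ateaux differentiable in $u$. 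Differentiating in direction $h \in \mathbb{H}_a$ at $u = \bar u^{\rho,f}$, the $f$-term gives $\mathbb{E} \int \nabla f(W_\infty + I_\infty(\bar u^{\rho,f})) I_\infty(h) \mathd x$; the $\sin$-integral yields, by the chain rule inside $\sin$ (which brings down a factor $\beta \llbracket \cos \rrbracket I_t(h)$) and another use of $J_t$ self-adjoint, both the $\beta^2$-term involving $J_t(\rho \llbracket \cos \rrbracket I_t(h))\, \bar u^{\rho,f}_t$ and the $\beta$-term involving $J_t(\rho \llbracket \sin \rrbracket) h_t$; and the quadratic gives $\mathbb{E} \int \bar u^{\rho,f}_t h_t \mathd x \mathd t$. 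Setting the derivative to zero and rearranging produces the stated equation.

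The main obstacle will be justifying the exchange of $\lim$, $\mathbb{E}$, and the space-time integral when differentiating the $\sin$-integral, and controlling the martingale dropped after taking expectation. For the differentiation I would invoke dominated convergence, bounding difference quotients by the product of $\| I_t(h) \|_{W^{1,\infty}}$ (controlled via the proposition in Section \ref{sec:decomposition}) with the localized negative Besov norms of $\rho \llbracket \cos \rrbracket$ and $\rho \llbracket \sin \rrbracket$ from Section \ref{sec:cosine}, paired by Besov duality; the assumption $\beta^2 < 4\pi$ enters precisely so this pairing is subcritical, and compact support of $\rho$ supplies the spatial decay. For the martingale, BDG together with compact support of $\rho$ shows its expectation vanishes.
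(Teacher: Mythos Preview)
Your overall strategy matches the paper's: rewrite the cosine via It\^o's formula into a $\sin$-drift plus martingale, then take a directional derivative. Your existence argument (step (i)) is actually more explicit than what the paper writes; the paper simply says ``assume $u$ minimizes the functional'' and moves on, so you are not missing anything there.

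The gap is in your dominated-convergence justification. You propose to control the difference quotients by pairing Besov norms of $\rho\llbracket\cos\rrbracket$, $\rho\llbracket\sin\rrbracket$ against $\|I_t(h)\|_{W^{1,\infty}}$, citing the estimate in Section~\ref{sec:decomposition}. But that estimate reads $\|I_{s,t}(u)\|_{W^{1,\infty}} \le C\|\langle l\rangle^{1/2+\delta} u_l\|_{L^\infty_l([s,t]\times\mathbb{R}^2)}$, which requires $u\in L^\infty$ in space and time. For a generic test direction $h\in\mathbb{H}_a$ you only know $\mathbb{E}\int_0^\infty\|h_s\|_{L^2}^2\,\mathd s<\infty$, so $I_t(h)\in H^1$ but not $W^{1,\infty}$; your bound is simply unavailable.

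The paper avoids this entirely by a much more elementary observation: after It\^o's formula everything lives at finite $t$, where $\llbracket\sin(\beta(W_t+\cdot))\rrbracket = \alpha(t)\sin(\beta(W_t+\cdot))$ is a genuine bounded function with $|\alpha(t)\sin|\le\alpha(t)\le C\langle t\rangle^{\beta^2/8\pi}$. Combined with $\|J_t\|_{L^\infty\to L^\infty}\le t^{-1}$, the difference quotients are bounded pointwise in $t$ by
\[
C\, t^{\beta^2/8\pi-1}\,\|I_t(h)\|_{L^2}\,\|u_t\|_{L^2} \;+\; C\, t^{\beta^2/8\pi-1}\,\|\rho\|_{L^2}^{1/2}\,\|h_t\|_{L^2},
\]
which is integrable in $(t,\mathbb{P})$ since $\beta^2<4\pi$ forces $\beta^2/8\pi-1<-1/2$ and $u,h\in\mathbb{H}_a$. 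No Besov duality is needed. The same pointwise bound on the quadratic variation shows the stochastic integral is a true martingale, so BDG is also unnecessary. Once you replace your $W^{1,\infty}$/Besov argument with this direct $L^2$ bound, the rest of your proof goes through.
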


\begin{proof}
  We first claim that for $T \in [0, \infty]$
  \[ \int \rho \llbracket \cos (\beta (W_T + I_T (u \nobracket)) \rrbracket
     \mathd x = \int^T_0 \int \rho \llbracket \cos (\beta (W_t + I_t (u
     \nobracket)) \rrbracket J_t u_t \mathd x \mathd t + \tmop{martingale} .
  \]
  Indeed by Ito's formula
  \begin{eqnarray*}
    &  & \int \rho \llbracket \cos (\beta (W_T + I_T (u \nobracket))
    \rrbracket \mathd x - \int \rho \mathd x\\
    & = & \int^T_0 \int \rho \llbracket \sin (\beta (W_t + I_t (u
    \nobracket)) \rrbracket \mathd W_t + \int^T_0 \int J_t (\rho \llbracket
    \sin (\beta (W_t + I_t (u \nobracket)) \rrbracket) u_t \mathd t.
  \end{eqnarray*}
  A priori the first term on the r.h.s might only be a local martingale but we
  can see that by Ito isometry it's quadratic variation is
  \[ \int (J_t (\rho \llbracket \sin (\beta (W_t + I_t (u \nobracket))
     \rrbracket))^2 \mathd x \leqslant t^{- 2} t^{\beta^2 / 4 \pi} \| \rho
     \|^2_{L^2} \leqslant t^{- 1 - 2 \delta} \| \rho \|^2_{L^2} \]
  wich is integrable in time. Thus we can conclude that the first part is
  indeed an martingale. From this we deduce that
  \begin{eqnarray*}
    &  & \mathbb{E} \left[ f (W_{\infty} + I_{\infty} (u)) + \lambda \int
    \rho \llbracket \cos (\beta (W_{\infty} + I_{\infty} (u \nobracket)))
    \rrbracket \mathd x + \frac{1}{2} \int \| u_t \|^2_{L^2} \mathd t
    \right] \\
    & = & \mathbb{E} \left[ f (W_{\infty} + I_{\infty} (u)) + \lambda \beta
    \int^{\infty}_0 \int J_t \left( \rho \left\llbracket \sin \left( \beta
    \left( W_t + I_t (u) \right) \right) \right\rrbracket \right) (u_t) \mathd
          x \mathd t+  \frac{1}{2} \int \| u_t \|^2_{L^2} \mathd t + \int \rho
    \mathd x \right] .
  \end{eqnarray*}
  Now assume that $u \in \mathbb{H}_a$ minimizes the functional. Then for any
  $h \in \mathbb{H}_a$ we have
  \[ \lim_{\varepsilon \rightarrow 0} \frac{1}{\varepsilon} (F (u +
     \varepsilon h) - F (u)) \geqslant 0 \qquad \lim_{\varepsilon \rightarrow
     0} \frac{1}{\varepsilon} (F (u) - F (u - \varepsilon h)) \leqslant 0. \]
  It is clear that
  \[ \frac{1}{2} \left( \int \| u_t + h_t \|^2_{L^2} \mathd t - \int \| u_t
     \|^2_{L^2} \mathd t \right) = \int \int u_t h_t \mathd x \mathd t. \]
  Now consider the limit
  \begin{eqnarray*}
    &  & \lim_{\varepsilon \rightarrow 0} \frac{1}{\varepsilon} \left(
    \mathbb{E} \left[ \int \int J_t (\rho \llbracket \sin (\beta (W_t + I_t (u
    + \varepsilon h))) \rrbracket) (u_t + \varepsilon h_t) \mathd x \mathd t
    \right. \right.\\
    &  & \left. - \left. \int \int J_t \rho \llbracket \sin (\beta (W_t + I_t
    (u))) \rrbracket (u_t) \mathd x \mathd t \right] \right)
  \end{eqnarray*}
  Recall that since $\llbracket \sin (\beta (W_t + I_t (u + \varepsilon h)))
  \rrbracket = \alpha (t) \sin (\beta (W_t + I_t (u + \varepsilon h)))$ and
  $\alpha (t) \leqslant C t^{\beta^2 / 8 \pi}$
  \begin{eqnarray*}
    &  & \frac{1}{\varepsilon} \left| \int (J_t \rho \llbracket \sin (\beta
    (W_t + I_t (u + \varepsilon h))) \rrbracket) \left( u_t + \varepsilon h_t
    \right) - J_t (\llbracket \sin (\beta (W_t + I_t (u))) \rrbracket) (u_t)
    \mathd x \right|\\
    & \leqslant & \frac{1}{\varepsilon} \left| \int J_t (\rho \llbracket \sin
    (\beta (W_t + I_t (u + \varepsilon h))) \rrbracket) (u_t) - J_t (\rho
    \llbracket \sin (\beta (W_t + I_t (u))) \rrbracket) (u_t) \mathd x
    \right|\\
    &  & + \left| \int J_t (\rho \llbracket \sin (\beta (W_t + I_t (u +
    \varepsilon h))) \rrbracket) (h_t) \mathd x \right|\\
    & \leqslant & C t^{\beta^2 / 8 \pi - 1} \| I_t (h) \|_{L^2
    (\mathbb{R}^2)} \| u_t \|_{L^2 (\mathbb{R}^2)} + C t^{\beta^2 / 8 \pi - 1}
    \left( \int \rho \mathd x \right)^{1 / 2} \| h_t \|_{L^2 (\mathbb{R}^2)} .
  \end{eqnarray*}
  Note that the last term does not depend on $\varepsilon$ and is integrable
  in time and probability, since $u, h \in \mathbb{H}_a$. By dominated
  convergence we thus have
  \begin{eqnarray*}
    &  & \lim_{\varepsilon \rightarrow 0} \frac{1}{\varepsilon} \left(
    \mathbb{E} \left[ \int^{\infty}_0 \int J_t \rho \llbracket \sin (\beta
    (W_t + I_t (u + \varepsilon h))) \rrbracket (u_t + \varepsilon h_t) \mathd
    x \mathd t \right. \right.  \\
    &  &- \left. \left. \int^{\infty}_0 \int J_t \rho \llbracket \sin (\beta (W_t + I_t (u)))
    \rrbracket (u_t) \mathd x \mathd t \right] \right)\\
    &  & =\mathbb{E} \int^{\infty}_0 \lim_{\varepsilon \rightarrow 0} \left(
    \frac{1}{\varepsilon} \int J_t \rho \llbracket \sin (\beta (W_t + I_t (u +
         \varepsilon h))) \rrbracket \left( u_t + \varepsilon h_t \right) \mathd x \right.
    \\
    &  &\left. - \int J_t \rho \llbracket \sin (\beta (W_t + I_t (u))) \rrbracket (u_t)
    \mathd x \right) \mathd t
  \end{eqnarray*}
  and so it remains to show
  \begin{eqnarray*}
    &  & \lim_{\varepsilon \rightarrow 0} \left( \frac{1}{\varepsilon} \int
    J_t \rho \llbracket \sin (\beta (W_t + I_t (u + \varepsilon h)))
    \rrbracket (u_t + \varepsilon h) \mathd x - \int J_t \rho \llbracket \sin
    (\beta (W_t + I_t (u))) \rrbracket (u_t) \mathd x \right)\\
    & = & \left( \int \frac{1}{\varepsilon} \left( J_t \rho \llbracket \sin
    (\beta (W_t + I_t (u + \varepsilon h))) \rrbracket \left( u_t +
    \varepsilon h \right) - J_t \rho \llbracket \sin (\beta (W_t + I_t (u)))
    \rrbracket (u_t) \right) \mathd x \right)
  \end{eqnarray*}
  from which the statement follows by chain rule. Now
  \begin{eqnarray*}
    &  & \frac{1}{\varepsilon} | J_t \rho \llbracket \sin (\beta (W_t + I_t
    (u + \varepsilon h))) \rrbracket (u_t + \varepsilon h_t) - J_t \rho
    \llbracket \sin (\beta (W_t + I_t (u))) \rrbracket (u_t) |\\
    & \leqslant & | J_t \rho \llbracket \sin (\beta (W_t + I_t (u +
    \varepsilon h))) \rrbracket (h_t) |  \\ &  & + \frac{1}{\varepsilon} | J_t \rho
    \llbracket \sin (\beta (W_t + I_t (u + \varepsilon h))) \rrbracket - J_t
    \rho \llbracket \sin (\beta (W_t + I_t (u))) \rrbracket (u_t) |\\
    & \leqslant & t^{\beta^2 / 8 \pi} J_t \rho | h_t | + t^{\beta^2 / 8 \pi}
    J_t \rho | I_t (h) |  | u_t |
  \end{eqnarray*}
  which is integrable in $\mathbb{R}^2$, so we can conclude by dominated
  convergence. 
\end{proof}

\begin{lemma}
  \label{lemma:decay}Let $\bar{u}^{\rho, g}$ satisfy the equation
  \[ \begin{array}{ll}
       & \mathbb{E} \left[ \lambda \beta^2 \int^{\infty}_0 \int J_t (\rho
       \llbracket \cos (\beta (W_t + I_t (\bar{u}^{\rho, g}))) \rrbracket I_t
       (h)) (\bar{u}^{\rho, g}) \mathd x \mathd t \right.\\
       & \left. + \lambda \beta \int^{\infty}_0 \int J_t (\rho \llbracket
       \sin (\beta (W_t + I_t (\bar{u}^{\rho, g}))) \rrbracket) h_t \mathd x
       \mathd t \right]\\
       = & \mathbb{E} \left[ \int^{\infty}_0 \int \bar{u}_t^{\rho, g} h_t
       \mathd x \mathd t \right] -\mathbb{E} [g (W, \bar{u}^{\rho, g}, h)] .
     \end{array} \]
  and assume that for $0 < \gamma < 2 m$ and $z \in \mathbb{R}^2$, $w (x) =
  \exp (\gamma | x - z |)$, we have
  \[ \mathbb{E} [g (W, \bar{u}^{\rho, g}, h)] \leqslant C_g \mathbb{E} [\|
     w^{- 1 / 2} h \|^2_{L^2 (\mathbb{R}_+ \times \mathbb{R}^2)}]^{1 / 2} . \]
  Then there exists a $\kappa > 0$ such that for $\lambda (\beta^2 + \beta)
  \leqslant \kappa$
  \[ \mathbb{E} [\| w^{1 / 2} (\bar{u}^{\rho, g} - \bar{u}^{\rho, 0})
     \|^2_{L^2 (\mathbb{R}_+ \times \mathbb{R}^2)}] \leqslant 2 C_g . \]
\end{lemma}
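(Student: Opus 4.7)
My plan is to derive the bound from the Euler--Lagrange identity of Lemma \ref{lemma:el-f}, testing it against a weighted version of the difference $\bar u^{\rho,g}-\bar u^{\rho,0}$ and exploiting smallness of $\lambda(\beta+\beta^2)$ to absorb the nonlinear error into the coercive quadratic term coming from the quadratic control cost.

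\textbf{Step 1 (subtraction and test function).} Both $\bar u^{\rho,g}$ and $\bar u^{\rho,0}$ satisfy the EL identity of Lemma \ref{lemma:el-f} (the second with $g=0$). Subtracting yields, for every admissible $h\in\mathbb{H}_a$,
\begin{align*}
\mathbb{E}\int_0^\infty\!\!\int(\bar u^{\rho,g}_t-\bar u^{\rho,0}_t)h_t\,\mathd x\,\mathd t=\mathbb{E}[g(W,\bar u^{\rho,g},h)]+\mathrm{NL}(h),
\end{align*}
where $\mathrm{NL}(h)$ collects the two nonlinear differences (the ``cosine Hessian'' and the ``sine gradient'' pieces). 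I then test with $h_t:=w(\bar u^{\rho,g}_t-\bar u^{\rho,0}_t)$. The left-hand side becomes exactly the target $X:=\mathbb{E}\|w^{1/2}(\bar u^{\rho,g}-\bar u^{\rho,0})\|^2_{L^2(\mathbb{R}_+\times\mathbb{R}^2)}$, and since $\|w^{-1/2}h\|^2_{L^2}=X$, the hypothesis on $g$ gives $|\mathbb{E}[g(W,\bar u^{\rho,g},h)]|\le C_g X^{1/2}$.

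\textbf{Step 2 (control of NL, main obstacle).} For the sine piece I use
\[
\llbracket\sin\rrbracket^g-\llbracket\sin\rrbracket^0=\alpha(t)\beta\int_0^1\cos\!\bigl(\beta(W_t+I_t(\bar u^{\rho,0})+sI_t(\bar u^{\rho,g}-\bar u^{\rho,0}))\bigr)\mathd s\cdot I_t(\bar u^{\rho,g}-\bar u^{\rho,0}),
\]
and for the cosine Hessian I split
\[
J_t(\rho\llbracket\cos\rrbracket^g I_t(h))\bar u^{\rho,g}-J_t(\rho\llbracket\cos\rrbracket^0 I_t(h))\bar u^{\rho,0}=J_t\bigl(\rho(\llbracket\cos\rrbracket^g{-}\llbracket\cos\rrbracket^0)I_t(h)\bigr)\bar u^{\rho,g}+J_t(\rho\llbracket\cos\rrbracket^0 I_t(h))(\bar u^{\rho,g}{-}\bar u^{\rho,0}),
\]
writing in the first summand $\bar u^{\rho,g}=\bar u^{\rho,0}+(\bar u^{\rho,g}-\bar u^{\rho,0})$ once more to isolate the dominant quadratic piece from a higher-order term. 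I pass $J_t$ onto the adjacent factor by self-adjointness and apply a weighted Cauchy--Schwarz inequality, splitting $w=w^{1/2}\cdot w^{1/2}$ so that one factor of $w^{1/2}$ rides on $I_t(\bar u^{\rho,g}-\bar u^{\rho,0})$ (or on the difference itself) and the other on $h$. The key inputs to be assembled are:\ (i) $|\llbracket\cos\rrbracket|,|\llbracket\sin\rrbracket|\le\alpha(t)\le C\langle t\rangle^{\beta^2/8\pi}$ from Section \ref{sec:cosine};\ (ii) $\|J_tf\|_{L^\infty}\le t^{-1}\|f\|_{L^\infty}$ and the weighted $L^{2,\gamma/2}$ estimates on $I_{s,t}$ from Section \ref{sec:decomposition}, which are valid thanks to $\gamma/2<m$ (the role of the assumption $\gamma<2m$);\ (iii) the $L^\infty$ estimate $\|\bar u^{\rho,0}_t\|_{L^\infty}\le C\langle t\rangle^{\beta^2/8\pi-1}$ from Theorem \ref{thm:L-infty}. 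Since $\beta^2<4\pi$, the combined time-dependent prefactors such as $\alpha(t)\langle t\rangle^{\beta^2/8\pi-1}$ and $t^{-1}\alpha(t)^2$ are integrable in $t$ (near infinity by $\beta^2/4\pi<1$, near $0$ by boundedness of the relevant kernels). Combining these bounds produces $|\mathrm{NL}(h)|\le C\lambda(\beta+\beta^2)\,X$.

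\textbf{Step 3 (closing).} Steps 1--2 give the quadratic inequality $X\le C\lambda(\beta+\beta^2)X+C_g X^{1/2}$. Choosing $\kappa$ so small that $C\lambda(\beta+\beta^2)\le \tfrac12$ absorbs the nonlinearity into the left-hand side and reduces the inequality to $X^{1/2}\le 2C_g$, giving the stated bound (up to the explicit constant). The principal obstacle is Step 2: tracking the exponential weight $w$ through the operators $J_t$ and $I_t$ and verifying that the Besov/$L^\infty$ prefactors appearing after each splitting combine to an integrable-in-time quantity; the hypotheses $\gamma<2m$ and $\beta^2<4\pi$ are precisely what make this bookkeeping close.
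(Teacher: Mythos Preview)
Your proposal is correct and follows essentially the same route as the paper: subtract the two Euler--Lagrange identities, test with $h=w(\bar u^{\rho,g}-\bar u^{\rho,0})$, bound the sine and cosine nonlinear differences by $C\lambda(\beta+\beta^2)X$ using $\alpha(t)\lesssim\langle t\rangle^{\beta^2/8\pi}$, the weighted $I_t$ estimate of Lemma~\ref{lemma:boundIL2}, and the $L^\infty$ bound on $\bar u^{\rho,0}$ from Theorem~\ref{thm:L-infty}, then absorb and close. The only cosmetic difference is that the paper splits the cosine Hessian term as $(\llbracket\cos\rrbracket^g-\llbracket\cos\rrbracket^0)\bar u^{\rho,0}+\llbracket\cos\rrbracket^g(\bar u^{\rho,g}-\bar u^{\rho,0})$ directly, keeping $\bar u^{\rho,0}$ (for which the $L^\infty$ bound is available) on the first summand, whereas you keep $\bar u^{\rho,g}$ there and then decompose once more; the two are equivalent.
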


\begin{proof*}{Proof}
  Takinge differences of the EL equations we get
  \[ \mathbb{E} \left[ \int^{\infty}_0 \int (\bar{u}_t^{\rho, g} -
     \bar{u}_t^{\rho, 0}) h_t \mathd x \mathd t \right] = \Iota + \Iota \Iota
     + \Iota \Iota \Iota \]
  with
  \begin{eqnarray*}
    \Iota & = & \mathbb{E} \left[ \lambda \beta^2 \int^{\infty}_0 \int J_t
    \left( \rho \left( \llbracket \cos (\beta (W_t + I_t (\bar{u}^{\rho, g})))
    \rrbracket \left( \bar{u}_t^{\rho, g} \right) - \llbracket \cos (\beta
    (W_t + I_t (\bar{u}^{\rho}))) \rrbracket I_t (h) \right) \right)
    (\bar{u}_t^{\rho}) \mathd x \mathd t \right]\\
    & = & \mathbb{E} \left[ \lambda \beta^2 \int^{\infty}_0 \int J_t \left(
    (\rho \llbracket \cos (\beta (W_t + I_t (\bar{u}^{\rho, g}))) \rrbracket -
    \rho \llbracket \cos (\beta (W_t + I_t (\bar{u}^{\rho}))) \rrbracket) I_t
    (h) \right) (\bar{u}_t^{\rho}) \right. \\
    &  &+ \left. J_t (\rho \llbracket \cos (\beta (W_t + I_t (\bar{u}^{\rho, g})))
    \rrbracket I_t (h)) (\bar{u}_t^{\rho, g} - u_t^{\rho}) \mathd x \mathd t
    \right]\\
    & = & \Iota_a + \Iota_b\\
    \Iota \Iota & = & \lambda \beta \mathbb{E} \left[ \int^{\infty}_0 \int J_t
    (\llbracket \sin (\beta (W_t + I_t (\bar{u}^{\rho, g}))) \rrbracket -
    \llbracket \sin (\beta (W_t + I_t (\bar{u}^{\rho}))) \rrbracket) h_t
    \mathd x \mathd t \right]\\
    \Iota \Iota \Iota & = & \mathbb{E} [g (W, u^{\rho, g}, h)]
  \end{eqnarray*}
  Now setting $h = w (\bar{u}^{\rho, g} - \bar{u}^{\rho})$ we can estimate
  \begin{footnotesize}
  \begin{eqnarray*}
    &  & \frac{1}{\lambda \beta^2} | \Iota_a |\\
    & = & \left| \mathbb{E} \left[ \int^{\infty}_0 \int J_t \left( \left(
    \rho \llbracket \cos (\beta (W_t + I_t (\bar{u}^{\rho, g}))) \rrbracket -
    \rho \left\llbracket \cos \left( \beta (W_t + I_t (\bar{u}^{\rho}))
    \right) \right\rrbracket \right) I_t (w (\bar{u}^{\rho, g} -
    \bar{u}^{\rho, 0})) \right) (\bar{u}_t^{\rho}) \mathd x \mathd t \right]
    \right|\\
    & = &\left| \mathbb{E} \left[ \int^{\infty}_0 \int J_t \left( w^{1 / 2}
    \rho \llbracket \cos (\beta (W_t + I_t (\bar{u}^{\rho, g}))) \rrbracket -
    w^{1 / 2} \rho \left\llbracket \cos \left( \beta (W_t + I_t
    (\bar{u}^{\rho, 0})) \right) \right\rrbracket \right. \right. \right.
    \left. \left. \left. w^{- 1 / 2} I_t (w (\bar{u}^{\rho, g} - \bar{u}^{\rho, 0})) \right)
    \bar{u}_t^{\rho, 0} \mathd x \mathd t \right] \right|\\
    & \leqslant & \mathbb{E} \left[ \left\| \left( J_t w^{1 / 2} \rho
    \llbracket \cos (\beta (W_t + I_t (\bar{u}^{\rho, g}))) \rrbracket - J_t
    w^{1 / 2} \rho \llbracket \cos (\beta (W_t + I_t (\bar{u}^{\rho})))
    \rrbracket \right) (\bar{u}_t^{\rho}) \right\|_{L^1 (\mathbb{R}_+, L^2
    (\mathbb{R}^2))}^2 \right]^{1 / 2}\\
    &  & \times \mathbb{E} [\| w^{- 1 / 2} I_t (w (\bar{u}^{\rho, g} -
    \bar{u}^{\rho})) \|^2_{L^{\infty} (\mathbb{R}_+ L^2 (\mathbb{R}^2))}]^{1 /
    2}\\
    & \leqslant & \mathbb{E} [\| t^{- 1} (w^{1 / 2} \rho \llbracket \cos
    (\beta (W_t + I_t (\bar{u}^{\rho, g}))) \rrbracket - w^{1 / 2} \rho
    \llbracket \cos (\beta (W_t + I_t (\bar{u}^{\rho}))) \rrbracket) \|_{L^2
    (\mathbb{R}_+, L^2 (\mathbb{R}^2))}^2]^{1 / 2} \| u^{\rho} \|_{L^2
    (\mathbb{R}_+, L^{\infty} (\mathbb{P} \times \mathbb{R}^2))}\\
    &  & \times \mathbb{E} [\| w^{- 1 / 2} I_t (w (\bar{u}^{\rho, g} -
    \bar{u}^{\rho})) \|^2_{L^{\infty} (\mathbb{R}_+ L^2 (\mathbb{R}^2))}]^{1 /
    2}\\
    & \leqslant & \mathbb{E} [\| t^{\beta^2 / 8 \pi - 1} w^{1 / 2} I_t
    (\nobracket \bar{u}^{\rho, g} - \bar{u}^{\rho}) \nobracket  \|_{L^2
    (\mathbb{R}_+, L^2 (\mathbb{R}^2))}^2]^{1 / 2} \| u^{\rho} \|_{L^2
    (\mathbb{R}_+, L^{\infty} (\mathbb{P} \times \mathbb{R}^2))} \\
    &  &\times \mathbb{E} [\| w^{- 1 / 2} I_t (w (\bar{u}^{\rho, g} -
    \bar{u}^{\rho})) \|^2_{L^{\infty} (\mathbb{R}_+, L^2 (\mathbb{R}^2))}]^{1
    / 2}\\
    & \leqslant & C \| u^{\rho} \|_{L^2 (\mathbb{R}_+, L^{\infty} (\mathbb{P}
    \times \mathbb{R}^2))} \mathbb{E} [\| w^{1 / 2} I_t (\nobracket
    \bar{u}^{\rho, g} - \bar{u}^{\rho}) \nobracket  \|_{L^{\infty}
    (\mathbb{R}_+, L^2 (\mathbb{R}^2))}^2]^{1 / 2} \\
    &  &\times \mathbb{E} [\| w^{- 1 / 2} I_t (w (\bar{u}^{\rho, g} -
    \bar{u}^{\rho})) \|^2_{L^{\infty} (\mathbb{R}_+ L^2 (\mathbb{R}^2))}]^{1 /
    2}\\
    & \leqslant & C \| u^{\rho} \|_{L^2 (\mathbb{R}_+, L^{\infty} (\mathbb{P}
    \times \mathbb{R}^2))} \mathbb{E} [\| w^{1 / 2} (u^{\rho, g} - u^{\rho,
    0}) \|^2_{L^2 (\mathbb{R}_+ \times \mathbb{R}^2)}]
  \end{eqnarray*}
  \end{footnotesize}
  since we have
  \begin{eqnarray*}
    &  & \mathbb{E} [\| w^{- 1 / 2} I_t (w (\bar{u}^{\rho, f} -
    \bar{u}^{\rho})) \|^2_{L^{\infty} (\mathbb{R}_+, L^2 (\mathbb{R}^2))}]\\
    & \leqslant & \mathbb{E} [\| \nobracket w^{- 1 / 2} w (\bar{u}^{\rho, f}
    - \bar{u}^{\rho})) \|^2_{L^2 (\mathbb{R}_+ \times \mathbb{R}^2)}]\\
    & \leqslant & \mathbb{E} [\| \nobracket w^{1 / 2} (\bar{u}^{\rho, f} -
    \bar{u}^{\rho})) \|^2_{L^2 (\mathbb{R}_+ \times \mathbb{R}^2)}]
  \end{eqnarray*}
  and
  \[ \begin{array}{ll}
       & \frac{}{} \frac{1}{\lambda \beta^2} | \Iota_b |\\
       = & \mathbb{E} \left[ \int^{\infty}_0 \int J_t (\rho \llbracket \cos
       (\beta (W_t + I_t (\bar{u}^{\rho, g}))) \rrbracket I_t (w
       (\bar{u}^{\rho, g} - \bar{u}^{\rho}))) (\bar{u}_t^{\rho, g} -
       \bar{u}_t^{\rho}) \mathd x \mathd t \right]\\
       = & \mathbb{E} \left[ \int^{\infty}_0 \int w^{- 1 / 2} J_t \rho
       \left\llbracket \cos (\beta (W_t + I_t (\bar{u}^{\rho, g})))
       \right\rrbracket I_t (w (\bar{u}^{\rho, g} - \bar{u}^{\rho})) w^{1 / 2}
       (\bar{u}_t^{\rho, g} - \bar{u}_t^{\rho}) \mathd x \mathd t \right]\\
       \leqslant & \mathbb{E} [\| J_t \rho \llbracket \cos (\beta (W + I
       (\bar{u}^{\rho, g}))) \rrbracket w^{- 1 / 2} I_t (w (\bar{u}^{\rho, g}
       - \bar{u}^{\rho})) \|_{L^2 (\mathbb{R}_+, L^2 (\mathbb{R}^2))}^2]^{1 /
       2}\\
       & \times \mathbb{E} [\| w^{1 / 2} (\bar{u}^{\rho, g} - \bar{u}^{\rho})
       \|^2_{L^2 (\mathbb{R}_+, L^2 (\mathbb{R}^2))}]^{1 / 2}\\
       \leqslant & \mathbb{E} [\| t^{\beta^2 / 8 \pi - 1} w^{- 1 / 2} I_t (w
       (\bar{u}^{\rho, g} - \bar{u}^{\rho})) \|_{L^1 (\mathbb{R}_+, L^2
       (\mathbb{R}^2))}^2]^{1 / 2} \mathbb{E} [\| w^{1 / 2} (\bar{u}^{\rho, g}
       - \bar{u}^{\rho}) \|^2_{L^2 (\mathbb{R}_+, L^2 (\mathbb{R}^2))}]^{1 /
       2}\\
       \leqslant & \mathbb{E} [\| w^{1 / 2} (\bar{u}^{\rho, g} - u^{\rho})
       \|_{L^2 (\mathbb{R}_+, L^2 (\mathbb{R}^2))}^2]^{1 / 2} \mathbb{E} [\|
       w^{- 1 / 2} I_t (w (\bar{u}^{\rho, g} - \bar{u}^{\rho}))
       \|^2_{L^{\infty} (\mathbb{R}_+, L^2 (\mathbb{R}^2))}]^{1 / 2}\\
       \leqslant & \mathbb{E} [\| w^{1 / 2} (\bar{u}^{\rho, g} - u^{\rho})
       \|_{L^2 (\mathbb{R}_+, L^2 (\mathbb{R}^2))}^2] .
     \end{array} \]
  where we have used Lemma \ref{lemma:boundIL2}. So in total we get that
  \[ | \Iota | \leqslant C \lambda \beta^2 \mathbb{E} [\| w^{1 / 2} (u^{\rho,
     f} - u^{\rho}) \|^2_{L^2 (\mathbb{R}_+ \times \mathbb{R}^2)}] \]
  similarly
  \begin{eqnarray*}
    | \Iota \Iota | & = & \lambda \beta \left| \mathbb{E} \left[
    \int^{\infty}_0 \int J_t \left( \llbracket \sin (\beta (W_t + I_t
    (\bar{u}^{\rho, f}))) \rrbracket - \llbracket \sin (\beta (W_t + I_t
    (\bar{u}^{\rho}))) \rrbracket \right) w (\bar{u}^{\rho, f} -
    \bar{u}^{\rho}) \mathd x \mathd t \right] \right|\\
    & \leqslant & \lambda \beta \mathbb{E} \left[ \int \langle t \rangle^{-
    1} \left\| w^{1 / 2} \llbracket \sin (\beta (W_t + I_t (\bar{u}^{\rho,
    f}))) \rrbracket - \left\llbracket \sin \left( \beta (W_t + I_t
    (\bar{u}^{\rho})) \right) \right\rrbracket \right\|_{L^2} \| w^{1 / 2}
    (\bar{u}^{\rho, f} - \bar{u}^{\rho}) \|_{L^2} \mathd t \right]\\
    & \leqslant & \lambda \beta \mathbb{E} \left[ \int \langle t
    \rangle^{\beta^2 / 8 \pi - 1} \| w^{1 / 2} (I_t (\bar{u}^{\rho, f}) - I_t
    (\bar{u}^{\rho})) \|_{L^2 (\mathbb{R}^2)} \| w^{1 / 2} (\bar{u}^{\rho, f}
    - \bar{u}^{\rho}) \|_{L^2 (\mathbb{R}^2)} \mathd t \right]\\
    & \leqslant & \lambda \beta \mathbb{E} [\| w^{1 / 2} (I_t (\bar{u}^{\rho,
    f}) - I_t (\bar{u}^{\rho})) \|_{L_t^{\infty} L_x^2} \| w^{1 / 2}
    (\bar{u}^{\rho, f} - \bar{u}^{\rho}) \|_{L^2 (\mathbb{R}_+ \times
    \mathbb{R}^2)}]\\
    & \leqslant & C \lambda \beta \mathbb{E} [\| w^{1 / 2} (\bar{u}^{\rho, f}
    - \bar{u}^{\rho}) \|^2_{L^2 (\mathbb{R}_+ \times \mathbb{R}^2)}] .
  \end{eqnarray*}
  By assumption
  \[ | \Iota \Iota \Iota | \leqslant C_g \mathbb{E} [\| w^{- 1 / 2} w
     (\bar{u}^{\rho, f} - \bar{u}^{\rho}) \|^2_{L^2 (\mathbb{R}_+ \times
     \mathbb{R}^2)}]^{1 / 2} =\mathbb{E} [\| w^{1 / 2} (\bar{u}^{\rho, f} -
     \bar{u}^{\rho}) \|^2_{L^2 (\mathbb{R}_+ \times \mathbb{R}^2)}]^{1 / 2} \]
  All together we obtain
  \begin{eqnarray*}
    &  & \mathbb{E} [\| w^{1 / 2} (\bar{u}^{\rho, f} - \bar{u}^{\rho})
    \|^2_{L^2 (\mathbb{R}_+ \times \mathbb{R}^2)}]\\
    & = & | \Iota + \Iota \Iota + \Iota \Iota \Iota |\\
    & \leqslant & C \lambda (\beta + \beta^2) \mathbb{E} [\| w^{1 / 2}
    (\bar{u}^{\rho, f} - \bar{u}^{\rho}) \|^2_{L^2 (\mathbb{R}_+ \times
    \mathbb{R}^2)}]\\
    &  & + C_g \mathbb{E} [\| w^{1 / 2} (\bar{u}^{\rho, f} - \bar{u}^{\rho})
    \|^2_{L^2 (\mathbb{R}_+ \times \mathbb{R}^2)}]^{1 / 2}
  \end{eqnarray*}
  Provided that $C \lambda (\beta + \beta^2) < 1 / 2$ this implies
  \[ \mathbb{E} [\| w^{1 / 2} (\bar{u}^{\rho, f} - \bar{u}^{\rho}) \|^2_{L^2
     (\mathbb{R}_+ \times \mathbb{R}^2)}]^{1 / 2} \leqslant 2 C_g \]
  which is the claim.
\end{proof*}

\begin{proof*}{Proof of Theorem \ref{thm:weighted}}
  By Lemma \ref{lemma:decay} and Lemma \ref{lemma:el-f} it is sufficient to
  verify that with
  \[ \mathbb{E} \left[ \int \nabla f (W_{\infty} + I_{\infty} (u^{\rho, f}))
     I_{\infty} (h) \right] \leqslant | f |_{1, \gamma} \mathbb{E} [\| h
     \|^2_{L^2 (\mathbb{R}_+, L^{2, - \gamma})}]^{1 / 2} . \]
  However this holds since with $w (x) = \exp (2 \gamma | x |)$
  \begin{eqnarray*}
    &  & \mathbb{E} \left[ \int \nabla f (W_{\infty} + I_{\infty} (u^{\rho,
    f})) I_{\infty} (h) \right]\\
    & \leqslant & \mathbb{E} \left[ \int w \nabla f (W_{\infty} + I_{\infty}
    (u^{\rho, f})) w^{- 1} I_{\infty} (h) \right]\\
    & \leqslant & | f |_{1, \gamma} \mathbb{E} [\| w^{- 1} I_{\infty} (h)
    \|_{L^2}]\\
    & \leqslant & | f |_{1, \gamma} \mathbb{E} [\| h \|^2_{L^2 (\mathbb{R}_+,
    L^{2, - \gamma})}]^{1 / 2}
  \end{eqnarray*}
  
\end{proof*}

\begin{proposition}
  \label{prop:drift-infinite-volume}There exists an $\bar{u} \in L^2
  (\mathbb{R}_+, L^{\infty} (\mathbb{R}^2))$ such that for any $0 < \gamma <
  m$
  \[ \lim_{\rho \rightarrow 1} \mathbb{E} \left[ \int^{\infty}_0 \|
     \bar{u}_s^{\rho, 0} - \bar{u}_s \|^2_{L^{2, - \gamma}} \mathd s \right] =
     0. \]
\end{proposition}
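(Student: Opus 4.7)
The plan is to show that $\{\bar u^{\rho,0}\}$ is Cauchy in $L^2(\Omega, L^2(\mathbb{R}_+, L^{2,-\gamma}))$ as $\rho \to 1$, and then identify the limit. Fix two cutoffs $\rho_1, \rho_2 \in C$ with $\rho_1 \equiv \rho_2 \equiv 1$ on $B(0,K)$, so that $\rho_2 - \rho_1$ is supported in $\{|x| > K\}$. Writing the Euler-Lagrange equation of Lemma \ref{lemma:el-f} for both cutoffs and subtracting realizes $\bar u^{\rho_2,0}$ as a $\bar u^{\rho_1, g}$ in the sense of Lemma \ref{lemma:decay}, with
\begin{align*}
g(W, u, h) &= \lambda\beta^2 \int_0^\infty \!\!\int J_t\bigl((\rho_2 - \rho_1)\llbracket\cos(\beta(W_t + I_t(u)))\rrbracket I_t(h)\bigr) u_t\, dx\, dt \\
&\quad + \lambda\beta \int_0^\infty \!\!\int J_t\bigl((\rho_2 - \rho_1)\llbracket\sin(\beta(W_t + I_t(u)))\rrbracket\bigr) h_t\, dx\, dt.
\end{align*}

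I would apply Lemma \ref{lemma:decay} with weight $w(x) = e^{\gamma|x|}$ centered at the origin, yielding
\[
\mathbb{E}\bigl[\|w^{1/2}(\bar u^{\rho_2,0} - \bar u^{\rho_1,0})\|_{L^2(\mathbb{R}_+\times\mathbb{R}^2)}^2\bigr] \le 2 C_g,
\]
and reduce the task to bounding $C_g$ by a quantity vanishing as $K \to \infty$. Both pieces of $g$ are handled by Cauchy-Schwarz analogously to the terms $\Iota_a,\Iota_b$ in the proof of Lemma \ref{lemma:decay}, with the quadratic piece using the $L^2(\mathbb{R}_+, L^\infty)$ bound on $\bar u^{\rho_2}$ from Theorem \ref{thm:L-infty}. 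This reduces matters to estimating $\|J_t((\rho_2-\rho_1)\llbracket\sin(\beta\cdot)\rrbracket_t)\|_{L^2(\mathbb{R}_+, L^{2,\gamma/2})}$. Since $(\rho_2-\rho_1)$ is supported outside $B(0,K)$ and $\int_0^\infty J_t^2 dt = L^{-1}$ has Bessel-type kernel decaying at rate $m$, standard weighted resolvent estimates (valid for $\gamma/2 < m$) should give $C_g \lesssim e^{-\delta K}$ for some $\delta > 0$.

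Convergence in the softer norm $L^{2,-\gamma}$ then follows from a simple pointwise split with $R = K/2$:
\[
\int e^{-2\gamma|x|}|\bar u^{\rho_2}_t - \bar u^{\rho_1}_t|^2 dx \le \int w(x) |\bar u^{\rho_2}_t - \bar u^{\rho_1}_t|^2 dx + C \|\bar u^{\rho_2}_t - \bar u^{\rho_1}_t\|_{L^\infty}^2 e^{-\gamma K}.
\]
After integrating in $t$, the first piece is at most $2C_g$, while the second is controlled by Theorem \ref{thm:L-infty}, since $\int_0^\infty \langle t\rangle^{2(\beta^2/8\pi - 1)} dt < \infty$ for $\beta^2 < 4\pi$. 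Sending $K \to \infty$ establishes the Cauchy property; by completeness we obtain a limit $\bar u$, which inherits the $L^2(\mathbb{R}_+, L^\infty)$ bound of Theorem \ref{thm:L-infty}.

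The hard part is proving $C_g \lesssim e^{-\delta K}$. The subtlety is that $J_t$ is only strongly localized for $t$ at least of order $K^{-2}$, through the Gaussian factor $e^{-t|x-y|^2/4}$ in the kernel of $J_t^2$; for small $t$ one has to rely on the mass damping $e^{-m^2/t}$ to preserve time integrability of the weighted norms. The cleanest organization is via the resolvent identity $L^{-1} = \int_0^\infty J_t^2 dt$ combined with the exponential decay of the massive Bessel kernel at rate $m$, which forces the constraint $\gamma/2 < m$ and thus determines the admissible range of weights in the final statement.
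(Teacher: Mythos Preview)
Your overall structure is right --- one does show the Cauchy property by subtracting the Euler--Lagrange equations and invoking Lemma~\ref{lemma:decay} --- but the choice of weight center is where the argument breaks. You apply Lemma~\ref{lemma:decay} with $w(x)=e^{\gamma|x|}$ centered at the origin and then claim $C_g\lesssim e^{-\delta K}$. This cannot hold: recall that $C_g$ is defined through $|\mathbb{E}[g(W,u,h)]|\le C_g\,\mathbb{E}[\|w^{-1/2}h\|^2]^{1/2}$, so $C_g$ is a supremum over test drifts $h$. Take $h$ supported near $|x|\sim K$; there $w^{-1/2}(x)\approx e^{-\gamma K/2}$, so the constraint $\|w^{-1/2}h\|_{L^2}\le 1$ allows $\|h\|_{L^2}\approx e^{\gamma K/2}$. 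Since $(\rho_2-\rho_1)$ is also supported in that region, the pairing $g(h)$ is of order $e^{\gamma K/2}$, not $e^{-\delta K}$. Worse, $(\rho_2-\rho_1)$ may have arbitrarily large support, so $\|w^{1/2}(\rho_2-\rho_1)\|_{L^2}$ --- and with it the quantity you call $\|J_t((\rho_2-\rho_1)\llbracket\sin\rrbracket)\|_{L^2(\mathbb{R}_+,L^{2,\gamma/2})}$ --- is typically infinite. The Bessel decay of $L^{-1}$ does not rescue this: it transports mass between distant regions, but here both the perturbation and the worst-case $h$ sit in the \emph{same} far region. Consequently the first term in your pointwise split is not controlled, and the argument does not close.

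The paper's proof avoids this by localizing the perturbation and centering the weight at the perturbation rather than at the origin. One writes $\rho_2-\rho_1=\sum_{|i|\ge N}\chi_i(\rho_2-\rho_1)$ via a partition of unity with $\operatorname{supp}\chi_i\subset B(i,2)$, and for each piece applies Lemma~\ref{lemma:decay} with weight $w_i(x)=e^{\gamma|x-i|}$. Because the piece has unit-scale support and $w_i\sim 1$ there, the corresponding $C_{g_i}$ is bounded by a \emph{uniform} constant, giving $\mathbb{E}[\|w_i^{1/2}(\bar u^{\rho_1+\rho_i,0}-\bar u^{\rho_1,0})\|^2]^{1/2}\le C$. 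The exponential decay then enters only when one transfers to the origin-weighted norm: from $e^{-\gamma|x|}\le e^{-\gamma|i|}e^{\gamma|x-i|}$ one gets $\|\,\cdot\,\|_{L^{2,-\gamma}}\le e^{-\gamma|i|}\|\,\cdot\,\|_{L_i^{2,\gamma}}$, hence each piece contributes $Ce^{-\gamma|i|}$ in the target norm. Telescoping and summing $\sum_{|i|\ge N}e^{-\gamma|i|}\lesssim e^{-\gamma N}$ yields the Cauchy estimate. In short: center the weight where the perturbation lives, not at the origin.
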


\begin{proof}
  We first show that if $\tmop{supp} (\rho) \subseteq B (y, 1)$ $x \in
  \mathbb{R}^2$ then for $w (x) = \exp (- 2 \gamma | x - y |)$ we have if $\|
  u \|_{L^{\infty} (\mathbb{R} \times \mathbb{R}^2)} \leqslant C$ then: \
  \begin{eqnarray*}
    &  & \mathbb{E} \left[ \left| \int^{\infty}_0 \int J_t (\rho \llbracket
    \sin (W_t + I_t (u)) \rrbracket) h_t \mathd x \mathd t \right| + \left|
    \int^{\infty}_0 \int J_t (\rho \llbracket \sin (W_t + I_t (u)) \rrbracket
    I_t (h)) u_t \mathd x \mathd t \right| \right]\\
    & \leqslant & C\mathbb{E} [\| w^{- 1 / 2} h \|^2_{L^2 (\mathbb{R}_+
    \times \mathbb{R}^2)}]^{1 / 2} .
  \end{eqnarray*}
  Indeed
  \[ \begin{array}{lll}
       &  & \mathbb{E} \left[ \left| \int^{\infty}_0 \int J_t (\rho
       \llbracket \sin (W_t + I_t (u)) \rrbracket) h_t \mathd x \mathd t
       \right| + \left| \int^{\infty}_0 \int J_t (\rho \llbracket \sin (W_t +
       I_t (u)) \rrbracket I_t (h)) u_t \mathd x \mathd t \right| \right]\\
       & = & \mathbb{E} \left[ \left| \int^{\infty}_0 \int w^{1 / 2} J_t
       (\rho \llbracket \sin (W_t + I_t (u)) \rrbracket) w^{- 1 / 2} h_t
       \mathd x \mathd t \right| \right]\\
       &  & +\mathbb{E} \left[ \left| \int^{\infty}_0 \int J_t (w^{1 / 2}
       \rho \llbracket \sin (W_t + I_t (u)) \rrbracket w^{- 1 / 2} I_t (h))
       u_t \mathd x \mathd t \right| \right]\\
       & \leqslant & \mathbb{E} [\| w^{1 / 2} J_t (\rho \llbracket \sin (W_t
       + I_t (u)) \rrbracket) \|_{L^2 (\mathbb{R}_+ \times \mathbb{R}^2)} \|
       w^{- 1 / 2} h \|_{L^2 (\mathbb{R}_+ \times \mathbb{R}^2)}]\\
       &  & +\mathbb{E} [\| w^{1 / 2} J_t (\rho \llbracket \sin (W_t + I_t
       (u)) \rrbracket w^{- 1 / 2} I_t (h) ) \|_{L^2 (\mathbb{R}_+ \times
       \mathbb{R}^2)} \| w^{- 1 / 2} h \|_{L^2 (\mathbb{R}_+ \times
       \mathbb{R}^2)}]\\
       & \leqslant & \mathbb{E} [\| t^{\beta^2 / 8 \pi - 1}  \| w^{1 / 2}
       \rho \|_{L^2 (\mathbb{R}^2)}  \|_{L^2 (\mathbb{R}_+)} \| w^{- 1 / 2} h
       \|_{L^2 (\mathbb{R}_+ \times \mathbb{R}^2)}]\\
       &  & +\mathbb{E} [\| t^{\beta^2 / 8 \pi - 1}  \| w^{1 / 2} \rho
       \|_{L^2 (\mathbb{R}^2)}  \|_{L^2 (\mathbb{R}_+)} \| w^{- 1 / 2} I_t (h)
       \|_{L^{\infty} (\mathbb{R}_+, L^2 (\mathbb{R}^2))} \| u \|_{L^{\infty}
       (\mathbb{R}_+ \times \mathbb{R}^2)}]\\
       & \leqslant & C\mathbb{E} [\| w^{- 1 / 2} h \|^2_{L^2 (\mathbb{R}_+
       \times \mathbb{R}^2)}]^{1 / 2} .
     \end{array} \]
  Now suppose that $| x | \geqslant N$, $(\rho_1 - \rho_2) \subset B (x, 1)$
  and applying Lemma \ref{lemma:decay} we get with $w (y) = \exp (\gamma | x -
  y |)$
  \begin{eqnarray*}
    \mathbb{E} [\| w^{1 / 2} (\bar{u}^{\rho_1, 0} - \bar{u}^{\rho_2, 0})
    \|^2_{L^2 (\mathbb{R}_+ \times \mathbb{R}^2)}]^{1 / 2} & \leqslant & C
  \end{eqnarray*}
  which implies by Lemma \ref{lemma:decay1} from Appendix \ref{app:weighted}
  that
  \begin{equation}
    \mathbb{E} [\| (\bar{u}^{\rho_1, 0} - \bar{u}^{\rho_2, 0}) \|^2_{L^2
    (\mathbb{R}_+, L^{2, - \gamma})}]^{1 / 2} \leqslant C \exp (- \gamma | x
    |) . \label{eq:decay-cutoff}
  \end{equation}
  Now suppose that that $\rho_1, \rho_2 = 1$ on $B (0, N)$. We can depompose
  $\rho_2 - \rho_1 = \sum_{i \in \mathbb{Z}^2} \chi_i (\rho_2 - \rho_1)
  \backassign \sum_{i \in \mathbb{Z}^2} \rho_i$ where $\chi_i$ is a partition
  of unity with $\tmop{supp} \chi_i \subset B (i, 2)$. Applying estimate
  {\eqref{eq:decay-cutoff}} iterativly we get
  \begin{eqnarray*}
    \mathbb{E} [\| (\bar{u}^{\rho_2, 0} - \bar{u}^{\rho_1, 0}) \|^2_{L^2
    (\mathbb{R}_+, L^{2, - \gamma})}]^{1 / 2} & \leqslant & \sum_{i \in
    \mathbb{Z}^2, | i | \geqslant N} \mathbb{E} [\|  (\bar{u}^{\rho_1 +
    \rho_i, 0} - \bar{u}^{\rho_1, 0}) \|^2_{L^2 (\mathbb{R}_+, L^{2, -
    \gamma})}]^{1 / 2}\\
    & \leqslant & C \sum_{i \in \mathbb{Z}^2, | i | \geqslant N} \exp (-
    \gamma | i |)\\
    & \leqslant & C \exp (- \gamma N) .
  \end{eqnarray*}
  This shows that $u^{\rho}$ is a Couchy-sequence in $L^2 (\mathbb{P}, L^2
  (\mathbb{R}_+, L^{2, - \gamma}))$ which implies that it converges in this
  space. Since $u^{\rho} \in \mathbb{H}_a$ the limit is also adapted to $W_t$.
  \ 
\end{proof}

\begin{theorem}
  \label{thm:characterization}Define $\mathbb{D}^f$ to be the space
  \[ \mathbb{D}^f = \left\{ v \in \mathbb{H}_a : \mathbb{E} \left[
     \int^{\infty}_0 \int \| v \|^2_{L^{2, \gamma} (\mathbb{R}^2)} \mathd x
     \mathd t \right] \leqslant C \right\} . \]
  Then for $C$ large enough
  \[ \lim_{\rho \rightarrow 1} (W^{\rho} (f) - W^{\rho} (0)) = \inf_{v \in
     \mathbb{D}} \bar{F}^f (v) \]
  where
  \begin{align*} \bar{F}^f (v) =& \mathbb{E} \left[ f (W_{\infty} + I_{\infty} (\bar{u}) +
     I_{\infty} (v)) \right. \\ & + \lambda \int \left( \llbracket \cos (\beta (W_{\infty} +
     I_{\infty} (\bar{u}) + I_{\infty} (v))) \rrbracket - \llbracket \cos
     (\beta (W_{\infty} + I_{\infty} (\bar{u}))) \rrbracket \right) \mathd x \\ & \left.  +
     \int^{\infty}_0 \int \bar{u}_t v_t \mathd x \mathd t + \frac{1}{2}
    \int^{\infty}_0 \| v_t \|^2_{L^2} \mathd t \right]
  \end{align*}
  and \={u} has beeen introduced in Proposition
  \ref{prop:drift-infinite-volume}.
\end{theorem}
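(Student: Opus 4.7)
The starting point is to rewrite the left-hand side as a single shifted variational problem. Since $W^{\rho}(f) = \inf F^{f,\rho}$ and $W^{\rho}(0) = F^{\rho}(\bar{u}^{\rho})$ by the BD formula and minimality of $\bar{u}^{\rho}$, the substitution $u = \bar{u}^{\rho} + v$ and expansion of $\tfrac12 \int \|u_t\|^2_{L^2}$ give
\[
W^{\rho}(f) - W^{\rho}(0) \;=\; \inf_{v \in \mathbb{H}_a} \bar{F}^{f,\rho}(v),
\]
where $\bar{F}^{f,\rho}$ is obtained from $\bar{F}^f$ by replacing $\bar{u}$ with $\bar{u}^{\rho}$ and multiplying the cosine difference by $\rho$. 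By Theorem \ref{thm:weighted} the minimizer $v^{\rho,f} := \bar{u}^{\rho,f} - \bar{u}^{\rho}$ satisfies $\mathbb{E}[\|w^{1/2} v^{\rho,f}\|^2_{L^2(\mathbb{R}_+\times\mathbb{R}^2)}] \leqslant C|f|_{1,\gamma}$ uniformly in $\rho$, so choosing the constant defining $\mathbb{D}^f$ large enough the infimum on both sides can be restricted to $\mathbb{D}^f$.

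For the upper bound I would fix $v \in \mathbb{D}^f$ and show $\bar{F}^{f,\rho}(v) \to \bar{F}^f(v)$ as $\rho \to 1$. The bilinear coupling term $\mathbb{E}[\int \bar{u}^{\rho}_t v_t \mathd x \mathd t]$ converges by Cauchy--Schwarz using Proposition \ref{prop:drift-infinite-volume}, the $f$ contribution is continuous, and $\tfrac12 \int \|v_t\|^2_{L^2}$ is independent of $\rho$. The core point is the cosine difference: applying the identity $\cos(\beta(a+b)) - \cos(\beta a) = -2\sin(\beta(a+b/2))\sin(\beta b/2)$ with $b = I(v)$ and expanding $\sin(\beta W + \text{smooth})$ in terms of $\llbracket \sin(\beta W)\rrbracket$ and $\llbracket \cos(\beta W)\rrbracket$ factorizes the integrand as a product of a polynomially-tempered Wick distribution with a bounded smooth function whose modulus is controlled by $\tfrac{\beta}{2}|I(v)|$. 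Since $v \in \mathbb{D}^f$ yields exponential spatial decay for $I(v)$, this pairing is absolutely integrable, and dominated convergence gives the limit, hence $\limsup_{\rho \to 1} \inf \bar{F}^{f,\rho} \leqslant \inf \bar{F}^f$.

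For the lower bound I would show that $v^{\rho,f}$ converges strongly in $\mathbb{D}^f$ to some limit $v^{*,f}$, and then apply the continuity machinery of the upper bound along this strongly convergent sequence to conclude $\bar{F}^{f,\rho}(v^{\rho,f}) \to \bar{F}^f(v^{*,f})$. Strong convergence is obtained by a Cauchy argument modeled on Proposition \ref{prop:drift-infinite-volume}: writing the Euler--Lagrange equation of Lemma \ref{lemma:el-f} for two cutoffs $\rho_1, \rho_2$ with $\rho_1 = \rho_2$ on a large ball, subtracting, and testing against $w\cdot(v^{\rho_1,f} - v^{\rho_2,f})$, the estimates of Lemma \ref{lemma:decay} produce exponential decay in the distance to $\tmop{supp}(\rho_1 - \rho_2)$; a partition-of-unity summation on the complement of the ball then closes the Cauchy property in $\mathbb{D}^f$. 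Since $\bar{F}^f(v^{*,f}) \geqslant \inf \bar{F}^f$ trivially, this gives $\liminf_{\rho \to 1} \inf \bar{F}^{f,\rho} \geqslant \inf \bar{F}^f$ and, combined with the upper bound, completes the proof.

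The main obstacle is the strong convergence of $v^{\rho,f}$: the nonlinear cosine couples $\bar{u}^{\rho}$ and $v^{\rho,f}$, so the EL equation for $v^{\rho,f}$ is not a pure linear perturbation of a convex problem. The key inputs that make the coercivity argument work are the $L^{\infty}$ bound on $\bar{u}^{\rho}$ from Theorem \ref{thm:L-infty} and the smallness condition $\lambda(\beta + \beta^2) < \kappa$, which together let the strictly convex quadratic $\tfrac12 \int \|v_t\|^2_{L^2} \mathd t$ absorb the cosine contributions exactly as in the proof of Lemma \ref{lemma:decay}.
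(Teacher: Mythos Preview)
Your reduction to $\inf_{v\in\mathbb{H}_a}\bar F^{f,\rho}(v)$ and the restriction to $\mathbb{D}^f$ via Theorem~\ref{thm:weighted} match the paper exactly. The difference is in how the limit $\rho\to 1$ is taken. The paper does not split into upper and lower bounds: it shows directly that $\sup_{v\in\mathbb{D}^f}|\bar F^{f,\rho}(v)-\bar F^f(v)|\to 0$, which yields both inequalities at once and never requires knowing that the perturbed minimizers $v^{\rho,f}$ converge. The uniformity comes from estimating the cosine--difference term by a quantity of the form (a power of) $\mathbb{E}[\|I_\infty(\bar u)-I_\infty(\bar u^\rho)\|_{H^{1,-\gamma}}]$ times (a power of) $\mathbb{E}[\|I_\infty(v)\|^2_{H^{1,2\gamma}}]$, the second factor being uniformly bounded on $\mathbb{D}^f$; the technical vehicle is Lemma~\ref{lemma:cosine-sobolev} combined with interpolation against $L^\infty$ to land in the Besov dual of $\llbracket\cos(\beta W_\infty)\rrbracket$. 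Your route---pointwise convergence for the upper bound, strong convergence of $v^{\rho,f}$ for the lower---would also close, but the lower half effectively repeats the Cauchy argument of Proposition~\ref{prop:drift-infinite-volume} with $f$ present, which is extra work the paper avoids.

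One technical remark on your upper bound: the pairing of $\llbracket\cos(\beta W_\infty)\rrbracket$ with the smooth factor cannot be handled by ``dominated convergence'' alone, since the Wick object lives only in a negative Besov space. You need the smooth factor in a positive-regularity space such as $B^{1-\delta}_{q,q}(\langle x\rangle^{-n})$, which is where the $W^{1,1,\gamma}$ estimate and the $W^{1,\infty}$ bound on $I_\infty(\bar u)$ (from Theorem~\ref{thm:L-infty} and Lemma~\ref{lemma:boundWinfty}) enter. Your sketch has the right ingredients but understates this step.
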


\begin{proof}
  We have
  \[ (W^{\rho} (f) - W^{\rho} (0)) = \inf_{u \in \mathbb{H}_a} (F^{f, \rho}
     (u) - F^{0, \rho} (u^{\rho})) \backassign \inf_{v \in \mathbb{H}_a}
     \bar{F}^{f, \rho} (v) \]
  where
  \[ \bar{F}^{f, \rho} (v) = F^{f, \rho} (u^{\rho} + v) - F^{0, \rho}
     (u^{\rho}) . \]
  We can restrict the functional on the the space $\mathbb{D}^f$ without
  changing the infimum by Theorem \ref{thm:weighted}. We now claim that
  $\bar{F}^f (v) - \bar{F}^{\rho, f} (v)$ goes to $0$ uniformly on
  $\mathbb{D}^f$. Indeed we can estimate
  \begin{eqnarray*}
    &  & \bar{F}^f (v) - \bar{F}^{\rho, f} (v)\\
    & = & \mathbb{E} [f (W_{\infty} + I_{\infty} (v) + I_{\infty} (\bar{u}))
    - f (W_{\infty} + I_{\infty} (v) + I_{\infty} (\bar{u}^{\rho}))
    \nobracket\\
    &  & + \lambda \int \rho \llbracket \cos (\beta W_{\infty}) \rrbracket
    \left( (\cos (\beta (I_{\infty} (v) + I_{\infty} (\bar{u}))) - \cos (\beta
    I_{\infty} (\bar{u})) \nobracket - (\nobracket \cos (\beta (I_{\infty} (v)
    + I_{\infty} (\bar{u}^{\rho}))) - \cos (\beta I_{\infty} (\bar{u}^{\rho}))
    \right)\\
    &  & + \lambda \int \rho \llbracket \sin (\beta W_{\infty}) \rrbracket
    \left( (\sin (\beta (I_{\infty} (v) + I_{\infty} (\bar{u}))) - \sin (\beta
    I_{\infty} (\bar{u}))) - (\nobracket \sin (\beta (I_{\infty} (v) +
    I_{\infty} (\bar{u}^{\rho}))) - \sin (\beta I_{\infty} (\bar{u}^{\rho})
    \nobracket \right)\\
    &  & + \lambda \int (1 - \rho) \llbracket \cos (\beta W_{\infty})
    \rrbracket (\cos (\beta (I_{\infty} (v) + I_{\infty} (\bar{u}))) - \cos
    (\beta I_{\infty} (\bar{u})))\\
    &  & + \lambda \int (1 - \rho) \llbracket \sin (\beta W_{\infty})
    \rrbracket (\sin (\beta (I_{\infty} (v) + I_{\infty} (\bar{u}))) - \sin
    (\beta I_{\infty} (\bar{u})))\\
    &  & + \int^{\infty}_0 \int v_t (\bar{u}_t - \bar{u}_t^{\rho}) \mathd x
    \mathd t\\
    &  & + \left. \int^{\infty}_0 \| v_t \|^2_{L^2 (\mathbb{R}^2)} \mathd t
    \right]
  \end{eqnarray*}
  By Interpolation with $L^{\infty}$, for $q$ close enough to $1$:
  \begin{eqnarray*}
    &  & \left\| \left( (\cos (\beta (I_{\infty} (v) + I_{\infty} (\bar{u})))
    - \cos (\beta I_{\infty} (\bar{u}))) - \cos (\beta (I_{\infty} (v) +
    I_{\infty} (\bar{u}^{\rho}))) - \cos (\beta I_{\infty} (\bar{u}^{\rho}))
    \right) \right\|_{B_{q, q}^{1 - 2\delta_1} (\langle x \rangle^k)}\\
    & \leqslant & 4 \beta \int^1_0 \| (\sin (\theta \beta I_{\infty} (v) +
    \beta I_{\infty} (\bar{u})) \nobracket - \sin (\theta \beta I_{\infty} (v)
    + \beta I_{\infty} (\bar{u}^{\rho})) I_{\infty} (v) \|^{1 -
    \delta_1}_{W^{1, 1, \gamma}} \mathd \theta\\
    & \leqslant & C \| \nobracket I_{\infty} (\bar{u}) - I_{\infty}
    (\bar{u}^{\rho}) | |^{1 - \delta_1}_{H^{1, - \gamma}} \| I_{\infty} (v)
    \|^{1 - \delta_1}_{H^{1, 2 \gamma}}\\
    &  & + C \| I_{\infty} (\bar{u}) - I_{\infty} (\bar{u}^{\rho})
    \|^{\delta_2 (1 - \delta_1)}_{L^{2, - \gamma}} \| I_{\infty} (v) \|^{2 (1
    - \delta_1)}_{H^{1, 2 \gamma}}
  \end{eqnarray*}
  Where we have used the embedding $W^{1,1,\gamma}\mapsto B^{1-\delta}_{1,1\/}(\langle x \rangle ^{2k})$ and subsequent interopolation with $L^{\infty}.$
 We have  also applied applied Lemma \ref{lemma:cosine-sobolev},
  using that $\| I (\bar{u}) \|_{W^{1, \infty}} \leqslant C$ from Theorem
  \ref{thm:L-infty} and Lemma \ref{lemma:boundWinfty}. It is clear that
  \[ \| I_{\infty} (\bar{u}^{\rho}) - I_{\infty} (\bar{u}) \|_{L^{2, -
     \gamma}} \leqslant \| I_{\infty} (\bar{u}^{\rho}) - I_{\infty} (\bar{u})
     \|^{1 - \delta}_{L^{\infty}} \| I_{\infty} (\bar{u}^{\rho}) - I_{\infty}
     (\bar{u}) \|^{\delta}_{L^{2, - \gamma}} . \]
  We can then use this estimate to obtain for $p$ large enough such that $1 /
  p + 1 / q = 1$
  \begin{eqnarray*}
    &  & \lambda \mathbb{E} \left[ \int \rho \llbracket \cos (\beta
    W_{\infty}) \rrbracket \left( \left( \cos (\beta (I_{\infty} (v) +
    I_{\infty} (\bar{u}))) - \cos \left( \beta I_{\infty} (\bar{u}) \right)
    \right) - \cos (\beta (I_{\infty} (v) + I_{\infty} (\bar{u}^{\rho}))) -
    \cos (\beta I_{\infty} (u^{\rho})) \right) \right]\\
    & \leqslant & C\mathbb{E} [\| \llbracket \cos (\beta W_{\infty})
    \rrbracket \|^p_{B_{p, p}^{- 1 + \delta} (\langle x \rangle^{- k})}]^{1 /
    p} \\ &  & \times \mathbb{E} \left[ \left( \| \nobracket I_{\infty} (\bar{u}) -
    I_{\infty} (\bar{u}^{\rho}) | |^{q (1 - \delta_1)}_{H^{1, - \gamma}} \|
    I_{\infty} (v) \|^{q (1 - \delta_1)}_{H^{1, 2 \gamma}} + \| I_{\infty}
    (\bar{u}) - I_{\infty} (\bar{u}^{\rho}) \|^{q \delta_2 (1 -
    \delta_1)}_{L^{2, - \gamma}} \| I_{\infty} (v) \|^{q 2 (1 -
    \delta_1)}_{H^{1, 2 \gamma}} \right) \right]^{1 / q}\\
    & \leqslant & \mathbb{E} [\| \llbracket \cos (\beta W_{\infty})
    \rrbracket \|^p_{B_{p, p}^{- 1 + \delta} (\langle x \rangle^{- k})}]^{1 /
    p} \\
    &  &\times \mathbb{E} [(\| \nobracket I_{\infty} (\bar{u}) - I_{\infty}
    (\bar{u}^{\rho}) | |^{2 q (1 - \delta_1)}_{H^{1, - \gamma}})]^{1 / 2 q}
    \mathbb{E} [\| I_{\infty} (v) \|^{2 q (1 - \delta_1)}_{H^{1, 2
    \gamma}}]^{1 / 2 q} \\
    &  &+\mathbb{E} [\| \nobracket I_{\infty} (\bar{u}) - I_{\infty}
    (\bar{u}^{\rho}) | |^2_{H^{1, - \gamma}}]^{1 / q - (1 - \delta_1)}
    \mathbb{E} [\| I_{\infty} (v) \|^2_{H^{1, 2 \gamma}}]^{(1 - \delta_1)},
  \end{eqnarray*}
  provided that we choose $q < 1 / (1 - \delta_1)$ and $\delta_2 = 2 (1 - q (1
  - \delta_1)) / q (1 - \delta_1)$. Now for $v \in \mathbb{D}^f$ the last line
  is bounded by
  \[ C (\mathbb{E} [(\| \nobracket I_{\infty} (\bar{u}) - I_{\infty}
     (\bar{u}^{\rho}) | |^{2 q (1 - \delta_1)}_{H^{1, - \gamma}})]^{1 / 2 q}
     +\mathbb{E} [\| \nobracket I_{\infty} (\bar{u}) - I_{\infty}
     (\bar{u}^{\rho}) | |^2_{H^{1, - \gamma}}]^{1 / q - (1 - \delta_1)}), \]
  which goes to $0$. We can proceed analogously for the sinus term. To
  estimate
  \begin{eqnarray*}
    &  & \beta \int (1 - \rho) \llbracket \cos (\beta W_{\infty}) \rrbracket
    (\cos (\beta (I_{\infty} (v) + I_{\infty} (\bar{u}))) - \cos (\beta
    I_{\infty} (\bar{u})))\\
    & = & \beta \int^1_0 \int (1 - \rho) \llbracket \cos (\beta W_{\infty})
    \rrbracket (\sin (\beta \theta I_{\infty} (v) + \beta I_{\infty}
    (\bar{u})) \nobracket I_{\infty} (v) \mathd \theta
  \end{eqnarray*}
  it is not hard to see that that $\| (1 - \rho) f \|_{W^{1, 1} (\langle x
  \rangle^k)} \leqslant N^{- k / 2} \| f \|_{W^{1, 1} (\langle x \rangle^{k /
  2})}$, so interpolating between $W^{1, 1, \gamma / 2}$ and $L^{\infty}$ we
  have
  \begin{eqnarray*}
    & \leqslant & \mathbb{E} [\| \llbracket \cos (\beta W_{\infty})
    \rrbracket \|^p_{B_{p, p}^{- 1 + \delta} (\langle x \rangle^{- k})}]^{1 /
    p}\\
    &  & \times \mathbb{E} [\| (1 - \rho) ((\sin (\beta (\theta I_{\infty}
    (v) + I_{\infty} (\bar{u}))) \nobracket I_{\infty} (v)) \|^{(1 - \delta)
    q}_{W^{1, 1, \gamma / 2}}]^{1 / q}\\
    & \leqslant & N^{- \gamma / 2} \mathbb{E} [\| \llbracket \cos
    (W_{\infty}) \rrbracket \|^p_{B_{p, p}^{- 1 + \delta} (\langle x
    \rangle^{- k})}]^{1 / p}\\
    &  & \times \mathbb{E} [\| ((\sin (\beta (\theta I_{\infty} (v) +
    I_{\infty} (\bar{u}))) \nobracket I_{\infty} (v)) \|^{(1 - \delta)
    q}_{W^{1, 1, \gamma}}]^{1 / q} .
  \end{eqnarray*}
  Now
  \[ \mathbb{E} [\| ((\sin (\beta (\theta I_{\infty} (v) + I_{\infty}
     (\bar{u}))) \nobracket I_{\infty} (v)) \|^{1 - \delta}_{W^{1, 1,
     \gamma}}] \]
  can be estimated analogously to the above computations. Clearly
  \[ \int^{\infty}_0 \int v (\bar{u} - u^{\rho}) \mathd x \mathd t \leqslant
     \mathbb{E} [\| v \|^2_{L^2 (\mathbb{R}_+, L^{2, \gamma})}]^{1 / 2}
     \mathbb{E} [\| \bar{u} - \bar{u}^{\rho} \|^2_{L^2 (\mathbb{R}^2, L^{2, -
     \gamma})}]^{1 / 2} \]
  Finally by definition of $f$
  \[ \mathbb{E} [| f (W_{\infty} + I_{\infty} (v) + I_{\infty} (\bar{u})) - f
     (W_{\infty} + I_{\infty} (v) + I_{\infty} (\bar{u}^{\rho})) |] \leqslant
     \mathbb{E} [\| \bar{u} - \bar{u}^{\rho} \|^2_{L^2 (\mathbb{R}^2, L^{2, -
     \gamma})}]^{1 / 2} \]
  which allows us to conclude.
\end{proof}

\begin{lemma}
  \label{lemma:cosine-sobolev}Assume that $\| f^1 \|_{W^{1, \infty}} + \| f^2
  \|_{W^{1, \infty}} \leqslant C$.\quad Then
  \begin{eqnarray*}
    &  & \| ((\cos (f^1 + g) \nobracket - \cos (f^2 + g) g) \|_{W^{1, 1,
    \gamma}}\\
    & \leqslant & C (\| \nobracket f^1 - f^2 | |_{H^{1, - \gamma}} \| g
    \|_{H^{1, 2 \gamma}} + \| f^1 - f^2 \|^{\delta}_{L^{2, - \gamma}} \| g
    \|^2_{H^{1, 2 \gamma}})
  \end{eqnarray*}
\end{lemma}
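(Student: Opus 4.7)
The plan is to reduce everything to weighted bilinear and trilinear integral estimates by exploiting the pointwise bounds $|\cos(f^1+g) - \cos(f^2+g)| \le |f^1-f^2|$ and $|\sin(f^1+g)-\sin(f^2+g)| \le |f^1-f^2|$, which follow from the fundamental theorem of calculus, combined with the factorization $e^{\gamma|x|} = e^{-\gamma|x|}\cdot e^{2\gamma|x|}$ that yields the Cauchy-Schwarz bound $\|ab\|_{L^{1,\gamma}} \le \|a\|_{L^{2,-\gamma}}\|b\|_{L^{2,2\gamma}}$.

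The $L^{1,\gamma}$ part of the $W^{1,1,\gamma}$ norm follows immediately from this Cauchy-Schwarz and the pointwise bound, landing in the first term on the right-hand side. For the gradient I would expand by the chain rule into four contributions,
\begin{align*}
\nabla\bigl[(\cos(f^1+g)-\cos(f^2+g))g\bigr]
&= -[\sin(f^1+g)-\sin(f^2+g)]\,g\,\nabla f^1 \\
&\quad - \sin(f^2+g)\,g\,\nabla(f^1-f^2) \\
&\quad - [\sin(f^1+g)-\sin(f^2+g)]\,g\,\nabla g \\
&\quad + (\cos(f^1+g)-\cos(f^2+g))\,\nabla g.
\end{align*}
The first, second, and fourth terms are bilinear in one factor coming from $f^1 - f^2$ and one from $g$, the remaining factor $\nabla f^1$ or $\sin(\cdot)$ being controlled in $L^\infty$ by the hypothesis $\|f^i\|_{W^{1,\infty}}\le C$; the same weighted Cauchy-Schwarz contributes to $\|f^1-f^2\|_{H^{1,-\gamma}}\|g\|_{H^{1,2\gamma}}$.

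The only delicate term is the third, genuinely trilinear in $|f^1-f^2|$, $|g|$ and $|\nabla g|$, with no $L^\infty$ slack on any single factor. For this I would first exploit the pointwise bound $|f^1-f^2| \le C|f^1-f^2|^{\delta}$ coming from $\|f^1-f^2\|_{L^\infty}\le 2C$, reducing the problem to estimating $\int e^{\gamma|x|}|f^1-f^2|^\delta\,|g|\,|\nabla g|\,\mathd x$. I would then apply a three-factor Hölder inequality with exponents $(2/\delta,\,2/(1-\delta),\,2)$ and the weight split $e^{\gamma|x|} = e^{-\delta\gamma|x|}\cdot e^{\gamma(1+\delta)|x|/2}\cdot e^{\gamma(1+\delta)|x|/2}$, producing $\|f^1-f^2\|^{\delta}_{L^{2,-\gamma}}\,\|g\|_{L^{2/(1-\delta),\gamma(1+\delta)/2}}\,\|\nabla g\|_{L^{2,\gamma(1+\delta)/2}}$. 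The weighted version of the 2D Sobolev embedding $H^1\hookrightarrow L^{2/(1-\delta)}$ (obtained by conjugation with $e^{r|x|}$), together with monotonicity of the weighted norms in the weight parameter (since $\gamma(1+\delta)/2\le 2\gamma$ for small $\delta$), then absorbs both $g$-factors into $\|g\|_{H^{1,2\gamma}}$, yielding the second term in the claimed inequality.

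The main obstacle is precisely this trilinear term: the product $|g||\nabla g|$ cannot be controlled by $\|g\|_{H^{1,2\gamma}}\|f^1-f^2\|_{L^{2,-\gamma}}$ alone because we have no higher $L^p$ integrability on $\nabla g$. Trading one power of $|f^1-f^2|$ for $C|f^1-f^2|^\delta$ with $\delta<1$ is exactly what frees enough integrability to place the non-trivial $L^{2/(1-\delta)}$ requirement on $g$ itself, where the two-dimensional Sobolev embedding saves us. This fractional loss on $f^1-f^2$ and the resulting square on $g$ are the features that distinguish the second summand of the stated bound from the first.
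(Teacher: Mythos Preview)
Your proposal is correct and follows essentially the same route as the paper: the same four-term chain-rule decomposition of the gradient, the same weighted Cauchy--Schwarz $\|ab\|_{L^{1,\gamma}}\le\|a\|_{L^{2,-\gamma}}\|b\|_{L^{2,2\gamma}}$ for the bilinear pieces, and the same mechanism for the trilinear term $|f^1-f^2|\,|g|\,|\nabla g|$ --- three-factor H\"older, the 2D Sobolev embedding to absorb the high-$L^p$ demand on $g$, and interpolation of $f^1-f^2$ between $L^\infty$ and $L^{2,-\gamma}$. The only cosmetic differences are that you perform the $L^\infty$ interpolation pointwise (writing $|f^1-f^2|\le C|f^1-f^2|^\delta$) before H\"older, whereas the paper applies H\"older with exponents $(p,q,2)$ first and then interpolates $\|f^1-f^2\|_{L^{p,-\gamma}}\le\|f^1-f^2\|_{L^\infty}^{1-\delta}\|f^1-f^2\|_{L^{2,-\gamma}}^\delta$, and that your weight split places part of the excess weight on $g$ (requiring a weighted Sobolev embedding) while the paper puts it all on $\nabla g$ and uses the unweighted embedding $\|g\|_{L^q}\le\|g\|_{H^1}\le\|g\|_{H^{1,2\gamma}}$; both choices are valid and lead to the same bound.
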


\begin{proof}
  Set $w (x) = \exp (\gamma x)$. Then with $1 / p + 1 / q + 1 / 2 = 1$ and $q$
  close enough to $2$ we have \
  
  $\begin{array}{lll}
    &  & \| \nabla ((\cos (f^1 + g) \nobracket - \cos (f^2 + g) g) \|_{L^{1,
    1, \gamma}}\\
    & \leqslant & \left| \int_{\mathbb{R}^2} w (x) (\cos (f^1 + g) - \cos
    (f^2 + g)) \nabla g \mathd x \right| + \left| \int_{\mathbb{R}^2} w (x)
    (\cos (f^1 + g) - \cos (f^2 + g)) g \nabla g \mathd x \right|\\
    &  & + \left| \int_{\mathbb{R}^2} w (x) (\cos (f^1 + g) - \cos (f^2 + g))
    \nabla f^1 g \mathd x \right| + \left| \int_{\mathbb{R}^2} w (x) (\cos
    (f^1 + g)) (\nabla f^1 - \nabla f^2) g \mathd x \right|\\
    & \leqslant & \int_{\mathbb{R}^2} w (x) | f^1 - f^2 | | \nabla g | \mathd
    x + \int_{\mathbb{R}^2} w (x) | f^1 - f^2 | | g | | \nabla g | \mathd x +
    \int_{\mathbb{R}^2} w (x) | f^1 - f^2 | | \nabla f^1 | | g | \mathd x\\
    &  & + \int_{\mathbb{R}^2} w (x) | (\nabla f^1 - \nabla f^2) | | g |
    \mathd x\\
    & \leqslant & \| \nabla g \|_{L^{2, 2 \gamma}} \| \nobracket \nabla f^1 -
    \nabla f^2 | |_{L^{2, - \gamma}} + \| f^1 - f^2 \|_{L^{p, p, - \gamma}} \|
    g \|_{L^q} \| \nabla g \|_{L^{2, 2 \gamma}} +\\
    &  & \| \nabla f^1 \|_{L^{\infty}} \| f^1 - f^2 \|_{L^{2, - \gamma}} \| g
    \|_{L^{2, 2 \gamma}} + \| \nobracket \nabla f^1 - \nabla f^2 | |_{L^{2, -
    \gamma}} \| g \|_{L^{2, 2 \gamma}}
  \end{array}$
  
  Now using the Sobolev embedding
  \[ \| g \|_{L^q} \leqslant \| g \|_{H^1} \leqslant \| g \|_{H^{1, 2
     \gamma}} \]
  we have
  \begin{eqnarray*}
    &  & \| \nabla g \|_{L^{2, 2 \gamma}} \| \nobracket \nabla f^1 - \nabla
    f^2 | |_{L^{2, - \gamma}} + \| f^1 - f^2 \|_{L^{p, p, - \gamma}} \| g
    \|_{L^q} \| \nabla g \|_{L^{2, 2 \gamma}} +\\
    &  & \| \nabla f^1 \|_{L^{\infty}} \| f^1 - f^2 \|_{L^{2, - \gamma}} \| g
    \|_{L^{2, 2 \gamma}} + \| \nobracket \nabla f^1 - \nabla f^2 | |_{L^{2, -
    \gamma}} \| g \|_{L^{2, 2 \gamma}}\\
    & \leqslant & \| \nobracket \nabla f^1 - \nabla f^2 | |_{L^{2, - \gamma}}
    (\| g \|_{L^{2, 2 \gamma}} + \| \nabla g \|_{L^{2, 2 \gamma}}) + \| \nabla
    f^1 \|_{L^{\infty}} \| f^1 - f^2 \|_{L^{- \gamma}} \| g \|_{L^{2
    \gamma}}\\
    &  & + \| f^1 - f^2 \|^{1 - \delta}_{L^{\infty}} \| f^1 - f^2
    \|^{\delta}_{L^{2, - \gamma}} \| g \|^2_{H^{1, 2 \gamma}}\\
    & \leqslant & C (\| \nobracket f^1 - f^2 | |_{H^{1, - \gamma}} \| g
    \|_{H^{1, 2 \gamma}} + \| f^1 - f^2 \|^{\delta}_{L^{2, - \gamma}} \| g
    \|^2_{H^{1, 2 \gamma}})
  \end{eqnarray*}
  where in the last line we have applied the assumption $\| f^1 \|_{W^{1,
  \infty}} + \| f^2 \|_{W^{1, \infty}} \leqslant C$.
  
  Now using that
  \[ \| ((\cos (f^1 + g) \nobracket - \cos (f^2 + g) g) \|_{L^{1, \gamma}}
     \leqslant \| f^1 - f^2 \|_{L^{2, - \gamma}} \| g \|_{L^{2, 2 \gamma}} \]
  we can conclude. 
\end{proof}

\section{Large deviations }\label{sec:LD}

In this section we want to discuss a Laplace principle for the Sine-Gordon
measure in the ``semiclasssical limit'' as described in the introduction. \ We
introduce the family $\nu_{\tmop{SG}, \hbar}^{T, \rho}$ of measures given by
\begin{equation}
  \int_{\CS' (\mathbb{R}^2)} g (\phi) \nu_{\tmop{SG}, \hbar}^{T, \rho} (\mathd
  \phi) = \frac{\mathbb{E} \left[ g (\hbar^{1 / 2} W_T) e^{-
  \frac{\lambda}{\varepsilon} V_{\hbar}^{T, \rho} (\hbar^{1 / 2} W_T)}
  \right]}{Z^{T, \rho}_{\hbar}},
\end{equation}
\[ \  \]
where similarly as above
\[ V_{\hbar}^{\rho, T} (\varphi) \assign \lambda \alpha^{\hbar} (T)
   \int_{\mathbb{R}^2} \cos (\beta \varphi (x)) \mathd x \qquad Z^{T,
   \rho}_{\hbar} \assign \mathbb{E} [e^{- V_{\hbar}^{\rho, T} (W_{0, T})}] \]
for any bounded measurable $g : H^{- 1} (\langle x \rangle^{- n}) \rightarrow
\mathbb{R}$. Here $\alpha^{\hbar} (T) = e^{\frac{\beta^2}{2} \hbar K_T (0)}$
and $\alpha^{\hbar} (T) \cos (\hbar^{1 / 2} \beta W_T)$ enjoys the same
properties as $\alpha (T) \cos (\beta W_T) .$ It will also be convenient to
introduce the unnormalized measures $\tilde{\nu}_{\tmop{SG}, \hbar}^{T, \rho}
= Z^{T, \rho}_{\hbar} \nu_{\tmop{SG}, \hbar}^{T, \rho}$.

Note that this corresponds (modulo a normalization constant) to the measure
heuristically defined by
\[ e^{- \frac{1}{\hbar} \int_{\mathbb{R}^2} \lambda \alpha^{\hbar} (T) \cos
   (\beta \varphi (x)) + \frac{1}{2} m^2 \varphi (x)^2 + \frac{1}{2} | \nabla
   \varphi (x) |^2 \mathd x} \mathd \varphi . \]
Our goal is now to show that $\nu$ given as the weak limit of $\nu_{\tmop{SG},
\hbar}^{T, \rho}$ as $T \rightarrow \infty, \rho \rightarrow 1$ satisfies a
Laplace principle as $\hbar \rightarrow 0$. We recall the definition of the
Laplace principle.

\begin{definition}
  A sequence of Borel measures $\nu_{\varepsilon}$ on a metric space $S$
  satisfies the Laplace principle with rate function $I$ if for any continuous
  bounded function $f : S \rightarrow \mathbb{R}$
  \[ - \lim_{\varepsilon \rightarrow 0} \varepsilon \log \int e^{-
     \frac{1}{\varepsilon} f (x)} \nu_{\varepsilon} (\mathd x) = \inf_{x \in
     S} \{ f (x) + I (x) \}_{} . \]
\end{definition}

\begin{definition}
  For a metric space $S$ and let $I : S \rightarrow \mathbb{R}$ be a rate
  function. A set $D \subseteq C (S)$ is called rate function determining if
  any exponentially tight sequence $\nu_{\varepsilon}$ of measures on $S$ such
  that
  \[ - \lim_{\varepsilon \rightarrow 0} \varepsilon \log \int e^{-
     \frac{1}{\varepsilon} f} \mathd \nu_{\varepsilon} = \inf_{x \in S} \{ f
     (x) + I (x) \}, \]
  for all $f \in D$ satisfies a large deviations principle with rate function
  $I$. \ 
\end{definition}

\begin{lemma}
  \label{lemma:rate-function-determing}Assume that $D \subseteq C (S)$ is
  bounded below, i.e $f \geqslant - C$ for any $f \in D$ with $C$ independent
  of $f$. Furthermore assume that $D$ isolates points i.e for each compact set
  $K \subseteq S, x \in S$ and $\varepsilon > 0$ there exists $f \in D$ such
  that
  \begin{itemize}
    \item $| f (x) | < \varepsilon$
    
    \item $\inf_{y \in K} f (y) \geqslant 0$
    
    \item $\inf_{y \in K \cap B^c (x, \varepsilon)} f (y) \geqslant
    \varepsilon^{- 1}$
  \end{itemize}
  Then $D$ is rate function determining. 
\end{lemma}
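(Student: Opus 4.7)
The plan combines a standard subsequence-extraction argument with pointwise identification of the rate function using the isolating property. First, I would invoke the extraction principle for exponentially tight families (Puhalskii's theorem, or Theorem 1.3.4 in Dupuis--Ellis): from any subsequence of $(\nu_\varepsilon)$ one can extract a further subsequence $(\nu_{\varepsilon_k})$ satisfying a full large deviations principle with some good rate function $J\colon S \to [0,\infty]$. Since the LDP determines its rate function uniquely, it suffices to prove $J = I$ for every such subsequential limit, after which the standard sub-subsequence argument yields the LDP with rate $I$.

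The second step is Varadhan's lemma applied to this subsequence. For each $f \in D$, the hypothesis $f \geq -C$ forces $e^{-f/\varepsilon} \leq e^{C/\varepsilon}$, so the moment condition in Varadhan's lemma holds trivially, and
\[
-\lim_k \varepsilon_k \log \int e^{-f/\varepsilon_k}\,d\nu_{\varepsilon_k} \;=\; \inf_{x \in S}\{f(x)+J(x)\}.
\]
Combined with the hypothesis of the lemma, this yields $\inf_x\{f+J\} = \inf_x\{f+I\}$ for every $f \in D$.

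Now fix $x_0 \in S$. To prove $J(x_0) \leq I(x_0)$, assume $I(x_0) < \infty$ and, using goodness of $J$, enlarge $\{J \leq I(x_0)+C+1\}$ to a compact $K \ni x_0$. For $\varepsilon,\eta > 0$ small, the isolating hypothesis produces $f \in D$ with $|f(x_0)|<\varepsilon$, $f \geq 0$ on $K$, and $f \geq \varepsilon^{-1}$ on $K \setminus B(x_0,\varepsilon)$. Then $\inf\{f+I\} \leq I(x_0) + \varepsilon$, while on $K \cap B(x_0,\varepsilon)$ one has $f+J \geq J(x_0)-\eta$ by lower semicontinuity of $J$, on $K \setminus B(x_0,\varepsilon)$ one has $f+J \geq \varepsilon^{-1}$, and on $K^c$ one has $f+J \geq -C + (I(x_0)+C+1) = I(x_0)+1$. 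Sending $\varepsilon, \eta \to 0$ yields $J(x_0) \leq I(x_0)$. The reverse inequality $I(x_0) \leq J(x_0)$ is symmetric; the tail of $I$ on $K^c$ is controlled using the already-proved bound $J \leq I$ together with the goodness of $J$, so the same choice $K \supseteq \{J \leq J(x_0)+C+1\}$ works.

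The main obstacle is this pointwise identification of $J$ with $I$, which is delicate because $I$ is assumed only to be a lower-semicontinuous rate function and is not a priori good, so one cannot directly compactify its domain when bounding $\inf\{f + I\}$. The isolating hypothesis together with the uniform lower bound $f \geq -C$ on $D$ is tailor-made to bypass this: goodness of the subsequential limit $J$ supplies all the compact sets needed, and after establishing $J \leq I$ in the first direction, the same $J$-driven compactification handles the reverse inequality.
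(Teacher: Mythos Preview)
The paper does not actually prove this lemma: it simply cites Feng--Kurtz, Proposition~3.20. Your sketch reproduces the standard argument behind that result---extract subsequential LDPs via exponential tightness, use Varadhan along the subsequence (the uniform lower bound on $D$ giving the required integrability), and then identify the subsequential rate $J$ with $I$ pointwise using the isolating functions---and it is correct. One small cosmetic point: when you ``enlarge $\{J\le I(x_0)+C+1\}$ to a compact $K\ni x_0$'' you should note explicitly that adjoining the single point $x_0$ preserves compactness; the isolating hypothesis does not require $x\in K$, but your lower bound on $K\cap B(x_0,\varepsilon)$ implicitly uses $x_0\in K$ to get $f(x_0)\ge 0$. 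Otherwise the argument is complete and matches the cited reference.
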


For a proof see {\cite{Feng_2015}} Proposition 3.20.

\begin{lemma}
  \label{lemma:class-rate-function-specific}Let $S = H^{- 1} (\langle x
  \rangle^{- n})$ for any $\gamma > 0$ Then
  \[ D = C^2 (L^2 (\mathbb{R}^2), \mathbb{R}_+) \cap C (H^{- 1} (\langle x
     \rangle^{- n})) \cap \{ | f |_{1, 2, m} < \infty \} \cap \{ f \geqslant 0
     \} \]
  is rate function determining.
\end{lemma}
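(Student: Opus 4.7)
The plan is to apply Lemma \ref{lemma:rate-function-determing}. Since every element of $D$ is nonnegative by definition, the lower-bound hypothesis of that lemma is immediate, and only the point-isolation property needs verification. Given a compact $K \subseteq H^{-1}(\langle x \rangle^{-n})$, a point $x_0 \in H^{-1}(\langle x \rangle^{-n})$ and $\varepsilon > 0$, the idea is to build the separating $f$ as a smooth bounded function of finitely many linear functionals of $y-x_0$, each realized by a smooth compactly supported test function. This automatically delivers the differentiability, continuity and weighted-gradient conditions defining $D$.

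First I would fix a countable family $\{\phi_i\}_{i \geq 1} \subset C_c^\infty(\mathbb{R}^2)$ that is dense in the unit ball of the dual $H^{1}(\langle x \rangle^{n})$; density of smooth compactly supported functions in this weighted Sobolev space is standard (mollify and truncate). For such a family one has the pointwise identity $\|y\|_{H^{-1}(\langle x \rangle^{-n})} = \sup_i |\langle \phi_i, y \rangle|$ for every $y \in H^{-1}(\langle x \rangle^{-n})$.

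Next, using the compactness of $K \cap B^c(x_0,\varepsilon)$, a finite covering argument extracts a finite index set $I \subset \mathbb{N}$ such that for every $y \in K$ with $\|y-x_0\|_{H^{-1}(\langle x \rangle^{-n})} \geq \varepsilon$ there exists $i \in I$ with $|\langle \phi_i, y-x_0\rangle| \geq \varepsilon/4$. I would then pick a smooth nondecreasing function $g : \mathbb{R}_+ \to [0,1]$ with $g(0)=0$, bounded derivatives, and $g(t)=1$ for $t \geq \varepsilon^2/16$, and define
\[
  f(y) \;=\; \varepsilon^{-1}\, g\!\left(\sum_{i \in I}\bigl(\langle \phi_i, y\rangle - \langle \phi_i, x_0\rangle\bigr)^2\right).
\]
The three isolation conditions are then immediate: $f(x_0) = 0 < \varepsilon$; $f \geq 0$ on all of $S$; and $f(y) = \varepsilon^{-1}$ for $y \in K \cap B^c(x_0,\varepsilon)$ by the choice of $I$.

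Finally I would verify $f \in D$. Continuity on $H^{-1}(\langle x \rangle^{-n})$ holds because each $\phi_i \in H^1(\langle x \rangle^{n})$. Fr\'echet smoothness on $L^2(\mathbb{R}^2)$ is clear, with
\[
  \nabla f(y) \;=\; 2\varepsilon^{-1}\, g'(\cdots)\sum_{i \in I}\bigl(\langle \phi_i, y\rangle - \langle \phi_i, x_0\rangle\bigr)\phi_i,
\]
which is compactly supported (uniformly in $y$, by the supports of the $\phi_i$); therefore $\|\nabla f(y)\|_{L^{2,m}}$ is bounded uniformly in $y$, giving $|f|_{1,2,m} < \infty$, and an analogous computation handles the Hessian. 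The only mild point of care is the dual identification of $H^{-1}(\langle x \rangle^{-n})$ with $H^1(\langle x \rangle^{n})$ and the $C_c^\infty$-density there, but both are routine; I expect no substantial obstacle beyond this bookkeeping.
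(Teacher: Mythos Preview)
Your argument is correct and proceeds by verifying the same abstract criterion (Lemma~\ref{lemma:rate-function-determing}) as the paper, but the separating function is built very differently. The paper takes $f(\varphi)=\chi(\|\varphi\|^2_{H^{-1,-m}})$ for a suitable cutoff $\chi$: a single function of the exponentially weighted $H^{-1}$ norm. The gradient then has the explicit form $2\chi'(\cdot)\,w(1-\Delta)^{-1}w\varphi$ with $w(x)=e^{-m|x|}$, and the resolvent sandwich $w(1-\Delta)^{-1}w$ maps $H^{-1,-m}$ into $L^{2,m}$ with a uniform bound, giving $|f|_{1,2,m}<\infty$ directly. Your construction instead reduces to finitely many $C_c^\infty$ linear coordinates via a compactness/covering argument on $K\cap B^c(x_0,\varepsilon)$; the gradient is then a bounded combination of compactly supported test functions, so membership in $L^{2,m}$ is immediate without any operator estimate. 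Your route is more elementary and transports to other target spaces with essentially no change, at the cost of the extra covering step; the paper's route is shorter but leans on the specific weighted Sobolev/resolvent structure.
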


\begin{proof}
  We want to verify the assumptions of Lemma
  \ref{lemma:rate-function-determing}: By translating it is enough to verify
  the assumptions for $x = 0 \in H^{- 1} (\langle x \rangle^{- n})$.
  Furthermore we can assume that $K \subseteq B (0, N)$ for some $N > 0$. Now
  choose $\chi \in C^{\infty}_c (\mathbb{R}, \mathbb{R}_+)$ such that $\chi
  (0) = 0$ and $\chi (y) \geqslant \varepsilon^{- 1}$ if $N^2 \geqslant | y
  |^2 > \varepsilon$. $f (\varphi) = \chi (\| \varphi \|^2_{H^{- 1, - m}})$
  satisfies the requirement of Lemma \ref{lemma:rate-function-determing}.
  Clearly $f \in C^2 (L^2 (\mathbb{R}^2), \mathbb{R}_+) \cap C (H^{- 1}
  (\langle x \rangle^{- n}))$, furthermore
  \[ \nabla f (\varphi) = 2 \chi' (\| \varphi \|^2_{H^{- 1, - m}}) (w (1 -
     \Delta)^{- 1} w \varphi) \]
  where $w (y) = \exp (- m y) .$ This implies that $| f |_{1, 2, m} < \infty$
  since
  \[ \| w (1 - \Delta)^{- 1} w \varphi \|_{L^{2, m}} \leqslant \| (1 -
     \Delta)^{- 1} w \varphi \|_{L^2} \leqslant \| \varphi \|_{H^{- 1, - m}} .
  \]
\end{proof}

From the Boue-Dupuis formula we obtain

\begin{proposition}
  {\tmdummy}
  
  \begin{equation}
    \begin{array}{ll}
      & \hbar \log \int e^{- \frac{1}{\hbar} f} \mathd \nu_{\tmop{SG},
      \hbar}^{T, \rho} (\mathd \phi)\\
      = & \inf_{u \in \mathbb{H}_a} \mathbb{E} \left[ f (\hbar^{1 / 2}
      W_{\infty} + I_{\infty} (u)) + \lambda \int \rho (x) \llbracket \cos
      (\hbar^{1 / 2} \beta W_{\infty} + \beta I_{\infty} (u)) \rrbracket
      \mathd x + \frac{1}{2} \int^{\infty}_0 \| u_t \|^2_{L^2 (\mathbb{R}^2)}
      \mathd t \right]
    \end{array} \label{eq:LD-finite-vol}
  \end{equation}
  
\end{proposition}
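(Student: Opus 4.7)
The plan is to apply the Boue-Dupuis formula (Proposition~\ref{prop:BD-formula}) combined with a Brownian rescaling $v_t = \hbar^{1/2}u_t$ that absorbs the factors of $\hbar$. Interpreting the left-hand side as $-\hbar\log\int e^{-\hbar^{-1}f}\mathd\tilde{\nu}^{T,\rho}_{\mathrm{SG},\hbar}$ against the unnormalized measure introduced just above (the displayed sign is presumably typographical, since for $f=0$ the identity as printed fails), this quantity equals
\[
-\hbar\log\mathbb{E}\!\left[\exp\!\Bigl(-\tfrac{1}{\hbar}\bigl(f(\hbar^{1/2}W_T)+\lambda\alpha^{\hbar}(T)\!\int\rho\cos(\beta\hbar^{1/2}W_T)\mathd x\bigr)\Bigr)\right].
\]
For finite $T$ and compactly supported $\rho$, the potential $V_T(\varphi) := \hbar^{-1}\bigl(f(\hbar^{1/2}\varphi)+\lambda\alpha^\hbar(T)\int\rho\cos(\beta\hbar^{1/2}\varphi)\mathd x\bigr)$ sits in $C^2(L^2(\langle x\rangle^{-n}))$ with bounded derivatives, thanks to the boundedness of $\cos$ and its derivatives and the regularity assumption on $f$, so Proposition~\ref{prop:BD-formula} applies directly.

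Writing out BD, multiplying both sides by $\hbar$, and substituting $v_t = \hbar^{1/2}u_t$, one uses linearity of $I_T$ (hence $\hbar^{1/2}I_T(u)=I_T(v)$) together with the identity $\hbar\|u_t\|^2=\|v_t\|^2$ to obtain
\[
-\hbar\log\mathbb{E}[e^{-V_T(W_T)}] = \inf_{v\in\mathbb{H}_a}\mathbb{E}\!\left[f(\hbar^{1/2}W_T+I_T(v))+\lambda\alpha^\hbar(T)\!\int\rho\cos\bigl(\beta(\hbar^{1/2}W_T+I_T(v))\bigr)\mathd x+\tfrac{1}{2}\int_0^T\|v_t\|^2_{L^2}\mathd t\right].
\]
Because $I_T(v)$ is almost surely smooth, Wick ordering acts trivially on the shift, and the product $\alpha^\hbar(T)\cos(\beta(\hbar^{1/2}W_T+I_T(v)))$ coincides with the scaled renormalized cosine $\llbracket\cos(\beta(\hbar^{1/2}W_T+I_T(v)))\rrbracket$, yielding the stated formula at level $T$.

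The extension to $T=\infty$ is intended to go through exactly as in the proof of Lemma~\ref{lemma:coupling}: by Theorem~1.3 of \cite{Junnila-2020} the functional $\int\rho\llbracket\cos(\beta\hbar^{1/2}W_\infty)\rrbracket\mathd x$ has exponential moments of all orders, which makes the full semiclassical potential a tame functional in the sense of \cite{ustunel_variational_2014}, so the variational formula extends directly from finite $T$ to $T=\infty$. The finite-$T$ half of the argument is a mechanical combination of rescaling with Proposition~\ref{prop:BD-formula}; the main delicate point I expect is the $T=\infty$ step, where the $C^2$-boundedness hypothesis of Proposition~\ref{prop:BD-formula} must be relaxed via tameness and uniform $L^p$ control of the renormalized cosine along drifts $I_\infty(v)$, but this is precisely the input already exploited in Lemma~\ref{lemma:coupling} and so transfers to the present setting without new work.
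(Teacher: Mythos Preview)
Your proposal is correct and matches the paper's approach: the paper states this proposition with no proof beyond the phrase ``From the Boue-Dupuis formula we obtain,'' so your argument---applying Proposition~\ref{prop:BD-formula} at finite $T$, rescaling via $v_t=\hbar^{1/2}u_t$, and passing to $T=\infty$ using the tameness argument from the proof of Lemma~\ref{lemma:coupling}---is exactly the intended derivation, and your observation about the sign and the mismatch between $T$ and $\infty$ in the displayed formula is a fair reading of the paper's typos.
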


Repeating the proof of Proposition \ref{prop:drift-infinite-volume} word for
word we get:

\begin{proposition}
  \label{prop:uhbar}Let $\bar{u}^{\hbar, \rho}$ be the minimzer of
  \[ F^{\hbar, \rho} (u) =\mathbb{E} \left[ \lambda \int \rho (x) \llbracket
     \cos (\hbar^{1 / 2} \beta W_{\infty} + \beta I_{\infty} (u)) \rrbracket
     \mathd x + \frac{1}{2} \int^{\infty}_0 \| u_t \|^2_{L^2 (\mathbb{R}^2)}
     \mathd t \right] . \]
  Then $\bar{u}^{\hbar, \rho}$ converges in $L^2 (\mathbb{P}, L^2
  (\mathbb{R}_+, L^{2, - \gamma}))$, we denote the limit by $\bar{u}^{\hbar}$,
  more precisely we have
  \[ \lim_{\rho \rightarrow 1} \sup_{\hbar \leqslant 1} \mathbb{E} \left[
     \int^{\infty}_0 \| \bar{u}^{\hbar}_t - \bar{u}_t^{\hbar, \rho}
     \|^2_{L^{2, - \gamma}} \mathd t \right] = 0. \label{eq:cov-uh} \]
\end{proposition}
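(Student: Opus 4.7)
The strategy is to redo the proof of Proposition \ref{prop:drift-infinite-volume} line by line with $W_t$ replaced by $\hbar^{1/2} W_t$ throughout, tracking that every constant that appears can be chosen uniform in $\hbar \in (0,1]$. First, I would record that the renormalization constant for the $\hbar$-rescaled Wick cosine is $\alpha^\hbar(t) = e^{\hbar\beta^2 K_t(0)/2}$, which satisfies $\alpha^\hbar(t) \leqslant C \langle t\rangle^{\hbar\beta^2/8\pi} \leqslant C \langle t\rangle^{\beta^2/8\pi}$ uniformly for $\hbar \leqslant 1$. Consequently all the ingredients that went into Theorem \ref{thm:L-infty} --- the expression for $\nabla V_T$, the value-function estimate of Lemma \ref{lemma:value-func}, and the $L^{\infty}$ bound on the optimizer --- go through with the same proof and with constants independent of $\hbar \leqslant 1$. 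In particular $\|\bar u^{\hbar,\rho}_t\|_{L^\infty} \leqslant C \langle t\rangle^{\beta^2/8\pi - 1}$ uniformly in $\hbar$ and $\rho$.

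Second, the Euler--Lagrange equation for $\bar u^{\hbar,\rho}$ is derived exactly as in Lemma \ref{lemma:el-f}, and the weighted stability estimate of Lemma \ref{lemma:decay} applies to it. Every factor of $\alpha_T$ or $\langle t\rangle^{\beta^2/8\pi}$ that appears in the computation of the three terms $\mathrm{I}_a, \mathrm{I}_b, \mathrm{II}$ is majorized by its $\hbar=1$ counterpart, so the same smallness threshold $\lambda(\beta+\beta^2) \leqslant \kappa$ closes the estimate with an $\hbar$-independent constant. Thus, exactly as in the proof of Proposition \ref{prop:drift-infinite-volume}, whenever $\mathrm{supp}(\rho_1 - \rho_2) \subseteq B(x,1)$ with $|x| \geqslant N$, one obtains
\[
\mathbb{E}\bigl[\|w^{1/2}(\bar u^{\hbar,\rho_1} - \bar u^{\hbar,\rho_2})\|^2_{L^2(\mathbb{R}_+\times\mathbb{R}^2)}\bigr]^{1/2} \leqslant C,
\qquad w(y) = e^{\gamma|x-y|},
\]
uniformly in $\hbar \leqslant 1$, which via Lemma \ref{lemma:decay1} translates into
\[
\mathbb{E}\bigl[\|\bar u^{\hbar,\rho_1} - \bar u^{\hbar,\rho_2}\|^2_{L^2(\mathbb{R}_+, L^{2,-\gamma})}\bigr]^{1/2} \leqslant C e^{-\gamma|x|}.
\]

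Third, the partition of unity argument from the end of the proof of Proposition \ref{prop:drift-infinite-volume} then gives, for any $\rho_1, \rho_2$ both equal to $1$ on $B(0,N)$,
\[
\sup_{\hbar \leqslant 1}\mathbb{E}\bigl[\|\bar u^{\hbar,\rho_1} - \bar u^{\hbar,\rho_2}\|^2_{L^2(\mathbb{R}_+, L^{2,-\gamma})}\bigr]^{1/2} \leqslant C e^{-\gamma N},
\]
so $\{\bar u^{\hbar,\rho}\}_\rho$ is Cauchy in $L^2(\mathbb{P}, L^2(\mathbb{R}_+, L^{2,-\gamma}))$ uniformly in $\hbar \leqslant 1$. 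Calling the limit $\bar u^\hbar$, this is the claim of the proposition, and adaptedness of the limit to $W_t$ follows as in the finite-$\hbar$ case.

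The only subtle point --- and the one place where some care is genuinely needed --- is bookkeeping the $\hbar$-dependence of the constants in Lemma \ref{lemma:value-func} and Lemma \ref{lemma:decay}. There is no real obstacle, since the $\hbar$-rescaling only multiplies the Gaussian field and the coupling $\beta$ by a factor $\leqslant 1$, but one must verify at each step that the constants depend on $\hbar$ only through quantities that are monotone in $\hbar$ on $(0,1]$, so that taking $\hbar = 1$ gives a valid uniform bound.
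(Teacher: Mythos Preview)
Your proposal is correct and is exactly the approach the paper takes: immediately before stating this proposition the paper simply writes ``Repeating the proof of Proposition~\ref{prop:drift-infinite-volume} word for word we get,'' without further detail. You have actually supplied more than the paper does, namely the explicit observation that $\alpha^\hbar(t) \leqslant C\langle t\rangle^{\beta^2/8\pi}$ uniformly in $\hbar \leqslant 1$, which is precisely the mechanism ensuring all constants in Lemmas~\ref{lemma:value-func} and~\ref{lemma:decay} remain $\hbar$-independent.
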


\begin{notation}
  Denote
  \begin{eqnarray*}
    &  & G_{\hbar}^f (v)\\
    & = & \mathbb{E} \left[ f (\hbar W_{\infty} + I_{\infty}
    (\bar{u}^{\hbar}) + I_{\infty} (v)) + \lambda \int \left\llbracket \cos
    \left( \beta W_{\infty} + \beta I_{\infty} (\bar{u}^{\hbar}) + \beta
    I_{\infty} (v) \right) \right\rrbracket - \llbracket \cos (\beta
    W_{\infty} + \beta I_{\infty} (\bar{u}^{\hbar})) \rrbracket \right.\\
    &  & \int \int \bar{u}_t^{\rho} v_t \mathd t \mathd x \left. +
    \frac{1}{2} \int^{\infty}_0 \| v_t \|^2_{L^2 (\mathbb{R}^2)} \mathd t
    \right]
  \end{eqnarray*}
\end{notation}

In complete analogy with Theorem \ref{thm:characterization} we can deduce the
following proposition.

\begin{proposition}
  {\tmdummy}
  
  \begin{eqnarray*}
    & \hbar \log \int e^{- \frac{1}{\hbar} f} \mathd \nu_{\tmop{SG},
    \hbar}^{T, \rho} (\mathd \phi) & \backassign \inf_{v \in \mathbb{D}^f}
    G_{\hbar}^f (v)
  \end{eqnarray*}
\end{proposition}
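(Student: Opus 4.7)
The plan is to follow the proof of Theorem \ref{thm:characterization} mutatis mutandis, adapting every step to the semiclassical scaling. From \eqref{eq:LD-finite-vol}, after shifting $u = \bar{u}^{\hbar,\rho} + v$ around the minimizer supplied by Proposition \ref{prop:uhbar},
\[
\hbar \log \int e^{-\frac{1}{\hbar} f} \mathd \nu^{T,\rho}_{\tmop{SG}, \hbar} = \inf_{v \in \mathbb{H}_a} \bar{G}^{\hbar,T,\rho,f}(v),
\]
where $\bar{G}^{\hbar,T,\rho,f}$ coincides with $G^f_\hbar$ after replacing $\bar{u}^\hbar$ and the Wick-ordered cosines at infinity by their $(T,\rho)$ truncations; the subtracted $\llbracket \cos(\beta W_\infty + \beta I_\infty(\bar{u}^\hbar))\rrbracket$ in $G^f_\hbar$ is precisely what arises from cancelling the $f=0$ partition-function contribution after the shift.

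Next I restrict the infimum to $\mathbb{D}^f$ via the semiclassical analog of Theorem \ref{thm:weighted}. Its proof relies on (i) the $L^\infty$ bound of Theorem \ref{thm:L-infty} applied to the minimizer $\bar{u}^{\hbar,\rho}$, (ii) the Euler-Lagrange identity of Lemma \ref{lemma:el-f}, and (iii) the polynomial growth $\alpha^\hbar(T) = \exp(\tfrac{\beta^2}{2}\hbar K_T(0)) \lesssim \langle T\rangle^{\beta^2\hbar/8\pi} \leq \langle T\rangle^{\beta^2/8\pi}$. Since $\hbar^{1/2}W_\infty$ is a rescaled GFF and $\alpha^\hbar(T) \leq \alpha(T)$ for $\hbar \leq 1$, every step carries over with constants uniform in $\hbar$. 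Consequently, for $\lambda(\beta + \beta^2)$ sufficiently small,
\[
\mathbb{E} \left[\int^{\infty}_0 \| w^{1/2}(\bar{u}^{\hbar,\rho,f} - \bar{u}^{\hbar,\rho}) \|^2_{L^2} \mathd t \right] \leq C |f|_{1,\gamma},
\]
which lets us confine the infimum to $\mathbb{D}^f$.

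Finally I pass to the limits $T\to\infty$ and $\rho \to 1$ uniformly over $v \in \mathbb{D}^f$, exactly as in Theorem \ref{thm:characterization}. The $f$-difference term is controlled by its assumed regularity together with the $L^{2,-\gamma}$ convergence of Proposition \ref{prop:uhbar}; the cosine-difference terms are decomposed using Lemma \ref{lemma:cosine-sobolev}, paired against the Wick cosine in $B^{-1+\delta}_{p,p}(\langle x \rangle^{-k})$, and then interpolated between $W^{1,1,\gamma}$ and $L^\infty$ using the uniform $W^{1,\infty}$ bound on $I_\infty(\bar{u}^{\hbar,\rho})$ coming from the semiclassical analog of Theorem \ref{thm:L-infty} and Lemma \ref{lemma:boundWinfty}; the linear drift pairing $\int \int \bar{u}^{\hbar,\rho}_t v_t$ converges by $L^{2,-\gamma}$--$L^{2,\gamma}$ duality; and the quadratic term in $v$ is unchanged. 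The principal obstacle is ensuring the uniformity of these convergences in $v \in \mathbb{D}^f$: this is where the weighted $L^2$ bound built into the definition of $\mathbb{D}^f$ is essential, since it bounds precisely the Sobolev/Besov norms that arise after pairing against the Wick cosine. With this uniformity in hand, the infimum converges to $\inf_{v \in \mathbb{D}^f} G^f_\hbar(v)$, which is the claim.
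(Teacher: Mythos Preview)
Your proposal is correct and takes essentially the same approach as the paper, which proves this proposition in a single sentence: ``In complete analogy with Theorem \ref{thm:characterization} we can deduce the following proposition.'' You have simply fleshed out what that analogy entails---the shift by $\bar{u}^{\hbar,\rho}$, the restriction to $\mathbb{D}^f$ via the $\hbar$-uniform version of Theorem \ref{thm:weighted} (using $\alpha^\hbar(T)\leq \alpha(T)$ for $\hbar\leq 1$), and the passage to the $\rho\to 1$ limit via Proposition \ref{prop:uhbar} and the estimates of Lemma \ref{lemma:cosine-sobolev}---all of which is exactly what the paper intends.
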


\begin{proposition}
  With the notation of Proposition \ref{prop:uhbar} we have:
  \[ \lim_{\hbar \rightarrow 0} \mathbb{E} \left[ \int^{\infty}_0 \|
     \bar{u}^{\hbar}_t \|^2_{L^{2, - \gamma}} \mathd t \right] = 0. \]
\end{proposition}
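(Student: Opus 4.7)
The plan is to establish a uniform-in-$\rho$ pointwise bound
\[
\|\bar u^{\hbar,\rho}_t\|_{L^\infty} \leq c \hbar^{1/2} \langle t\rangle^{\hbar\beta^2/8\pi - 1}
\]
by adapting Theorem \ref{thm:L-infty} to the semiclassical setting, where $c$ depends on $\lambda, \beta, m$ only. The key observation is that
\[
\nabla V^{T, \rho}_\hbar(\varphi) = -\lambda \hbar^{1/2} \beta \rho \alpha^\hbar(T) \sin(\hbar^{1/2} \beta \varphi),
\]
so the gradient inherits an extra $\hbar^{1/2}$ via the chain rule compared with the non-semiclassical case. I would then re-run the dynamic-programming induction of Lemma \ref{lemma:value-func} with $\beta$ replaced by $\hbar^{1/2}\beta$ (and using $\alpha^\hbar(T) \leq c \langle T\rangle^{\hbar\beta^2/8\pi}$) to transport the bound through all scales, uniformly in the spatial cutoff $\rho$ and in $\hbar \in (0,1]$.

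Since $e^{-2\gamma|x|}$ is integrable on $\mathbb{R}^2$, the pointwise $L^\infty$ bound immediately yields
\[
\|\bar u^{\hbar,\rho}_t\|_{L^{2,-\gamma}}^2 \leq c_\gamma \|\bar u^{\hbar,\rho}_t\|_{L^\infty}^2 \leq c \hbar \langle t \rangle^{\hbar\beta^2/4\pi - 2}.
\]
For $\hbar$ small enough that $\hbar \beta^2/4\pi < 1$ (automatic in the limit since $\beta^2 < 4\pi$), the time integral of the right hand side is uniformly bounded, giving
\[
\mathbb{E}\int_0^\infty \|\bar u^{\hbar,\rho}_t\|_{L^{2,-\gamma}}^2 dt \leq c \hbar,
\]
uniformly in the cutoff $\rho$.

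To remove the cutoff I apply Proposition \ref{prop:uhbar}: given $\varepsilon > 0$ I pick $\rho$ with $\sup_{\hbar \leq 1} \mathbb{E}\int \|\bar u^\hbar_t - \bar u^{\hbar,\rho}_t\|_{L^{2,-\gamma}}^2 dt < \varepsilon/2$, then combine with the previous step via the triangle inequality to get
\[
\mathbb{E}\int_0^\infty \|\bar u^{\hbar}_t\|_{L^{2,-\gamma}}^2 dt \leq \varepsilon + 2 c \hbar,
\]
and send first $\hbar \to 0$ and then $\varepsilon \to 0$. The main obstacle lies in the careful bookkeeping of $\hbar^{1/2}$ factors in the adaptation of Theorem \ref{thm:L-infty}: one must verify that the remainder $R^\hbar_{t,T}$ in Lemma \ref{lemma:value-func} scales as $(\lambda \hbar^{1/2}\beta)^2 \langle t\rangle^{-\delta}$, so that the $\hbar^{1/2}$ prefactor survives the induction and is inherited by the final bound on $\bar u^{\hbar,\rho}_t$; everything else is a routine transcription of Section \ref{sec:proof}.
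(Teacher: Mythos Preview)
The claimed $L^\infty$ bound $\|\bar u^{\hbar,\rho}_t\|_{L^\infty}\le c\,\hbar^{1/2}\langle t\rangle^{\hbar\beta^2/8\pi-1}$ does not follow from the proposed argument, and in fact cannot hold. The issue is an accounting error with the $\hbar^{1/2}$. In the functional of Proposition~\ref{prop:uhbar} the drift enters the cosine as $\cos(\hbar^{1/2}\beta W_\infty+\beta I_\infty(u))$, so the coefficient of $I_\infty(u)$ is $\beta$, \emph{not} $\hbar^{1/2}\beta$; differentiating in $u$ produces no extra $\hbar^{1/2}$. If instead you pass to the parametrisation $\tilde u=\hbar^{-1/2}u$ in which the terminal potential reads $\lambda\alpha^\hbar(T)\int\rho\cos(\hbar^{1/2}\beta\,\cdot\,)$ and its gradient carries the $\hbar^{1/2}$ you wrote, then the running cost becomes $\tfrac{\hbar}{2}\|\tilde u\|^2$, the verification theorem gives $\tilde u_t=-\hbar^{-1}J_t\nabla V_{t,T}$, and on converting back $\bar u^{\hbar,\rho}_t=\hbar^{1/2}\tilde u_t=-\hbar^{-1/2}J_t\nabla V_{t,T}$, which exactly cancels your gain. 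Running Theorem~\ref{thm:L-infty} in the semiclassical setting therefore only yields the $\hbar$-\emph{uniform} bound $\|\bar u^{\hbar,\rho}_t\|_{L^\infty}\le C\lambda\beta\langle t\rangle^{\hbar\beta^2/8\pi-1}$, with no decay as $\hbar\to 0$. Heuristically this is inevitable: at $\hbar=0$ the minimizer vanishes because the state sits at the critical point $y=0$, not because $\nabla V_{t,T}$ is small; for $\hbar>0$ the state is perturbed by $\hbar^{1/2}W_t$, so $\bar u^{\hbar,\rho}_t\approx -J_t\operatorname{Hess}V_{t,T}(0)\,\hbar^{1/2}W_t$, and $W_t\notin L^\infty$.

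The paper's proof bypasses pointwise control altogether. It tests the Euler--Lagrange equation against $h=\bar u^{\hbar,\rho}$ to get $\mathbb{E}\|\bar u^{\hbar,\rho}\|_{L^2(\mathbb{R}_+,L^2)}^2$ equal to a pairing with $\llbracket\sin(\hbar^{1/2}\beta W_\infty+\beta I_\infty(\bar u))\rrbracket$, then expands via the addition formula. The term $\sin(\beta I_\infty(\bar u))I_\infty(\bar u)$ is absorbed using $\lambda\beta$ small, while the genuinely small contributions come from $\llbracket\sin(\hbar^{1/2}\beta W_\infty)\rrbracket\to 0$ and $\llbracket\cos(\hbar^{1/2}\beta W_\infty)\rrbracket-1\to 0$ in negative-regularity Besov norms (Remark~\ref{rem:conv-cos-semi}); the $L^\infty$ bound from Theorem~\ref{thm:L-infty} is used only as an auxiliary input to control $\|I_\infty(\bar u^{\hbar,\rho})\|_{W^{1,\infty}}$ uniformly. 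This is the missing mechanism: the smallness in $\hbar$ lives in the \emph{stochastic} factors, measured in weak norms, not in any $L^\infty$ bound on the optimal drift.
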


\begin{proof}
  Keeping in mind {\eqref{eq:cov-uh}} it is suffient to show that
  \[ \lim_{\hbar \rightarrow 0} \mathbb{E} \left[ \int^{\infty}_0 \|
     \bar{u}^{\hbar, \rho}_t \|^2_{L^2} \mathd t \right] = 0. \]
  By a simple modification of Lemma \ref{lemma:el-f} we can assume that
  $\bar{u}^{\hbar, \rho}$ satisfies, for any $h \in \mathbb{H}_a$,
  \[ \mathbb{E} \left[ \lambda \beta \int \rho (x) \llbracket \sin (\hbar^{1
     / 2} \beta W_{\infty} + \beta I_{\infty} (\bar{u}^{\hbar, \rho}))
     \rrbracket I_{\infty} (h) \mathd x + \int^{\infty}_0 \int
     \bar{u}_t^{\hbar, \rho} h_t \mathd x \right] = 0. \]
  By choosing $h = \bar{u}^{\hbar, \rho}$ we get
  \[ \mathbb{E} [\| \bar{u}^{\hbar, \rho} \|^2_{L^2 (\mathbb{R}_+, L^2)}] =
     -\mathbb{E} \left[ \lambda \beta \int \rho (x) \llbracket \sin (\hbar^{1
     / 2} \beta W_{\infty} + \beta I_{\infty} (\bar{u}^{\hbar, \rho}))
     \rrbracket I_{\infty} ( \bar{u}^{\hbar, \rho}) \mathd x \right] . \]
  Expanding we get
  \begin{eqnarray*}
    \llbracket \sin (\hbar^{1 / 2} \beta W_{\infty} + \beta I_{\infty}
    (\bar{u}^{\hbar, \rho})) \rrbracket & = & \llbracket \sin (\hbar^{1 / 2}
    \beta W_{\infty}) \rrbracket \cos (\beta I_{\infty} (\bar{u}^{\hbar,
    \rho})) + \llbracket \cos (\hbar^{1 / 2} \beta W_{\infty}) \rrbracket \sin
    (\beta I_{\infty} (\bar{u}^{\hbar, \rho})) .\\
    & = & \llbracket \sin (\hbar^{1 / 2} \beta W_{\infty}) \rrbracket \cos
    (\beta I_{\infty} (\bar{u}^{\hbar, \rho}))\\
    &  & + (\llbracket \cos (\hbar^{1 / 2} \beta W_{\infty}) \rrbracket - 1)
    \sin (\beta I_{\infty} (\bar{u}^{\hbar, \rho})) + \sin (\beta I_{\infty}
    (\bar{u}^{\hbar, \rho}))
  \end{eqnarray*}
  Now
  \begin{eqnarray*}
    &  & \| \rho \llbracket \sin (\hbar^{1 / 2} \beta W_{\infty}) \rrbracket
    \cos (\beta I_{\infty} (\bar{u}^{\hbar, \rho})) \|_{W^{- \beta^2 / 4 \pi -
    \delta, 2}}\\
    & \leqslant & \| \rho \llbracket \sin (\hbar^{1 / 2} \beta W_{\infty})
    \rrbracket \|_{W^{- \beta^2 / 4 \pi - \delta, 2}} \| \cos (\beta
    I_{\infty} (\bar{u}^{\hbar, \rho})) \|_{W^{1, \infty}}\\
    & \leqslant & C \| \rho \llbracket \sin (\hbar^{1 / 2} \beta W_{\infty})
    \rrbracket \|_{W^{- \beta^2 / 4 \pi - \delta, 2}}
  \end{eqnarray*}
  where we have used that $\| \bar{u}^{\hbar, \rho} \|_{L^{\infty}} \leqslant
  C \langle t \rangle^{- 1 / 2 - \delta}$ and Lemma \ref{lemma:boundWinfty}.
  Furthermore \ \
  \begin{eqnarray*}
    &  & \| \rho (\llbracket \cos (\hbar^{1 / 2} \beta W_{\infty}) \rrbracket
    - 1) \sin (\beta I_{\infty} (\bar{u}^{\hbar, \rho})) \|_{W^{- \beta^2 / 4
    \pi - \delta, 2}}\\
    & \leqslant & \| \rho (\llbracket \cos (\hbar^{1 / 2} \beta W_{\infty})
    \rrbracket - 1) \|_{W^{- \beta^2 / 4 \pi - \delta, 2}} \| I_{\infty}
    (\bar{u}^{\hbar, \rho}) \|_{W^{1, \infty}}\\
    & \leqslant & C \| \rho (\llbracket \cos (\hbar^{1 / 2} \beta W_{\infty})
    \rrbracket - 1) \|_{W^{- \beta^2 / 4 \pi - \delta, 2}} .
  \end{eqnarray*}
  These two estimates imply
  \begin{eqnarray*}
    &  & \left| \int \rho \left( \llbracket \sin (\hbar^{1 / 2} \beta
    W_{\infty}) \rrbracket \cos (\beta I_{\infty} (\bar{u}^{\hbar, \rho})) +
    \left( \left\llbracket \cos (\hbar^{1 / 2} \beta W_{\infty})
    \right\rrbracket - 1 \right) \sin (\beta I_{\infty} (\bar{u}^{\hbar,
    \rho})) \right) I_{\infty} (\bar{u}^{\hbar, \rho}) \mathd x \right|\\
    & \leqslant & C \left( \| \rho \llbracket \sin (\hbar^{1 / 2} \beta
    W_{\infty}) \rrbracket \|_{W^{- \beta^2 / 4 \pi - \delta, 2}} + \| \rho
    (\llbracket \cos (\hbar^{1 / 2} \beta W_{\infty}) \rrbracket - 1) \|_{W^{-
    \beta^2 / 4 \pi - \delta, \infty}} \right) \| \bar{u}^{\hbar, \rho}
    \|_{L^2 (\mathbb{R}_+ \times \mathbb{R}^2)}
  \end{eqnarray*}
  Finally
  \[ \lambda \beta \left| \int \rho \sin (\beta I_{\infty} (\bar{u}^{\hbar,
     \rho})) I_{\infty} (\bar{u}^{\hbar, \rho}) \right| \leqslant \lambda
     \beta \| I_{\infty} (\bar{u}^{\hbar, \rho}) \|^2_{L^2 (\mathbb{R}^2)}
     \leqslant C \lambda \beta \| \bar{u}^{\hbar, \rho} \|^2_{L^2
     (\mathbb{R}_+ \times \mathbb{R}^2)} \]
  Now putting everything together we obtain for $C \lambda \beta \leqslant 1 /
  2$
  \begin{eqnarray*}
    &  & \mathbb{E} [\| \bar{u}^{\hbar, \rho} \|^2_{L^2 (\mathbb{R}_+,
    L^2)}]^{1 / 2}\\
    & \leqslant & C\mathbb{E} \left[ \| \rho \llbracket \sin (\hbar^{1 / 2}
    \beta W_{\infty}) \rrbracket \|^2_{W^{- \beta^2 / 4 \pi - \delta, 2}} + \|
    \rho (\llbracket \cos (\hbar^{1 / 2} \beta W_{\infty}) \rrbracket - 1)
    \|^2_{W^{- \beta^2 / 4 \pi - \delta, 2}} \right]^{1 / 2}
  \end{eqnarray*}
  and the r.h.s goes to $0$ as $\hbar \rightarrow 0$ Reamark
  \ref{rem:conv-cos-semi} below.
\end{proof}

\begin{proposition}
  Assume that $| f |_{1, 2, m} < \infty$ and $f : H^{- 1} (\langle x
  \rangle^{- n}) \rightarrow \mathbb{R}$ be Lipschitz continuous.
  \[ \lim_{\hbar \rightarrow 0} \sup_{v \in \mathbb{D}^f} | G^f_{\hbar} (v) -
     G^f_0 (v) | = 0 \]
  where
  \begin{eqnarray*}
    &  & G^f_0 (u)\\
    & = & \mathbb{E} \left[ f (I_{0, \infty} (v)) + \lambda \int (\cos (\beta
    I_{0, \infty} (v)) - 1) + \frac{1}{2} \int^{\infty}_0 \| v_t \|^2_{L^2}
    \mathd t \right] .
  \end{eqnarray*}
\end{proposition}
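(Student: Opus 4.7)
I would compare $G^f_\hbar(v)$ and $G^f_0(v)$ termwise and show each part of the difference vanishes uniformly in $v \in \mathbb{D}^f$ as $\hbar \to 0$. The quadratic term $\tfrac{1}{2}\int \|v_t\|^2_{L^2}\mathd t$ is identical on both sides and drops out. For the $f$-piece, Lipschitz continuity of $f$ on $H^{-1}(\langle x \rangle^{-n})$ gives
\begin{equation*}
  \mathbb{E}\bigl|f(\hbar^{1/2}W_\infty + I_\infty(\bar{u}^\hbar) + I_\infty(v)) - f(I_\infty(v))\bigr| \leqslant L_f\,\mathbb{E}\bigl\|\hbar^{1/2}W_\infty + I_\infty(\bar{u}^\hbar)\bigr\|_{H^{-1}(\langle x \rangle^{-n})},
\end{equation*}
which is $v$-independent; the first summand is $O(\hbar^{1/2})$ and the second vanishes by the preceding proposition. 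For the cross term I use Cauchy--Schwarz against the weighted $L^2$ spaces:
\begin{equation*}
  \Bigl|\mathbb{E}\int_0^\infty\int \bar{u}_t^\hbar v_t\,\mathd x\,\mathd t\Bigr| \leqslant \bigl(\mathbb{E}\|\bar{u}^\hbar\|^2_{L^2(\mathbb{R}_+,L^{2,-\gamma})}\bigr)^{1/2}\,\bigl(\mathbb{E}\|v\|^2_{L^2(\mathbb{R}_+,L^{2,\gamma})}\bigr)^{1/2},
\end{equation*}
the first factor vanishing by the preceding proposition and the second being uniformly bounded on $\mathbb{D}^f$ by definition.

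The main obstacle is the Wick-cosine difference. Using the sum-product identity
\begin{equation*}
  \llbracket\cos(\hbar^{1/2}\beta W_\infty + \phi)\rrbracket = \llbracket\cos(\hbar^{1/2}\beta W_\infty)\rrbracket\cos\phi - \llbracket\sin(\hbar^{1/2}\beta W_\infty)\rrbracket\sin\phi
\end{equation*}
with $\phi_1 = \beta I_\infty(\bar{u}^\hbar) + \beta I_\infty(v)$ and $\phi_2 = \beta I_\infty(\bar{u}^\hbar)$, the Wick cosine piece of $G^f_\hbar(v) - G^f_0(v)$ decomposes as a duality pairing of $\llbracket\cos(\hbar^{1/2}\beta W_\infty)\rrbracket - 1$ against $\cos\phi_1 - \cos\phi_2$, a pairing of $\llbracket\sin(\hbar^{1/2}\beta W_\infty)\rrbracket$ against $\sin\phi_1 - \sin\phi_2$, and a purely classical remainder $\cos\phi_1 - \cos\phi_2 - (\cos(\beta I_\infty(v)) - 1)$ depending only on $I_\infty(\bar{u}^\hbar)$. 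The classical remainder is handled by the Lipschitz estimate of Lemma \ref{lemma:cosine-sobolev}, the uniform bound $\|I_\infty(\bar{u}^\hbar)\|_{W^{1,\infty}} \leqslant C$ from Theorem \ref{thm:L-infty}, and the weighted $L^2$ convergence of the preceding proposition.

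For the two Wick pairings I appeal to the convergences $\llbracket\cos(\hbar^{1/2}\beta W_\infty)\rrbracket \to 1$ and $\llbracket\sin(\hbar^{1/2}\beta W_\infty)\rrbracket \to 0$ in $L^p(\mathbb{P}, B^{-\beta^2/4\pi - \delta}_{p,p}(\langle x \rangle^{-k}))$ as $\hbar \to 0$ (the remark invoked at the end of the previous proof). Each test function $\cos\phi_j$, $\sin\phi_j$ factors into a uniformly $W^{1,\infty}$ piece (involving $I_\infty(\bar{u}^\hbar)$) and a uniformly $H^{1,2\gamma}$ piece (involving $I_\infty(v)$ for $v \in \mathbb{D}^f$); interpolating these products with $L^\infty$ exactly as in Theorem \ref{thm:characterization} places them uniformly in the dual Besov space $B^{1-\delta}_{q,q}(\langle x \rangle^{k})$ with $1/p + 1/q = 1$. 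Consequently both Wick pairings tend to zero uniformly over $\mathbb{D}^f$ as $\hbar \to 0$. The key technical difficulty is precisely this $v$-uniformity of the Besov duality bound, and it parallels the uniformity argument used to remove the cutoff $\rho \to 1$ in Theorem \ref{thm:characterization}.
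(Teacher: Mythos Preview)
Your proposal is correct and follows essentially the same approach as the paper: Lipschitz continuity for the $f$-term, weighted Cauchy--Schwarz for the cross term, the sum--product decomposition of the Wick cosine into a $\llbracket\sin\rrbracket$-pairing, a $(\llbracket\cos\rrbracket-1)$-pairing, and a classical remainder, with Besov duality and Remark~\ref{rem:conv-cos-semi} controlling the Wick pieces uniformly over $\mathbb{D}^f$. The only minor difference is that for the classical remainder the paper applies the fundamental theorem of calculus twice and bounds directly by $\mathbb{E}[\|I_\infty(\bar{u}^\hbar)\|_{L^{2,-\gamma}}\|I_\infty(v)\|_{L^{2,\gamma}}]$, whereas you invoke Lemma~\ref{lemma:cosine-sobolev}; that lemma is tailored to a slightly different expression (it carries an extra factor $g$ and targets a $W^{1,1,\gamma}$ norm), so strictly you would use its underlying weighted-H\"older idea rather than the lemma verbatim, but this is a cosmetic point and not a gap.
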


\begin{proof}
  By Lipschitz continuity of $f$
  \begin{eqnarray*}
    &  & | f (\hbar^{1 / 2} W_{\infty} + I_{\infty} (v) + I_{\infty}
    (\bar{u}^{\hbar})) - f (I_{\infty} (v)) |\\
    & \leqslant & \hbar^{1 / 2} \mathbb{E} \| W_{\infty} \|_{H^{- 1} (\langle
    x \rangle^{- n})} +\mathbb{E} [\| I_{\infty} (\bar{u}^{\hbar}) \|_{H^{- 1}
    (\langle x \rangle^{- n})}]\\
    & \rightarrow & 0.
  \end{eqnarray*}
  Furthermore
  \begin{eqnarray*}
    &  & \left| \int \llbracket \sin (\hbar^{1 / 2} \beta W_{\infty})
    \rrbracket (\sin (\beta I_{\infty} (v) + \beta I_{\infty}
    (\bar{u}^{\hbar})) - \sin (\beta I_{\infty} (\bar{u}^{\hbar}))) \right|\\
    & \leqslant & \| \llbracket \sin (\hbar^{1 / 2} \beta W_{\infty})
    \rrbracket \|_{B_{p, p}^{- 1 + \delta} (\langle x \rangle^{- n})} \\
    &  &\times \| (\sin (\beta I_{\infty} (u) + \beta I_{\infty}
    (\bar{u}^{\hbar})) - \sin (\beta I_{\infty} (\bar{u}^{\hbar}))) \|_{B^{1 -
    \delta}_{q, q} (\langle x \rangle^{- n})}
  \end{eqnarray*}
  Now for $q$ close enough to $1$ we have for any $\gamma > 0$
  \begin{eqnarray*}
    &  & \| (\sin (\beta I_{0, \infty} (v) + \beta I_{0, \infty}
    (\bar{u}^{\hbar})) - \sin (\beta I_{0, \infty} (\bar{u}^{\hbar})))
    \|_{B^{1 - \delta}_{q, q} (\langle x \rangle^{- n})}\\
    & \leqslant & \| (\sin (\beta I_{\infty} (v) + \beta I_{\infty}
    (\bar{u}^{\hbar})) - \sin (\beta I_{\infty} (\bar{u}^{\hbar}))) \|^{1 -
    \delta}_{W^{1, 1, \gamma}}\\
    & \leqslant & \| (\cos (\beta I_{\infty} (v) + \beta I_{0, \infty}
    (\bar{u}^{\hbar})) - \cos (\beta I_{\infty} (\bar{u}^{\hbar}))) \nabla
    I_{\infty} (\bar{u}^{\hbar}) \|^{1 - \delta}_{L^{1, \gamma}}\\
    &  & + \beta \| (\cos (\beta I_{\infty} (v) + \beta I_{\infty}
    (\bar{u}^{\hbar})) \nabla I_{\infty} (v) \nobracket \|^{1 - \delta}_{L^{1,
    \gamma}}\\
    &  & + \beta \| (\sin (\beta I_{\infty} (v) + \beta I_{\infty}
    (\bar{u}^{\hbar})) - \sin (\beta I_{\infty} (\bar{u}^{\hbar}))) \|^{1 -
    \delta}_{L^{1, \gamma}}\\
    & \leqslant & C (\| I_{\infty} (v) \|_{L^{2, 2 \gamma}} \| \nabla
    I_{\infty} (\bar{u}^{\hbar}) \|_{L^{2, - \gamma}} + \| \nabla I_{\infty}
    (v) \|_{L^{2, 2 \gamma}} + \| I_{\infty} (\bar{u}^{\hbar}) \|_{L^{2, 2
    \gamma}})^{1 - \delta}
  \end{eqnarray*}
  so
  \begin{eqnarray*}
    &  & \mathbb{E} \left| \int \llbracket \sin (\hbar^{1 / 2} \beta
    W_{\infty}) \rrbracket (\sin (\beta I_{\infty} (v) + \beta I_{\infty}
    (\bar{u}^{\hbar})) - \sin (\beta I_{\infty} (\bar{u}^{\hbar}))) \right|\\
    & \leqslant & C\mathbb{E} [\| \llbracket \sin (\hbar^{1 / 2} W_{\infty})
    \rrbracket \|^{1 / \delta}_{B_{p, p}^{- 1 + \delta, - \gamma}}] \\
    &  &\times \mathbb{E} [\| I_{\infty} (v) \|_{L^{2, 2 \gamma}} \| \nabla
    I_{\infty} (\bar{u}^{\hbar}) \|_{L^{2, - \gamma}} + \| \nabla I_{\infty}
    (v) \|_{L^{2, 2 \gamma}} + \| I_{\infty} (v) \|_{L^{2, 2 \gamma}}]\\
    & \leqslant & C\mathbb{E} [\| \llbracket \sin (\hbar^{1 / 2} \beta
    W_{\infty}) \rrbracket \|^{1 / \delta}_{B_{p, p}^{- 1 + \delta, -
    \gamma}}] \mathbb{E} [\| I_{\infty} (v) \|^2_{L^{2, 2 \gamma}}]
    +\mathbb{E} [\| \nabla I_{\infty} (\bar{u}^{\hbar}) \|^2_{L^{2, -
    \gamma}}]\\
    &  & +\mathbb{E} [\| \llbracket \sin (\hbar^{1 / 2} \beta W_{\infty})
    \rrbracket \|^{1 / \delta}_{B_{p, p}^{- 1 + \delta, - \gamma}}] \mathbb{E}
    [\| \nabla I_{\infty} (v) \|_{L^{2, 2 \gamma}} + \| I_{\infty} (v)
    \|_{L^{2, 2 \gamma}}]
  \end{eqnarray*}
  and since by Remark \ref{rem:conv-cos-semi}
  \[ \mathbb{E} [\| \llbracket \sin (\hbar^{1 / 2} \beta W_{\infty})
     \rrbracket \|^{1 / \delta}_{B_{p, p}^{- 1 + \delta} (\langle x \rangle^{-
     n})}] \rightarrow 0, \]
  as $\hbar \rightarrow 0$, we have uniform convergence of this term to $0$.
  We now rewrite
  \begin{eqnarray*}
    &  & \left| \int \llbracket \cos (\hbar^{1 / 2} \beta W_{\infty})
    \rrbracket \left( \cos \left( \beta I_{\infty} (v) + \beta I_{0, \infty}
    (\bar{u}^{\hbar}) \right) - \cos (\beta I_{\infty} (\bar{u}^{\hbar}))
    \right) \right. \\
    &  &- \left. \int (\cos (\beta I_{\infty} (v)) - 1) \right|\\
    & \leqslant & \left| \int (\llbracket \cos (\hbar^{1 / 2} \beta
    W_{\infty}) \rrbracket - 1) (\cos (\beta I_{\infty} (v) + \beta I_{\infty}
    (\bar{u}^{\hbar})) - \cos (\beta I_{\infty} (\bar{u}^{\hbar}))) \right|\\
    &  & + \left| \int (\cos (\beta I_{\infty} (v) + \beta I_{\infty}
    (\bar{u}^{\hbar})) - \cos (\beta I_{\infty} (\bar{u}^{\hbar}))) - \int
    (\cos (\beta I_{\infty} (v)) - 1) \right| .
  \end{eqnarray*}
  The first term can be estimated in the same way as the sinus term, provided
  we replace $\llbracket \sin (\hbar^{1 / 2} \beta W_{0, \infty}) \rrbracket$
  with $\llbracket \cos (\hbar^{1 / 2} \beta W_{0, \infty}) \rrbracket - 1$
  which also satisfies
  \[ \mathbb{E} [\| \llbracket \cos (\hbar^{1 / 2} \beta W_{\infty})
     \rrbracket - 1 \|^{1 / \delta}_{B_{p, p}^{- 1 + \delta} (\langle x
     \rangle^{- n})}] \rightarrow 0. \]
  by Remark \ref{rem:conv-cos-semi}. For the second term by fundamental
  theorem of calculus we can write
  \begin{eqnarray*}
    & (\cos (\beta I_{\infty} (v) + I_{\infty} (\bar{u}^{\hbar})) - \cos
    (I_{\infty} (\bar{u}^{\hbar}))) - (\cos (I_{\infty} (v)) - 1) & \\
    = & - \beta \int^1_0 ((\cos (\theta \beta I_{\infty} (v) + \beta
    I_{\infty} (\bar{u}^{\hbar})) \nobracket - \cos (\theta \beta I_{\infty}
    (\bar{u}^{\hbar})) I_{0, \infty} (v)) \mathd \theta & \\
    = & - \beta \int^1_0 \int^1_0 ((\cos (\theta \beta I_{\infty} (v) + \xi
    \beta I_{\infty} (\bar{u}^{\hbar})) I_{0, \infty} (\bar{u}^{\hbar})
    \nobracket I_{\infty} (v)) \mathd \theta \mathd \xi & 
  \end{eqnarray*}
  and so
  \begin{eqnarray*}
    &  & \mathbb{E} \left| \int^1_0 \int^1_0 ((\cos (\theta \beta I_{\infty}
    (v) + \xi \beta I_{\infty} (\bar{u}^{\hbar})) I_{\infty} (\bar{u}^{\hbar})
    \nobracket I_{\infty} (v)) \mathd \theta \mathd \xi \right|\\
    & \leqslant & \mathbb{E} [\| I_{\infty} (\bar{u}^{\hbar}) \|_{L^{2, -
    \gamma}} \| I_{0, \infty} (v) \|_{L^{2, \gamma}}]\\
    & \leqslant & \mathbb{E} [\| I_{\infty} (\bar{u}^{\hbar}) \|^2_{L^{2, -
    \gamma}}]^{1 / 2} \mathbb{E} [\| I_{\infty} (v) \|^2_{L^{2, \gamma}}]^{1 /
    2}
  \end{eqnarray*}
  which implies also that term converges to $0$. Finally
  \[ \mathbb{E} \left[ \int^{\infty}_0 \int v_t \bar{u}_t^{\hbar} \mathd t
     \right] \leqslant \mathbb{E} [\| v \|_{L^2 (\mathbb{R}_+, L^{2, \gamma})}
     \| \bar{u}^{\hbar} \|_{L^2 (\mathbb{R}_+, L^{2, \gamma})}] \leqslant
     \mathbb{E} [\| v \|_{L^2 (\mathbb{R}_+, L^{2, \gamma})}^2]^{1 / 2}
     \mathbb{E} [\| \bar{u}^{\hbar} \|^2_{L^2 (\mathbb{R}_+, L^{2,
     \gamma})}]^{1 / 2} \]
  and we can conclude. 
\end{proof}

We now relate $G^f$ to the rate function.

\begin{lemma}
  {\tmdummy}
  
  \[ \inf_{u \in \mathbb{D}^f} G^f_0 (u) = \inf_{\psi \in H^1 (\mathbb{R}^2)}
     \{ f (\psi) + I (\psi) \} \]
\end{lemma}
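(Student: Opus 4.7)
The plan is to relate a control $v \in \mathbb{H}_a$ to the resulting profile $\psi := I_\infty(v)$ by exploiting the identity $L^{-1} = \int_0^\infty J_t^2\,\mathrm{d}t$. The pointwise-in-$\omega$ key fact is: for any $\psi \in H^1(\mathbb{R}^2)$, the minimum of $\tfrac12\int_0^\infty \|v_t\|_{L^2}^2\,\mathrm{d}t$ subject to $I_\infty(v) = \psi$ is attained at the deterministic control $v_t = J_t L\psi$, with value
\[
\tfrac12\langle L\psi,\psi\rangle_{L^2} = \tfrac12\|\nabla\psi\|_{L^2}^2 + \tfrac12 m^2 \|\psi\|_{L^2}^2.
\]
This is a one-line Cauchy--Schwarz/orthogonality calculation: writing $v = JL\psi + w$ with $\int_0^\infty J_t w_t\,\mathrm{d}t = 0$, one has $\|v\|_{L^2(\mathbb{R}_+, L^2)}^2 = \|JL\psi\|^2 + \|w\|^2 \geq \langle L\psi,\psi\rangle$, with equality iff $w = 0$.

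For the lower bound $\inf_{\mathbb{D}^f} G_0^f \geq \inf_{H^1}\{f+I\}$: given $v \in \mathbb{D}^f$, set $\psi(\omega) := I_\infty(v)(\omega)$, which lies in $H^1$ almost surely because $I$ maps $L^2(\mathbb{R}_+ \times \mathbb{R}^2)$ into $L^\infty(\mathbb{R}_+, H^1)$. Substituting $\psi$ for $I_\infty(v)$ in the first two terms of $G_0^f$ and applying the pointwise inequality to the quadratic term yields
\[
G_0^f(v) \geq \mathbb{E}\bigl[f(\psi) + I(\psi)\bigr] \geq \inf_{\psi' \in H^1(\mathbb{R}^2)}\{f(\psi') + I(\psi')\}.
\]

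For the upper bound, pick $\psi_0 \in H^1$ almost attaining the infimum and approximate $\psi_0$ by $\psi_n \in C_c^\infty(\mathbb{R}^2)$ in the $H^1$-norm. For each $n$ set $v^n_t := J_t L\psi_n$, which is deterministic (hence adapted). Then $I_\infty(v^n) = L^{-1}L\psi_n = \psi_n$ and the quadratic cost is $\tfrac12 \langle L\psi_n,\psi_n\rangle$, so $G_0^f(v^n) = f(\psi_n) + I(\psi_n)$. Taking $n \to \infty$ uses continuity of $f + I$ on $H^1$: $f$ is continuous on $H^{-1}(\langle x\rangle^{-n}) \supset H^1$; the quadratic part of $I$ is norm-continuous; and $\int(\cos(\beta\psi_n) - 1)\,\mathrm{d}x \to \int(\cos(\beta\psi_0) - 1)\,\mathrm{d}x$ by dominated convergence using $|1-\cos(x)| \leq x^2/2$ and $\psi_n \to \psi_0$ in $L^2$.

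The main obstacle is checking $v^n \in \mathbb{D}^f$ for the weight $w(x) = e^{\gamma|x|}$, i.e.\ that $\int_0^\infty \|w\,J_t L\psi_n\|_{L^2}^2\,\mathrm{d}t < \infty$. For $\psi_n$ compactly supported this follows from the explicit Gaussian form of the kernel of $e^{-L/t}$ computed in Section \ref{sec:decomposition}, which beats any exponential weight when acting on a compactly supported input; combined with $\|J_t\|_{L^2 \to L^2} \lesssim t^{-1}$ for $t$ large and the factor $e^{-m^2/t}$ at small $t$, the $t$-integral converges. The constant $C$ in the definition of $\mathbb{D}^f$ may be chosen large enough along the approximating sequence.
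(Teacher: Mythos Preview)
Your lower bound argument is correct and coincides with the paper's Step~2: the inequality $\int_0^\infty\|v_t\|_{L^2}^2\,\mathrm{d}t \geq \|(m^2-\Delta)^{1/2}I_\infty(v)\|_{L^2}^2$ (your Cauchy--Schwarz/orthogonality computation, the paper's Lemma~\ref{bound-I-l2}) is exactly what is needed.

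The upper bound has the right construction, $v^n_t = J_t L\psi_n$ with $\psi_n\in C_c^\infty$, but there is a real gap in the last sentence. The set $\mathbb{D}^f$ carries a \emph{fixed} constant $C$ (in the paper it depends only on $|f|_{1,2,\gamma}$, cf.\ Theorem~\ref{thm:characterization}); you are not free to let $C$ grow with $n$. Your $\psi_0$ is an arbitrary near-minimizer in $H^1(\mathbb{R}^2)$ with no decay assumption, so the compactly supported approximants $\psi_n$ will in general have supports growing without bound, and then $\int_0^\infty\|w\,v^n_t\|_{L^2}^2\,\mathrm{d}t$ (which is comparable to $\|\psi_n\|_{H^{1,\gamma}}^2$) blows up as $n\to\infty$. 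Hence no single $\mathbb{D}^f$ contains all the $v^n$, and your chain of inequalities does not close.

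The paper fills this gap with Lemma~\ref{lemma:restriction-classical}: by testing the Euler--Lagrange equation for the classical functional $f+I$ against $e^{2\gamma|x|}\varphi$, one shows that any minimizer (hence any near-minimizer) satisfies $\|\varphi\|_{H^{1,\gamma}} \leq C_\gamma |f|_{1,2,m}$, provided $2\gamma^2+\lambda<m^2$. This a~priori exponential decay lets one restrict the classical infimum to a ball in $H^{1,\gamma}$, approximate within that ball, and thereby keep all $v^n$ inside a single $\mathbb{D}^f$. You need either this step or an equivalent argument; the Gaussian decay of the heat kernel alone only gives finiteness of the weighted norm for each fixed $\psi_n$, not a uniform bound.
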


\begin{proof}
  By Lemma \ref{lemma:restriction-classical} below it is enough to show that
  \[ \inf_{u \in \mathbb{D}^f} G^f_0 (u) = \inf_{\| \psi \|_{H^{1, \gamma}}
     \leqslant C | f |_{1, 2, \gamma}} \{ f (\psi) + I (\psi) \} \]
  for some $\gamma > 0$.
  
  {\tmstrong{Step 1}}. First we prove
  \[ \inf_{u \in \mathbb{H}_a} F (u) \leqslant \inf_{\| \psi \|_{H^{1,
     \gamma}} \leqslant C | f |_{1, 2, \gamma}} \{ f (\psi) + I (\psi) \} . \]
  Restricting the infimum to processes of the form
  \[ u_s = J_s (m^2 - \Delta) \psi \]
  with $\psi \in H^2 (\mathbb{R}^2) \cap H^{1, 2 \gamma}$ ,we see that
  \[ I_{0, \infty} (u) = \int^{\infty}_0 J_s u_s \mathd s = \int^{\infty}_0
     J^2_s (m^2 - \Delta) \psi \mathd s = \psi . \]
  We also compute
  \[ \int^{\infty}_0 \int_{\mathbb{R}^2} u^2_s \mathd s = \int^{\infty}_0
     \langle J^2_s (m^2 - \Delta) \psi, (m^2 - \Delta) \psi \rangle_{L^2
     (\mathbb{R}^2)} = \langle \psi, (m^2 - \Delta) \psi \rangle_{L^2
     (\mathbb{R}^2)} \]
  and with $w (x) = \exp (\gamma x)$
  \[ \int^{\infty}_0 \int_{\mathbb{R}^2} w u^2_s \mathd s = = \int^{\infty}_0
     \langle w J^2_s (m^2 - \Delta) \psi, (m^2 - \Delta) \psi \rangle_{L^2
     (\mathbb{R}^2)} = \langle w \psi, (m^2 - \Delta) \psi \rangle_{L^2
     (\mathbb{R}^2)} \]
  from which we can deduce that $\| w u \|^2_{L^2 (\mathbb{R}_+ \times
  \mathbb{R}^2)} \leqslant C \| \psi \|_{H^{1, \gamma}}$ and $u$ is in
  $\mathbb{D}^f$. So
  \begin{eqnarray*}
    &  & \inf_{u \in \mathbb{D}^f} F (u)\\
    & \leqslant & \inf_{\tmscript{\begin{array}{c}
      u_s = Q_s (m^2 - \Delta) \psi\\
      \psi \in H^2\\
      \| \psi \|_{H^{1, 2 \gamma}} \leqslant C | f |_{1, 2, m}
    \end{array}}} F (u)\\
    & \leqslant & \inf_{\tmscript{\begin{array}{c}
      \psi \in H^2\\
      \| \psi \|_{H^{1, 2 \gamma}} \leqslant C | f |_{1, 2, m}
    \end{array}}} \{ f (\psi) + I (\psi) \}\\
    & \leqslant & \inf_{\tmscript{\begin{array}{c}
      \| \psi \|_{H^{1, 2 \gamma}} \leqslant C | f |_{1, 2, m}
    \end{array}}} \{ f (\psi) + I (\psi) \}
  \end{eqnarray*}
  where the last equality follows from the density of the $H^2$ in $H^{1, 2
  \gamma}$ and continuity of the functional in $H^1$.
  
  {\tmstrong{Step 2}}.We now prove the converse inequality
  \[ \inf_{u \in \mathbb{D}^f} F_0^{\rho} (u) \geqslant \inf_{\psi \in H^1
     (\mathbb{R}^2)} \{ f (\psi) + I (\psi) \} . \]
  Recall that from Lemma \ref{bound-I-l2} $\| u \|_{L^2 (\mathbb{R}_+ \times
  \mathbb{R}^2)} \geqslant \| (m^2 - \Delta)^{1 / 2} I_{\infty} (u) \|_{L^2}$
  so
  \begin{eqnarray*}
    \inf_{u \in \mathbb{D}^C} F (u) & \geqslant & \inf_{u \in \mathbb{D}^C}
    \mathbb{E} \left[ f (I_{\infty} (u)) + \lambda \int \rho \cos (\beta I_{0,
    \infty} (u)) + \frac{1}{2} \int_{\mathbb{R}^2} ((m^2 - \Delta) I_{0,
    \infty} (u)) I_{0, \infty} (u) \right]\\
    & \geqslant & \inf_{\psi \in H^1 (\mathbb{R}^2)} \{ f (\psi) + I (\psi)
    \} .
  \end{eqnarray*}
  which proves the statement. 
\end{proof}

\begin{lemma}
  \label{lemma:restriction-classical}Assume that $2 \gamma^2 + \lambda < m^2$.
  Then for $\rho \in C^{\infty} (\mathbb{R}^2)$ and $\rho, | \nabla \rho |
  \leqslant 1$(note that this includes the $\rho = 1$ case.)
  \[ \inf_{\psi \in H^1 (\mathbb{R}^2)} f (\varphi) + I^{\rho} (\varphi) =
     \inf_{\| \psi \|_{H^{1, \gamma}} \leqslant C | f |_{1, 2, 2 \gamma}} f
     (\varphi) + I^{\rho} (\varphi) \]
\end{lemma}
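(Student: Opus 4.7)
\begin{proof*}{Proof Proposal}
The plan is to produce a minimizer of $f+I^{\rho}$ in $H^{1}$, show it automatically lies in the weighted ball $\{\|\psi\|_{H^{1,\gamma}}\leqslant C|f|_{1,2,2\gamma}\}$ via a weighted energy estimate on the Euler--Lagrange equation, and then read off the equality of infima. Since the smaller set on the right-hand side trivially yields a larger infimum, only the inequality $\inf_{H^1}(f+I^{\rho})\geqslant \inf_{\mathrm{ball}}(f+I^{\rho})$ requires argument, and this is what the weighted bound provides.

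First I will establish existence of a minimizer $\psi^{\ast}\in H^{1}(\mathbb{R}^{2})$. Using $|\cos(\beta\psi)-1|\leqslant \min(\beta^{2}\psi^{2}/2,2)$ one sees that $I^{\rho}$ is well defined on $H^{1}$ (even for $\rho=1$), is weakly lower semicontinuous, and is coercive: $I^{\rho}(\psi)\geqslant\tfrac{1}{2}\|\psi\|^{2}_{H^{1,m}}-\lambda(\beta^{2}/2+2)\|\psi\|^{2}_{L^{2}}$, which, together with the assumption $2\gamma^{2}+\lambda<m^{2}$, produces the needed coercivity in $H^{1}$. Since $f$ is continuous and bounded below on $H^{-1}(\langle x\rangle^{-n})\supset H^{1}$, the direct method of the calculus of variations yields a minimizer $\psi^{\ast}$ satisfying, in distribution, the EL equation
\[ (m^{2}-\Delta)\psi^{\ast}-\lambda\beta\rho\sin(\beta\psi^{\ast})+\nabla f(\psi^{\ast})=0. \]

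Next I will test this identity against $w^{2}\psi^{\ast}$ with $w(x)=\exp(\gamma\sqrt{1+|x|^{2}})$ (a smooth version of $\exp(\gamma|x|)$ satisfying $|\nabla w|\leqslant\gamma w$). A priori $w^{2}\psi^{\ast}$ need not be in $H^{1}$, so the pairing will be made rigorous by first replacing $w$ with $w_{R}=w\,\chi_{R}$ for a smooth spatial cutoff $\chi_{R}$, performing the integration by parts, and letting $R\to\infty$ once a uniform bound is in hand. After the IBP and Young's inequality applied to the cross term $2\!\int w\psi^{\ast}(\nabla w\cdot\nabla\psi^{\ast})$, one obtains
\[ \int w^{2}\psi^{\ast}(m^{2}-\Delta)\psi^{\ast}\;\geqslant\;(m^{2}-2\gamma^{2})\!\int w^{2}(\psi^{\ast})^{2}+\tfrac{1}{2}\!\int w^{2}|\nabla\psi^{\ast}|^{2}. \]
Using $|\sin(\beta\psi^{\ast})|\leqslant \beta|\psi^{\ast}|$ and $\rho\leqslant 1$ gives $|\lambda\beta\!\int w^{2}\rho\psi^{\ast}\sin(\beta\psi^{\ast})|\leqslant\lambda\beta^{2}\!\int w^{2}(\psi^{\ast})^{2}$, while Cauchy--Schwarz bounds the RHS by $\|\psi^{\ast}\|_{L^{2}}\,|f|_{1,2,2\gamma}$ after writing $w^{2}=1\cdot w^{2}$ and noting that $\|w^{2}\nabla f(\psi^{\ast})\|_{L^{2}}=\|\nabla f(\psi^{\ast})\|_{L^{2,2\gamma}}\leqslant |f|_{1,2,2\gamma}$. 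Invoking the hypothesis $2\gamma^{2}+\lambda<m^{2}$ (absorbing the sine contribution), one arrives at
\[ c\,\|\psi^{\ast}\|_{H^{1,\gamma}}^{2}\;\leqslant\;\|\psi^{\ast}\|_{L^{2}}\,|f|_{1,2,2\gamma}\;\leqslant\;\|\psi^{\ast}\|_{H^{1,\gamma}}\,|f|_{1,2,2\gamma}, \]
yielding $\|\psi^{\ast}\|_{H^{1,\gamma}}\leqslant C|f|_{1,2,2\gamma}$, which is exactly the bound claiming $\psi^{\ast}$ lies in the relevant ball.

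The main technical obstacle is the justification of the weighted energy identity: one must verify both that $\psi^{\ast}$ has \emph{finite} weighted norms and that the IBP closes despite the exponential weight. The standard approach is to run the estimate with the truncated weight $w_{R}$, derive a bound for $\|\chi_{R}\psi^{\ast}\|_{H^{1,\gamma}}$ independent of $R$ (controlling the commutator terms arising from $\nabla\chi_{R}$ by a factor $R^{-1}$ on a region where the weight is already bounded), and pass to the limit by monotone convergence. Once this is done, the equality of infima follows immediately since $\psi^{\ast}$ is admissible for the restricted minimization.
\end{proof*}
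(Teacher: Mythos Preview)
Your proof is correct and follows essentially the same approach as the paper: derive the Euler--Lagrange equation for a minimizer, test it against the minimizer multiplied by the exponential weight, control the cross term from integration by parts via $|\nabla w|\leqslant C\gamma w$ and Young's inequality, bound the sine term by the weighted $L^2$ norm, and absorb everything using $2\gamma^2+\lambda<m^2$ to obtain the weighted $H^{1,\gamma}$ bound. You add two pieces of rigor the paper omits---the direct-method existence argument and the cutoff justification for the weighted testing---but the core argument is identical.
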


\begin{proof}
  By a standard argument we obtain that any minimizer of $f (\varphi) + I
  (\varphi)$ satisfies the Euler Lagrange equation
  \begin{equation}
    \nabla f (\varphi) + \lambda \rho \sin (\beta \varphi) + m^2 \varphi -
    \Delta \varphi = 0. \label{EL-equation-classical}
  \end{equation}
  Now multiplying {\eqref{EL-equation-classical}} with $w \varphi$ where $w
  (x) = \exp (2 \gamma | x |)$ and integrating we obtain
  \begin{eqnarray*}
    0 & = & \int w \nabla f (\varphi) \varphi + \lambda \int w \rho \sin
    (\beta \varphi) \varphi + m^2 \int w \varphi^2 - \int w \varphi \Delta
    \varphi\\
    & = & \int \rho \nabla f (\varphi) \varphi + \lambda \int w \rho \sin
    (\beta \varphi) \varphi + m^2 \int w \varphi^2 + \int w | \nabla \varphi
    |^2 + \int \varphi \nabla w \cdot \nabla \varphi
  \end{eqnarray*}
  now observe that $\nabla w = 2 \gamma \frac{x}{| x |} \exp (2 \gamma | x |)$
  so $| \nabla w | \leqslant 2 \gamma w$
  \[ \int | \varphi \nabla w \cdot \nabla \varphi | \leqslant 2 \gamma^2 \int
     w \varphi^2 + \frac{1}{2} \int w | \nabla \varphi |^2 \]
  note also that
  \[ \lambda \int | \rho w \sin (\beta \varphi) \varphi | \leqslant \lambda
     \int w \varphi^2 . \]
  Since $m^2 \int w \varphi^2 + \int w | \nabla \varphi |^2 > 0$ we have
  \begin{eqnarray*}
    0 & = & \int w \nabla f (\varphi) \varphi + \lambda \int w \rho \sin
    (\beta \varphi) \varphi + m^2 \int w \varphi^2 + \int w | \nabla \varphi
    |^2 + \int \varphi \nabla w \cdot \nabla \varphi\\
    & \geqslant & (m^2 - 2 \gamma^2 + \lambda - \delta) \int w \varphi^2 +
    \frac{1}{2} \int w | \nabla \varphi |^2 - C | f |^2_{1, 2, m}
  \end{eqnarray*}
  which implies
  \[ \int w \varphi^2 + \frac{1}{2} \int w | \nabla \varphi |^2 \leqslant
     C_{\gamma} | f |^2_{1, 2, m} . \]
  
\end{proof}

\section{Osterwalder Schrader Axioms}\label{sec:OS}

In this section we complete the proof of Theorem \ref{thm:OS}.

\subsection{Reflection Positivity}\label{sec:ref-pos}

To prove Reflection Positivity we prove that the measure $\nu_{\tmop{SG}}$ is
a limit of reflection positive measures, since reflection positivity is
clearly preserved by weak limits.. We denote by $\nu^{\rho}_{\tmop{SG}} : =
\lim_{T \rightarrow \infty} \nu^{\rho, T}_{\tmop{SG}}$. Since
$\nu^{\rho}_{\tmop{SG}} \rightarrow \nu_{\tmop{SG}}$ as $\rho \rightarrow 1$
it is enough to construct a sequence $\nu^{\varepsilon, \rho}_{\tmop{SG}}
\rightarrow \nu^{\rho}_{\tmop{SG}}$ such that $\nu^{\varepsilon,
\rho}_{\tmop{SG}}$ is reflection positive. We can take $\rho$ being invariant
under the time reflection $\Theta f (x_1, x_2) \backassign f (- x_1, x_2)$. To
construct $\nu^{\varepsilon, \rho}_{\tmop{SG}}$ we cannot smooth in the
``physical time'' direction since this would destroy reflection positivity.
Instead define $\theta = \delta_0 \otimes \eta$, $\theta \in \mathcal{} \CS'
(\mathbb{R}^2)$ where $\eta \in C^{\infty}_c (\mathbb{R})$. Also set
$\theta^{\varepsilon} = \varepsilon^{- 2} \theta (\cdot / \varepsilon) =
\delta_0 \otimes \eta_{\varepsilon}$ where $\eta_{\varepsilon} =
\varepsilon^{- 1} \eta (\cdot / \varepsilon)$. Finally we set
$W_T^{\varepsilon} = \theta_{\varepsilon} \ast W_T$, $T \in [0, \infty]$. We
define
\[ \nu^{\varepsilon, \rho}_{\tmop{SG}} = e^{- \lambda \int \rho
   \alpha^{\varepsilon} \cos (\beta W_{\infty}^{\varepsilon})} \mathd
   \mathbb{P}. \]
We will now proceed in three steps: In Step 1 we show that for the correct
choice of $\alpha^{\varepsilon}$
\[ \alpha^{\varepsilon} \cos (\beta W_{\infty}^{\varepsilon}) \rightarrow
   \llbracket \cos (\beta W_{\infty}) \rrbracket . \]
In Step 2 we show that for any $p > 1$
\[ \sup_{\varepsilon} \mathbb{E} \left[ e^{- \lambda p \int \rho
   \alpha^{\varepsilon} \cos (\beta W_{\infty}^{\varepsilon})} \right] <
   \infty . \]
Steps 1 and 2 together imply that $\nu^{\varepsilon, \rho}_{\tmop{SG}}
\rightarrow \nu^{\rho}_{\tmop{SG}} .$ In Step 3 we prove that
$\nu^{\varepsilon, \rho}_{\tmop{SG}}$ is indeed reflection positive.

\textbf{Step 1.}Observe that
\[ \mathbb{E} [\theta_{\varepsilon} \ast W_{T_1} (x) \theta_{\varepsilon}
   \ast W_{T_2} (y)] = (\theta_{\varepsilon} \otimes \theta_{\varepsilon} \ast
   K_{T_1 \wedge T_2}) (x, y) . \]
Now observe that for $T \in [0, \infty]$, $K_T (x, y) = \bar{K}_T (x - y)$
with $\bar{K}_T (x) \leqslant - \frac{1}{4 \pi} \log (T \wedge | x |) + g (x)$
with $g$ a bounded function. Furthermore \ \ \
\[ (\theta_{\varepsilon} \otimes \theta_{\varepsilon} \ast K_T) (x, y) =
   (\theta_{\varepsilon} \ast \theta_{\varepsilon} \ast \bar{K}_T) (x - y) .
\]
Then it not hard to see that \
\[ \bar{K}^{\varepsilon} (x) = \theta_{\varepsilon} \ast \theta_{\varepsilon}
   \ast K_{\infty} = \frac{1}{4 \pi} \log \left( \frac{1}{| x | \vee
   \varepsilon} \right) + g^{\varepsilon} (x) . \]
with $\sup_{\varepsilon} \| g^{\varepsilon} \|_{L^{\infty}} < \infty$. From
this we can deduce that $\theta_{\varepsilon} \ast W_{\infty} (x)$ is in
$L_{\tmop{loc}}^2 (\mathbb{R}^2)$ almost surely since for any bounded $U
\subseteq \mathbb{R}^2$
\[ \mathbb{E} \left[ \int_U ((\theta_{\varepsilon} \ast W_{\infty}) (x))^2
   \mathd x \right] = | U | \bar{K}^{\varepsilon} (0) . \]
We claim that for any $f \in C^{\infty}_c (\mathbb{R}^2)$
\[ \int_{\mathbb{R}^2} f \tau^{\varepsilon} : = \int f e^{\frac{\beta^2}{2}
   \bar{K}^{\varepsilon} (0)} e^{i \beta W_{\infty}^{\varepsilon}} \rightarrow
   \int f \llbracket e^{i \beta W_{\infty}^{}} \rrbracket \]
where the convergence is in $L^2 (\mathbb{P})$. To prove this we calculate
\begin{eqnarray*}
  &  & \mathbb{E} \left[ \left| \int_{\mathbb{R}^2} e^{\frac{1}{2} \beta^2
  \bar{K}^{\varepsilon} (0)} e^{i \beta W_{\infty}^{\varepsilon} (x)} f (x) -
  e^{\frac{1}{2} \beta^2 K_T (0)} e^{i \beta W_T (x)} f (x) \mathd x \right|^2
  \right]\\
  & = & \mathbb{E} \left[ \int_{\mathbb{R}^2} \int_{\mathbb{R}^2} e^{\beta^2
  \bar{K}^{\varepsilon} (0)} e^{i \beta (W_{\infty}^{\varepsilon} (x) -
  W_{\infty}^{\varepsilon} (y))} - e^{\frac{\beta^2}{2} (\bar{K}_T (0) +
  \bar{K}^{\varepsilon} (0))} e^{i \beta (W_T (x) - W_{\infty}^{\varepsilon}
  (y))} \right.\\
  &  & \left. - e^{\frac{\beta^2}{2} (\bar{K}_T (0) + \bar{K}^{\varepsilon}
  (0))} e^{i \beta (W_{\infty}^{\varepsilon} (x) - W_T (y))} + e^{\beta^2
  \bar{K}_T (0)} e^{i \beta (W_T (y) - W_T (y))} f (x) f (y) \mathd x \mathd y
  \right]\\
  & = & \int_{\mathbb{R}^2} \int_{\mathbb{R}^2} e^{\beta^2
  \bar{K}^{\varepsilon} (x - y)} + e^{\beta^2 \bar{K}_T (x - y)} - 2
  e^{\beta^2 \mathbb{E} [W_T (x) W^{\varepsilon}_{\infty} (y)]} f (x) f (y)
  \mathd x \mathd y.
\end{eqnarray*}
W.l.o.g we can take $f \geqslant 0$. Now since $\bar{K}^{\varepsilon} (x - y)
\leqslant - \frac{1}{4 \pi} \log | x - y | + C$, $K_T (x - y) \leqslant -
\frac{1}{4 \pi} \log | x - y | + C$. We have by dominated convergence and
Fatou's lemma
\begin{eqnarray*}
  &  & \lim_{\varepsilon \rightarrow 0} \lim_{T \rightarrow \infty}
  \int_{\mathbb{R}^2} \int_{\mathbb{R}^2} e^{\beta^2 \bar{K}^{\varepsilon} (x
  - y)} + e^{\beta^2 \bar{K}_T (x - y)} - 2 e^{\beta^2 \mathbb{E} [W_T (x)
  W^{\varepsilon}_{\infty} (y)]} f (x) f (y) \mathd x \mathd y.\\
  & = & 0
\end{eqnarray*}
which proves the claim. This clearly implies
\[ \int \rho e^{\frac{1}{2} \beta^2 \bar{K}^{\varepsilon} (0)} \cos (\beta
   W_{\infty}^{\varepsilon}) \rightarrow \int \rho \llbracket \cos (\beta
   W_{\infty}) \rrbracket \]
in $L^2 (\mathbb{P}) .$ In particular we can select a subsequence (not
relabeled) such that this implies that $\mathbb{P}- a.s$
\[ e^{- \lambda \int \rho e^{\frac{1}{2} \beta^2 \bar{K}^{\varepsilon} (0)}
   \cos (\beta W_{\infty}^{\varepsilon})} \rightarrow e^{- \lambda \int \rho
   \llbracket \cos (\beta W_{\infty}) \rrbracket} . \]
\textbf{Step 2.} Step 1 will imply that $\nu^{\varepsilon, \rho}_{\tmop{SG}}
\rightarrow \nu^{\rho}_{\tmop{SG}}$ as soon as we have established that
\[ \sup_{\varepsilon} \mathbb{E} \left[ e^{- \lambda p \int \rho e^{\beta^2
   \bar{K}^{\varepsilon} (0)} \cos (\beta W_{\infty}^{\varepsilon})} \right] <
   \infty . \]
From Corollary \ref{cor:BD-formula} we know
\[ - \log \mathbb{E} \left[ e^{- \lambda p \int \rho e^{\frac{\beta^2}{2}
   K^{\varepsilon} (0)} \cos (W_{\infty}^{\varepsilon})} \right] = \inf_{u \in
   \mathbb{H}_a} \mathbb{E} \left[ \lambda p \int \rho e^{\frac{\beta^2}{2}
   K^{\varepsilon} (0)} \cos (\beta (W_{\infty}^{\varepsilon} +
   I^{\varepsilon} (u))) + \frac{1}{2} \int^{\infty}_0 \| u_t \|^2_{L^2}
   \mathd t \right] \]
with $I^{\varepsilon} (u) = \theta^{\varepsilon} \ast I_{\infty} (u)$.
Expanding the cosine we get
\begin{eqnarray*}
  &  & \left| \int \rho e^{\frac{\beta^2}{2} K^{\varepsilon} (0)} \cos (\beta
  (W_{\infty}^{\varepsilon} + I^{\varepsilon} (u))) \right|^2\\
  & = & \left| \int \rho e^{\frac{\beta^2}{2} K^{\varepsilon} (0)} \cos
  (\beta W_{\infty}^{\varepsilon}) \cos (\beta I^{\varepsilon} (u)) \right|^2
  + \left| \int \rho e^{\frac{\beta^2}{2} K^{\varepsilon} (0)} \sin (\beta
  W_{\infty}^{\varepsilon}) \sin (\beta I^{\varepsilon} (u)) \right|^2\\
  & \leqslant & \mathbb{E} \left[ \left\| \rho e^{\frac{\beta^2}{2}
  K^{\varepsilon} (0)} \cos (\beta W_{\infty}^{\varepsilon}) \right\|^2_{H^{-
  1}} \right] \mathbb{E} [\| \cos (\beta I^{\varepsilon} (u)) \|_{H^1}^2] \\
  &  &+\mathbb{E} \left[ \left\| \rho e^{\frac{\beta^2}{2} K^{\varepsilon} (0)}
  \sin (\beta W_{\infty}^{\varepsilon}) \right\|^2_{H^{- 1}} \right]
  \mathbb{E} [\| \sin (\beta I^{\varepsilon} (u)) \|_{H^1}^2]\\
  & \leqslant & C \left( \mathbb{E} \left[ \left\| \rho e^{\frac{\beta^2}{2}
  K^{\varepsilon} (0)} \cos (\beta W_{\infty}^{\varepsilon}) \right\|^2_{H^{-
  1}} \right] +\mathbb{E} \left[ \left\| \rho e^{\frac{\beta^2}{2}
  K^{\varepsilon} (0)} \sin (\beta W_{\infty}^{\varepsilon}) \right\|^2_{H^{-
  1}} \right] \right) \mathbb{E} [\| I^{\varepsilon} (u) \|_{H^1}^2]\\
  & \leqslant & C \left( \mathbb{E} \left[ \left\| \rho
  e^{\frac{\beta^2}{2}^{} K^{\varepsilon} (0)} \cos (\beta
  W_{\infty}^{\varepsilon}) \right\|^2_{H^{- 1}} \right] +\mathbb{E} \left[
  \left\| \rho e^{\frac{\beta^2}{2} K^{\varepsilon} (0)} \sin (\beta
  W_{\infty}^{\varepsilon}) \right\|^2_{H^{- 1}} \right] \right) \mathbb{E}
  \left[ \int^{\infty}_0 \| u_t \|_{L^2}^2 \mathd t \right]
\end{eqnarray*}
where in the last line we have used Lemma \ref{IH1estimate}. This implies by
Young's inequality
\begin{eqnarray*}
  &  & \inf_{u \in \mathbb{H}_a} \mathbb{E} \left[ \lambda p \int \rho
  e^{\beta^2 K^{\varepsilon} (0)} \cos (\beta (W_{\infty}^{\varepsilon} +
  I^{\varepsilon} (u))) + \frac{1}{2} \int^{\infty}_0 \| u_t \|^2_{L^2} \mathd
  t \right]\\
  & \geqslant & - C \left( \mathbb{E} \left[ \left\| \rho
  e^{\frac{\beta^2}{2} K^{\varepsilon} (0)} \cos (\beta
  W_{\infty}^{\varepsilon}) \right\|^2_{H^{- 1}} \right] +\mathbb{E} \left[
  \left\| \rho e^{\frac{\beta^2}{2} K^{\varepsilon} (0)} \sin (\beta
  W_{\infty}^{\varepsilon}) \right\|^2_{H^{- 1}} \right] \right) + \frac{1}{4}
  \mathbb{E} \left[ \int^{\infty}_0 \| u_t \|_{L^2}^2 \mathd t \right] .
\end{eqnarray*}
Now note that from a simple calculation we get
\[ \mathbb{E} \left[ \left| e^{\frac{\beta^2}{2} K^{\varepsilon} (0)} \cos
   (\beta W_{\infty}^{\varepsilon} (x)) e^{\frac{\beta^2}{2} K^{\varepsilon}
   (0)} \cos (\beta W_{\infty}^{\varepsilon} (y)) \right| \right] \leqslant C
   \frac{1}{| x - y |^{\beta^2 / 2 \pi}}, \]
from which we can conclude by Lemma \ref{regularityN} that $\sup_{\varepsilon}
\mathbb{E} \left[ \left\| \rho e^{\frac{\beta^2}{2} K^{\varepsilon} (0)} \cos
(\beta W_{\infty}^{\varepsilon}) \right\|^2_{H^{- 1}} \right] < \infty$, so we
can deduce that $\sup_{\varepsilon} \mathbb{E} \left[ e^{- \lambda p \int \rho
e^{\beta K^{\varepsilon} (0)} \cos (\beta W_{\infty}^{\varepsilon})} \right] <
\infty$. \

\textbf{Step 3.} We now show that $\nu^{\varepsilon, \rho}_{\tmop{SG}}$ are
reflection positive. We can write
\[ \nu^{\varepsilon, \rho}_{\tmop{SG}} = e^{- \lambda S^{\rho}_{\varepsilon}
   (\phi)} \mu_F^{\varepsilon} (\mathd \phi), \text{\quad with\quad}
   S^{\rho}_{\varepsilon} (\phi) = e^{\frac{1}{2} \beta^2 K^{\varepsilon} (0)}
   \int \rho \cos (\beta \phi) \]
where $\mu_F^{\varepsilon} = \tmop{Law} (W_{\infty}^{\varepsilon})$ is the
gaussian measure with covariance operator
\[ C^{\varepsilon} (f) = \theta^{\varepsilon} \ast (m^2 - \Delta)^{- 1} \ast
   \theta^{\varepsilon} f. \]
We claim that $\mu_F^{\varepsilon}$ is reflection positive. Since it is
Gaussian by Theorem 6.2.2 in {\cite{Glimm-1987}} it is enough to show that
\[ \langle f, \Pi_+ \Theta C^{\varepsilon} \Pi_+ f \rangle_{L^2} \geqslant 0.
\]
Where $\Pi_+$ is the projection on $L^2 (\mathbb{R}_+ \times \mathbb{R})$.
Since the convolution with $\theta^{\varepsilon}$ commutes with $\Pi_+$ we
have
\begin{eqnarray*}
  &  & \langle f, \Pi_+ \Theta C^{\varepsilon} \Pi_+ f \rangle\\
  & = & \langle \Pi_+ (\theta^{\varepsilon} \ast f), \Theta (m^2 - \Delta)^{-
  1} \Pi_+ (\theta^{\varepsilon} \ast f) \rangle\\
  & \geqslant & 0,
\end{eqnarray*}
where in the last line we have used reflection positivity of $(m^2 -
\Delta)^{- 1}$. Now finally we prove that $\nu^{\varepsilon,
\rho}_{\tmop{SG}}$ is indeed reflection positive. Write
\[ S^{\rho, +}_{\varepsilon} (\phi) = e^{\frac{\beta^2}{2} K^{\varepsilon}
   (0)} \int_{\mathbb{R}_+ \times \mathbb{R}} \rho \cos (\beta \phi) . \]
Observe that provided $\rho$ is symmetric
\[ S^{\rho}_{\varepsilon} (\phi) = S^{\rho, +}_{\varepsilon} (\phi) + S^{\rho,
   +}_{\varepsilon} (\Theta \phi) . \]
Then
\[ \int F (\phi) \Theta F (\phi) \mathd \nu^{\varepsilon, \rho}_{\tmop{SG}}
   =_{} \int F (\phi) e^{- \lambda S^{\rho, +}_{\varepsilon} (\phi)} \Theta (F
   (\phi) e^{- \lambda S^{\rho, +}_{\varepsilon} (\phi)}) \mathd
   \mu_F^{\varepsilon} \geqslant 0 \]
by reflection positivity of $\mu_F^{\varepsilon}$.

\subsection{Exponential clustering}\label{sec:MG}

In this section we want to study expectations under the Sine Gordon measure of
the form
\[ \int_{\CS' (\mathbb{R}^2)} \prod_{i = 1}^k \langle \psi_i, \phi
   \rangle_{L^2 (\mathbb{R}^2)} \nu_{\tmop{SG}} (\mathd \phi) . \]
Our goal is to show that there exist constants $C = C (\{ \psi_i \}^k_{i =
1})$ and an $m_p > 0$ independent of $\psi$, such that for any $a \in
\mathbb{R}^2$ and $\tmop{supp} \psi_i \subset B (0, 1)$
\begin{eqnarray*}
  &  & \left| \int_{\CS' (\mathbb{R}^2)} \prod_{i = 1}^l \langle \psi_i, \phi
  \rangle_{L^2 (\mathbb{R}^2)} \prod_{i = l + 1}^k \langle \psi_i (\cdot + a),
  \phi \rangle \nu_{\tmop{SG}} (\mathd \phi) \right.\\
  &  & \left. - \int_{\CS' (\mathbb{R}^2)} \prod_{i = 1}^l \langle \psi_i,
  \phi \rangle_{L^2 (\mathbb{R}^2)} \nu_{\tmop{SG}} (\mathd \phi) \int_{\CS'
  (\mathbb{R}^2)} \prod_{i = l + 1}^k \langle \psi_i, \phi \rangle_{L^2
  (\mathbb{R}^2)} \nu_{\tmop{SG}} (\mathd \phi) \right|\\
  & \leqslant & C \exp (- m_p | a |) .
\end{eqnarray*}
In this subsection all constants will be allowed to depend on $\psi_i$. The
idea of proof is similar to the analogous stament in
{\cite{Barashkov-elliptic}}. First note that a simple computation gives, for
$f, g : H^{- 1} (\langle x \rangle^{- n}) \rightarrow \mathbb{R}$
continuous,bounded \
\[ \frac{\mathd}{\mathd t} \frac{\mathd}{\mathd s} \left( - \log \int_{\CS'
   (\mathbb{R}^2)} e^{- t f - s g} \mathd \nu_{\tmop{SG}} \right) = \int_{\CS'
   (\mathbb{R}^2)} f g \mathd \nu_{\tmop{SG}} - \int_{\CS' (\mathbb{R}^2)} f
   \mathd \nu_{\tmop{SG}} \int_{\CS' (\mathbb{R}^2)} g \mathd \nu_{\tmop{SG}}
   . \]
\tmcolor{black}{\begin{lemma}
  \label{lemma:pre-massgap}Assume that $0 < \gamma < m$ and $f, g : H^{- 1}
  (\langle x \rangle^{- n}) \rightarrow \mathbb{R}^2$ are
  Frechet-differentiable such that $| f |^y_{1, 2, \gamma} + | g |^z_{1, 2,
  \gamma}$
  \[ \frac{\mathd}{\mathd t} \frac{\mathd}{\mathd s} \left( - \log \int_{\CS'
     (\mathbb{R}^2)} e^{- t f - s g} \mathd \nu_{\tmop{SG}} \right) \leqslant
     C | f |^z_{1, 2, \gamma} | g |_{1, 2, \gamma}^y \exp (- \gamma | x - z |)
     . \]
\end{lemma}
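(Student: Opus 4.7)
The plan is to interpret the mixed derivative as a covariance under $\nu_{\mathrm{SG}}$ and then extract the exponential factor using the weighted control on the variational drift established in Theorem \ref{thm:weighted}. Indeed, standard differentiation of a log-Laplace transform gives
\begin{equation*}
  \frac{d}{dt}\frac{d}{ds}\Bigl(-\log \int e^{-tf-sg}\, d\nu_{\mathrm{SG}}\Bigr)\Big|_{t=s=0} = -\mathrm{Cov}_{\nu_{\mathrm{SG}}}(f,g),
\end{equation*}
so it suffices to establish the stated bound for this covariance (with $|y-z|$ in the exponent, which I read as the intended statement in place of the written $|x-z|$).

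By Lemma \ref{lemma:coupling} combined with Theorem \ref{thm:characterization} and Proposition \ref{prop:drift-infinite-volume}, for each $s$ the tilted measure $e^{-sg}d\nu_{\mathrm{SG}}$ (normalized) is the law of $W_\infty + I_\infty(\bar u^{sg})$ where $\bar u^{sg}$ is the drift minimising the corresponding variational functional. Applying the envelope principle a second time,
\begin{equation*}
  \mathrm{Cov}_{\nu_{\mathrm{SG}}}(f,g) = \frac{d}{ds}\Big|_{s=0}\mathbb{E}\bigl[f(W_\infty + I_\infty(\bar u^{sg}))\bigr] = \mathbb{E}\Bigl[\int \nabla f(W_\infty + I_\infty(\bar u))\, I_\infty(\dot u)\,dx\Bigr],
\end{equation*}
where $\dot u := \partial_s \bar u^{sg}|_{s=0}$. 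In practice I would not differentiate pointwise in $s$; instead I would work directly with the finite difference $(\bar u^{sg}-\bar u)/s$, control it in a weighted $L^2$ norm via Theorem \ref{thm:weighted}, and pass $s\to 0$ at the very end.

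The key input is that, applying Theorem \ref{thm:weighted} with the weight $w_y(x)=\exp(2\gamma|x-y|)$ centered at the localisation center $y$ of $g$ and dividing by $s$, one obtains
\begin{equation*}
  \mathbb{E}\Bigl[\int_0^\infty \|w_y^{1/2}\dot u_t\|_{L^2}^2\, dt\Bigr]^{1/2} \leq C\,|g|^y_{1,2,\gamma}.
\end{equation*}
The triangle inequality $|x-y|+|x-z|\geq |y-z|$ gives the pointwise bound $e^{-\gamma|x-z|}\leq e^{-\gamma|y-z|}\,e^{\gamma|x-y|}$, hence
\begin{equation*}
  \|\dot u_t\|_{L^{2,-\gamma}_z}^2 \leq e^{-2\gamma|y-z|}\,\|w_y^{1/2}\dot u_t\|_{L^2}^2.
\end{equation*}
Combining this with the mapping estimate $\|I_\infty(u)\|_{L^{2,-\gamma}_z}\lesssim \|u\|_{L^2_tL^{2,-\gamma}_z}$ from Section \ref{sec:decomposition}, and then weighted Cauchy--Schwarz in the dual pair $L^{2,\gamma}_z / L^{2,-\gamma}_z$, yields
\begin{equation*}
  \Bigl|\mathbb{E}\!\int \nabla f\cdot I_\infty(\dot u)\,dx\Bigr| \leq |f|^z_{1,2,\gamma}\cdot \mathbb{E}\|I_\infty(\dot u)\|_{L^{2,-\gamma}_z} \leq C\,|f|^z_{1,2,\gamma}|g|^y_{1,2,\gamma}\,e^{-\gamma|y-z|},
\end{equation*}
which is the claim after relabelling $\gamma$.

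The main technical obstacle is justifying the differentiation/envelope step in the infinite-volume, weighted setting: one needs that $\bar u^{sg}$ depends smoothly on $s$ in the weighted $L^2$ norm, uniformly in the cutoff $\rho$, so that $s\to 0$ and $\rho\to 1$ can be exchanged without losing the weighted bound. This should follow by redoing the proof of Lemma \ref{lemma:decay} on finite differences of $\bar u^{sg}$, since the linearized Euler--Lagrange operator inherits the same convexity/coercivity condition $C\lambda(\beta+\beta^2)<\kappa$ used there, and then propagating through the $\rho\to 1$ limit via Proposition \ref{prop:drift-infinite-volume}.
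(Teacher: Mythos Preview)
Your approach is essentially the paper's: use the envelope theorem (Lemma \ref{lemma:coupling}) to express the covariance via the finite difference $u^{sg}-u^{0}$, invoke Theorem \ref{thm:weighted} to bound this difference in an exponentially weighted $L^2$ norm centered at the localization of $g$, and then convert weights via the triangle inequality (this is exactly Lemma \ref{lemma:decay1} in the paper) to extract the factor $e^{-\gamma|y-z|}$ before pairing with $\nabla f$. The one point where the paper is cleaner is the obstacle you flag at the end: rather than justify the exchange of $s\to 0$ and $\rho\to 1$ in infinite volume, the paper simply proves the bound for $\nu^{\rho}_{\mathrm{SG}}$ with constants uniform in $\rho$ (where Lemma \ref{lemma:coupling} and Theorem \ref{thm:weighted} already apply directly) and then passes to the limit by weak convergence of the covariance.
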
}

\begin{proof}
  By weak convergence it is enough to prove the statement for
  $\nu^{\rho}_{\tmop{SG}}$ with $C, \gamma$ uniform in $\rho .$ By Lemma
  \ref{lemma:coupling} we have
  \[ \frac{\mathd}{\mathd s} \frac{\mathd}{\mathd t} \left( - \log \int_{\CS'
     (\mathbb{R}^2)} e^{- t f - s g} \mathd \nu^{\rho}_{\tmop{SG}} \right) =
     \lim_{s \rightarrow 0} \frac{1}{s} (\mathbb{E} [f (W_{\infty} +
     I_{\infty} (u^{s g, \rho}))] -\mathbb{E} [f (W_{\infty} + I_{\infty}
     (u^{0, \rho}))]) . \]
  Now from Theoerem \ref{thm:weighted}\tmcolor{red}{ }we get
  \[ \| I_{\infty} (u^{s g, \rho}) - I_{\infty} (u^{0, \rho}) \|_{L_z^{2,
     \gamma} (B)} \leqslant s | g |_{1, 2}^B, \]
  so we have by Lemma \ref{lemma:decay1}
  \begin{eqnarray*}
    &  & | \mathbb{E} [f (W_T + I_T (u^{s g, \rho}))] -\mathbb{E} [f (W_T +
    I_T (u^{0, \rho}))] |\\
    & \leqslant & C | f |^z_{1, 2, y} \| I_T (u^{s g, \rho}) - I_T (u^{0,
    \rho}) \|_{L_y^{2, - \gamma}}\\
    & \leqslant & C | f |^z_{1, 2, x} \| I_T (u^{s g, \rho}) - I_T (u^{0,
    \rho}) \|_{L_z^{2, \gamma}} \exp (- \gamma | y - z |)\\
    & \leqslant & C s | f |^z_{1, 2, \gamma} | g |_{1, 2, \gamma}^y \exp (-
    \gamma | y - z |)
  \end{eqnarray*}
  which implies the statement. 
\end{proof}

Finally we are able to prove the exponential clustering: Take $\chi^N \in
C_c^{\infty} (\mathbb{R}, \mathbb{R})$ with $\chi^N (x) = 1$ if $| x |
\leqslant N$ and $\chi^N (x) = 0$ if $| x | \geqslant N + 1$, $\sup_{N \in
\mathbb{N}} \| (\chi^N)' \|_{L^{\infty}} \leqslant C$. Now define
\[ f^N (\phi) = \prod_{i = 1}^l \langle \psi_i, \phi \rangle_{L^2
   (\mathbb{R}^2)} \chi^N (\| \phi \|_{H^{- 1, - \gamma}}), \qquad g^N (\phi)
   = \prod_{i = l + 1}^k \langle \psi_i, \phi \rangle_{L^2 (\mathbb{R}^2)}
   \chi^N (\| \phi \|_{H^{- 1, - \gamma}}) . \]
Furthermore introduce
\[ g^{N, a} (\phi) = \prod_{i = l + 1}^k \langle \psi_i (\cdot + a), \phi
   \rangle_{L^2 (\mathbb{R}^2)} \chi (\| \phi \|_{H_a^{- 1, - \gamma}}) . \]
Observe that $f^N, g^N \in C^2 (L^2 (\mathbb{R}^2)) .$ Note that with $w (x) =
\exp (- \gamma | x - a |)$ by product rule
\begin{eqnarray*}
  &  & \nabla f^N (\phi)\\
  & = & \chi^N (\| \phi \|_{H^{- 1, - \gamma}}) \sum^l_{j = 1}
  \prod_{\tmscript{\begin{array}{l}
    i = 0\\
    i \neq j
  \end{array}}}^l \langle \psi_i, \phi \rangle_{L^2 (\mathbb{R}^2)} \psi_j\\
  &  & + \frac{(\chi^N)' (\| \phi \|_{H^{- 1, - \gamma}})}{\| \phi \|_{H^{-
  1, - \gamma}}} \prod_{\tmscript{\begin{array}{l}
    i = 0
  \end{array}}}^l \langle \psi_i, \phi \rangle_{L^2 (\mathbb{R}^2)} (w (1 -
  \Delta)^{- 1} w \phi)
\end{eqnarray*}
so since
\[ \| w (1 - \Delta)^{- 1} w \phi \|_{L^{2, \gamma}} \leqslant \| (1 -
   \Delta)^{- 1} w \phi \|_{L^2} \leqslant C \| \phi \|_{H^{- 1, - \gamma}} \]
\[ \  \]
\[ | \nabla f^N (\phi) |_{1, 2, \gamma} \leqslant C N^l \left( \prod_{j = 1}^l
   | \psi_j |_{1, 2, \gamma} \right) \]
and now by exponential integrability and translation invariance of
$\nu_{\tmop{SG}}$
\begin{eqnarray*}
  &  & \int_{\CS' (\mathbb{R}^2)} \left| \prod_{i = 1}^l \langle \psi_i, \phi
  \rangle_{L^2 (\mathbb{R}^2)} \prod_{i = l + 1}^k \langle \psi_i (\cdot + a),
  \phi \rangle - f^N (\phi) g^{N, a} (\phi) \right| \nu_{\tmop{SG}} (\mathd
  \phi)\\
  & \leqslant & C \int_{\left\{ \| \phi \|_{H_a^{- 1, - \gamma}} \geqslant N
  \text{or}  \| \phi \|_{H^{- 1, - \gamma}} \geqslant N \right\} } \| \phi
  \|^l_{H^{- 1, - \gamma}} \| \phi \|_{H_a^{- 1, - \gamma}}^{k - l}
  \nu_{\tmop{SG}} (\mathd \phi)\\
  & \leqslant & 2 \nu_{\tmop{SG}} (\| \phi \|_{H^{- 1, - \gamma}} \geqslant
  N)^{1 / 2} \int_{\CS' (\mathbb{R}^2)} \| \phi \|^{4 l}_{H^{- 1, - \gamma}}
  \nu_{\tmop{SG}} (\mathd \phi) \int \| \phi \|_{H^{- 1, - \gamma}_a}^{4 k - 4
  l} \nu_{\tmop{SG}} (\mathd \phi)\\
  & \leqslant & C 2 \nu_{\tmop{SG}} (\| \phi \|_{H^{- 1, - \gamma}} \geqslant
  N)^{1 / 2} \int_{\CS' (\mathbb{R}^2)} \| \phi \|_{H^{- 1, - \gamma}}^{4 k}
  \nu_{\tmop{SG}} (\mathd \phi)\\
  & \leqslant & C e^{- N} .
\end{eqnarray*}
And analogous statements hold for
\[ \int_{\CS' (\mathbb{R}^2)} \left| \prod_{i = 1}^l \langle \psi_i, \phi
   \rangle_{L^2 (\mathbb{R}^2)} - f^N (\phi) \right| \nu_{\tmop{SG}} (\mathd
   \phi), \int_{\CS' (\mathbb{R}^2)} \left| \prod_{i = l + 1}^k \langle
   \psi_i, \phi \rangle - g^N (\phi) \right| \nu_{\tmop{SG}} (\mathd \phi) .
\]
Now by Lemma \ref{lemma:pre-massgap}
\begin{eqnarray*}
  &  & \left| \int_{\CS' (\mathbb{R}^2)} f^N (\phi) g^{N, a} (\phi)
  \nu_{\tmop{SG}} (\mathd \phi) - \int_{\CS' (\mathbb{R}^2)} f^N (\phi)
  \nu_{\tmop{SG}} (\mathd \phi) \int_{\CS' (\mathbb{R}^2)} g^N (\phi)
  \nu_{\tmop{SG}} (\mathd \phi) \right|\\
  & = & \left| \int_{\CS' (\mathbb{R}^2)} f^N (\phi) g^{N, a} (\phi)
  \nu_{\tmop{SG}} (\mathd \phi) - \int_{\CS' (\mathbb{R}^2)} f^N (\phi)
  \nu_{\tmop{SG}} (\mathd \phi) \int_{\CS' (\mathbb{R}^2)} g^{N, a} (\phi)
  \nu_{\tmop{SG}} (\mathd \phi) \right|\\
  & \leqslant & | \nabla f^N (\phi) |_{1, 2, \gamma} | \nabla g^{N, a} (\phi)
  |^a_{1, 2, \gamma} \exp (- \gamma a)\\
  & = & | \nabla f^N (\phi) |_{1, 2, \gamma} | \nabla g^N (\phi) |_{1, 2,
  \gamma} \exp (- \gamma a)\\
  & \leqslant & C N^k \exp (- \gamma a)
\end{eqnarray*}
Putting things together we have
\begin{eqnarray*}
  &  & \int_{\CS' (\mathbb{R}^2)} \prod_{i = 1}^l \langle \psi_i, \phi
  \rangle_{L^2 (\mathbb{R}^2)} \prod_{i = l + 1}^k \langle \psi_i (\cdot + a),
  \phi \rangle \nu_{\tmop{SG}} (\mathd \phi)\\
  &  & - \int_{\CS' (\mathbb{R}^2)} \prod_{i = 1}^l \langle \psi_i, \phi
  \rangle_{L^2 (\mathbb{R}^2)} \nu_{\tmop{SG}} (\mathd \phi) \int_{\CS'
  (\mathbb{R}^2)} \prod_{i = l + 1}^k \langle \psi_i, \phi \rangle_{L^2
  (\mathbb{R}^2)} \nu_{\tmop{SG}} (\mathd \phi)\\
  & \leqslant & C (N^k \exp (- \gamma a) + \exp (- N))\\
  N = \gamma | a | & = & C ((\gamma a)^k \exp (- \gamma | a |) + \exp (-
  \gamma | a |))\\
  & \leqslant & C \exp (- (1 - \delta) \gamma | a |) .
\end{eqnarray*}
\subsection{Non Gaussianity}\label{sec:NG}

In this section we prove that $\nu_{\tmop{SG}}$ is indeed not a Gaussian
measure. Assume $\nu_{\tmop{SG}}$ would be Gaussian, we can regard it as a
gaussian measure on the Hilbert space $H^{- 1} (\langle x \rangle^{- n})$ with
$n \in \mathbb{N}$ sufficiently large. Then there exists a Banach space
$\mathcal{H} \subseteq \CS' (\mathbb{R}^2)$ and $M \in H^{- 1} (\langle x
\rangle^{- n})$ such that for any $\psi \in \mathcal{H}$
\[ \log \int e^{- \langle \psi, \phi \rangle} \mathd \nu_{\tmop{SG}} (\mathd
   \phi) = \| \psi \|^2_{\mathcal{H}} + (M, \psi)_{H^{- 1} (\langle x
   \rangle^{- n})} \]
(This follows easily from Lemma 5.1 in {\cite{Kukush-2019}}). On the other
hand we know that with $V^{\rho}_T (\phi) = \alpha (T) \int \rho (x) \cos
(\phi (x)) \mathd x$ by the Cameron-Martin theorem for the Gaussian Free Field
\begin{eqnarray*}
  &  & \log \int e^{- \langle \psi, \phi \rangle} \mathd \nu_{\tmop{SG}}
  (\mathd \phi)\\
  & = & \lim_{\rho \rightarrow 1, T \rightarrow \infty} \log
  \frac{1}{Z_{\rho, T}}  \int e^{- \langle \psi, \phi \rangle} \mathd
  \nu^{\rho, T}_{\tmop{SG}} (\mathd \phi)\\
  & = & \lim_{\rho \rightarrow 1, T \rightarrow \infty} \log
  \frac{1}{Z_{\rho, T}}  \int e^{- \langle \psi, \phi \rangle} e^{- \lambda
  V^{\rho}_T (\phi)} \mathd \mu_T\\
  & = & \lim_{\rho \rightarrow 1, T \rightarrow \infty} \log
  \frac{1}{Z_{\rho, T}}  \int e^{- \langle \psi, \mathcal{C}_T \phi \rangle}
  e^{- \lambda V^{\rho}_T (\mathcal{C}_T \phi)} \mathd \mu\\
  & = & \lim_{\rho \rightarrow 1, T \rightarrow \infty} \log \left(
  e^{\langle \mathcal{C}_T \psi, (m^2 - \Delta)^{- 1} \mathcal{C}_T \psi
  \rangle} \frac{1}{Z_{\rho, T}} \int e^{- \lambda V^{\rho}_T (\phi + (m^2 -
  \Delta)^{- 1} \psi)} \mathd \mu_T \right)\\
  & = & \lim_{\rho \rightarrow 1} \lim_{T \rightarrow \infty} (\langle
  \mathcal{C}_T \psi, (m^2 - \Delta)^{- 1} \mathcal{C}_T \psi \rangle +
  V^{\rho}_{0, T} ((m^2 - \Delta)^{- 1} \psi) - V^{\rho}_{0, T} (0)) .
\end{eqnarray*}
Recall that since $\sup_{\varphi \in L^2} \| \nabla V^{\rho}_{0, T}
\|_{L^{\infty}} \leqslant C \lambda$ by Lemma \ref{lemma:value-func} we have
that for $\psi \in C^{\infty}_c$
\begin{eqnarray*}
  &  & \| \psi \|^2_{\mathcal{H}} - \langle \mathcal{C}_T \psi, (m^2 -
  \Delta)^{- 1} \mathcal{C}_T \psi \rangle\\
  & = & \log \int e^{- \langle \psi, \phi \rangle} \mathd \nu_{\tmop{SG}}
  (\mathd \phi) - (M, \psi)_{H^{- 1} (\langle x \rangle^{- n})} - \langle
  \mathcal{C}_T \psi, (m^2 - \Delta)^{- 1} \mathcal{C}_T \psi \rangle\\
  & \leqslant & \liminf_{\rho \rightarrow 1, T \rightarrow \infty} \log \int
  e^{- \langle \psi, \phi \rangle} \mathd \nu^{\rho, T}_{\tmop{SG}} (\mathd
  \phi) - (M, \psi)_{H^{- 1} (\langle x \rangle^{- n})} - \langle
  \mathcal{C}_T \psi, (m^2 - \Delta)^{- 1} \mathcal{C}_T \psi \rangle\\
  & \leqslant & \sup_{T < \infty, \rho \in C^{\infty}_c (\mathbb{R}^2, [0,
  1])} | V^{\rho}_{0, T} |_{1, \infty} \| (m^2 - \Delta)^{- 1} \psi \|_{L^1} -
  \| M \|_{H^1 (\langle x \rangle^{- n})} \| \psi \|_{H^1 (\langle x
  \rangle^n)}\\
  & < & \infty .
\end{eqnarray*}
So in particular $\mathcal{H}$ contains $C^{\infty}_c$ functions. We now show
that $\lim_{\rho \rightarrow 1} \lim_{T \rightarrow \infty} V^{\rho}_{0, T}
(\psi)$ is not a quadratic functional which will imply that
\[ \lim_{\rho \rightarrow 1} \lim_{T \rightarrow \infty} \langle
   \mathcal{C}_T \psi, (m^2 - \Delta)^{- 1} \mathcal{C}_T \psi \rangle +
   V^{\rho}_{0, T} (\psi) - V^{\rho}_{0, T} (0) \neq \| \psi
   \|^2_{\mathcal{H}} - (M, \psi)_{H^{- 1} (\langle x \rangle^{- n})} . \]
giving a contradiction. Observe that
\[ \nabla V^{\rho}_{0, T} (\psi) = \lambda \alpha (0) \sin (\psi) + \nabla
   R_{0, T} (\psi) \]
with $\sup_{\psi \in L^2} \| \nabla R_{0, T} (\psi) \|_{L^{\infty}} \leqslant
C \lambda^2$, by Lemma \ref{lemma:value-func}. Now for a quadratic functional
we would have that $\nabla V (\psi)$ is linear in $\psi$ so
\begin{equation}
  \lim_{T \rightarrow \infty, \rho \rightarrow 1} \nabla V^{\rho}_{0, T} (\psi
  + \varphi) + \nabla V^{\rho}_{0, T} (\psi - \varphi) - 2 \nabla V^{\rho}_{0,
  T} (\psi) = 0. \label{eq:contradiction}
\end{equation}
Let us choose $\psi, \varphi$ such that on $\varphi, \psi \in C^{\infty}_c$
and for $x \in B (0, 1)$ $\psi (x) = \pi / 2$ and $\varphi (x) = \pi / 4$.
Then for any $x \in B (0, 1)$ \
\[ \lambda \alpha (0) \sin (\varphi (x) + \psi (x)) + \lambda \alpha (0) \sin
   (\psi (x) - \varphi (x)) - 2 \lambda \alpha (0) \sin (\psi (x)) = \lambda
   \left( 2 \sqrt{2} / 2 - 2 \right) = \lambda \left( \sqrt{2} - 2 \right) \]
and since $\| \nabla R_{0, T} (\psi) \|_{L^{\infty}} \leqslant C \lambda^2$
this implies that for $\lambda$ sufficiently small and $x \in B (0, 1)$
\[ \lim_{\rho \rightarrow 1} \lim_{T \rightarrow \infty} \nabla V^{\rho}_{0,
   T} (\psi + \varphi) (x) + \nabla V^{\rho}_{0, T} (\psi - \varphi) (x) - 2
   \nabla V^{\rho}_{0, T} (\psi) (x) > \lambda \left( \sqrt{2} - 2 \right) /
   2. \]
This is clearly a contradiction to {\eqref{eq:contradiction}}.

\appendix\section{Wick ordered cosine }\label{app:cosine}\label{app:cosine}

We recall the definition of the regularized GFF as
\[ W_t = W_{0, t} = \int^t_0 Q_s \mathd X_s \]
where $X_s$ is a cylindrical Brownian motion on $L^2$. We can calculate:
\[ \mathbb{E} [W_t (x) W_t (y)] = K_t (x, y) . \]
Now it is not hard to see from Ito's formula that the quantity
\begin{equation}
  e^{\frac{\beta^2}{2} K_t (x, x)} \cos (\beta W_t (x)) \backassign \alpha (t)
  \cos (\beta W_t (x)) \label{eq:alpha}
\end{equation}
is a martingale. We will write
\begin{eqnarray*}
  \llbracket \cos (\beta W_t) \rrbracket (x) & = & \alpha (t) \cos (\beta W_t
  (x))\\
  \llbracket \sin (\beta W_t) \rrbracket (x) & = & \alpha (t) \sin (\beta W_t
  (x))\\
  \llbracket e^{i \beta W_t} \rrbracket (x) & = & \alpha (t) e^{i W_t (x)}
\end{eqnarray*}
We claim that is $\llbracket \cos (\beta W_t) \rrbracket$ bounded in $L^2
(\mathbb{P}, H^{- 1 + \delta} (\langle x \rangle^{- n}))$ uniformly in $t, g$.
Since it is also a martingale it converges almost surely. We will largely
follow {\cite{Junnila-2020}}. To prove this the following lemma will be
helpful:

\begin{lemma}\label{bound-quadratic-variation}
  Consider the martingale
  \[ M_t^{i, x} = K_i \ast \llbracket \cos (\beta W_t) \rrbracket (x) . \]
  Then the quadratic variation of $M^{i, x},$ denoted by $[M^{i, x}]$
  satisfies for any $\delta > 0$,
  \[ | \langle M^{i, x} \rangle_t | \leqslant C_{\delta} 2^{i \beta^2 / 2 \pi
     + \delta} \]
  where the constant $C_{\delta}$ is deterministic and does not depend on $x$
  and $t$.
\end{lemma}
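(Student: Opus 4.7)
The plan is to apply It\^o's formula to $\llbracket\cos(\beta W_t)\rrbracket$ to obtain an explicit stochastic differential, compute the quadratic variation of $M^{i,x}_t$ as a double spatial integral against the kernel $\dot K_t = J_t^2$, and then bound $\sin\cdot\sin$ by $1$ pointwise to arrive at a purely deterministic estimate that can be integrated against the Fourier-side heat kernel. The choice $\alpha'(t)/\alpha(t) = (\beta^2/2)\dot K_t(0)$ is precisely what cancels the It\^o drift, yielding
\[
d\llbracket\cos(\beta W_t)\rrbracket(y) = -\beta\alpha(t)\sin(\beta W_t(y))\,(J_t\,dX_t)(y),
\]
so that $dM^{i,x}_t$ is a stochastic integral in $dX_t$ with integrand $-\beta\alpha(t)J_t(K_i(x-\cdot)\sin(\beta W_t))$. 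The It\^o isometry then gives
\[
d\langle M^{i,x}\rangle_t = \beta^2\alpha(t)^2\iint K_i(x-y)K_i(x-z)\sin(\beta W_t(y))\sin(\beta W_t(z))\,\dot K_t(y,z)\,dy\,dz\,dt,
\]
where $\dot K_t = J_t^2$ has pointwise nonnegative kernel as a (rescaled) heat semigroup.

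Bounding $|\sin\cdot\sin|\le 1$ and pulling absolute values through, one gets the deterministic bound
\[
\langle M^{i,x}\rangle_t \;\le\; \beta^2\int_0^\infty \alpha(s)^2\,\|J_s\,|K_i(x-\cdot)|\|_{L^2}^2\,ds,
\]
and the remaining task is to estimate $\|J_s\,|K_i|\|_{L^2}^2$ uniformly in $x$. Since $|\widehat{J_s}(\xi)|^2 = s^{-2}e^{-(m^2+|\xi|^2)/s}$, I estimate in Fourier using two size bounds on $\mathcal{F}|K_i|$: the uniform bound $\|\mathcal{F}|K_i|\|_{L^\infty}\le \||K_i|\|_{L^1}\lesssim 1$, and Plancherel $\|\mathcal{F}|K_i|\|_{L^2}^2 = \||K_i|\|_{L^2}^2\lesssim 2^{2i}$; both are standard Littlewood--Paley facts. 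These yield
\[
\|J_s\,|K_i(x-\cdot)|\|_{L^2}^2 \;\lesssim\; \frac{e^{-m^2/s}}{s}\min\!\left(1,\frac{2^{2i}}{s}\right).
\]

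Combining with $\alpha(s)^2\lesssim \langle s\rangle^{\beta^2/(4\pi)}$ from Section \ref{sec:cosine} and splitting at $s = 2^{2i}$,
\[
\langle M^{i,x}\rangle_t \;\lesssim\; \int_0^{2^{2i}} \langle s\rangle^{\beta^2/(4\pi)-1}e^{-m^2/s}\,ds \;+\; 2^{2i}\int_{2^{2i}}^\infty s^{\beta^2/(4\pi)-2}\,ds \;\lesssim\; 2^{i\beta^2/(2\pi)},
\]
with the convergence of the tail integral requiring precisely $\beta^2<4\pi$, and the behaviour near $s=0$ tamed by the Gaussian factor $e^{-m^2/s}$. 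The arbitrarily small $\delta$ in the statement absorbs subleading logarithmic corrections in $\alpha(s)$ and the borderline cases. The main obstacle is that replacing $K_i$ by $|K_i|$ destroys its frequency localization at scale $2^i$; I circumvent this by using only the two scaling bounds $\||K_i|\|_{L^1}\lesssim 1$ and $\||K_i|\|_{L^2}^2\lesssim 2^{2i}$, which are invariant under taking absolute values and already encode the correct $2^{i\beta^2/(2\pi)}$ scaling.
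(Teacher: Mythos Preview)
Your argument is correct. Both proofs begin identically: apply It\^o's formula to obtain the stochastic differential of $\llbracket\cos(\beta W_t)\rrbracket$, express the quadratic variation as a double spatial integral against the heat kernel $J_s^2(y,z)$, and bound $|\sin\cdot\sin|\le 1$ pointwise to reduce to a purely deterministic quantity.

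The two approaches diverge in how that deterministic integral is estimated. The paper first performs the $s$-integral, producing the Riesz-type singularity $|z_1-z_2|^{-\beta^2/2\pi}$, and then controls the resulting convolution by Young's inequality together with an interpolation $\|K_i\|_{L^{p'}}\le\|K_i\|_{L^1}^{1-\theta}\|K_i\|_{L^\infty}^{\theta}$ at the borderline exponent; this interpolation step is the source of the $+\delta$ loss in the statement. You instead keep the $s$-integral, recognise the inner double integral as $\|J_s\,|K_i(x-\cdot)|\|_{L^2}^2$, and estimate it on the Fourier side using only $\||K_i|\|_{L^1}\lesssim 1$ and $\||K_i|\|_{L^2}^2\lesssim 2^{2i}$, then split at $s=2^{2i}$. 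Your route is somewhat cleaner: it avoids the interpolation and the spatial near/far split, and in fact yields the bound $C\,2^{i\beta^2/2\pi}$ without the $+\delta$ loss. The paper's approach, on the other hand, makes the physical-space mechanism (a local singularity controlled by the correct $L^p$ norm of the Littlewood--Paley kernel) more transparent. Both rely on the same two scaling facts about $K_i$ and on $\beta^2<4\pi$ for integrability at large $s$.
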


\begin{proof}
  We have
  \[ K_i \ast \llbracket \cos (\beta W_t) \rrbracket (x) = \int K_i (x - z)
     \int^t_0 \llbracket \sin (\beta W_s) \rrbracket (z) \mathd W_s (z) \mathd
     z \]
  So
  \begin{eqnarray*}
    &  & | [K_i \ast \llbracket \cos (\beta W_t) \rrbracket (x)]_t |\\
    & = & \left| \int \int \int^t_0 K_i (x - z_1) K_i (x - z_2) \llbracket
    \sin (\beta W_s) \rrbracket (z_1) \llbracket \sin (\beta W_s) \rrbracket
    (z_2) \langle \mathd W_s (z_1), \mathd W_s (z_2) \rangle \mathd z_1 \mathd
    z_2 \right|\\
    & \leqslant & \int \int \int^t_0 | K_i (x - z_1) K_i (x - z_2) \llbracket
    \sin (\beta W_s) \rrbracket (z_1) \llbracket \sin (\beta W_s) \rrbracket
    (z_2) | Q_s (z_1, z_2) \mathd z_1 \mathd z_2\\
    & \leqslant & \int \int \int^t_0 | K_i (x - z_1) K_i (x - z_2) | \alpha^2
    (s) \frac{1}{2 \pi s} \exp (- 4 s | z_1 - z_2 |^2) \mathd z_1 \mathd z_2\\
    & \leqslant & \int \int \int^t_0 | K_i (x - z_1) K_i (x - z_2) |
    \frac{\langle s \rangle^{\beta^2 / 4 \pi}}{4 \pi s} \exp (- 4 s | z_1 -
    z_2 |^2) \mathd z_1 \mathd z_2\\
    & \leqslant & C \int \int | K_i (x - z_1) K_i (x - z_2) | \frac{1}{| z_1
    - z_2 |^{\beta^2 / 2 \pi}} \mathd z_1 \mathd z_2\\
    & = & C \int \int | K_i (x - z_1) K_i (x - z_2) | \frac{1}{| z_1 - z_2
    |^{\beta^2 / 2 \pi}} (1_{| z_1 - z_2 | \leqslant 1} + 1_{| z_1 - z_2 |
    \geqslant 1}) \mathd z_1 \mathd z_2\\
    & = & \Iota + \Iota \Iota
  \end{eqnarray*}
  To estimate term one we write
  \begin{eqnarray*}
    &  & \int | K_i (x - z_1) | \int | K_i (x - z_2) | \frac{1}{| z_1 - z_2
    |^{\beta^2 / 2 \pi}} 1_{| z_1 - z_2 | \leqslant 1} \mathd z_1 \mathd z_2\\
    & \leqslant & \int | K_i (x - z_1) | \mathd z_1 \left\| 1_{| z |
    \nobracket \leqslant 1 \nobracket} \frac{1}{| z |^{\beta^2 / 2 \pi}}
    \right\|_{L^p} \| K_i \|_{L^{p'}}\\
    & \leqslant & \| K_i \|_{L^{p'}} \| K_i \|_{L^1} \left\| 1_{| z |
    \nobracket \leqslant 1 \nobracket} \frac{1}{| z |^{\beta^2 / 2 \pi}}
    \right\|_{L^p}
  \end{eqnarray*}
  where we choose $p = 4 \pi / \beta^2 - \delta''$ for $\delta''$ sufficently
  small. This implies $\frac{1}{p'} = 1 - \beta^2 / 4 \pi - \delta'$ for some
  $\delta' $ which can be made arbitrarily small. Recall that
  \[ \| K_i \|_{L^1} \leqslant C \qquad \| K_i \|_{L^{\infty}} \leqslant C
     2^{2 i} \]
  So interpolating with the parameter $1 - \beta^2 / 4 \pi - \delta'$ we get
  $\| K_i \|_{L^{p'}} \leqslant \| K_i \|^{1 - \beta^2 / 4 \pi -
  \delta'}_{L^1} \| K_i \|^{\beta^2 / 4 \pi + \delta'}_{L^{\infty}}$which
  implies $\| K_i \|_{L^{p'}} \leqslant 2^{\left( \frac{\beta^2}{2 \pi} + 2
  \delta' \right) i}$. We have chosen $p$ in such a way that $\left\| 1_{| z
  \leqslant 1 |} \frac{1}{| z |^{\beta^2 / 2 \pi}} \right\|_{L^p} < \infty$.
  
  To estimate term $\Iota \Iota$ we simply write
  \begin{eqnarray*}
    &  & \int \int | K_i (x - z_1) K_i (x - z_2) | \frac{1}{| z_1 - z_2
    |^{\beta^2 / 2 \pi}} 1_{| z_1 - z_2 | \geqslant 1} \mathd z_1 \mathd z_2\\
    & \leqslant & \int \int | K_i (x - z_1) K_i (x - z_2) | \mathd z_1 \mathd
    z_2\\
    & = & \| K_i \|^2_{L^1} .
  \end{eqnarray*}
  in total we obtain that
  \[ | \langle M^{i, x} \rangle_t | \leqslant C 2^{i \beta^2 / 2 \pi + 2
     \delta'} . \]
\end{proof}

\begin{lemma}\label{bound-besov-cos}
  For any $p < \infty$ and $\delta > 0$ and $\rho$ such that $\int \rho \mathd
  x < \infty$
  \[ \sup_{t \geqslant 0} \mathbb{E} \left[ \| \llbracket \cos (\beta W_t)
     \rrbracket \|_{B_{p, p}^{- \beta^2 / 4 \pi - \delta} (\rho)}^p \right] <
     \infty . \]
\end{lemma}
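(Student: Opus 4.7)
The strategy is to reduce the Besov norm to the quadratic-variation bound of the preceding lemma via Burkholder--Davis--Gundy (BDG). Concretely, I would begin by expanding the norm via its definition: since $\int \rho\,dx < \infty$ and $p < \infty$, Fubini gives
\[
\mathbb{E}\bigl[\|\llbracket\cos(\beta W_t)\rrbracket\|_{B_{p,p}^{-\beta^2/4\pi-\delta}(\rho)}^p\bigr]
= \sum_{j \geq -1} 2^{-jp(\beta^2/4\pi+\delta)} \int \rho(x)\, \mathbb{E}\bigl[|\Delta_j \llbracket\cos(\beta W_t)\rrbracket(x)|^p\bigr]\, dx.
\]
Identifying $\Delta_j \llbracket\cos(\beta W_t)\rrbracket(x)$ with the martingale $M_t^{j,x} = K_j * \llbracket\cos(\beta W_t)\rrbracket(x)$ from Lemma \ref{bound-quadratic-variation} (the Littlewood--Paley block is convolution against $K_j$), the whole problem reduces to a pointwise moment bound on $M_t^{j,x}$.

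Next, I would apply BDG to the continuous martingale $t \mapsto M_t^{j,x}$: for any $p \in [1,\infty)$,
\[
\mathbb{E}\bigl[|M_t^{j,x}|^p\bigr] \leq C_p\, \mathbb{E}\bigl[\langle M^{j,x}\rangle_t^{p/2}\bigr].
\]
By Lemma \ref{bound-quadratic-variation}, $\langle M^{j,x}\rangle_t$ is bounded by a deterministic constant $C_{\delta'} 2^{j(\beta^2/2\pi+\delta')}$ uniformly in $t$ and $x$, for any $\delta'>0$. Choosing $\delta' = \delta$ and raising to the power $p/2$ yields
\[
\sup_{t \geq 0,\, x \in \mathbb{R}^2} \mathbb{E}\bigl[|\Delta_j \llbracket\cos(\beta W_t)\rrbracket(x)|^p\bigr] \leq C_{p,\delta}\, 2^{jp(\beta^2/4\pi + \delta/2)}.
\]

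Plugging this back and using $\int\rho\,dx < \infty$, the full Besov expectation is dominated by
\[
C_{p,\delta}\, \|\rho\|_{L^1} \sum_{j \geq -1} 2^{-jp(\beta^2/4\pi+\delta)}\, 2^{jp(\beta^2/4\pi+\delta/2)} = C_{p,\delta}\, \|\rho\|_{L^1} \sum_{j \geq -1} 2^{-jp\delta/2},
\]
which is finite and independent of $t$, giving the claim. The proof is essentially bookkeeping --- the only real input is the previous lemma, and the only minor point to be careful about is to apply that lemma with some exponent strictly smaller than $\delta$ so that a positive geometric decay survives after subtracting $-\beta^2/4\pi - \delta$ from $\beta^2/4\pi + \delta'$. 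There is no serious obstacle: BDG plus Lemma \ref{bound-quadratic-variation} do essentially all of the work.
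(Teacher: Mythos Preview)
Your proposal is correct and follows essentially the same route as the paper: expand the weighted Besov norm, apply Fubini, use Burkholder (BDG) to pass to the quadratic variation, invoke Lemma~\ref{bound-quadratic-variation} with an exponent $\delta'<\delta$, and sum the resulting geometric series. The paper's argument is identical up to the bookkeeping of choosing $\delta'$.
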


\begin{proof}
  Using Burkholder's inequality we obtain
  \begin{eqnarray*}
    &  & \mathbb{E} \left[ \| \llbracket \cos (\beta W_t) \rrbracket
    \|_{B_{p, p}^{- \beta^2 / 4 \pi - \delta} (\rho)}^p \right]\\
    & \leqslant & \mathbb{E} \left[ \sum_{i \in \mathbb{N}} 2^{- p i (\beta^2
    / 4 \pi + \delta)} \| K_i \ast \llbracket \cos (\beta W_t) \rrbracket
    \|^p_{L^p (\rho)} \right]\\
    & = & \sum_{i \in \mathbb{N}} 2^{- p i (\beta^2 / 4 \pi + \delta)} \int
    \rho (x) \mathbb{E} [| K_i \ast \llbracket \cos (\beta W_t) \rrbracket (x)
    |^p] \mathd x\\
    & \leqslant & C \sum_{i \in \mathbb{N}} 2^{- p i (\beta^2 / 4 \pi +
    \delta)} \int \rho (x) \mathbb{E} [| \langle M^{i, x} \rangle_t |^{p / 2}]
    \mathd x\\
    & \leqslant & C \sum_{i \in \mathbb{N}} 2^{- p i (\beta^2 / 4 \pi +
    \delta)} 2^{i \beta^2 p / 4 \pi + \delta'}\\
    & < & \infty
  \end{eqnarray*}
  if $\delta > \delta'$.
\end{proof}

\begin{definition}
  \label{def:wick-order}Since $\llbracket \cos (\beta W_t) \rrbracket$ is a
  martingale and
  \[ \sup_t \mathbb{E} \left[ \| \llbracket \cos (\beta W_t) \rrbracket
     \|^p_{B_{p, p}^{- \beta^2 / 4 \pi - 2 \delta} (\langle x \rangle^{- n})}
     \right] < \infty \]
  it converges in $L^p (\mathbb{P}, B_{p, p}^{- \beta^2 / 4 \pi - 2 \delta}
  (\langle x \rangle^{- n}))$ to a limit. We will denote this limit by
  $\llbracket \cos (\beta W_{\infty}) \rrbracket$(and analogously for $\alpha
  (t) \sin (W_t)$ and $\alpha (t) e^{i W_t}$).
\end{definition}

\begin{remark}
  \label{rem:conv-cos-semi} From Lemma \ref{bound-quadratic-variation} we see that as $\beta
  \rightarrow 0$ $\mathbb{E} [\| \Delta_i (\llbracket \cos (\beta W_t)
  \rrbracket - 1) \|^2_{L^2 (\langle x \rangle^{- n})}] \rightarrow 0$.
  Together with Lemma \ref{bound-besov-cos} we can easily deduce from this that
  \[ \mathbb{E} \left[ \| (\llbracket \cos (\beta W_t) \rrbracket - 1)
     \|^2_{B_{p, p}^{- \beta^2 / 4 \pi - 3 \varepsilon} (\langle x \rangle^{-
     n + 1})} \right] \rightarrow 0, \quad \mathbb{E} \left[ \| (\llbracket
     \sin (\beta W_t) \rrbracket) \|^2_{B_{p, p}^{- \beta^2 / 4 \pi - 3
     \varepsilon} (\langle x \rangle^{- n + 1})} \right] \rightarrow 0. \]
\end{remark}

\section{Weighted estimates }\label{app:weighted}

\begin{definition}
  For a set $z \in \mathbb{R}^2$, $r \in \mathbb{R}$ we define the weighted
  $L^p$ spaces
  \[ \| f \|_{L_z^{p, r}} = \left( \int \exp (r p | x |) f^p (x) \mathd x
     \right)^{1 / p} \]
  And
  \[ \| f \|_{W_z^{1, p, r}} = \| f \|_{L^{p, r} (A)} + \left( \int (\exp (r
     p | x - z |)) (\nabla f (x))^p \mathd x \right)^{1 / p} \]
  We will also set $H^{1, r} \equallim W^{1, 2, r}$. Furthermore we will set
  \[ \| f \|_{L^{p, r}} = \| f \|_{L_0^{p, r}}, \qquad \| f \|_{W^{1, p, r}}
     = \| f \|_{W_0^{1, p, r}} . \]
\end{definition}

\begin{lemma}
  Let $r > 0$. Then for $f \in L_y^{2, r_1}, g \in L_z^{2,
  r_2}$\label{lemma:decay1}
  \[ \int f g \mathd x \leqslant \exp (- (r_1 \wedge r_2) | y - z |) \| f
     \|_{L_y^{2, r_1} (A)} \| g \|_{L_z^{2, r_2} (B)} . \]
\end{lemma}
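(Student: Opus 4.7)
The plan is to reduce the estimate to a combination of pointwise control of the weight product and Cauchy--Schwarz. Using the (intended) definition $\|f\|_{L_y^{2,r_1}} = \|e^{r_1|\cdot - y|}f\|_{L^2}$ and analogously for $g$, rewrite
\[
\int f(x)g(x)\, dx = \int \bigl(e^{r_1|x-y|}f(x)\bigr)\bigl(e^{r_2|x-z|}g(x)\bigr)\, e^{-r_1|x-y| - r_2|x-z|}\, dx.
\]

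First I would establish the pointwise bound on the auxiliary exponential factor. By the triangle inequality $|y-z| \leqslant |x-y|+|x-z|$, so
\[
r_1|x-y| + r_2|x-z| \;\geqslant\; (r_1\wedge r_2)(|x-y|+|x-z|) \;\geqslant\; (r_1\wedge r_2)|y-z|,
\]
uniformly in $x$. Consequently $e^{-r_1|x-y|-r_2|x-z|}\leqslant e^{-(r_1\wedge r_2)|y-z|}$ pointwise, and this factor can be pulled out of the integral.

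Then I would apply the Cauchy--Schwarz inequality to the remaining integrand $(e^{r_1|x-y|}f)(e^{r_2|x-z|}g)$ in $L^2(dx)$, which gives the product $\|f\|_{L_y^{2,r_1}}\|g\|_{L_z^{2,r_2}}$. Combining with the exponential factor yields the claim; restriction to sets $A$ and $B$ only shrinks the integrands and is harmless.

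There is no real obstacle here: the statement is essentially a Cauchy--Schwarz-plus-triangle-inequality computation. The only subtlety is the evident typo in the definition of $\|\cdot\|_{L_z^{p,r}}$ (the weight should read $\exp(rp|x-z|)$, as is clear from the $W_z^{1,p,r}$ definition and from the lemma itself being non-trivial); with that reading the two-line argument above is complete.
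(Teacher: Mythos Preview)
Your proof is correct and follows essentially the same approach as the paper: both insert the exponential weights, use the triangle inequality to show $r_1|x-y|+r_2|x-z|\geqslant (r_1\wedge r_2)|y-z|$, and then apply Cauchy--Schwarz. The only cosmetic difference is that the paper multiplies by the factor $\exp(r_1|x-y|+r_2|x-z|-(r_1\wedge r_2)|y-z|)\geqslant 1$ whereas you write the identity first and bound the negative exponential, which is the same computation rearranged.
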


\begin{proof}
  
  \begin{eqnarray*}
    &  & \int f g \mathd x\\
    & \leqslant & \int \exp (r_1 | x - y |) \exp (r_2  | x - z |) \exp (- r_1
    \wedge r_2  | y - z |) f (x) g (x) \mathd x\\
    & = & \exp (- r_1 \wedge r_2  | y - z |) \int \exp (r_1  | x - y |) f
    \exp (r_2  | x - z |) g \mathd x\\
    & \leqslant & \exp (- (r_1 \wedge r_2)  | y - z |) \| f \|_{L_y^{2, r_1}
    (A)} \| g \|_{L_z^{2, r_2} (B)}
  \end{eqnarray*}
  where we have used that by triangle inequality
  \[ r_1  | x - y | + r_2  | x - z | - r_1 \wedge r_2  | y - z | \geqslant 0.
  \]
  
\end{proof}

\begin{lemma}
  \label{lemma:estimate-decay-set}For any $\gamma > 0, n \leqslant 0$
  \[ \| f \|_{L^2 (\langle x \rangle^{- n})} \leqslant C \langle d (0, y)
     \rangle^{- n / 2} \| f \|_{L_y^{2, \gamma}} \]
\end{lemma}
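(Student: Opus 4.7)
The plan is to bound the polynomial weight $\langle x \rangle^{-n}$ pointwise by the exponential weight appearing in $\|\cdot\|_{L_y^{2,\gamma}}$, times a $y$-dependent constant, and then pull the latter factor out of the integral. Concretely, I would factor the integrand as
\[
|f(x)|^2 \langle x \rangle^{-n} \;=\; \bigl(|f(x)|^2\, e^{2\gamma|x-y|}\bigr) \cdot \bigl(\langle x \rangle^{-n}\, e^{-2\gamma|x-y|}\bigr),
\]
and estimate the second factor in $L^\infty_x$.

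To control $\sup_x \langle x \rangle^{-n} e^{-2\gamma|x-y|}$ I would invoke the Peetre-type inequality $\langle x \rangle \leq C\,\langle y \rangle\,\langle x-y \rangle$. Since $-n \geq 0$, raising to the power $-n$ preserves the inequality, giving
\[
\langle x \rangle^{-n} \;\leq\; C\,\langle y \rangle^{-n}\,\langle x-y \rangle^{-n}.
\]
The factor $\langle x-y \rangle^{-n}$ is polynomial in $|x-y|$, so it is dominated uniformly in $x$ by $C_\gamma\, e^{\gamma|x-y|}$ (here is where one uses $\gamma > 0$), and consequently
\[
\sup_{x \in \mathbb{R}^2} \langle x \rangle^{-n}\, e^{-2\gamma|x-y|} \;\leq\; C\,\langle y \rangle^{-n}\,\sup_{x} \langle x-y \rangle^{-n}\, e^{-2\gamma|x-y|} \;\leq\; C\,\langle y \rangle^{-n}.
\]

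Combining the two displays,
\[
\|f\|_{L^2(\langle x \rangle^{-n})}^2 \;=\; \int |f(x)|^2 \langle x \rangle^{-n}\,\mathd x \;\leq\; C\,\langle y \rangle^{-n}\, \|f\|_{L_y^{2,\gamma}}^2,
\]
and taking square roots gives the claim (with $\langle d(0,y)\rangle = \langle |y| \rangle = \langle y \rangle$). There is no real obstacle here; the only point to be careful about is the sign convention for $n$, which ensures that the Peetre inequality works in the correct direction and that absorbing the polynomial $\langle x-y\rangle^{-n}$ into the exponential $e^{\gamma|x-y|}$ only costs a constant.
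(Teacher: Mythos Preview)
Your proof is correct and follows essentially the same approach as the paper: insert $1 = e^{2\gamma|x-y|}e^{-2\gamma|x-y|}$, use the Peetre inequality $\langle x\rangle^{-n} \leq C\langle y\rangle^{-n}\langle x-y\rangle^{-n}$, and absorb the remaining polynomial $\langle x-y\rangle^{-n}$ into the exponential decay. Your presentation is in fact cleaner than the paper's, which carries some leftover notation ($d(x,A)$, $A$) from an earlier version and has an ambiguous intermediate line; your explicit factoring and pointwise $L^\infty_x$ bound on $\langle x\rangle^{-n}e^{-2\gamma|x-y|}$ makes each step transparent.
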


\begin{proof}
  
  \begin{eqnarray*}
    &  & \int f^2 (x) \langle x \rangle^{- n} \mathd x\\
    & = & \int f^2 (x) e^{2 d (x, A)} e^{- 2 d | x - y |} \langle x
    \rangle^{- n} \mathd x\\
    & \leqslant & \int f^2 (x) e^{2 d (x, A)} \langle x - y \rangle^{- n}
    \langle x \rangle^{- n} \mathd x\\
    & \leqslant & C \langle d (0, A) \rangle^{- n} \int f^2 (x) e^{2 | x - y
    |} \mathd x
  \end{eqnarray*}
  
\end{proof}

\begin{lemma}
  Let $s \in \{ 0, 1 \}$ $r > 0$ and $f \in W_p^{s, r}$ is supported on $B (0,
  N)^c$, $N \geqslant 1$. Then
  \[ \| f \|_{W_p^{s, r - \kappa}} \leqslant N^{- \kappa} \| f \|_{W_p^{s,
     r}} \]
\end{lemma}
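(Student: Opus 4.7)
The plan is to use the pointwise support condition to convert the exponential factor into a uniform scalar bound, then integrate. Note that the stated inequality appears to be a typographical mismatch with the analogous Proposition earlier in the paper, which gives $\exp(-\kappa N)$ rather than $N^{-\kappa}$; the exponential bound is the one that follows naturally and is actually stronger for $N \geqslant 1$. I would prove the exponential version.

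First I would unpack the definition of the norm: for $s \in \{0,1\}$,
\[
\| f \|_{W_p^{s,r-\kappa}}^p = \int e^{p(r-\kappa)|x|}\bigl(|f(x)|^p + s\,|\nabla f(x)|^p\bigr)\, \mathrm{d}x.
\]
Since $f$ is supported in $B(0,N)^c$, the integrand vanishes for $|x|<N$, and when $f$ is in $W^{1,p}$ its weak gradient is also supported in $\overline{B(0,N)^c}$ (a.e.\ on $\{|x|<N\}$ the gradient vanishes). Hence on the effective domain of integration we have $|x|\geqslant N$, so
\[
e^{p(r-\kappa)|x|} = e^{pr|x|}\, e^{-p\kappa |x|} \leqslant e^{pr|x|}\, e^{-p\kappa N}.
\]

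Substituting this pointwise bound into the integral yields
\[
\| f \|_{W_p^{s,r-\kappa}}^p \leqslant e^{-p\kappa N}\int e^{pr|x|}\bigl(|f(x)|^p + s\,|\nabla f(x)|^p\bigr)\, \mathrm{d}x = e^{-p\kappa N}\, \| f \|_{W_p^{s,r}}^p,
\]
and taking $p$-th roots gives the claim $\| f \|_{W_p^{s,r-\kappa}} \leqslant e^{-\kappa N}\| f \|_{W_p^{s,r}}$. Since $e^{-\kappa N}\leqslant N^{-\kappa}$ fails in general but the weaker polynomial version $N^{-\kappa}\leqslant 1$ is trivially subsumed for $N \geqslant 1$ by $e^{-\kappa N}$, the exponential statement implies either version one might intend.

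There is essentially no obstacle here: the key (and only) observation is that the support condition lets us replace $|x|$ in the exponential weight by the uniform lower bound $N$, turning the difference of weights into a constant factor that pulls out of the integral. The only small point of care is that for $s=1$ one should note that $\nabla f$ is supported in $\overline{B(0,N)^c}$ as well, which is immediate from $f\equiv 0$ on the open set $B(0,N)$.
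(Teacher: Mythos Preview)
Your proof is correct and is essentially identical to the paper's own argument: use the support condition to restrict the integral to $|x|\geqslant N$, bound $e^{-p\kappa|x|}\leqslant e^{-p\kappa N}$ pointwise, and pull the constant out; then repeat for $\nabla f$ when $s=1$. You also correctly spotted the typo in the lemma's statement (the earlier Proposition has the intended $\exp(-\kappa N)$).

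One small correction to your closing remark: for $N\geqslant 1$ the inequality $e^{-\kappa N}\leqslant N^{-\kappa}$ \emph{does} hold (it is equivalent to $N\leqslant e^{N}$), so your exponential bound directly implies the polynomial one as stated; there is no failure to worry about.
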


\begin{proof}
  
  \begin{eqnarray*}
    &  & \left( \int f^p \exp ((r - \kappa) p | x |) \mathd x \right)^{1 /
    p}\\
    & = & \left( \int_{| x | \geqslant N} f^p \exp ((r - \kappa) p | x |)
    \mathd x \right)^{1 / p}\\
    & \leqslant & N^{- \kappa} \left( \int f^p \exp (r p | x |) \mathd x
    \right)^{1 / p}\\
    & = & N^{- \kappa} \| f \|_{L^{p, r}}
  \end{eqnarray*}
  This proves the claim with $s = 0$. Applying this inequality also to $\nabla
  f$ we obtain the full statment.
\end{proof}

\begin{lemma}
  \label{bound:Q-linfty}{\tmdummy}
  
  \[ \| J_t f \|_{L^{\infty}} \leqslant t^{- 1} \| f \|_{L^{\infty}} \]
\end{lemma}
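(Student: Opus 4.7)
The plan is to compute $J_t$ explicitly and recognize it as a rescaled heat semigroup multiplied by a scalar smaller than $t^{-1}$.

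First I would take the square root in the definition $J_t = \bigl(\tfrac{1}{t^2} e^{-(m^2 - \Delta)/t}\bigr)^{1/2}$. Since $\tfrac{1}{t^2}$ is a positive scalar and $e^{-(m^2-\Delta)/t}$ is a positive operator on $L^2$ whose square root is $e^{-(m^2-\Delta)/(2t)}$, we have
\[
  J_t \;=\; \frac{1}{t}\, e^{-m^2/(2t)}\, e^{\Delta/(2t)}.
\]
Here $e^{\Delta/(2t)}$ is the heat semigroup at time $1/(2t)$, i.e.\ convolution with the Gaussian kernel
\[
  p_{1/(2t)}(x) \;=\; \frac{t}{2\pi}\, e^{-t|x|^2/2},
\]
which is a probability density (integrates to $1$).

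Next I would use this representation to bound the $L^\infty$ norm. Convolution with a nonnegative $L^1$ kernel of total mass one is a contraction on $L^\infty$, so $\|e^{\Delta/(2t)} f\|_{L^\infty} \leqslant \|f\|_{L^\infty}$. Combining with the scalar prefactor,
\[
  \| J_t f\|_{L^\infty} \;\leqslant\; \frac{1}{t}\, e^{-m^2/(2t)}\, \|f\|_{L^\infty} \;\leqslant\; \frac{1}{t}\, \|f\|_{L^\infty},
\]
since $e^{-m^2/(2t)} \leqslant 1$ for all $t>0$. This gives the stated inequality.

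There is no real obstacle; the only point to be careful about is taking the operator square root, which is unambiguous here because $e^{-(m^2-\Delta)/t}$ is a positive self-adjoint operator (functional calculus for $-\Delta$ on $L^2(\mathbb{R}^2)$) and the scalar prefactor commutes with it. The rest is a one-line estimate using that the Gaussian heat kernel is a probability measure.
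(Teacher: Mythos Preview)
Your proof is correct and is essentially the same as the paper's: the paper simply says ``this follows directly from Young's inequality,'' which is exactly your observation that $J_t$ is convolution with a nonnegative kernel of $L^1$-norm $t^{-1}e^{-m^2/(2t)}\leqslant t^{-1}$. You have merely spelled out the kernel explicitly.
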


\begin{proof}
  This follows directly from Young's inequality. 
\end{proof}

\begin{lemma}
  Assume that $t / 2 \leqslant s \leqslant t$, or $0 \leqslant t \leqslant 1$
  then
  \[ \| I_{s, t} (u) \|_{L^{\infty}} \leqslant C \| u_{} \|_{L^{\infty} ([s,
     t] \times \mathbb{R}^2)} . \]
\end{lemma}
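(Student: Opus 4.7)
The plan is to bound $\|I_{s,t}(u)\|_{L^\infty}$ by moving the norm inside the integral $I_{s,t}(u) = \int_s^t J_l u_l \, \mathd l$ and controlling the $L^\infty$-operator norm of $J_l$, splitting into the two stated cases.

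For $t/2 \leqslant s \leqslant t$ the estimate is immediate from Lemma \ref{bound:Q-linfty}: by the triangle inequality for Bochner integrals,
\[
\|I_{s,t}(u)\|_{L^\infty} \leqslant \int_s^t l^{-1} \, \mathd l \cdot \|u\|_{L^\infty([s,t]\times\mathbb{R}^2)} = \log(t/s) \cdot \|u\|_{L^\infty([s,t]\times\mathbb{R}^2)} \leqslant \log 2 \cdot \|u\|_{L^\infty([s,t]\times\mathbb{R}^2)} .
\]

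For $0 \leqslant t \leqslant 1$ the naive estimate breaks down since $\int_0^1 l^{-1} \, \mathd l = \infty$, so the strategy is to sharpen Lemma \ref{bound:Q-linfty} by exploiting the mass $m > 0$. Writing $J_l = l^{-1} e^{-L/(2l)}$ with $L = m^2 - \Delta$ and $e^{-L/(2l)} = e^{-m^2/(2l)} e^{\Delta/(2l)}$, the kernel of $J_l$ factorizes as
\[
J_l(x,y) = \frac{1}{2\pi} e^{-m^2/(2l)} e^{-l|x-y|^2/2},
\]
and Young's inequality then gives the sharper bound
\[
\|J_l\|_{L^\infty \to L^\infty} = \int J_l(0,y) \, \mathd y = \frac{1}{l} e^{-m^2/(2l)} .
\]
The decisive point is that the exponential factor $e^{-m^2/(2l)}$ vanishes to all orders as $l \to 0^+$ and therefore tames the $l^{-1}$ singularity: the substitution $v = 1/l$ converts $\int_0^1 l^{-1} e^{-m^2/(2l)} \, \mathd l$ into $\int_1^\infty v^{-1} e^{-m^2 v/2} \, \mathd v$, which is finite. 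Integrating $\|J_l u_l\|_{L^\infty} \leqslant l^{-1} e^{-m^2/(2l)} \|u_l\|_{L^\infty}$ over $[s,t] \subseteq [0,1]$ then yields the claim with a constant independent of $s, t$.

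No significant obstacle is expected. The only nontrivial observation is that the $L^\infty \to L^\infty$ operator norm of $J_l$ carries an exponential factor $e^{-m^2/(2l)}$ which is absent from Lemma \ref{bound:Q-linfty}, and it is exactly this factor that makes the short-time regime work.
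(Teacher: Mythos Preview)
Your proof is correct and follows essentially the same route as the paper: both bound $\|I_{s,t}(u)\|_{L^\infty}$ by integrating the $L^\infty\to L^\infty$ operator norm of $J_l$, which is $l^{-1}e^{-m^2/(2l)}$; in the regime $t/2\leqslant s\leqslant t$ one drops the exponential and gets $\log 2$, while for $0\leqslant t\leqslant 1$ the factor $e^{-m^2/(2l)}$ from the mass renders $\int_0^1 l^{-1}e^{-m^2/(2l)}\,\mathd l$ finite. Your presentation is arguably cleaner, since you identify the operator norm explicitly and justify the finiteness of the short-time integral via the substitution $v=1/l$, whereas the paper works directly with the kernel and simply asserts the bound in the second case.
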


\begin{proof}
  
  \begin{eqnarray*}
    &  & \sup_x \left| \int^t_s \int_{\mathbb{R}^2} e^{- \frac{1}{2} m^2 / l}
    \frac{1}{\sqrt{4 \pi} l^{1 / 2}} e^{- 2 l | x - y |^2} u_l (y) \mathd l
    \mathd y \right|\\
    & \leqslant & \sup_x \int^t_s \int_{\mathbb{R}^2} e^{- \frac{1}{2} m^2 /
    l} \frac{1}{\sqrt{4 \pi} l^{1 / 2}} e^{- 2 l | x - y |^2} \mathd l \mathd
    y \| u \|_{L^{\infty} ([s, t] \times \mathbb{R}^2)}\\
    & \leqslant & \int^t_s e^{- \frac{1}{2} m^2 / l} l^{- 1} \mathd l \| u
    \|_{L^{\infty} ([s, t] \times \mathbb{R}^2)}
  \end{eqnarray*}
  Now in the case $t / 2 \leqslant s \leqslant t$
  \[ \int^t_s e^{- \frac{1}{2} m^2 / l} l^{- 1} \mathd l \| u \|_{L^{\infty}
     ([s, t] \times \mathbb{R}^2)} \leqslant \int^t_{t / 2} l^{- 1} \mathd l
     \| u \|_{L^{\infty} ([s, t] \times \mathbb{R}^2)} \leqslant \log 2 \| u
     \|_{L^{\infty} ([s, t] \times \mathbb{R}^2)} \]
  and in the case $0 \leqslant t \leqslant 1$
  \[ \int^t_s e^{- \frac{1}{2} m^2 / l} l^{- 1} \mathd l \| u \|_{L^{\infty}
     ([s, t] \times \mathbb{R}^2)} \leqslant \int^1_0 e^{- \frac{1}{2} m^2 /
     l} \mathd l \| u \|_{L^{\infty} ([s, t] \times \mathbb{R}^2)} \leqslant C
     \| u \|_{L^{\infty} ([s, t] \times \mathbb{R}^2)} . \]
  
\end{proof}

\begin{lemma}
  \label{lemma:boundWinfty}{\tmdummy}
  
  \[ \| I_{s, t} (u) \|_{W^{1, \infty}} \leqslant C \| \langle l \rangle^{1 /
     2 + \delta} u_l \|_{L_l^{\infty} ([s, t] \times \mathbb{R}^2)} \]
\end{lemma}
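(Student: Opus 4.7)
The plan is to apply Minkowski's inequality to write
\[
\| I_{s,t}(u) \|_{W^{1,\infty}} \leq \int_s^t \| J_l u_l \|_{L^\infty} \, \mathd l + \int_s^t \| \nabla J_l u_l \|_{L^\infty} \, \mathd l,
\]
and then estimate the two mapping constants $\|J_l\|_{L^\infty \to L^\infty}$ and $\|J_l\|_{L^\infty \to W^{1,\infty}}$ by explicit integration against the Gaussian kernel of $J_l$. Recall that $J_l = l^{-1} e^{-(m^2-\Delta)/(2l)}$, so its convolution kernel in $\mathbb R^2$ has the explicit form
\[
G_l(x-y) \;=\; \frac{1}{2\pi}\, e^{-m^2/(2l)}\, e^{-l|x-y|^2/2},
\]
(up to a harmless constant), with $\int G_l = l^{-1} e^{-m^2/(2l)}$ and $\int |\nabla_x G_l| \leq C\, l^{-1/2} e^{-m^2/(2l)}$ by a short polar integration.

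From these two integrals and the factorization $\|u_l\|_{L^\infty} \leq \langle l \rangle^{-1/2-\delta} M$ with $M = \|\langle l\rangle^{1/2+\delta} u_l\|_{L^\infty_l([s,t]\times \mathbb R^2)}$, I would conclude
\[
\| I_{s,t}(u) \|_{W^{1,\infty}} \leq C M \int_0^\infty \bigl(l^{-1} + l^{-1/2}\bigr) e^{-m^2/(2l)} \langle l \rangle^{-1/2-\delta} \, \mathd l.
\]
The remaining step is to check that this integral is finite: near $l=0$ the factor $e^{-m^2/(2l)}$ dominates any polynomial singularity, and near $l=\infty$ the integrand is bounded by $l^{-1-\delta}$, which is integrable. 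Thus the integral is a finite constant depending only on $m$ and $\delta$, and the claimed inequality follows.

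Alternatively, one can avoid the small-$l$ integration by splitting $[s,t]$ into $[s, 1 \wedge t]$ and a dyadic decomposition $\bigcup_{k\ge 0} [2^k, 2^{k+1}] \cap [s,t]$ of the large-$l$ piece, and applying the previous lemma (giving $\|I_{a,b}(u)\|_{L^\infty} \leq C\|u\|_{L^\infty([a,b]\times \mathbb R^2)}$ when $b\leq 1$ or $a \geq b/2$) on each piece, with the analogous bound for $\nabla I_{a,b}(u)$ obtained by differentiating the kernel. Summing the resulting geometric series $\sum_k 2^{-k(1/2+\delta)}$ against $M$ gives the same conclusion. The only subtle point is verifying the gradient analog of the previous lemma, but this follows from the explicit bound $\int |\nabla_x G_l| \leq C l^{-1/2} e^{-m^2/(2l)}$ above; this is the main (and only) piece of actual work in the proof.
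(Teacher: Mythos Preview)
Your proposal is correct and follows essentially the same approach as the paper: both reduce to Young's inequality against the explicit Gaussian kernel of $J_l$ and its gradient, using $\int G_l \lesssim l^{-1} e^{-m^2/(2l)}$ and $\int |\nabla G_l| \lesssim l^{-1/2} e^{-m^2/(2l)}$. The only organizational difference is that the paper integrates in $l$ first (obtaining a kernel $\sim |x-y|^{-2+\delta} e^{-m|x-y|}$ to be integrated in $y$) and splits into the regimes $s \gtrsim m^2$ and $s \lesssim m^2$, whereas you integrate in $y$ first and handle all of $[0,\infty)$ at once via the factor $e^{-m^2/(2l)}$; your ordering is slightly more economical but the content is the same.
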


\begin{proof}
  We firs threat the case $s\geq m^{2}$. Then
  \begin{eqnarray*}
    &  & \sup_x \left| \int^t_s \int_{\mathbb{R}^2} \nabla_x e^{- \frac{1}{2}
    m^2 / l} \frac{1}{\sqrt{4 \pi}} e^{- 2 l | x - y |^2} u_l (y)
    \mathd l \mathd y \right|\\
    & = & \sup_x \left| \int^t_s \int_{\mathbb{R}^2} e^{- \frac{1}{2} m^2 /
    l} \frac{2 (x - y) l}{\sqrt{\pi}} e^{- 2 l | x - y |^2} u_l (y)
    \mathd l \mathd y \right|\\
    & \leqslant & \sup_x \left| \int^t_s \int_{\mathbb{R}^2} e^{- \frac{1}{2}
    m^2 / l} \frac{2 |x - y|  l^{1/2-\delta}}{\sqrt{\pi}} e^{- 2
    l | x - y |^2} \langle l \rangle^{1 / 2 + \delta} u_l (y) \mathd l \mathd
    y \right|\\
    & \leqslant & \int^t_s \int_{\mathbb{R}^2}
    e^{- \frac{1}{2} m^2 / l} \frac{2 |x - y| l^{1/2-\delta}}{\sqrt{\pi}} e^{- 2 l | x - y
    |^2} \mathd y \mathd l \| \langle l \rangle^{1 / 2 + \delta} u_l
    \|_{L_l^{\infty} ([s, t] \times \mathbb{R}^2)}\\
    & \leqslant & \left|
    \int_{\mathbb{R}^2}  \frac{2}{\sqrt{\pi}|x-y|^{2-\delta}}
    e^{- m | x - y |} \mathd y \mathd l \right| \| \langle l \rangle^{1 /
                  2 + \delta} u_l \|_{L_l^{\infty} ([s, t] \times \mathbb{R}^2)}\\
    & \leqslant & C \| \langle l \rangle^{1 / 2 + \delta} u_l
    \|_{L_l^{\infty} ([s, t] \times \mathbb{R}^2)}
  \end{eqnarray*}
  In the case $s \leq m^{2}$ We have $\exp(-m^{2}/l)\exp(-l|x-y|^{2})\lesssim \exp(-m|x-y|)$ so we have to estimate
  \begin{align*}
    &\int_{0}^{1}\int_{\mathbb{R}^{2}} \exp(-m|x-y|)|x-y| l u_{l}(y) \mathrm{d}l \mathrm{d}y \\
    \lesssim & \int_{\mathbb{R}^{2}} |x-y| \exp(-m|x-y|) \mathrm{d}y \|u\|_{L^{\infty}(\mathbb{R}_{+}\times \mathbb{R}^{2})}\\
    & \lesssim  \|u\|_{L^{\infty}(\mathbb{R}_{+} \times \mathbb{R}^{2})}
  \end{align*}
\end{proof}

\begin{lemma}
  Let $w (x) = \exp (- \gamma | x - z |)$ for $x, z \in \mathbb{R}^2$ and $|
  \gamma | < m - \kappa$. Then \ \label{lemma:boundIL2}
  \[ \| w I_{s, t} (u) \|_{L^2 (\mathbb{R}^2)} \leqslant C \langle s
     \rangle^{- 1 / 2} \| w u \|_{L^2 (\mathbb{R}_+ \times \mathbb{R}^2)} \]
  where the constant depends on $\kappa$.
\end{lemma}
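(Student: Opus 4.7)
The plan is to use Minkowski's inequality in time together with Cauchy--Schwarz to reduce the statement to a pointwise-in-$l$ estimate of $\|w J_l w^{-1}\|_{L^2(\mathbb{R}^2)\to L^2(\mathbb{R}^2)}$, which I will control via Schur's test on the explicit Gaussian kernel of $J_l$. Recall the symbol $\widehat{J_l}(\xi) = l^{-1}e^{-(m^2+|\xi|^2)/(2l)}$, so by Fourier inversion $J_l$ has the positive kernel
\[
J_l(x,y) = \frac{1}{2\pi}\, e^{-m^2/(2l)}\, e^{-l|x-y|^2/2}.
\]

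First I would estimate the conjugated kernel. Using $\bigl||x-z|-|y-z|\bigr|\leq |x-y|$, one has pointwise
\[
\tfrac{w(x)}{w(y)}\,J_l(x,y) \leq \tfrac{1}{2\pi}\,e^{-m^2/(2l)}\,e^{|\gamma||x-y|}\,e^{-l|x-y|^2/2}.
\]
Completing the square in the Gaussian exponent produces the crucial factor $e^{|\gamma|^2/(2l)}$, which combines with the $m^2$-damping to give an overall $e^{-(m^2-|\gamma|^2)/(2l)}$. Integrating the remaining Gaussian in polar coordinates yields (with constants depending on $\kappa$)
\[
\sup_x \int \tfrac{w(x)}{w(y)}\,J_l(x,y)\,dy \;+\; \sup_y \int \tfrac{w(x)}{w(y)}\,J_l(x,y)\,dx \;\leq\; C\bigl(l^{-1}+l^{-3/2}\bigr)\,e^{-(m^2-|\gamma|^2)/(2l)} =: A(l),
\]
and Schur's test gives $\|w J_l w^{-1}\|_{L^2\to L^2}\leq A(l)$. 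Therefore $\|w J_l u_l\|_{L^2}\leq A(l)\|w u_l\|_{L^2}$.

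The final step is to put these pieces together. By Minkowski and Cauchy--Schwarz in time,
\[
\|w I_{s,t}(u)\|_{L^2(\mathbb{R}^2)}
\leq \int_s^t \|w J_l u_l\|_{L^2}\,dl
\leq \Bigl(\int_s^\infty A(l)^2\,dl\Bigr)^{1/2}\,\|wu\|_{L^2(\mathbb{R}_+\times\mathbb{R}^2)}.
\]
The hypothesis $|\gamma|<m-\kappa$ ensures $m^2-|\gamma|^2\geq 2m\kappa-\kappa^2 > 0$, so $A(l)^2 \lesssim l^{-2}e^{-c(\kappa)/l}$ for small $l$ and $A(l)^2 \lesssim l^{-2}$ for $l$ large. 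Hence $\int_s^\infty A(l)^2\,dl \lesssim \langle s\rangle^{-1}$ uniformly in $t$: for $s\geq 1$ the $l^{-2}$ tail gives $1/s$, while for $s<1$ the sub-Gaussian factor $e^{-c/l}$ makes the integral bounded (via $u=1/l$). Taking square roots yields the claimed $\langle s\rangle^{-1/2}$ bound.

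The main obstacle is the weight commutation: naively moving $w$ past $J_l$ costs $e^{|\gamma||x-y|}$, which without further information would destroy the Gaussian decay in the kernel. The key point is that this cost is exactly absorbable into an $e^{|\gamma|^2/(2l)}$ factor via the square-completion, which then competes cleanly with the mass damping $e^{-m^2/(2l)}$ already present in the symbol of $J_l$ — and this is precisely why one needs $|\gamma|<m$ (with the gap $\kappa$ quantifying the resulting exponential decay constant).
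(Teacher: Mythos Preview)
Your argument is correct (modulo one harmless slip: for small $l$ the bound on $A(l)^2$ behaves like $l^{-3}e^{-c/l}$ rather than $l^{-2}e^{-c/l}$, since the $l^{-3/2}$ term dominates, but this is still integrable via $u=1/l$ and does not affect the conclusion).

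Your route is genuinely different from the paper's. You estimate the weighted operator norm of each $J_l$ individually via Schur's test on the conjugated kernel $\tfrac{w(x)}{w(y)}J_l(x,y)$, then apply Minkowski and Cauchy--Schwarz in the time variable to the resulting scalar bounds $A(l)$. The paper instead splits into the regimes $s,t\leq 1$ and $s,t\geq 1$ and, in the large-time case, performs Cauchy--Schwarz in $l$ \emph{inside} the spatial integral, producing the time-integrated kernel $\bigl(\int_s^t J_l(x,y)^2\,dl\bigr)^{1/2}\lesssim |x-y|^{-1}e^{-2s|x-y|^2}$, which is then handled by Young's convolution inequality; for small times it uses the crude pointwise domination $J_l(x,y)\lesssim e^{-(m-\kappa)|x-y|}$. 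Your approach is cleaner in that it avoids the case split and isolates the role of the mass gap $m^2-|\gamma|^2$ transparently through the square-completion, while the paper's version keeps the $\langle s\rangle^{-1/2}$ gain more explicitly tied to the Gaussian spatial decay $e^{-s|x-y|^2}$ of the summed kernel. Both hinge on the same mechanism: the exponential weight cost $e^{|\gamma||x-y|}$ is absorbed by the mass damping in the symbol of $J_l$, which is exactly why the constraint $|\gamma|<m$ appears.
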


\begin{proof}
  It is enough to prove the inequality for $s, t \leqslant 1$ and $s, t
  \geqslant 1$, then the general case will follow from $I_{s, t} (u) = I_{s,
  1} (u) + I_{1, t} (u)$. In the proof we will use several times that
  \[ e^{r | x - z |} e^{- | r | | x - y |} \leqslant e^{r | y - z |} \]
  For $s, t \geqslant 1$
  \begin{eqnarray*}
    &  & \int_{\mathbb{R}^2} \left| \int^t_s \int e^{2 r | x - z |} e^{-
    \frac{1}{2} m^2 / l} \frac{1}{\sqrt{4 \pi}} e^{- 2 l | x - y
    |^2} u_l (y) \mathd l \mathd y \right|^2 \mathd x\\
    & \leqslant & \int_{\mathbb{R}^2} \left( \int_{\mathbb{R}^2} e^{2 r | x -
    z |} \left( \int^t_s e^{- m^2 / l} \frac{1}{4 \pi} e^{- 4 l | x - y |^2}
    \mathd l \right)^{1 / 2} \left( \int^t_s u^2_l (y) \mathd l \right)^{1 /
    2} \mathd y \right)^2 \mathd x\\
    & \leqslant & C \int_{\mathbb{R}^2} \left( \int_{\mathbb{R}^2}
    e^{2 r | x - z |} \left( \frac{1}{| x - y |^2} e^{- 4 s | x - y |^2}
    \right)^{1 / 2} \left( \int^t_s u^2_l (y) \mathd l \right)^{1 / 2} \mathd
    y \right)^2 \mathd x\\
    & \leqslant & C \int_{\mathbb{R}^2} \left( \int_{\mathbb{R}^2}
    e^{2 r | x - z |} \frac{1}{| x - y |} e^{- 2 s | x - y |^2} \left(
    \int^t_s u^2_l (y) \mathd l \right)^{1 / 2} \mathd y \right)^2 \mathd x\\
    & \leqslant & C \int_{\mathbb{R}^2} \left( \int_{\mathbb{R}^2}
    \frac{1}{| x - y |} e^{- s | x - y |^2} \left( \int^t_s e^{2 r | y - z |}
    u^2_l (y) \mathd l \right)^{1 / 2} \mathd y \right)^2 \mathd x\\
    & \leqslant & C s^{- 1} \| w u \|^2_{L^2 (\mathbb{R}_+ \times
    \mathbb{R}^2)},^{}
  \end{eqnarray*}
  where in the last line we have used Young's inequality. We now treat the $s,
  t \leqslant 1$ case.
  \begin{eqnarray*}
    &  & \| w I_{s, t} (u) \|^2_{L^2}\\
    & \leqslant & C \int e^{2 r | x - z |} \int^t_s \left|
    \int_{\mathbb{R}^2} e^{- \frac{1}{2} m^2 / l} \frac{1}{\sqrt{4 \pi}} e^{- 2 l | x - y |^2} u_l (y) \mathd y \right|^2 \mathd x \mathd l
  \end{eqnarray*}
  Note that $e^{- \frac{1}{2} m^2 / l} \frac{1}{\sqrt{4 \pi}} e^{- 2
  l | x - y |^2} \leqslant C_{\kappa} e^{- (m - \kappa) | x - y |}$ so using
  Jensen's inequality
  \[ \begin{array}{ll}
       & \| w I_{s, t} (u) \|^2_{L^2}\\
       \leqslant & \int^t_s \int_{\mathbb{R}^2} \left| \int_{\mathbb{R}^2}
       e^{2 r | x - z |} e^{- \frac{1}{2} m^2 / l} \frac{1}{\sqrt{4 \pi}} e^{- 2 l | x - y |^2} u_l (y) \mathd y \right|^2 \mathd x \mathd
       l\\
       \leqslant & C \int^t_s \int_{\mathbb{R}^2} \left| \int_{\mathbb{R}^2}
       e^{- (m - \kappa / 2) | x - y |} e^{r | x - z |} u_l (y) \mathd y
       \right|^2 \mathd x \mathd l\\
       \leqslant & C_{\kappa} \int^t_s \int_{\mathbb{R}^2} \left|
       \int_{\mathbb{R}^2} e^{- (m - \kappa / 2 - r) | x - y |} e^{r | x - z
       |} u_l (y) \mathd y \right|^2 \mathd x \mathd l\\
       \leqslant & C_{\kappa} \int^t_s \| e^{r | y - z |} u_l (y) \|^2_{L^2}
       \mathd y \mathd l\\
       \leqslant & C_{\kappa} \| w u \|^2_{L^2},
     \end{array} \]
  as long as $m - r - \kappa \geqslant 0$ and we have used Young's inequality.
\end{proof}

In the case where we have no weight we can improve the preceeding estimate to
have constant $1$:

\begin{lemma}
  \label{bound-I-l2}{\tmdummy}
  
  \[ \| (m^2 - \Delta)^{1 / 2} I_{\infty} (u) \|^2_{L^2} \leqslant
     \int^{\infty}_0 \| u_s \|^2_{L^2} \mathd s \]
\end{lemma}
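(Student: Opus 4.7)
\begin{proof*}{Proof Proposal for Lemma \ref{bound-I-l2}}
The plan is to reduce the inequality to a one-dimensional Cauchy--Schwarz estimate via the functional calculus for $L = m^2 - \Delta$. Since $L$ is self-adjoint and positive on $L^2(\mathbb{R}^2)$, it admits a spectral resolution $L = \int_0^\infty \lambda \, dE_\lambda$, and by construction $J_s$ is a bounded Borel function of $L$: namely $J_s = g_s(L)$ with $g_s(\lambda) = s^{-1} e^{-\lambda/(2s)}$. The decomposition $L^{-1} = \int_0^\infty J_s^2\,\mathrm{d}s$ stated in Section \ref{sec:decomposition} is then equivalent (via spectral calculus) to the elementary identity
\[
\int_0^\infty g_s(\lambda)^2\,\mathrm{d}s = \int_0^\infty s^{-2} e^{-\lambda/s}\,\mathrm{d}s = \lambda^{-1}, \qquad \lambda > 0.
\]

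Next I would write, for each fixed realization of $u \in L^2(\mathbb{R}_+\times\mathbb{R}^2)$,
\[
\|L^{1/2} I_\infty(u)\|_{L^2}^2 = \left\langle L \int_0^\infty J_s u_s\,\mathrm{d}s,\int_0^\infty J_t u_t\,\mathrm{d}t \right\rangle_{L^2} = \int_0^\infty \langle L^{1/2} g_\lambda, L^{1/2} g_\lambda\rangle_{L^2}\,\text{(spectrally)},
\]
where $g_\lambda := \int_0^\infty g_s(\lambda)\,\mathrm{d}E_\lambda u_s\,\mathrm{d}s$. More concretely, denote by $\hat u_s(\lambda)$ the spectral coefficients of $u_s$ so that $\|u_s\|_{L^2}^2 = \int_0^\infty |\hat u_s(\lambda)|^2\,\mathrm{d}\mu(\lambda)$ for the spectral measure $\mu$. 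Then
\[
\|L^{1/2}I_\infty(u)\|_{L^2}^2 = \int_0^\infty \lambda \left|\int_0^\infty g_s(\lambda)\,\hat u_s(\lambda)\,\mathrm{d}s\right|^2 \mathrm{d}\mu(\lambda).
\]
Applying Cauchy--Schwarz in $s$ and using the identity above gives, for each $\lambda$,
\[
\lambda\left|\int_0^\infty g_s(\lambda)\,\hat u_s(\lambda)\,\mathrm{d}s\right|^2 \leq \lambda \left(\int_0^\infty g_s(\lambda)^2\,\mathrm{d}s\right)\int_0^\infty |\hat u_s(\lambda)|^2\,\mathrm{d}s = \int_0^\infty |\hat u_s(\lambda)|^2\,\mathrm{d}s.
\]
Integrating in $\mu(\lambda)$ and swapping the order of integration (Fubini--Tonelli, all terms non-negative) yields
\[
\|L^{1/2}I_\infty(u)\|_{L^2}^2 \leq \int_0^\infty \|u_s\|_{L^2}^2\,\mathrm{d}s,
\]
which is the claim.

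There is no real obstacle here; the only point requiring a little care is the justification of interchanging the spectral integral $\mathrm{d}\mu(\lambda)$ with the time integral $\mathrm{d}s$, and making rigorous the formal manipulation $L^{1/2}\int J_s u_s\,\mathrm{d}s = \int L^{1/2} J_s u_s\,\mathrm{d}s$. Both are immediate once one first verifies the estimate for $u$ supported in $\{s \in [\varepsilon, \varepsilon^{-1}]\}$ with $u \in L^2(\mathbb{R}_+; \mathrm{Dom}(L))$, where all operations are finite and commute trivially, and then extends to general $u \in L^2(\mathbb{R}_+\times\mathbb{R}^2)$ by density together with the monotone convergence argument permitted by the uniform spectral bound just derived.
\end{proof*}
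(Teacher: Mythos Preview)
Your proof is correct and essentially identical to the paper's: the paper works with the Fourier transform on $\mathbb{R}^2$, which is precisely the spectral resolution of $L = m^2 - \Delta$, and then applies Cauchy--Schwarz in the time variable together with the identity $\int_0^\infty s^{-2} e^{-\lambda/s}\,\mathrm{d}s = \lambda^{-1}$, exactly as you do. The only difference is that you phrase the argument in abstract spectral-calculus language rather than writing the Fourier variable $k$ explicitly.
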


\begin{proof}
  
  \begin{eqnarray*}
    &  & \int_{\mathbb{R}^2} ((m^2 - \Delta)^{1 / 2} I_{0, \infty} (u))^2
    \mathd x\\
    & = & \int_{\mathbb{R}^2} (m^2 + | k |^2) \left( \CF I_{0, \infty} (u)
    (k) \right)^2 \mathd k\\
    & = & \int_{\mathbb{R}^2} (m^2 + | k |^2) \left( \int^{\infty}_0
    \frac{1}{t} e^{- (m^2 + | k |^2) / 2 t} \CF u_t (k) \mathd t \right)^2
    \mathd k\\
    & \leqslant & \int_{\mathbb{R}^2} (m^2 + | k |^2) \left( \int^{\infty}_0
    \frac{1}{t^2} e^{- (m^2 + | k |^2) / t} \mathd t \right) \int^{\infty}_0
    \left( \CF u_s (k) \right)^2 \mathd s \mathd k\\
    & = & \int_{\mathbb{R}^2} \int^{\infty}_0 \left( \CF u_s (k) \right)^2
    \mathd s \mathd k\\
    & = & \int^{\infty}_0 \| u_s \|^2_{L^2} \mathd s
  \end{eqnarray*}
\end{proof}

\begin{lemma}
  \label{IH1estimate}Let $w_y (x) = \exp (- \gamma | x - z |)$ for $x, z \in
  \mathbb{R}^2$ and $| \gamma | < m$
  \[ \| w I_{s, t} (u) \|_{H^1 (\mathbb{R}^2)} \leqslant C \| w u \|_{L^2
     (\mathbb{R}_+ \times \mathbb{R}^2)} \]
\end{lemma}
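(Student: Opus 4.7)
The plan is to split the $H^1$ norm of $w I_{s,t}(u)$ into its $L^2$ piece and its gradient piece, and to reduce everything to a weighted version of the Fourier argument in Lemma \ref{bound-I-l2}. First I would write
\[
\|w I_{s,t}(u)\|_{H^1}^2 = \|w I_{s,t}(u)\|_{L^2}^2 + \|\nabla(w I_{s,t}(u))\|_{L^2}^2,
\]
and expand the gradient as $\nabla(w I_{s,t}(u)) = (\nabla w)\,I_{s,t}(u) + w\,\nabla I_{s,t}(u)$. Since $|\nabla w|\leq |\gamma|\,w$ pointwise, Lemma \ref{lemma:boundIL2} (taking $s=0$, so the $\langle s\rangle^{-1/2}$ factor is harmless) bounds both $\|w I_{s,t}(u)\|_{L^2}$ and $\|(\nabla w) I_{s,t}(u)\|_{L^2}$ by $C\|wu\|_{L^2(\mathbb{R}_+\times\mathbb{R}^2)}$. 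So the remaining real task is to show the same bound for $\|w\,\nabla I_{s,t}(u)\|_{L^2}$.

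The next step is to absorb the weight into the kernel. The Fourier multiplier $J_l$ has convolution kernel
\[
K_{J_l}(x-y) = \tfrac{1}{2\pi}\exp\!\Big({-}\tfrac{m^2}{2l} - \tfrac{l|x-y|^2}{2}\Big),
\]
and hence $\nabla_x K_{J_l}$ decays like $l|x-y|\exp(-m^2/(2l)-l|x-y|^2/2)$. Using $w(x)/w(y)\leq e^{|\gamma||x-y|}$ and completing the square via $|\gamma||x-y| - \tfrac{l}{2}|x-y|^2 \leq \tfrac{\gamma^2}{2l} - \tfrac{l}{2}(|x-y|-|\gamma|/l)^2$, one gets a pointwise bound of the form
\[
|w(x)\,\nabla_x K_{J_l}(x-y)| \;\leq\; C\,w(y)\,\widetilde{K}_l^\nabla(x-y),
\]
where $\widetilde{K}_l^\nabla$ is a shifted Gaussian kernel with the \emph{same} structure as $\nabla K_{J_l}$ but with $m$ replaced by $m' := \sqrt{m^2-\gamma^2}>0$ (this is exactly where the hypothesis $|\gamma|<m$ enters). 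Thus with $v_l := w u_l$, one has the pointwise convolution estimate
\[
|w(x)\,\nabla J_l u_l(x)| \;\leq\; C\,(\widetilde{K}_l^\nabla \ast |v_l|)(x).
\]

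With the weight now absorbed, the last step mirrors the proof of Lemma \ref{bound-I-l2} on the Fourier side. After passing to Fourier variables the (modified) multiplier $\widehat{\widetilde{\nabla J_l}}(k)$ is bounded by $|k|\,l^{-1}\exp(-(m'^2+|k|^2)/(2l))$, and applying Cauchy--Schwarz in $l$ together with Fubini reduces the estimate on $\|w\nabla I_{s,t}(u)\|_{L^2}^2$ to the exact symbol identity
\[
\int_0^\infty \frac{|k|^2}{l^2}\exp\!\Big({-}\frac{m'^2+|k|^2}{l}\Big)\,\mathrm{d}l \;=\; \frac{|k|^2}{m'^2+|k|^2} \;\leq\; 1,
\]
which yields $\|w\,\nabla I_{s,t}(u)\|_{L^2}^2 \leq C \int_0^\infty \|wu_l\|_{L^2}^2\,\mathrm{d}l$. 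Combined with the earlier pieces this concludes the proof.

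I expect the most delicate step to be the kernel absorption in the second paragraph: the extra $l|x-y|$ factor in $\nabla K_{J_l}$ combines awkwardly with the shifted Gaussian produced by completing the square, so one has to either split $|x-y|\leq |x-y-\gamma\hat z/l|+|\gamma|/l$ to separate a genuine gradient term from a zero-order remainder, or equivalently work with the shifted multiplier directly on the Fourier side. The condition $|\gamma|<m$ is used precisely to keep $m'^2=m^2-\gamma^2$ positive, ensuring that the constant $C$ (which will blow up as $|\gamma|\nearrow m$) remains finite.
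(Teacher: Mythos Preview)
Your overall strategy---handling the $L^2$ piece and the $(\nabla w)I_{s,t}(u)$ piece via Lemma~\ref{lemma:boundIL2} and then focusing on $\|w\,\nabla I_{s,t}(u)\|_{L^2}$---is sound, but the Fourier step for the gradient term does not go through as written. Once you pass to the pointwise kernel bound $|w(x)\nabla K_{J_l}(x-y)/w(y)|\leq \widetilde K^\nabla_l(x-y)$ with $\widetilde K^\nabla_l(z)\sim l|z|\,e^{-m'^2/(2l)}e^{-cl|z|^2}$, the dominating kernel is nonnegative and radial, so its Fourier transform at $k=0$ equals its $L^1$ norm, namely $\widehat{\widetilde K^\nabla_l}(0)\sim l^{-1/2}e^{-m'^2/(2l)}$. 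In particular it does \emph{not} vanish at $k=0$, so your claimed bound $|\widehat{\widetilde K^\nabla_l}(k)|\leq |k|\,l^{-1}e^{-(m'^2+|k|^2)/(2l)}$ is false. The Cauchy--Schwarz-in-$l$ argument then requires $\sup_k\int_0^\infty|\widehat{\widetilde K^\nabla_l}(k)|^2\,\mathrm dl<\infty$, but at $k=0$ this integral behaves like $\int^\infty l^{-1}\,\mathrm dl$ and diverges. The factor $|k|$ in the genuine symbol of $\nabla J_l$ comes from the \emph{oddness} of $\nabla K_{J_l}$, and that cancellation is irretrievably lost once you replace the kernel by its absolute value. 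The ``radially shifted Gaussian'' produced by completing the square is neither a translate of a Gaussian nor the kernel of a Fourier multiplier of the form $\nabla\widetilde J_l$, so your proposed split $|x-y|\leq |x-y-\gamma\hat z/l|+|\gamma|/l$ does not recover a genuine gradient term either.

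The paper sidesteps this by a commutator decomposition rather than a pointwise kernel bound: it writes
\[
w(x)\,\nabla K_{J_l}(x-y)=\nabla K_{J_l}(x-y)\,w(y)+\bigl(w(x)-w(y)\bigr)\,\nabla K_{J_l}(x-y).
\]
The first term is exactly $\nabla J_l$ acting on $w u_l$, so the unweighted estimate of Lemma~\ref{bound-I-l2} applies directly and the crucial $|k|$ factor is preserved. For the second term one uses that $|w(x)-w(y)|/|x-y|$ is pointwise bounded by $|\gamma|\,w(y)\,e^{|\gamma||x-y|}$, which cancels the factor $l|x-y|$ coming from the gradient and leaves a kernel of the form $|x-y|^{-1}e^{-c|x-y|^2}$ (after integrating in $l$); this lies in $L^1(\mathbb R^2)$, and Young's inequality closes the bound. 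If you want to salvage your route, you would have to abandon the Fourier step and bound the full space--time kernel $\int_s^t\widetilde K^\nabla_l\,\mathrm dl$ directly in real space, which in effect reproduces the paper's argument rather than the clean symbol identity you were aiming for.
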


\begin{proof}
  Without loss of generality we may set $z = 0$. We first discuss the case $s,
  t \geqslant m$
  \begin{eqnarray*}
    &  & \int \left| \int^t_s \int_{\mathbb{R}^2} \exp (r | x |) e^{-
    \frac{1}{2} m^2 / l} \nabla_x \frac{1}{\sqrt{4 \pi}} e^{- 2 l | x - y |^2}
    u_l (y) \mathd l \mathd y \right|^2 \mathd x\\
    & \leqslant & \int_{\mathbb{R}^2} \left| \int^t_s \int_{\mathbb{R}^2}
    e^{- \frac{1}{2} m^2 / l} \nabla_x \frac{1}{\sqrt{4 \pi}} e^{- 2 l | x - y
    |^2} u_l (y) \exp (r | y |) \mathd l \mathd y \right|^2 \mathd x\\
    &  & + \int_{\mathbb{R}^2} \left| \int^t_s \int_{\mathbb{R}^2} e^{-
    \frac{1}{2} m^2 / l} (\exp (r | x |) - \exp (r | y |)) \nabla_x
    \frac{1}{\sqrt{4 \pi}} e^{- 2 l | x - y |^2} u_l (y) \mathd l \mathd y
    \right|^2 \mathd x\\
    & = & \Iota + \Iota \Iota
  \end{eqnarray*}
  We identify Term $\Iota$ as
  \[ \| \nabla_x I_{s, t} (w u) \|_{L^2} \]
  which is bounded by $\| w u \|_{L^2 (\mathbb{R}_+ \times \mathbb{R}^2)}$
  from the unweighted estimate. To estimate Term $\Iota \Iota$ we have
  \begin{eqnarray*}
    &  & \int_{\mathbb{R}^2} \left| \int^t_s \int_{\mathbb{R}^2} e^{-
    \frac{1}{2} m^2 / l} (\exp (r | x |) - \exp (r | y |)) \nabla_x
    \frac{1}{\sqrt{4 \pi}} e^{- 2 l | x - y |^2} u_l (y) \mathd l \mathd y
    \right|^2 \mathd x\\
    & = & \int_{\mathbb{R}^2} \left| \int^t_s \int_{\mathbb{R}^2} e^{-
    \frac{1}{2} m^2 / l} (\exp (r | x |) - \exp (r | y |)) \frac{2 l | x - y
    |}{\sqrt{\pi}} e^{- 2 l | x - y |^2} u_l (y) \mathd l \mathd y \right|^2
    \mathd x\\
    & \leqslant & \int_{\mathbb{R}^2} \left( \int_{\mathbb{R}^2} | (\exp (r |
    x |) - \exp (r | y |)) | \frac{1}{| x - y |^{2}} e^{- m | x - y |^2} \| u_l
    (y) \|_{L^2 (\mathbb{R}_+)} \mathd y \right)^2 \mathd x\\
    & = & \int_{\mathbb{R}^2} \left( \int_{\mathbb{R}^2} \frac{| (\exp (r | x
    |) - \exp (r | y |)) (\exp (- r | y |)) |}{| x - y |^{2}} \exp (- m | x - y
    |^2) \exp (r | y |) \| u_l (y) \|_{L^2 (\mathbb{R}_+)} \mathd y \right)^2
    \mathd x
  \end{eqnarray*}
  Now we claim that
  \[ \exp (- m / 2 | x - y |^2) \frac{1}{| x - y |} | (\exp (r | x |) - \exp
     (r | y |)) | \exp (- r | y |) \leqslant C. \]
  Indeed
  \begin{eqnarray*}
    &  & \exp (- m / 2 | x - y |^2) \frac{1}{| x - y |} | (\exp (r
    (\nobracket | x | - | y |) - 1) |\\
    & \leqslant & C \exp (- m / 2 | x - y |^2) \frac{| | x | - | y | |}{| x -
    y |} | (\exp (r (\nobracket | x | - | y |)) |\\
    & = & C \frac{| | x | - | y | |}{| x - y |} \exp (r (\nobracket
    \nobracket | x | - | y |) - m / 2 | x - y |^2)
  \end{eqnarray*}
  which is uniformly bouned by reverse triangle inequality. So in total our
  term is bounded by
  \begin{eqnarray*}
    &  & \int_{\mathbb{R}^2} \left( \int_{\mathbb{R}^2} \frac{ \exp (- m / 2 | x - y
    |^2)}{|x-y|} \exp (r | y |) \| u_l (y) \|_{L^2 (\mathbb{R}_+)} \mathd y \right)^2
    \mathd x\\
    & \leqslant & C \| \exp (r | y |) \| u_l (y) \|_{L^2 (\mathbb{R}_+)}
    \|_{L^2 (\mathbb{R}^2)}\\
    & = & C \| w u \|_{L^2 (\mathbb{R}_+ \times \mathbb{R}^2)}
  \end{eqnarray*}
  where we  were able to  Young's convolution inquality since $\exp(-m|x|)|x|^{-1}$ is in $L^{1}$. For $s, t \leqslant m$ we
  compute using $e^{- \frac{1}{2} m^2 / l} e^{- 2 l | x - y |^2} \leqslant
  e^{- m | x - y |}$.Then for any $\kappa > 0$ such that $m - r > \kappa$ we
  have \
  \begin{eqnarray*}
    &  & \| \nabla I_{s, t} (u) \|^2_{L^{2, z}}\\
    & = & \int_{\mathbb{R}^2} \exp (2 r | x |) \left| \int^t_s
    \int_{\mathbb{R}^2} e^{- \frac{1}{2} m^2 / l} \frac{2 l (x -
    y)}{\sqrt{\pi}} e^{- 2 l | x - y |^2} u_l (y) \mathd l \mathd y \right|^2
    \mathd x\\
    & \leqslant & C \int_{\mathbb{R}^2} \exp (2 r | x |) \int^t_s
    \int_{\mathbb{R}^2} (l | x - y | \exp (- m | x - y |) | u_l (y) |)^2
    \mathd l \mathd y \mathd x\\
    & \leqslant & C_{\kappa} \int \int^m_0 \int_{\mathbb{R}^2} (\exp (- (m -
    r - \kappa) | x - y |) \exp (r | y |) | u_l (y) |)^2 \mathd l \mathd y
    \mathd x\\
    & \leqslant & C \| w u \|^2_{L^2 (\mathbb{R}_+ \times \mathbb{R}^2)}
  \end{eqnarray*}
  In the case $s \leqslant m, t > m$ we write $I_{s, t} (u) = I_{s, m} (u) +
  I_{m, t} (u)$ and we can reduce the problem to the previous two cases.
\end{proof}

\section{Stochastic optimal control }\label{app:stoch-cont}

We consider the decomposition $\left(with L = (m^2 - \Delta)\right)$
\[ L^{- 1} = \int^{\infty}_0 J^2_t \mathd t \]
where
\[ J_t = \left( \frac{1}{t^2} e^{- L / t} \right)^{1 / 2} . \]
We denote by
\begin{equation}
  \mathcal{C}_t = \int^t_0 J^2_s \mathd s = L^{- 1} e^{- L / t},
  \label{eq:heatkernel}
\end{equation}
and by $K_t (x, y)$ the kernel of $\mathcal{C}_t$. From the definitions one
can see that
\[ K_t (x, y) = \int^t_0 e^{- m^2 / s} \left( \frac{1}{s^2} \frac{s}{4 \pi}
   e^{- 4 s | x - y |^2} \right) \mathd s = \int^t_0 e^{- m^2 / s^2} \left(
   \frac{1}{4 \pi s} e^{- 4 s | x - y |^2} \right) \mathd s \]
so
\[ K_t (x, x) = \int^t_0 e^{- m^2 / s^2} \left( \frac{1}{4 \pi s} \right)
   \mathd s = \mathbbm{1}_{t \geqslant 1} \frac{1}{4 \pi} \log t + C (t) \]
where $\sup_{t \in \mathbb{R}_+} C (t) < \infty$. Let $0 \leqslant s < t$ and
$u \in L^2 ([s, t], L^2 (\mathbb{R}^2)) .$ For later use we introduce the
notation
\[ I_{s, t} (u) = \int^t_s J_l u_l \mathd l. \]
We are interested in studying the quantities
\[ v_{t, T} (\varphi) = - \log \mathbb{E} [\exp (- V_T (\varphi + W_{t, T}))]
\]
where $W_{t, T} = \int^T_t Q_s \mathd X_s$,with $X$ being a cylindrical
Brownian motion on $L^2 (\mathbb{R}^2)$, and $Z_{t, T} = \exp (- v_{t, T})$,
for $\varphi \in L^2 (\mathbb{R}^2) .$

For the rest of this chapter we will denote by $C^n (L^2 (\mathbb{R}^2))$
functions $L^2 (\mathbb{R}^2) \rightarrow \mathbb{R}$ which are $n$ times
continuously Fr{\'e}chet differntiable with bounded derivatives. Next we can
derive a Hamilton-Jacobi-Bellmann equation for $v_{t, T}$, known in the
physics literature as the Polchinski equation.

\begin{proposition}
  \label{HJB-partitionfunction}Assume that $V_T \in C^2 (L^2 (\mathbb{R}^2))$.
  Then $v_{t, T}$ satisfies
  \[ \frac{\partial}{\partial t} v_{t, T} (\varphi) + \frac{1}{2} \tmop{Tr}
     (\dot{\mathcal{C}}_t \tmop{Hess} v_{t, T} (\varphi)) - \frac{1}{2} \| J_t
     \nabla v_{t, T} (\varphi) \|^2_{L^2 (\mathbb{R}^2)} = 0 \]
  \[ v_{T, T} (\varphi) = V_T (\varphi) . \]
  Furthermore if $V_T \in C^2 (L^2 (\mathbb{R}^2))$ then $v_{t, T} \in C ([0,
  T], C^2 (L^2 (\mathbb{R}^2))) \cap C^1 ([0, T], C (L^2 (\mathbb{R}^2)))$.
\end{proposition}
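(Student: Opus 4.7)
\medskip
\noindent\textbf{Proof plan for Proposition \ref{HJB-partitionfunction}.}

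The natural strategy is to derive the equation first for the partition function $Z_{t,T}(\varphi) := e^{-v_{t,T}(\varphi)} = \mathbb{E}[e^{-V_T(\varphi+W_{t,T})}]$ and then transform to $v_{t,T}$. The key input is the Markov/convolution property of $W$: since $W_{t,T}$ has independent increments (it is the Ito integral $\int_t^T J_s \mathd X_s$), for $t < s \leqslant T$ we have $W_{t,T} \stackrel{d}{=} W_{s,T} + W'_{t,s}$ with $W'_{t,s}$ an independent Gaussian of covariance $\mathcal{C}_s-\mathcal{C}_t$. Conditioning and taking expectations yields the semigroup identity $Z_{t,T}(\varphi) = \mathbb{E}[Z_{s,T}(\varphi + W'_{t,s})]$. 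Applying this with $s = t+h$ and Taylor-expanding in the second argument of $Z_{s,T}$, the linear term vanishes by centredness of $W'_{t,t+h}$ and the quadratic term produces $\tfrac{1}{2}\operatorname{Tr}((\mathcal{C}_{t+h}-\mathcal{C}_t)\operatorname{Hess} Z_{t+h,T}(\varphi))$; dividing by $h$ and letting $h \downarrow 0$ gives
\[ \partial_t Z_{t,T}(\varphi) + \tfrac{1}{2}\operatorname{Tr}(\dot{\mathcal{C}}_t\, \operatorname{Hess} Z_{t,T}(\varphi)) = 0, \qquad Z_{T,T} = e^{-V_T}. \]

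The HJB equation for $v_{t,T} = -\log Z_{t,T}$ then follows by algebra: using $\partial_t Z = -Z \partial_t v$ and $\operatorname{Hess} Z = Z(\nabla v \otimes \nabla v - \operatorname{Hess} v)$, the trace splits as $\operatorname{Tr}(J_t^2 \nabla v \otimes \nabla v) = \|J_t \nabla v\|_{L^2}^2$ and dividing by $Z_{t,T}$ yields the stated equation. This step is routine once the previous one is justified.

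For the regularity statement, the point is that the expectation defining $Z_{t,T}(\varphi)$ depends on $\varphi$ only through the argument of $V_T$, and $W_{t,T}$ is independent of $\varphi$; so Frechet derivatives in $\varphi$ can be moved inside the expectation, giving for instance $\nabla Z_{t,T}(\varphi) = -\mathbb{E}[\nabla V_T(\varphi + W_{t,T})\, e^{-V_T(\varphi + W_{t,T})}]$ and a similar formula for the Hessian. Since $V_T \in C^2(L^2)$ has bounded first and second derivatives, $Z_{t,T}$ is bounded above and bounded away from zero uniformly in $(t,\varphi)$, so $v_{t,T} = -\log Z_{t,T}$ inherits the $C^2$ regularity in $\varphi$ with uniform bounds, justifying the trace (which is finite since $J_t^2 = t^{-2}e^{-L/t}$ is trace-class on $L^2(\mathbb{R}^2)$, of trace $\tfrac{\pi}{t}e^{-m^2/t}$ by Fourier computation). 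Continuity $t \mapsto Z_{t,T}(\varphi)$ and of its derivatives follows from dominated convergence together with continuity of $t \mapsto W_{t,T}$ in $L^2(\mathbb{P}, L^2(\langle x\rangle^{-n}))$; the terminal condition $Z_{T,T} = e^{-V_T}$ is obtained by letting $t \uparrow T$, using $W_{T,T} = 0$. Once $v_{t,T}\in C([0,T], C^2)$ is known, $C^1$ in $t$ is read off directly from the HJB equation, since the right-hand side is continuous in $t$.

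The main obstacle I expect is justifying the second-order Taylor expansion step rigorously in the infinite-dimensional setting: one has to control the remainder $\mathbb{E}[R(\varphi, W'_{t,t+h})]$ with $R$ cubic in $W'_{t,t+h}$, using the bound on $\operatorname{Hess} V_T$ and the fact that $\mathbb{E}\|W'_{t,t+h}\|_{L^2(\langle x\rangle^{-n})}^3 = O(h^{3/2})$, so that dividing by $h$ still gives $o(1)$. Once this moment estimate is in hand (straightforward from trace-class properties of $\mathcal{C}_{t+h}-\mathcal{C}_t$), the rest of the argument reduces to bookkeeping.
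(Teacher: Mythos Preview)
Your approach is essentially the same as the paper's: derive the backward heat equation for $Z_{t,T}=e^{-v_{t,T}}$, then pass to $v_{t,T}$ by chain rule, and read off the regularity by differentiating under the expectation and using that $Z_{t,T}$ is bounded away from zero. The only difference is in how you obtain $\partial_t Z_{t,T}+\tfrac12\operatorname{Tr}(\dot{\mathcal C}_t\operatorname{Hess}Z_{t,T})=0$: you use the semigroup identity $Z_{t,T}(\varphi)=\mathbb E[Z_{t+h,T}(\varphi+W'_{t,t+h})]$ followed by a second-order Taylor expansion, whereas the paper differentiates the Gaussian expectation in $t$ directly (producing a quadratic form in $W_{t,T}$) and then applies Gaussian integration by parts to convert it into the Hessian trace. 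The paper's route has the advantage of sidestepping precisely the remainder estimate you identify as the main obstacle; conversely your route is perhaps more transparently probabilistic. One small imprecision: with only $V_T\in C^2$ the Taylor remainder is not cubic but of the form $o(\|W'_{t,t+h}\|^2)$ coming from continuity of $\operatorname{Hess}Z_{t+h,T}$, so the control you need is $\mathbb E\|W'_{t,t+h}\|^2=O(h)$ together with uniform continuity of the Hessian, rather than a third-moment bound.
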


\begin{proof}
  Write $Z_{t, T} = \exp (- v_{t, T}) =\mathbb{E} [\exp (- V_T (\varphi +
  W_{t, T}))]$. Noting that $W_{t, T}$ has covariance $C_T - C_t$ it is not
  hard to see that
  \begin{eqnarray*}
    \frac{\partial}{\partial t} Z_{t, T} & = & \frac{\partial}{\partial t}
    \mathbb{E} [\exp (- V_T (\varphi + W_{t, T}))]\\
    & = & -\mathbb{E} [\langle W_{t, T}, (\mathcal{C}_T - \mathcal{C}_t)^{-
    2} \dot{C}_t W_{t, T} \rangle_{L^2 (\mathbb{R}^2)} \exp (- V_T (\varphi +
    W_{t, T}))] .
  \end{eqnarray*}
  Now using Gaussian integration by parts (see {\cite{bauerschmidt-2019-book}}
  Exercise 2.1.3)
  \begin{eqnarray*}
    &  & -\mathbb{E} [\langle W_{t, T}, (\mathcal{C}_T - \mathcal{C}_t)^{- 2}
    \dot{C}_t W_{t, T} \rangle_{L^2 (\mathbb{R}^2)} \exp (- V_T (\varphi +
    W_{t, T}))]\\
    & = & - \tmop{Tr} (\dot{\mathcal{C}}_t \tmop{Hess} Z_{t, T} (\varphi)) .
  \end{eqnarray*}
  Applying chain rule we get
  \begin{eqnarray*}
    \frac{\partial}{\partial t} v_{t, T} & = & - \frac{\partial}{\partial t}
    \log Z_{t, T}\\
    & = & - \frac{\frac{\partial}{\partial t} Z_{t, T}}{Z_{t, T}}\\
    & = & \frac{\tmop{Tr} (\dot{C}_t \tmop{Hess} Z_{t, T} (\varphi))}{Z_{t,
    T}}\\
    & = & e^{v_{t, T}} \text{Tr} (\dot{\mathcal{C}}_t \text{Hess} e^{- v_{t,
    T}})\\
    & = & - \text{Tr} (\dot{\mathcal{C}}_t \text{Hess} v_{t, T}) + \langle
    \nabla v_{t, T}, \dot{\mathcal{C}}_t \nabla v_{t, T} \rangle_{L^2
    (\mathbb{R}^2)}
  \end{eqnarray*}
  For the second statement differentiating under the expectation we
  obtain\tmcolor{red}{ \tmcolor{black}{$Z_{t, T} (\varphi) \in C^2 (L^2
  (\mathbb{R}^2))$}}, \ so using our first computation we can deduce from this
  that also $Z_{t, T} \in C^1 ([0, T], C (L^2 (\mathbb{R}^2)))$. Now observing
  that if $V_T \in C^2 (L^2 (\mathbb{R}^2))$ then $\inf_{t, \varphi} Z_{t, T}
  (\varphi) > 0$, and using chain rule we can conclude.
\end{proof}

\begin{definition}
  \label{def:controlproblem}Let $T > 0$, $H$ be a Hilbert space and \ $V_T : H
  \rightarrow \mathbb{R}$ measurable,bounded below. Let $X_t$ be a cylindrical
  process on some Hilbert space $\Xi$. Let $\Lambda$ be a Polish space and $u
  : [0, T] \rightarrow \Lambda$ be a process adapted to $X_t$. Let $Y_{s, t}
  (\varphi, u)$ be a solution to the equation
  \begin{equation}
    \mathd Y_{s, t} (u, \varphi) = \beta (t, Y_{s, t} (u, \varphi), u_t)
    \mathd t + \sigma (t, Y_{s, t} (u, \varphi), u_t) \mathd X_t
    \label{eq:control}
  \end{equation}
  \[ Y_s (u, \varphi) = \varphi . \]
  Where $\beta : [0, T] \times H \times \Lambda \rightarrow H$ and $\sigma :
  [0, T] \times H \times \Lambda \rightarrow \mathcal{L} (\Xi, H)$ are
  measurable. Then we say that $V_{t, T}$ is the value function on the
  stochastic control problem if
  \[ V_{t, T} (\varphi) = \inf_{u \in A ([s, T])} \mathbb{E} \left[ V_T
     (Y_{s, T} (u, \varphi)) + \int^T_s l_t (Y_{s, t}, u_t) \mathd t \right],
  \]
  with $l : [0, T] \times H \times \Lambda \rightarrow \mathbb{R}$ measurable,
  bounded below and we denote by $A ([s, t])$ the space of all processes $u :
  [s, t] \rightarrow \Lambda$ which are adapted to $X_t$. 
\end{definition}

\begin{proposition}[Dynamic programming]
  \label{prop:dynamicprogramming}$V_{t, T}$ as defined above satisfies for any
  $S < T$
  \[ V_{t, T} (\varphi) = \inf_{u \in A ([t, S])} \mathbb{E} \left[ V_{S, T}
     (Y_{t, S} (u, \varphi)) + \int^S_t l_s (Y_{t, s}, u_t) \mathd t \right] .
  \]
\end{proposition}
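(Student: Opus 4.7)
The plan is to establish the two inequalities separately, using the flow property of the SDE~\eqref{eq:control} together with the tower property of conditional expectation and a measurable selection of $\varepsilon$-optimal controls. Throughout I will write $Y^{u,\varphi}_{s,t}$ for the solution of \eqref{eq:control} starting at $\varphi$ at time $s$ under control $u$; the flow property $Y^{u,\varphi}_{t,T} = Y^{u,\, Y^{u,\varphi}_{t,S}}_{S,T}$ (a.s., for $t<S<T$) is the key structural fact.

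For the upper bound $V_{t,T}(\varphi) \le \inf_{u\in A([t,S])} \mathbb{E}\bigl[V_{S,T}(Y^{u,\varphi}_{t,S}) + \int_t^S l_s(Y^{u,\varphi}_{t,s},u_s)\,\mathrm{d}s\bigr]$, I would fix a control $u\in A([t,S])$ and, for each $\varepsilon>0$, use a measurable selection to build a family $\{v^\psi\}_{\psi\in H}\subset A([S,T])$ such that
\[
\mathbb{E}\Bigl[V_T(Y^{v^\psi,\psi}_{S,T}) + \int_S^T l_s(Y^{v^\psi,\psi}_{S,s},v^\psi_s)\,\mathrm{d}s\Bigr] \le V_{S,T}(\psi) + \varepsilon,
\]
and $\psi\mapsto v^\psi$ measurable. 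Splicing $u$ on $[t,S]$ with $v^{Y^{u,\varphi}_{t,S}}$ on $[S,T]$ yields an admissible control $\tilde{u}\in A([t,T])$. Using the flow property and conditioning on $\mathcal{F}_S$ gives
\[
\mathbb{E}\Bigl[V_T(Y^{\tilde u,\varphi}_{t,T}) + \int_t^T l_s(Y^{\tilde u,\varphi}_{t,s},\tilde u_s)\,\mathrm{d}s\Bigr] \le \mathbb{E}\Bigl[V_{S,T}(Y^{u,\varphi}_{t,S}) + \int_t^S l_s(Y^{u,\varphi}_{t,s},u_s)\,\mathrm{d}s\Bigr] + \varepsilon.
\]
Taking the infimum over $u$ and sending $\varepsilon\to 0$ gives the inequality.

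For the reverse inequality, I would take any $u\in A([t,T])$, denote by $u^{(1)}$ and $u^{(2)}$ its restrictions to $[t,S]$ and $[S,T]$ respectively, and condition on $\mathcal{F}_S$. By the flow property and the tower property,
\[
\mathbb{E}\Bigl[V_T(Y^{u,\varphi}_{t,T}) + \int_t^T l_s(Y^{u,\varphi}_{t,s},u_s)\,\mathrm{d}s\Bigr] = \mathbb{E}\Bigl[\mathbb{E}\bigl[V_T(Y^{u^{(2)},\psi}_{S,T}) + \textstyle\int_S^T l_s\,\mathrm{d}s\bigr]\big|_{\psi=Y^{u,\varphi}_{t,S}} + \int_t^S l_s(Y^{u,\varphi}_{t,s},u_s)\,\mathrm{d}s\Bigr],
\]
and the inner conditional expectation is bounded below by $V_{S,T}(Y^{u,\varphi}_{t,S})$ by definition of the value function (here one uses that conditioned on $\mathcal{F}_S$ the process $u^{(2)}$ is adapted to the shifted Brownian filtration $\sigma(X_r - X_S : r\ge S)$, so it gives an admissible control for the problem starting from $Y^{u,\varphi}_{t,S}$). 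Taking infimum over $u$ yields the desired inequality.

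The main obstacle is the measurable selection step in the upper bound, which requires a Borel-measurable choice $\psi\mapsto v^\psi$ of $\varepsilon$-optimal controls; this is standard but technical, and relies on Polish-space assumptions on $\Lambda$ and measurability of the cost and value functions (the measurability of $V_{S,T}$ is delivered by Proposition~\ref{HJB-partitionfunction} in the cases of interest). A second point requiring care is the conditioning argument in the lower bound: one has to verify that the regular conditional law of $\{X_r-X_S\}_{r\ge S}$ given $\mathcal{F}_S$ is again a cylindrical Brownian motion and that the shifted control $u^{(2)}$ is admissible in the correct sense; once this is in place, the flow property of \eqref{eq:control} closes the argument.
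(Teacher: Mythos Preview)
The paper does not give its own proof of this proposition; it simply refers to Theorem~2.24 in the cited textbook of Fabbri, Gozzi and \'Swi\k{e}ch. Your sketch is exactly the standard two-inequality argument one finds there (and in most treatments of the dynamic programming principle): restrict and splice controls, use the flow property of the controlled SDE, the tower property, and a measurable selection of $\varepsilon$-optimal controls for the upper bound. So your approach is correct and coincides with what the reference does.

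Two minor remarks. First, the proposition is stated abstractly (Definition~\ref{def:controlproblem}), so invoking Proposition~\ref{HJB-partitionfunction} for the measurability of $V_{S,T}$ only covers the specific application in this paper; in the general statement one needs an abstract measurability result for the value function, which is part of the machinery in the cited reference. Second, the conditioning step in your lower bound implicitly uses that the ``weak'' and ``strong'' formulations of the control problem give the same value (so that conditioning on $\mathcal{F}_S$ and treating $u^{(2)}$ as an admissible control on $[S,T]$ with a fresh Brownian motion is legitimate); this is again a standard but nontrivial point handled in the reference.
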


For a proof see {\cite{Fabbri_2017}} Theorem 2.24.

Now assume that $\sigma (t, Y_t, u_t)$ is self adjoint.We can associate a HJB
equation to the control problem from Definition \ref{def:controlproblem} . It
is:
\begin{equation}
  \frac{\partial}{\partial t} v (t, \varphi) + \frac{1}{2} \inf_{a \in
  \Lambda} [\tmop{Tr} (\sigma^2 (t, \varphi, a) \tmop{Hess} v (t, \varphi)) +
  \langle \nabla v, \beta (t, \varphi, a) \rangle_H + l (t, \varphi, a)] = 0.
  \label{eq:hjb-general}
\end{equation}
\[ v (T, \varphi) = V_T (\varphi) \]

We have the following theorem relating {\eqref{eq:hjb-general}} to the
solution of the control problem:

\begin{proposition}[Verification]
 \label{prop:verification-2} Assume that $v \in C ([0, T], C^{2})
  (H)) \cap C^{1} ([0, T], C (H))$ and $v$ solves
  \eqref{eq:hjb-general} with $v(T,\varphi) = V_T(\varphi)$. Furthermore assume that there exists $u \in A ([t, T])$ and $Y$
  such that $u, Y$ satisfy \eqref{eq:control} and
  \begin{equation}
    u_t \in \text{argmin}_{a \in \Lambda} [\text{Tr} (\sigma^2 (t, Y_t, u_t)
    \text{Hess} v (t, Y_t)) + \langle \nabla v (t, Y_t), \beta (t, Y_t, a)
    \rangle_H + l (t, Y_t, a)] . \label{eq:verification}
  \end{equation}
  Then $v (t, \varphi) = V_{t, T} (\varphi)$ and the pair $u, Y$ is optimal. 
\end{proposition}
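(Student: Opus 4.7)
The plan is to follow the classical verification argument via It\^o's formula, adapted to the infinite-dimensional setting. Given any admissible control $\tilde u \in A([s,T])$ with associated state process $\tilde Y_{s,\cdot} = Y_{s,\cdot}(\tilde u,\varphi)$ solving \eqref{eq:control}, I would apply It\^o's formula to $v(t,\tilde Y_{s,t})$ on $[s,T]$. Using the regularity $v \in C([0,T],C^2(H)) \cap C^1([0,T],C(H))$ this yields
\[
v(T,\tilde Y_{s,T}) - v(s,\varphi) = \int_s^T \Big(\partial_t v(t,\tilde Y_{s,t}) + \tfrac12 \tmop{Tr}(\sigma^2(t,\tilde Y_{s,t},\tilde u_t)\tmop{Hess} v(t,\tilde Y_{s,t})) + \langle \nabla v(t,\tilde Y_{s,t}),\beta(t,\tilde Y_{s,t},\tilde u_t)\rangle\Big)\mathd t + M_T,
\]
where $M$ is a local martingale arising from the stochastic integral against $X$.

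Next I would invoke the HJB equation \eqref{eq:hjb-general} pointwise along $\tilde Y_{s,t}$: since $\tilde u_t$ is an admissible value for the infimum, the bracket appearing in the It\^o expansion is bounded below by $-l(t,\tilde Y_{s,t},\tilde u_t)$ (with the appropriate $1/2$ factor as written in \eqref{eq:hjb-general}). Plugging in $v(T,\cdot) = V_T(\cdot)$ and taking expectations — assuming a standard localization/integrability argument to kill the local martingale part (which will follow from the boundedness of derivatives of $v$ built into the $C^2$ hypothesis and the $L^2$ integrability of the admissible controls and states) — gives
\[
v(s,\varphi) \le \mathbb{E}\Big[V_T(\tilde Y_{s,T}) + \int_s^T l_t(\tilde Y_{s,t},\tilde u_t)\mathd t\Big].
\]
Taking the infimum over $\tilde u \in A([s,T])$ yields $v(s,\varphi) \le V_{s,T}(\varphi)$.

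For the reverse inequality, I would apply the same computation to the distinguished pair $(u,Y)$ satisfying the pointwise minimization condition \eqref{eq:verification}. There the HJB inequality becomes an equality along the trajectory, so the integrand in the It\^o expansion equals exactly $-l(t,Y_t,u_t)$, and after taking expectations
\[
v(s,\varphi) = \mathbb{E}\Big[V_T(Y_{s,T}) + \int_s^T l_t(Y_{s,t},u_t)\mathd t\Big] \ge V_{s,T}(\varphi).
\]
Combining both bounds gives $v(s,\varphi)=V_{s,T}(\varphi)$ and shows that $(u,Y)$ attains the infimum, i.e.\ is optimal.

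The main obstacle will be the technical justification of It\^o's formula and the vanishing of the martingale term $\mathbb{E}[M_T]=0$ in this infinite-dimensional setting, since $X$ is a cylindrical Brownian motion on $L^2(\mathbb{R}^2)$ and $\sigma,\nabla v$ are operator/vector valued on an infinite-dimensional Hilbert space. The standard remedy is to stop at the hitting time $\tau_N$ of the ball of radius $N$ for $\|Y_{s,t}\|$, show that the stopped martingale is a true martingale (using boundedness of $\nabla v$ and the assumed $L^2$ bound on the control $u$), and pass to the limit $N\to\infty$ via dominated convergence using that $V_T$ and $l$ are bounded below and that $v$ has bounded derivatives. Everything else is a direct translation of the finite-dimensional verification argument (cf.\ \cite{Fabbri_2017}, Chapter~2), and for this reason I would likely just cite the infinite-dimensional verification theorem there rather than redo the technical estimates in full.
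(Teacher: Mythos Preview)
Your proposal is correct and follows the standard verification argument; the paper itself does not give a proof but simply refers to Theorem~2.36 in \cite{Fabbri_2017}, which is precisely the reference you suggest citing at the end. So your approach is fully in line with (and in fact more detailed than) the paper's own treatment.
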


For a proof see {\cite{Fabbri_2017}} Theorem 2.36. Now consider the case $H =
\Lambda = L^2 (\mathbb{R}^2)$ and
\begin{eqnarray*}
  \beta (t, \varphi, a) & = & J_t a\\
  \sigma (t, \varphi, a) & = & J_t\\
  l (t, Y_t, a) & = & \frac{1}{2} \| a \|^2_{L^2 (\mathbb{R}^2)} .
\end{eqnarray*}
Then {\eqref{eq:verification}} becomes a minimization problem for a quadratic
functional and reduces to
\[ u_t = - J_t \nabla v (t, Y_{s, t}) . \]
This means if we can solve the equation
\begin{equation}
  \mathd Y_{s, t} = - J^{2}_t \nabla v (t, Y_{s, t}) \mathd t + J_t \mathd X_t,
  \label{eq:minimizer}
\end{equation}
we can apply the verification theorem. Furthermore in this case
{\eqref{eq:hjb-general}} takes the form
\begin{equation}
  \frac{\partial}{\partial t} v (t, \varphi) + \frac{1}{2} \text{Tr}
  (\dot{\mathcal{C}}_t \text{Hess} v (t, \varphi)) - \frac{1}{2} \| J_t \nabla
  v (t, \varphi) \|^2_{L^2 (\mathbb{R}^2)} = 0, \label{eq:hjb-special}
\end{equation}
since
\begin{eqnarray*}
  &  & \inf_{a \in \Lambda} [\tmop{Tr} (\sigma (t, \varphi, a) \tmop{Hess} v
  (t, \varphi)) + \langle \nabla v, \beta (t, \varphi, a) \rangle_H + l (t,
  \varphi, a)]\\
  & = & \inf_{a \in \Lambda} \left[ \tmop{Tr} (J^2_t \tmop{Hess} v (t,
  \varphi)) + \langle \nabla v, J_t a \rangle_{L^2 (\mathbb{R}^2)} +
  \frac{1}{2} \| a \|^2_{L^2 (\mathbb{R}^2)} \right]\\
  & = & \frac{1}{2} \tmop{Tr} (\dot{\mathcal{C}}_t \tmop{Hess} v (t,
  \varphi)) - \frac{1}{2} \| J_t \nabla v (t, \varphi) \|^2_{L^2
  (\mathbb{R}^2)} .
\end{eqnarray*}
\begin{corollary}
  \label{cor:BD-formula}{\tmdummy}
  
  \[ - \log \mathbb{E} [e^{- V_T (\varphi + W_{t, T})}] = \inf_{u \in
     \mathbb{H}_a} \mathbb{E} \left[ V_T (Y_{s, T} (u, \varphi)) + \frac{1}{2}
     \int^T_s \| u_t \|^2_{L^2} \mathd t \right] \]
  where $\mathbb{H}_a$ is the space of processes adapted to $X_t$ such that
  $\mathbb{E} \left[ \int^{\infty}_0 \| u_t \|^2_{L^2} \mathd t \right]$ and
  $Y_t (u, \varphi)$ satisfies
  \[ \mathd Y_{s, t} (u, \varphi) = - J_t u_t \mathd t + J_t \mathd W_t \]
  \[ Y_{s, s} (u, \varphi) = \varphi . \]
  Note that $Y_{s, T} (u, \varphi) = \varphi + W_{t, T} + I_{t, T} (u)$.
  Furthermore the infimum on the r.h.s is attained \ 
\end{corollary}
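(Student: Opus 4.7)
The plan is to identify $-\log \mathbb{E}[e^{-V_T(\varphi + W_{t,T})}]$ with the value function of the stochastic control problem specified by the data
\[
\beta(t,\varphi,a) = J_t a, \qquad \sigma(t,\varphi,a) = J_t, \qquad l(t,\varphi,a) = \tfrac{1}{2}\|a\|_{L^2}^2,
\]
and then to read off the variational formula from the definition of the value function, using the verification theorem. First, by Proposition \ref{HJB-partitionfunction}, the function $v_{t,T}(\varphi) = -\log \mathbb{E}[e^{-V_T(\varphi + W_{t,T})}]$ belongs to $C([0,T],C^2(L^2)) \cap C^1([0,T], C(L^2))$ and solves
\[
\frac{\partial}{\partial t}v_{t,T}(\varphi) + \frac{1}{2}\operatorname{Tr}(\dot{\mathcal{C}}_t \operatorname{Hess} v_{t,T}(\varphi)) - \frac{1}{2}\|J_t \nabla v_{t,T}(\varphi)\|_{L^2}^2 = 0,
\]
with terminal condition $v_{T,T} = V_T$. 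This is exactly the HJB equation \eqref{eq:hjb-special} associated with the control problem above, as computed in the paragraph preceding Corollary \ref{cor:BD-formula}.

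Next I would apply the verification theorem, Proposition \ref{prop:verification-2}. The pointwise minimizer in \eqref{eq:verification} is the quadratic optimum
\[
u_t^\star = -J_t \nabla v_{t,T}(Y_{s,t}),
\]
so the candidate optimal state equation is
\[
\mathd Y_{s,t} = -J_t^2 \nabla v_{t,T}(Y_{s,t})\,\mathd t + J_t \,\mathd X_t, \qquad Y_{s,s} = \varphi,
\]
which is the equation \eqref{eq:minimizer}. I would establish existence and uniqueness of this SDE in $L^2(\mathbb{P}, C([s,T], L^2(\mathbb{R}^2)))$ by a standard Picard iteration, relying on the fact that $\nabla V_T$ is bounded Lipschitz (since $V_T \in C^2(L^2)$ with bounded derivatives) and that this Lipschitz property propagates to $\nabla v_{t,T}$ via the representation $\nabla v_{t,T}(\varphi) = \mathbb{E}[e^{-V_T(\varphi + W_{t,T})}\nabla V_T(\varphi + W_{t,T})]/\mathbb{E}[e^{-V_T(\varphi + W_{t,T})}]$ obtained by differentiating under the expectation, together with the $L^2$-contraction properties of $J_t$. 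Plugging the resulting pair $(Y^\star, u^\star)$ into the verification statement identifies $v_{t,T}(\varphi)$ with the value function
\[
\inf_{u \in \mathbb{H}_a} \mathbb{E}\left[V_T(Y_{s,T}(u,\varphi)) + \frac{1}{2}\int_s^T \|u_t\|_{L^2}^2\, \mathd t\right],
\]
and the infimum is attained by $u^\star$.

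Finally I would observe that for any admissible $u \in \mathbb{H}_a$, the state $Y_{s,T}(u,\varphi)$ satisfying $\mathd Y_{s,t} = J_t u_t\, \mathd t + J_t \,\mathd X_t$ with $Y_{s,s} = \varphi$ is explicitly given by
\[
Y_{s,T}(u,\varphi) = \varphi + W_{t,T} + I_{t,T}(u),
\]
which recasts the value function in the form stated in the corollary. The main obstacle is the verification step, since one must justify the regularity of $v_{t,T}$ needed to invoke Proposition \ref{prop:verification-2} and must show that \eqref{eq:minimizer} admits the required solution on all of $[s,T]$; both come from the $C^2$-bounds on $V_T$ but need to be assembled carefully in the infinite dimensional setting. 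Standard density arguments then remove the $C^2$ assumption when one wishes to apply the identity to the Wick-ordered cosine functionals used in the main text.
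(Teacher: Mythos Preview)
Your proposal is correct and follows essentially the same route as the paper: identify $v_{t,T}$ as a classical solution of the HJB equation via Proposition~\ref{HJB-partitionfunction}, solve the closed-loop SDE~\eqref{eq:minimizer} by a fixed-point argument using the Lipschitz bound on $\nabla v_{t,T}$, and invoke the verification theorem. The only step you leave implicit that the paper spells out is the passage from the infimum over all adapted controls $A([s,T])$ to the subspace $\mathbb{H}_a$, which is immediate since $V_T$ is bounded below.
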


\begin{proof}
  As already noted $v_{t, T} = - \log \mathbb{E} [e^{- V_T (\varphi + W_{t,
  T})}]$ satisfies the HJB equation {\eqref{eq:hjb-special}} and is in $C ([0,
  T], C^2 (L^2 (\mathbb{R}^2)))$, so $\nabla v_{t, T}$ is Lipschitz continuous
  uniformly in $T$ and bounded. By a standard fix-point argument we can then
  solve \eqref{eq:minimizer}, and so applying the
  verification theorem we obtain
  \[ - \log \mathbb{E} [e^{- V_T (\varphi + W_{t, T})}] = \inf_{u \in A ([s,
     T])} \mathbb{E} \left[ V_T (Y_{s, T} (u, \varphi)) + \frac{1}{2} \int^T_s
     \| u_t \|^2_{L^2} \mathd t \right] . \]
  Since $V_T$ is bounded below we can clearly restrict the infimum on the
  right hand side to $u \in \mathbb{H}_a$.
\end{proof}

\bibliographystyle{plain} \bibliography{qft-drift}

\end{document}